\theoremstyle{plain}
\numberwithin{equation}{section}
\newtheorem{theorem}{Theorem}[section]
\theoremstyle{definition}
\newtheorem{definition}{Definition}[section]
\theoremstyle{plain}
\newtheorem{corollary}{Corollary}[section]
\newtheorem{lemma}{Lemma}[section]
\newtheorem{proposition}{Proposition}[section]
\theoremstyle{definition}
\newtheorem{remark}{Remark}[section]
\newcommand{\D}{\mathbb{D}} 
\newcommand{\E}{\mathbb{E}}        
\newcommand{\R}{\mathbb{R}}          
\newcommand{\NN}{\mathbb{N}}         
\newcommand{\p}{\mathbb{P}}
\renewcommand{\P}{\mathbb{P}}          
\newcommand{\F}{\mathcal{F}}            
\newcommand{\I}{\mathbf{1}}              
\newcommand{\T}{\mathcal{T}}             
\newcommand{\N}{\mathcal{N}}             
\newcommand{\gG}{\mathcal{G}}
\newcommand{\M}{\mathscr{M}}              
\newcommand{\cF}{\mathcal{F}}  
\newcommand{\exc}{\mathbf{e}}
\newcommand{\diffd}{\mathrm{d}}   
\newfont{\indic}{bbmss12}
\def\un#1{\hbox{{\indic 1}$_{#1}$}}
\newcommand{\cN}{{\ensuremath{\mathcal N}} }
\newcommand{\intens}{\vartheta}
\begin{document}
 \title{ Ray-Knight representation of flows of branching processes with competition  by pruning of L\'evy trees}
 
 \author{J.Berestycki \footnote{Department of Statistics, University of Oxford, 1 South Parks Road, Oxford, OX13TG United Kingdom.  \ E-mail: \,  \texttt{julien.berestycki@stats.ox.ac.uk}},\,   
M.C. Fittipaldi \footnote{Universidad Nacional Aut\'onoma de M\'exico, {Departamento de Matem\'aticas, Facultad de Ciencias, Av. Universidad 3000, Circuito Exterior, Ciudad Universitaria,Delegación Coyoacán, C.P. 04510, D.F., México.} E-mail: \, \texttt{clarafitti@matem.unam.mx }} \,  and  \, 
 J.Fontbona\footnote{Department of Mathematical Engineering and Center for Mathematical Modeling,  UMI(2807) UCHILE-CNRS, University of  Chile,  { Casilla 170-3, Correo 3, Santiago-Chile.}  E-mail:\,  \texttt{fontbona@dim-uchile.cl}.  } }
 \date{\today}

 \maketitle

 \begin{abstract} 
  We introduce flows of branching processes with competition,  which  describe the evolution of general continuous state branching populations in which interactions between individuals give rise to a negative density dependence term. This generalizes  the logistic branching processes studied by Lambert  \cite{L1}.  Following the approach developed by Dawson and Li \cite{DL},  we first construct such processes  as the solutions of certain flow of stochastic differential equations.  We then  propose a  novel genealogical description  for   branching processes with competition based on  interactive pruning of L\'evy-trees, and establish a  Ray-Knight representation result for these processes in terms of the local times of suitably  pruned  forests.
 \end{abstract}
 
\bigskip

 {\bf Keywords}:  continuous state branching processes, competition, stochastic flows, L\'evy-tree exploration and local times, Poisson-snake, pruning,  Ray-Knight representation. 

\medskip

 {\bf AMS MSC 2010}: 60J25; 60G57; 60J80 .

\tableofcontents

\section{Introduction and main results}
 \subsection{Continuous state branching processes and their genealogies}\label{introCSBP}

Continuous state branching processes model the evolution of the size of a continuum population in which individuals reproduce and die but do not interact. 
Mathematically, a continuous state branching process (or  CSBP, for short) is a c\`adl\`ag $[0,\infty)$-valued strong Markov processes $Y= (Y_t: t\geq 0)$ with law $ \p_y $ given the initial state $y \geq 0$,  satisfying the branching property:  for any $y_{1},y_{2} \in [0,\infty)$, $Y$  under $\p_{y_{1}+y_{2}}$ has the same law as the independent sum $Y^{(1)} + Y^{(2)}$, with   $Y^{(i)}$  distributed as $Y$ under $\p_{y_{i}}$ ($i=1,2$). 

CSBPs arise as the possible scaling limits of discrete Galton-Watson processes, and their laws are completely characterized by their so-called  {\it branching mechanism}, which is the Laplace exponent $\lambda \mapsto \psi(\lambda)$ of a spectrally positive L\'evy process $X$. 

More precisely, one has 
\[
\E_{x}(e^{- \theta Y_{t}}) = e^{-xu_{t}(\theta)},  \, x\geq 0, 
\]
where $u_t$ is the unique nonnegative solution of the differential equation     
\[
\dfrac{\partial u_t(\theta)}{\partial t}= -\psi (u_t(\theta)), \quad u_0(\theta)=\theta.
\]  
By a celebrated result of Lamperti, $Y$ can also be  obtained in a pathwise way from  $X$  by  means a  random time-change  \cite{L}. We term the process $Y$  a $\psi$-CSBP to specify the underlying branching mechanism.

In this work, we focus on the case where the function  $\psi(\lambda)=\ln \E(e^{-\lambda X_1})$ has the following properties \eqref{psi2} and \eqref{psi3}, which    {\bf we assume  throughout}:  

One has
\begin{equation}\label{psi2}
 \psi(\lambda)= \alpha\lambda+\frac{1}{2}\sigma^{2}\lambda^{2} +\int_{(0,\infty)}(e^{-\lambda x} -1
                   +\lambda x)\Pi(\diffd x) \,,   \quad \lambda\geq 0 \, ,
\end{equation}
for some $\alpha, \sigma\geq 0$ and $\Pi$ a measure on $(0,\infty)$ such that $\int_{(0,\infty)} (x\wedge x^{2})\Pi(\diffd x)\,<\,\infty$. 
Moreover, Grey's condition holds, that is, one has 
\begin{equation}\label{psi3}
 \int_1^{\infty} \dfrac{\diffd \lambda}{\psi(\lambda)}<\infty .
\end{equation} 
Condition  \eqref{psi2}
 ensures that  $Y$ is conservative (i.e.  $\forall$ $t,x>0$, $\p_{x}(Y_{t}<\infty)= 1$ ) and (sub)critical  ($\psi'(0+)\geq 0$). Condition  \eqref{psi3}  implies  that  there is a.s. extinction (i.e. $\p_{x}(\exists t\geq 0: Y_{t}=0)= 1$, $\forall$ $x>0$) and that $\sigma > 0$ or $\int_{0}^{1} r\Pi(\diffd r)=\infty$, so the paths of $Y$ have infinite variation a.s. We refer the reader to \cite{DLGI}, \cite{Ky},  \cite{BookLi} for these facts and for general background on CSBP.

The branching property  allows us to construct a family of $\psi$-CSBP as a  two parameters process $(Y_t(v), t\ge 0, v\ge 0)$, with  $v$ ranging over all possible initial population sizes. 
In a more general  setting, this was first done  by Bertoin and Le Gall  \cite{BLGI} using families of nested  subordinators.  More recently, Dawson and Li constructed such a process by means of a  {\it stochastic flow} of SDE driven by Gaussian space-time white noise and a Poisson random measure. Precisely, in \cite{DL}  the process $ (Y_t(v):t\geq 0, v\geq 0)$ was obtained as  the unique strong solution of the family of stochastic differential equations:
\begin{equation}\label{eqflow csbp}
\begin{split}      
Y_t(v)=&\ v - \alpha\int_0^t Y_s(v)\diffd s + \sigma \int_{0}^{t}\int_{0}^{Y_{s-}(v)} W(\diffd s,\diffd u )     + \int_{0}^{t}\int_{0}^{Y_{s-}(v)}\int_{0}^{\infty}r \tilde{N}(\diffd s,\diffd \nu,\diffd r), \\
       &   t\ge 0, v\geq 0,               \quad 
\end{split}
\end{equation}
where $W(\diffd s, \diffd u)$ is a Gaussian white noise process on $[0, \infty)^{2}$ based on the Lebesgue measure $\diffd s\otimes \diffd u$ and $\tilde N$  is the compensated version of a Poisson random measure $N$  on $[0, \infty)^{3}$ with intensity $\diffd s\otimes \diffd \nu \otimes \Pi(\diffd r)$.  
Stochastic calculus easily yields the fact that, for each $v\geq 0$,  the above process is a $\psi$-CSBP, started from the initial population size $v$.  
Moreover, it is shown in \cite{DL}  that $(Y_t(v) : t \geq 0, v\geq 0)$ has a version with the following properties: 
\begin{itemize}
   \item[i)] for each $v \geq 0$, $t \mapsto Y_t(v)$ is a c\`adl\`ag process on $[0, \infty)$;
   \item[ii)] for each $t \geq 0$, $v \mapsto Y_t(v)$ is a non-negative and non-decreasing c\`adl\`ag process on $[0, \infty)$;
   \item[iii)]  for any $0\leq v_1\leq v_2\leq \dots \leq v_n$, the processes  $(Y_t(v_{j})- Y_t(v_{j-1}): t\geq 0)$, $j=1,\dots, n$ are independent CSBP with branching mechanism $\psi$ issued from  $v_j-v_{j-1}$ (branching property); and 
   \item[iv)]   if for each $0\leq s$  we denote by $(Y_{s,t}(v):t\geq s)$ the solution of \eqref{eqflow csbp} starting at time $s$, that is,
 \begin{equation}\label{flow property CSBP}
 Y_{s,t} (v) =\  v - \alpha\int_s^t Y_{s,r}(v)\diffd r + \sigma \int_{s}^{t}\int_{0}^{Y_{s,r-}(v)} W(\diffd r,\diffd u)  + \int_{s}^{t}\int_{0}^{Y_{s,r-}(v)}\int_{0}^{\infty}u \tilde{N}(\diffd r,\diffd \nu,\diffd u),
 \end{equation}
 then  for every $0\leq s\leq r$ we have a.s. $(Y_{s,t}(v): t\geq r)=(Y_{r,t}(Y_{s,r}(v)): t\geq r)$  (flow property).
\end{itemize}
The family of  solutions to equation \eqref{eqflow csbp} is thus called a {\it flow of continuous state branching processes} or, as in \cite{BookLi},  a {\it measure-valued branching process}. 
One upshot of such a construction is that, in a similar way as in  \cite{BLGI},  one can make sense of the genealogy of the family of CSBPs \eqref{eqflow csbp}: an individual $y$ at time $t\geq 0$ is a descendant of the individual $x$ at time  $s\leq t$ if and only if $y\in (Y_{s,t}(x-), Y_{s,t}(x)]$. 
 
 \smallskip

There is, however, a more explicit and  natural way to encode the genealogy of CSBPs, which  relies on   tree-like topological random objects known as  {\it continuum random trees} (CRT).  This point of view builds upon the pioneer works of Ray \cite{R} and Knight \cite{K} on the quadratic branching case, which imply that the full flow of the Feller diffusion can be  constructed from the local times, at different  heights, of  reflected Brownian motion. Aldous \cite{A,A1}  later showed how each of the   corresponding  Brownian excursions codes a  CRT. These results, together with   It\^o's  Poissonian  representation   of the Brownian  excursion process  \cite{I1} provide in that case a complete description of the continuum  genealogy of the evolving population. Later,  Le Gall and Le Jan \cite{LGLJ} and Duquesne and Le Gall \cite{DLG} extended such a genealogical  representation to a general class of $\psi$-CSBP.   More precisely,  under  assumptions \eqref{psi2} and \eqref{psi3},  those works provided  a  {\it Ray-Knight representation} result for a $\psi$-CSBP $Y$   in terms of  the height and local times processes of certain  measure valued Markov process,  the {\it exploration process},   defined using the corresponding L\'evy process  $X$  reflected at its running infimum (we recall this construction and the precise statement  in Section \ref{explo}).

\smallskip 

During the last decade,  the scope of mathematically tractable  population models  featuring branching behavior has been considerably enlarged to include models with interactions, immigration and density dependence. Negative density dependence, in particular, can represent  competition among individuals, due to limited resources or other mechanisms. The aim of the present work is to extend the Ray-Knight genealogical representation to branching-type populations with competition between individuals.

\subsection{Flows of branching processes with competition}


The prototypical example of a continuum  branching model with competition is the logistic branching process (LPB), introduced by Lambert  in \cite{L1} by  means of a Lamperti transform of spectrally positive Ornstein-Ulhenbeck processes. Alternatively, LBPs are defined as scaling limits of some discrete population models, where the death rate of each individual owed to competition, is proportional to the instantaneous population size. 
The aggregate competition rate results in that case into a negative drift, proportional to the squared population size (see \cite{L1} for details and further discussion). 

Generalizing that idea, we consider $g : [0,\infty) \to [0,\infty)$ a locally bounded measurable function, which we  call the {\it competition mechanism}. 
Heuristically, $g(z)$ is the rate at which an additional individual in a given population of size $z$ would be killed due to competition. We  introduce a competitive analogue of \eqref{eqflow csbp}:

\begin{proposition}  \label{flowLBP}
Let  $W(\diffd s, \diffd u)$  and $N(\diffd s,\diffd \nu,\diffd r)$ be the same processes considered in \eqref{eqflow csbp} and
define   a locally Lipschitz function $G: [0,\infty) \to [0,\infty)$ by $G(z):=\int_0^z g(u)\diffd u .$ There is, for each $v\ge 0$, a unique strong solution of the stochastic differential equation:
\begin{equation}\label{flow lb}
\begin{split}
                   & Z_{t}(v)  =\  v  - \alpha\int_{0}^{t}Z_{s}(v) \diffd s+\sigma\int_{0}^{t}\int_0^{Z_{s-}(v)}W(\diffd s,\diffd u) + \int_{0}^{t}\int_{0}^{Z_{s-}(v)}\int_{0}^{\infty}r\tilde{N}(\diffd s,\diffd \nu,\diffd r)  \\& 
                       \qquad \qquad - \int_{0}^{t} G(Z_s(v)) \diffd s, \quad 
                    t\ge 0.\\
\end{split}
\end{equation}
Moreover,  the process $(Z_t(v) : t \geq 0, v\geq 0)$ admits a (bi-measurable) version such that   
\begin{itemize}
 \item[i)] for each $v \geq 0$, $t \mapsto Z_t(v)$ is a c\`adl\`ag process on $[0, \infty)$; 
 \item[ii)] for each $t \geq 0$, $v \mapsto Z_t(v)$ is a non-negative and non-decreasing c\`adl\`ag process on $[0, \infty)$;
 \item[iii)]  for each $u\geq 0 $, the  conditional law of $(Z_t(v)- Z_t(u): t\geq 0, v\geq u )$ given $\left(  Z_t(x) : t \geq 0,0\leq  x\leq u \right)$ depends only on   $\left(  Z_t(u) : t \geq 0\right)$; and
 \item[iv)] defining  for each $s\geq 0$  the process $(Z_{s,t}(v): t\geq s, v\geq 0)$ solution to
 \begin{linenomath*}
 \begin{equation*}
 \begin{split}
  Z_{s,t}(v)  =\, &  v  - \alpha\int_{s}^{t}Z_{s,r}(v)\diffd r+\sigma\int_{s}^{t}\int_0^{Z_{s,r-}(v)}W(\diffd r,\diffd u) 
              + \int_{s}^{t}\int_{0}^{Z_{s,r-}(v)}\int_{0}^{\infty}u\tilde{N}(\diffd r,\diffd \nu,\diffd u) \\
                  & \quad  - \int_{s}^{t} G(Z_{s,r}(v)) \diffd r, \\
 \end{split}
 \end{equation*}
 \end{linenomath*}
 we have  for every $0\leq s\leq r$  a.s. that $(Z_{s,t}(v): t\geq r)=(Z_{r,t}(Z_{s,r}(v)): t\geq r)$.
\end{itemize}           
\end{proposition}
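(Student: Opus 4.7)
The plan is to construct $(Z_t(v))_{t,v\ge 0}$ as a perturbation of the Dawson--Li flow $(Y_t(v))_{t,v\ge 0}$ solving \eqref{eqflow csbp}, by incorporating the extra drift $-\int_0^t G(Z_s(v))\,\diffd s$. Since $g$ is locally bounded and non-negative, $G$ is non-decreasing and locally Lipschitz; this regularity preserves the Yamada--Watanabe type pathwise uniqueness machinery of \cite{DL} while allowing a pathwise comparison $Z\le Y$ which will take care of non-explosion.

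\emph{Step 1: existence, uniqueness, non-negativity and non-explosion for fixed $v$.} For fixed $v\ge 0$ I would treat \eqref{flow lb} as a scalar SDE driven by $W$ and $\tilde N$. Pathwise uniqueness follows as in \cite{DL}: the square-root Brownian integrand $\sigma\sqrt{Z_{s-}(v)}$ and the strip-type Poissonian integrand yield the same $1/2$-H\"older/concavity estimates as in the CSBP case, and the extra term $G(Z^1_s)-G(Z^2_s)$ between two candidate solutions is absorbed via Gronwall thanks to the local Lipschitzness of $G$. Existence in law on a localized interval $[0,\tau_M]$, $\tau_M=\inf\{t:Z_t(v)\ge M\}$, can be obtained by a standard approximation/Picard scheme, and combined with pathwise uniqueness gives a strong solution. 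Non-negativity is automatic because $0$ is absorbing: all noise integrands vanish at $Z=0$ and $G(0)=0$. Non-explosion, i.e.\ $\tau_M\uparrow\infty$, follows from the comparison $Z_t(v)\le Y_t(v)$ (obtained by applying the same absorbing-boundary argument to $Y_t(v)-Z_t(v)$, using $-G\le 0$) together with conservativity of $Y$.

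\emph{Step 2: joint bi-measurable version and monotonicity in $v$.} Solving \eqref{flow lb} simultaneously from the single pair of noises $(W,N)$ for all $v$ in a countable dense set and extending by right-continuous modification in $v$ produces the bi-measurable version. Monotonicity in $v$ is the central substantive point. For $0\le u\le v$, as long as $D_t:=Z_t(v)-Z_t(u)\ge 0$, one can write an SDE for $D$ whose stochastic integrals are supported on the strip $\{(s,u')\colon Z_{s-}(u)<u'\le Z_{s-}(v)\}$ and whose drift $-\alpha D_s -(G(Z_s(v))-G(Z_s(u)))$ vanishes when $D_s=0$. Poisson atoms in $[0,Z_{s-}(u)]\times(0,\infty)$ produce identical jumps in $Z(u)$ and $Z(v)$, so $D$ cannot jump downwards, while the continuous dynamics see $0$ as absorbing. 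Hence $D\ge 0$ for all $t$. Joint càdlàg regularity in $v$ then follows from monotonicity combined with a first-moment bound $\E[Z_t(v)-Z_t(u)]\le C_T(v-u)$ uniform on $[0,T]$, derived by taking expectations in the SDE for $D$ and using Gronwall.

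\emph{Step 3: properties (iii), (iv), and main obstacle.} Property (iii) is a direct consequence of the above SDE description of $D_t(u,v)$: conditionally on $(Z_\cdot(u))$, it involves only the restrictions of $W$ and $N$ to the half-plane $\{u'>Z_{s-}(u)\}$, which are independent of their restrictions to $\{u'\le Z_{s-}(u)\}$ and hence of $(Z_\cdot(x))_{x\le u}$, while its coefficients depend on the conditioning only through $Z_\cdot(u)$. Property (iv) is standard: applying pathwise uniqueness of \eqref{flow lb} shifted to start at time $r$ with initial value $Z_{s,r}(v)$, both $(Z_{s,t}(v))_{t\ge r}$ and $(Z_{r,t}(Z_{s,r}(v)))_{t\ge r}$ solve the same SDE, so they coincide a.s. The most delicate point of the whole argument is the pathwise comparison in Step 2: one must verify rigorously that the ``strip'' SDE representation of $D$ is a genuine consequence of \eqref{flow lb} and that the atoms of $N$ common to $Z(u)$ and $Z(v)$ preserve the sign of $D$ across jumps. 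Once this is in hand, the remaining estimates follow from the toolbox of \cite{DL} together with the Lipschitz character of $G$.
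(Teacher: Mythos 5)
Your skeleton (increment SDE driven by noises shifted by $Z_{s-}(u)$, comparison with the CSBP flow, pathwise uniqueness for the flow property) is the same as the paper's, but the decisive step of (iii) is asserted rather than proved, and it is precisely where the work lies. You claim that the restrictions of $W$ and $N$ to $\{u'>Z_{s-}(u)\}$ are ``independent of their restrictions to $\{u'\le Z_{s-}(u)\}$ and hence of $(Z_\cdot(x))_{x\le u}$''. The boundary $Z_{s-}(u)$ is random and adapted, so neither independence is automatic; moreover $Z(x)$, $x\le u$, is a priori a measurable functional of the \emph{whole} noise, not of its restriction below the boundary. The paper proceeds in two moves: first a L\'evy-type characterization (Lemma \ref{weirdwnandpp}) shows that $W'(\diffd s,\diffd w)=W(\diffd s,\diffd w+Z_{s-}(u))$ and $N'(\diffd s,\diffd\nu,\diffd r)=N(\diffd s,\diffd\nu+Z_{s-}(u),\diffd r)$ are again a white noise and an independent Poisson random measure; second, the space is enlarged with an independent copy $(\hat W,\hat N)$ and a patched pair $(\overline W,\overline N)$ is built, equal to $(W,N)$ below the (maximal) boundary and to the copy above it. One then checks $(\overline W,\overline N)\overset{d}{=}(W,N)$, that $(W',N')$ is independent of $(\overline W,\overline N)$, and that $Z(u_1),\dots,Z(u_p)$ solve the same SDEs driven by $(\overline W,\overline N)$, hence by strong uniqueness are measurable functions of it; only then does independence of $(W',N')$ from $\sigma(Z_\cdot(x):x\le u)$ follow, and (iii) is concluded by pathwise uniqueness of the time-inhomogeneous equation \eqref{flow lb nonhomog} with $z=Z(u)$, applied conditionally. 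Your one-line ``hence'' skips exactly this argument.

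A second, smaller gap is the regularity step: a first-moment bound $\E[Z_t(v)-Z_t(u)]\le C_T(v-u)$ at fixed $t$ only yields a modification in $v$ time by time. To produce a single bi-measurable version that is c\`adl\`ag in $t$ for every $v$ \emph{and} non-decreasing c\`adl\`ag in $v$ for every $t$, one needs uniform-in-time control, since a monotone pointwise limit of c\`adl\`ag paths need not be c\`adl\`ag; your right-limit extension along a dense set of $v$'s could lose property (i). The paper derives $\E[\sup_{s\le t}|Z_s(v)-Z_s(u)|]\le C(t)\{(v-u)+\sqrt{v-u}\}$ (estimate \eqref{boundcadlagmodif}, obtained by dominating the increment by a $\psi$-CSBP driven by $(W',N')$ and invoking \cite{DL}) and then runs the path-valued modification argument of \cite{DL} (their Lemma 3.5 and Theorem 3.6), with property (iii) standing in for independence of increments. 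The remaining points are fine and match the paper: fixed-$v$ strong existence and uniqueness follow by checking the admissibility conditions of \cite{DL} (the paper cites their Theorem 2.5 rather than redoing Yamada--Watanabe), monotonicity in $v$ is their comparison theorem (your ``no downward jumps, $0$ absorbing'' sketch is the same idea but needs the Yamada--Watanabe function machinery to handle the square-root martingale part and the compensated small jumps), and (iv) is indeed pathwise uniqueness.
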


We call $(Z_t(v): t\geq 0, v\ge 0)$  a  stochastic flow of  branching processes  with branching mechanism $\psi$ and competition mechanism $g$, or simply {\it   stochastic flow of branching processes with competition}, if the mechanisms are known and fixed. 

In the case that $g= C> 0$ is   constant, one clearly recovers a flow of CSBP as in  \eqref{eqflow csbp} but, in general, the branching property is lost (notice the change in property (iii) between solutions of \eqref{eqflow csbp}  and solutions of \eqref{flow lb}). 
It is easily checked that in the linear case,  $g(x)=2cx$ say, for each $v\geq 0$ the process $t\mapsto Z_t(v)$ reduces to the usual logistic branching process studied in \cite{L1} (with competition intensity $c>0$). 
If moreover $\Pi=0$,  one gets the so-called logistic Feller diffusion 
\begin{equation}\label{EQDL}
     \diffd Z_t = \left(-\alpha Z_t - c  Z_t^2 \right) \diffd t + \sigma\sqrt{Z_t}\diffd B_t ,  \quad Z_0 = v,
\end{equation}
the genealogy of which was studied by  Le {\it et al.}  \cite{PWL} and Pardoux and Wakolbinger  \cite{PW}  when $-\alpha\geq 0$,  by  means of approximations  with discrete logistic  processes. Extensions of those works  to much general  drift terms  were developed in  \cite{BaP} and \cite{Px},  but still in the diffusive  branching case.

The competition dynamics implicit in equation \eqref{flow lb} is determined by  an ordering of the population. 
Indeed, if the complete population is identified with  the positive half-line, we are implicitly  assuming that an individual at $x>0$ can only be killed ``from below'', i.e. by the fraction of the population in $[0,x)$,  at rate  given by $g(x)$. 
The downward drift $G(Z_t(v))=\int_0^{Z_t(v)} g(u) \diffd u$ thus  corresponds to the total rate of killing  at time $t$ in the population started from size $v$ at time $0$.  
This  picture is similar to the point of view  adopted in \cite{PWL,PW} to establish a genealogical representation for the  logistic Feller diffusion, which we further discuss later, along with other  relations  between our work and \cite{PWL,PW,BaP,Px}.
 
 Although one can already read  genealogical structure in \eqref{flow lb}, our main aim  here is to describe this genealogy using a  L\'evy CRT, that is, in a  manner analogous to  the Ray-Knight theorem for CSBP.  Following \cite{LGLJ}  and \cite{DLG}, we next  recall in details  that result  and introduce some basic  objects we require. 
  
\subsection{L\'evy CRT and Ray-Knight Theorem for flows of CSBP}\label{explo}

We briefly review here the exploration process, introduced in \cite{LGLJ} to construct the L\'evy continuum random tree from a spectrally positive L\'evy process  and  used in \cite{DLG} to  define the continuum genealogy of  the associated CSBP. We follow \cite{DLG} and \cite{ADV} and refer to Chapter 1 of the former for detailed proofs and further background.

\smallskip 

Let $X$ be a L\'evy process with Laplace exponent $\psi$ satisfying conditions \eqref{psi2} and \eqref{psi3}. Zero is then regular for $X$ reflected  at its {\it running infimum}. 
We denote the latter process by $I_t: = \inf\limits_{ 0\leq s \leq t}X_s$ and recall that $-I$ is a local time at $0$ for the strong Makov process $X-I$. 
For $0 \leq s \leq t$ we denote by  $I_t^s= \inf\limits_{s\leq r \leq t} X_r$ the two parameter process known as \textit{future infimum} of $X$. 
The \textit{height process} $H^0=(H_t^0: t\geq 0)$ is defined for each $t\geq 0$ as 
$H_t ^0 = \liminf\limits_{\varepsilon \rightarrow 0} \frac{1}{\varepsilon} \int_0^t \I_{\{X_s < I_t^s + \varepsilon\}} \diffd s$ (which, by time reversal,   is the total local time of the dual L\'evy process on $[0,t]$ reflected  at its supremum) and measures for each $t\geq 0$ the size of the set $\{s \leq t : X_s = \inf_{[s,t]} X_r \}$. 
If  $X$ has no jumps,  $H$  is a reflected Brownian motion with drift but it might  in general not be Markovian nor a semimartingale.  However, $H$  does always have a version which is a measurable function of some strong Markov process, called exploration process. 
The \textit{exploration process} $\rho = (\rho_t: t \geq 0)$ takes values in the space of finite measures in $\R_+$ and, for each $t\geq 0$,  it is defined on bounded measurable functions $f$ by 
\[
\langle \rho_t , f \rangle = \int_0^t \diffd_s I_t^s f(H_s^0 ),
\]
where $\diffd_s I_t^s$ denotes Lebesgue-Stieltjes integration with respect to the nondecreasing map $s \mapsto I_t^s$. Equivalently, one can write
\begin{equation}\label{rhodef}
    \rho_t(\diffd r)=\frac{\sigma^2}{2} \I_{[0,H_t ^0]}(r)\diffd r \ + {\sum\limits_{0<s\leq t,X_{s-} < I^s_t}}(I_t^s - X_{s-}) \delta_{H_s^0}(\diffd r).
\end{equation}
In particular, the measure $\rho_t$ can be written as a function of the excursion of the reflected L\'evy process $(X_s - I_s: \, s\geq 0)$ straddling $t$. 
Furthermore,   $t \mapsto \rho_t$ is c\`adl\`ag in the variation norm, $\rho_t$ has total mass $\langle \rho_t , 1 \rangle = X_t - I_t$ for each $t\geq 0$  and   $\rho_t(\{0\})=0$. 
The process defined by $H_t : = \sup \mbox{supp} (\rho_t)$ (with  $\mbox{supp}(\mu)$  the topological support of $\mu$ and the convention  that $ \sup \emptyset = 0$) is a continuous modification of  $H^0_t$ and one has  $ \mbox{supp} (\rho_t)=[0,H_t]$ when $\rho_t\neq 0$.  
This also implies that the excursion intervals out of zero are the same for $X-I$, $\rho$ or $H$. 
We call $\mathbf{N}$ the excursion measure (away from zero) of the strong Markov process  $\rho$ or, equivalently,  of $X-I$.  

\smallskip 
      
Under $\mathbf{N}$, the process $s\mapsto H_s$  is a.e. continuous, non-negative  with compact support and we have $H_0=0.$ It therefore encodes a tree, as follows. 
Let $\zeta=\inf\{ s>0: H_s =0\}$ denote  the length of the canonical excursion under  $\mathbf{N}$. 
The  function $d_H$  on $[0,\zeta]^2$ given by 
\[
d_H(s, t) = H_s + H_t - 2 H_{s, t}  \quad \mbox{with} \quad 
H_{s, t} :  = \inf\limits_{s\wedge t\leq r \leq s\vee t} H_r\, ,
\] 
defines an equivalence relation $\sim_H$ whereby 
$
(s \sim_H t ) \Leftrightarrow   (d_H(s, t) = 0),$  $\forall s,t \le \zeta.$
Hence,  $d_H$   induces a distance on  the quotient space   $\T_H=[0,\zeta]/ \sim_H$  which is thus  a compact metric space, more precisely,   a ``real tree''. That is, any two points are joined by a unique, up to re-parametrization, continuous injective path,  isomorphic to a line segment  \footnote{The reader is referred to \cite{DLGI} for further topological background.}.

The {\it $\psi$-L\'evy random tree}  (or L\'evy CRT) is the real  tree $(\T_H,d_H)$ coded by $H$ under the measure $\mathbf{N}.$
 Hence, each $s \in [0, \zeta]$ labels a vertex at height $H_s$ in the tree and $d_H(s, t)$ is the distance between vertices corresponding to $s$ and $t$; accordingly, $H_{s, t}$ represents the height (or  generation) of the most recent  ancestor common to $s$ and $t$. The equivalence class  of $s=0$ is termed {\it root} and  the unique path  isomorphic to $[0,H_t]$ connecting it with  the class of $t\geq 0$, is interpreted as the  ancestral line or ``lineage'' of  the individual corresponding to $t$. 
Thus, $\rho_t$ can be seen as a measure on this  lineage, describing the mass of sub-trees grafted on its right. 
 
Under $\P$, the process of excursions of  $\rho$ is Poisson with respect to the local time clock at level $0$, with intensity measure $\mathbf{N}$  (much as  reflected Brownian motion is a Poisson point process of Brownian excursions).  The genealogy of the population is then described by the ``L\'evy forest''  $\T$ defined as the union of the  corresponding Poissonian collection of trees.
  
\smallskip 

 Since $H_t = 0$ if and only if $X_t-I_t = 0$,   the local time  at level $0$ of $H$ is naturally  defined as the process $L_t^0:=-I_t$. 
One way of defining the local time process $(L^a_t:t\geq 0)$ of  $H$ at  levels  $a>0 $ is through  the spatial Markov  property  of the exploration process:  if   for each $t\geq 0$ we set
\begin{equation}\label{tauat} 
    \tau_t^a : = \inf \{s \geq 0 : \int_0^s \I_{\{H_r >a\}}\diffd r > t\} 
             = \inf\{s \geq 0 : \int_0^s \I_{\{\rho_r((a,\infty))>0\}}\diffd r >t\} 
\end{equation}
and $\tilde{\tau}_t^a : = \inf\{s \geq 0 : \int_0^s \I_{\{H_r \leq a\}} \diffd r > t\}$, then the  measure valued process $(\rho^a_t : t \geq 0)$  defined by 
\begin{equation}\label{rhoa}
     \langle \rho_t^a , f \rangle = \int_{(a, \infty)} \rho_{\tau_t^a} (\diffd r) f (r - a)
\end{equation}  
has the same distribution as $(\rho_t : t \geq 0)$, and is independent of the sigma field generated by the   process $((X_{\tilde{\tau}_t^a} , \rho_{\tilde{\tau}_t^a}) : t \geq 0)$. 
Thus, for each $t\geq 0$, we define $L^a_t : = l^a \left( \int_0^t \I_{\{H_r >a\}} \diffd r \right)$ where $l^a :  = (l^a (s): s \geq 0)$ denotes the local time at $0$ of the  L\'evy process reflected at its infimum  $(\langle \rho^a_s , 1 \rangle  :s \geq 0)$.

The Ray-Knight theorem of Duquesne and Le Gall \cite[Theorem 1.4.1]{DLG} states that, for each $x\geq 0$,  a  $\psi$-CSBP starting at $x$ is equal in law to the processes $(L_{T_x}^a: a \geq 0)$, where 
\begin{equation}\label{Tx}
  T_x: = \inf\{t \geq 0: L_t^0= x\}.
\end{equation}
Thanks to the strong Markov property of the exploration process and to  the branching property iii) of process  \eqref{eqflow csbp}, this result obviously extends to a two-parameter processes as follows:
 
\begin{theorem}[{\bf Ray-Knight representation for flows of CSBP}]\label{RKDL}
The process $(L_{T_x}^a: a \geq 0, x\ge 0)$ and  the process $(Y_a(x): a\geq 0, x\ge 0)$ given by \eqref{eqflow csbp} have the same law. 
\end{theorem}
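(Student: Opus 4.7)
The plan is to reduce the two-parameter statement to the single-parameter Ray-Knight theorem of Duquesne--Le Gall \cite[Thm.~1.4.1]{DLG}, combined with a branching property in the $x$ variable for the field $(L^a_{T_x})_{a, x \geq 0}$. The analogous branching property on the flow side is exactly property iii) of the Dawson--Li construction of $(Y_a(x))$ recalled above Proposition \ref{flowLBP}. Thus, it suffices to show that for every $0 = x_0 < x_1 < \cdots < x_n$, the increments $a \mapsto L^a_{T_{x_j}} - L^a_{T_{x_{j-1}}}$, $j = 1, \ldots, n$, are independent $\psi$-CSBPs starting from $x_j - x_{j-1}$.

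Two ingredients enter. First, $T_y$ is a stopping time for the natural filtration of $\rho$, with $\rho_{T_y} = 0$ a.s.: since $L^0_t = -I_t$ increases only on the set $\{t : X_t = I_t\} = \{t : \rho_t = 0\}$ (the latter equality by \eqref{rhodef}), hitting the value $y$ for $L^0$ occurs at a time where $\rho$ sits at $0$. Second, the level-$a$ local time decomposes additively across the shift by $T_y$: if $\theta_{T_y}\rho := (\rho_{T_y + t})_{t \geq 0}$ is the shifted exploration process and $\tilde L^a$, $\tilde T_\cdot$ are the local times and hitting times built from it exactly as in \eqref{tauat}--\eqref{rhoa} and \eqref{Tx}, then
\begin{equation*}
L^a_{T_y + t} - L^a_{T_y} = \tilde L^a_t, \qquad a, t \geq 0.
\end{equation*}
This identity is the key technical point and is expected to follow from the explicit form $L^a_t = l^a\bigl(\int_0^t \I_{\{H_r > a\}}\,dr\bigr)$ together with the fact that the reflected L\'evy process $\langle \rho^a_\cdot, 1\rangle$ underlying the definition of $l^a$ restarts at $0$ whenever $\rho$ itself returns to $0$.

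Once these two points are in place, applying the strong Markov property of $\rho$ at $T_{x_{j-1}}$ gives that $\theta_{T_{x_{j-1}}}\rho$ is independent of the past up to $T_{x_{j-1}}$ and distributed as $\rho$. Hence $T_{x_j} - T_{x_{j-1}} = \tilde T_{x_j - x_{j-1}}$ almost surely, and the single-parameter Ray-Knight theorem applied to $\theta_{T_{x_{j-1}}}\rho$ yields
\begin{equation*}
\bigl(L^a_{T_{x_j}} - L^a_{T_{x_{j-1}}} : a \geq 0\bigr) = \bigl(\tilde L^a_{\tilde T_{x_j - x_{j-1}}} : a \geq 0\bigr),
\end{equation*}
which is a $\psi$-CSBP issued from $x_j - x_{j-1}$, independent of $\F_{T_{x_{j-1}}}$, hence of the analogous increments for $j' < j$. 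Iterating in $j$ produces the required independence of the $n$ increments and matches the finite-dimensional distributions supplied by property iii) for $(Y_a(x))$; the c\`adl\`ag regularity in each coordinate of both two-parameter processes then identifies their laws. The one point needing genuine care is the additive decomposition of $L^a$ highlighted above; everything else reduces to the strong Markov property of $\rho$ and to \cite[Thm.~1.4.1]{DLG}.
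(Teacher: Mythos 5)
Your proposal is correct and follows essentially the same route as the paper, which justifies Theorem \ref{RKDL} in one line by combining the one-parameter Ray--Knight theorem of Duquesne--Le Gall with the strong Markov property of the exploration process at the times $T_{x_j}$ (where $\rho$ vanishes) and the branching property iii) of the flow \eqref{eqflow csbp}. Your write-up merely makes explicit the additivity of the level-$a$ local times across the shift at $T_{x_{j-1}}$, which indeed follows from the approximation \eqref{aprox1} and the fact that $\rho_{T_{x_{j-1}}}=0$, so there is no substantive difference.
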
  
   
\subsection{Pruning of L\'evy trees}\label{Pruning of Levy trees}

Following  \cite{ADV, ADH},  the {\it pruning}  of a real tree or forest  $\T$  at a discrete set of  points $\bar{\tau}\subset \T$ is  the subset  of $\T$ defined as the union of  the connected components of $\T\setminus \bar{\tau}$  containing the roots.  
According to those works, if conditionally on the corresponding L\'evy forest  $\T$ the point configuration  is randomly distributed as a Poisson point process of intensity $\theta>0$ (with respect to the natural length  measure on the skeleton of $\T$), then the resulting pruned random subforest  $\T^{\theta}$ has the same law as the random L\'evy forest associated with the branching mechanism 
\begin{equation}\label{psitheta}
  \psi_{\theta}(\lambda):= \psi(\lambda) + \theta \lambda \, ;
\end{equation}
see \cite{ADV} and Proposition \ref{markRK} below for rigorous statements in terms of  exploration and local time processes, respectively. 
In order to formulate a genealogical representation for process \eqref{flow lb} analogue to Theorem \ref{RKDL}, we will extend those ideas, by pruning the L\'evy forest  at variable rates. 
 
To that end, we need to first  formalize the notion of a Poissonian configuration of points on the product space $\T \times \R_+$,  with, as intensity, the product of  the respective length (Lebesgue) measures. 
The following notation will be used in the sequel:
\begin{itemize}
 \item   $ \M_f^0(\R_+) $  stands for  the space of compactly supported  Borel  measures on $\R_+$. For  $\mu \in  \M_f^0(\R_+) $, we set $H(\mu):=\sup \mbox{supp} (\mu)$, where  $\mbox{supp}(\mu)$  is the topological support of $\mu$ and  $ \sup \emptyset: = 0$.
 \item   $ \M_{at}(\R_+^2) $  denotes  the space of  atomic Borel measures  on $\R_+^2$ with unit mass atoms.   
 \item  ${\cal V}:=\big\{ (\mu, \eta)\in \M^0_f(\R_+)\times \M_{at}(\R^2_+): \,  \mbox{ supp } \mu=  [0, H(\mu)) $ and  $ \mbox{ supp } \eta\subseteq [0, H(\mu)) \times \R_+ \big\}$.
\end{itemize}

The object next  defined is an instance of the  snake processes introduced in \cite{DLG}. It extends, in a way,  a Poisson L\'evy-snake used in \cite{ADV} to prune a forest at constant rate, and it  is essentially a variant of the objects used  in \cite{AS,ADfrag} to define fragmentation processes by means of L\'evy trees. (See Section \ref{poissonsnake} below  for its construction  as a snake process and for the precise topology put on ${\cal V}$.)

\begin{definition} \label{def of 2d marked explo} 
We call exploration process with positive marks or simply  {\it marked exploration process} a  c\`adl\`ag strong Markov process $((\rho_s , \N_s)  : s \geq 0)$ with values in ${\cal V}$  such that
\begin{enumerate}
       \item $(\rho_s : s \geq 0)$ is the exploration process associated with the L\'evy process $X$;              \item conditionally on $(\rho_s : s \geq 0)$,  for each $s\geq  0$, $ \N_s $ is a Poisson point measure on $[0,H_s)\times \R_+ $   with  intensity the Lebesgue measure. Moreover,  for all  $0 \leq s \leq s'$,
       \begin{itemize}
       \item   $\N_{s'} (\diffd r, \diffd\nu) \I_{\{ r \leq H_{s,s'} \}} = \N_s ( \diffd r, \diffd \nu) \I_{\{ r \leq H_{s,s'} \}}$  a.s.     
        and\\
       \item  $\N_{s'} (\diffd r,  \diffd \nu) \I_{\{ r>H_{s,s'} \}}$ and $\N_s (\diffd r, \diffd \nu) \I_{\{ r > H_{s,s'} \}}$ are independent point processes.
       \end{itemize}
\end{enumerate}
\end{definition}    
The last two properties  in  point 2.\  naturally correspond to the random tree structure of $\T$  and are  classically referred to as the {\it snake property}. Thanks to them,   the Poisson snake $\N$ induces a point process in the obvious quotient space $\T \times \R_+$ of  $ \R_+ \times \R_+$.  Hence, the Poisson process $\N_s$ describes  the restriction of that point process  to  the unique path in $\T$   isometric to $[0,H_s]$  joining  the root and  the point  labeled   $s$.

 Note that given $\theta >0$, the point  process defined for  each $t\geq 0$  by 
\[
m_t^\theta(A) = \mathcal{N}_t(A\times [0,\theta]) \, \,  \forall  \mbox{ Borel set }A \subseteq [0,H_t),
\]   
 
  is  standard Poisson of rate $\theta$  on $ [0,H_t)$, conditionally on $\T$. The pair  $((\rho_t , m^\theta_t)  : t \geq 0)$  thus corresponds to the  exploration process ``marked  on the skeleton  at rate $\theta$'' of \cite{ADV}. According to that work, 
the ``pruned exploration process''  defined  by $  (\rho^\theta_t: t\geq 0) : = (\rho_{C^\theta_t}: t\geq 0) $, with  $C^\theta_t $ the  right-continuous inverse of  
\[
A^\theta_t : =   \int_0^t \I_{\{m^{\theta}_s  =0\}} \diffd s,
\]  
codes the genealogy of the population of individuals with no marked ancestors (cf. time changing by $C^\theta_t$ when exploring the tree amounts to skipping  individuals with an atom of $m^{\theta}$ in their lineages). It was then shown in   \cite[Theorem 0.3]{ADV} that $ (\rho_{C^\theta_t}: t\geq 0) $  has the same law as the exploration process of a CSBP with branching mechanism $\psi_{\theta}$ as in \eqref{psitheta}.  That result can be restated in a way that connects pruning, local times  and stochastic flows of CSBP,  which is  relevant to both motivate and prove our main results:

\begin{proposition}\label{markRK}
 Let $\theta>0$ be fixed and for each $ t,a \geq 0$ define 
 \[
 L_t^a(m^{\theta}):=  \int_0^t  \mathbf{1}_{\{m^\theta_s=0\}} \diffd L_s^a= \int_0^t  \mathbf{1}_{\{m^\theta_s([0,H_s))=0\}} \diffd L_s^a.
 \]
 Then, the process defined by $\displaystyle{\left( L_{T_x}^a (m^{\theta}) :  a \geq 0,x \geq 0\right)}$ with $T_x$ as in \eqref{Tx}, has the same law as the stochastic flow of CSBP 
 \begin{equation}\label{flow constant theta}
 \begin{split}
                    Y^{\theta}_{a}(x)=\ &  x - \alpha\int_{0}^{a} Y^{\theta}_{s}(x)\diffd s+\sigma\int_{0}^{a}\int_0^{ Y^{\theta}_{s-}(x)}W(\diffd s,\diffd u)\\
                     & + \int_{0}^{a}\int_{0}^{ Y^{\theta}_{s-}(x)}\int_{0}^{\infty}r\tilde{N}(\diffd s,\diffd \nu,\diffd r) -  \int_{0}^{a}  \theta   Y^{\theta}_s(x) \diffd s, \quad t\ge 0, x\ge 0.
                   \end{split}  
 \end{equation}
\end{proposition}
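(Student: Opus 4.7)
The plan is to combine the Abraham--Delmas--Voisin pruning result recalled just before the statement---identifying $(\rho^\theta_t)_{t\geq 0} := (\rho_{C^\theta_t})_{t\geq 0}$ as the exploration process of a $\psi_\theta$-CSBP---with the Ray--Knight Theorem~\ref{RKDL} applied to $\rho^\theta$ itself. The technical core is then to match local times and hitting times between the original and the pruned exploration.

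Denote by $H^\theta_t := H_{C^\theta_t}$ the pruned height process and by $L^{\theta,a}_t$ its local time at level $a$. Starting from the occupation-time approximation
\begin{equation*}
L^a_t = \lim_{\varepsilon \downarrow 0} \frac{1}{\varepsilon}\int_0^t \I_{\{a<H_s\leq a+\varepsilon\}}\diffd s,
\end{equation*}
and performing the change of variables $s=C^\theta_u$ (under which $\diffd s = \I_{\{m^\theta_u=0\}}\diffd u$), one is led to the identity
\begin{equation}\label{lteq}
L^{\theta,a}_t \;=\; \int_0^{C^\theta_t} \I_{\{m^\theta_u = 0\}}\,\diffd L^a_u \;=\; L^a_{C^\theta_t}(m^\theta).
\end{equation}
At level $a=0$ this is particularly transparent: the measure $\diffd L^0 = -\diffd I$ is supported on $\{H=0\}$, and since marks occur only on $[0,H_s)\times \R_+$, we have $m^\theta_u = 0$ throughout the support of $\diffd L^0$, whence $L^{\theta,0}_t = L^0_{C^\theta_t}$. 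Inverting and using that $A^\theta$ is strictly increasing at $T_x$ (because $H$, and hence $m^\theta$, vanish there) yields $T^\theta_x := \inf\{t: L^{\theta,0}_t = x\} = A^\theta_{T_x}$.

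Combining both identifications gives $L^{\theta,a}_{T^\theta_x} = L^a_{C^\theta_{A^\theta_{T_x}}}(m^\theta) = L^a_{T_x}(m^\theta)$ for every $a,x\geq 0$, and Theorem~\ref{RKDL}, applied to the exploration process $\rho^\theta$ of a $\psi_\theta$-CSBP, then identifies the process $(L^a_{T_x}(m^\theta))$ in law with the flow of $\psi_\theta$-CSBPs written in~\eqref{flow constant theta}. The main obstacle is the rigorous derivation of~\eqref{lteq}: $L^a$ is a singular random measure and $C^\theta$ has a countable dense set of jumps, so a naive change of variables is not available. The cleanest route is probably excursion-theoretic, using the stopping times $\tau^a_t$ of~\eqref{tauat} and the snake property of $\N$ to ensure that pruning above level $a$ proceeds independently on each sub-excursion of $\rho$ above $a$; this should allow an inductive (in $a$) identification of the local time of $\rho^\theta$ at level $a$ with the weighted local time $L^a(m^\theta)$.
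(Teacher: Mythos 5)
Your overall plan coincides with the paper's proof in Appendix~\ref{proofmarkRK}: invoke the Abraham--Delmas--Voisin theorem to identify $\rho^\theta:=\rho_{C^\theta_\cdot}$ as the exploration process of a $\psi_\theta$-CSBP, match the local times $\bar L^a$ of $\rho^\theta$ with the $m^\theta$-pruned local times $L^a(m^\theta)$ of $\rho$, match the level-$0$ inverse local times so that $C^\theta_{\bar T_x}$ can be replaced by $T_x$, and finish with Theorem~\ref{RKDL} applied to $\rho^\theta$. The identity you display in~\eqref{lteq} is exactly the paper's key identity.

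However, you explicitly leave the proof of~\eqref{lteq} open, calling it ``the main obstacle'' and sketching an excursion-theoretic induction in $a$ that you do not carry out; as written this is a genuine gap, not a proof. The paper closes it much more directly. From \eqref{aprox1} applied to $\rho^\theta$ and the change of variables $u=A^\theta_s$ (so $\diffd u = \I_{\{m^\theta_s=0\}}\diffd s$ — note the direction of your claimed substitution is reversed), one gets, in probability,
\begin{equation*}
\bar L^a_t \;=\; \lim_{\epsilon\to 0}\frac{1}{\epsilon}\int_0^{C^\theta_t}\I_{\{a<H_s\leq a+\epsilon\}}\,\I_{\{m^\theta_s=0\}}\,\diffd s .
\end{equation*}
On the other hand, \eqref{aprox1} applied to $\rho$ gives weak-in-probability convergence of $\epsilon^{-1}\I_{\{a<H_s\leq a+\epsilon\}}\diffd s$ to $\diffd L^a_s$ on $[0,C^\theta_t]$. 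The only delicate step is integrating the discontinuous indicator $\I_{\{m^\theta_s=0\}}$ against these approximating measures, and this is precisely what Lemma~\ref{total marks lsc} provides: $s\mapsto\I_{\{m^\theta_s([0,H_s))=0\}}$ is a.s. lower semi-continuous with at most countably many discontinuity points, hence $\diffd L^a$-a.e.\ continuous (the local-time measure being diffuse), so a Portmanteau argument yields $\bar L^a_t = L^a_{C^\theta_t}(m^\theta)$. This is the ingredient your proposal is missing; without Lemma~\ref{total marks lsc} (or some equivalent a.e.-continuity statement) the passage from the approximating occupation integrals to the pruned local time is unjustified. Once~\eqref{lteq} is in hand, the conclusion via the level-$0$ matching and Theorem~\ref{RKDL} goes through exactly as you indicate.
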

The proof of Proposition \ref{markRK} is given in  Appendix \ref{proofmarkRK}.
  In  words,   this result  states that pruning $\T$ at constant rate $\theta$ and measuring  the local times  $\displaystyle{\left( L_t^a (m^{\theta}) :  t\geq 0, a \geq 0\right)}$ in the obtained subtree, amounts to introducing at the level of stochastic flows an additional drift term that corresponds to a  constant competition mechanism $g(y)=\theta$.   This suggests us that, in order to obtain a pruned forest whose local times equal in law a flow of branching processes  with general  competition   $g$ (as in  \eqref{flow lb}),   one  should first be able to prune  $\T$ at a rate that can dynamically  depend on  the ``previous information'' in the exploration-time and height senses. 

\smallskip 

With this aim, denote by $ {\cal P}red(\N)$ (for predictable) the sigma-field generated by continuous  two-parameter processes, whose value at each point  $(t,h)$ is measurable with respect to the exploration process $\rho$ up to time $t$, and to atoms  of the Poisson snake  $\N$ below height $h$ up to time $t$ (see \eqref{Gtr} and Definition \ref{snakepredictable} in Section \ref{interprun} for the precise definitions).

\begin{definition}\label{adapintensity}
A real process $( \intens(t,h) : t\geq 0, h\geq 0)$ is called a height-time  adapted intensity process  or simply {\it adapted intensity process} if  the following properties holds:
\begin{enumerate}
 \item It  has a version which is  $ {\cal P}red(\N)$-measurable. 
 \item For each $t\ge0$, $h\mapsto \intens(t,h)$ is  a.s. locally integrable.  
 \item  For each pair $s, s' \ge 0$ one a.s. has  $\intens(s',h)  =\intens(s,h) $ for  $\diffd h$  a.e.    $h \leq H_{s,s'}$.
\end{enumerate} 
\end{definition}
Property  3.\ above will be refereed to as the  {\it semi-snake property of adapted intensity processes}. 
Using these objects and the Poisson snake,  we can  put marks on  the tree at height-time adapted rates as follows:  

\begin{definition}\label{def of prune with adapted process}
Let $((\rho_s, \mathcal{N}_s) :s\ge 0)$ be the marked exploration process and $\intens=(\intens(t,h): t\ge 0, h\ge0)$ be an adapted intensity process. 
The {\it  exploration process marked at adapted intensity $\intens$} is the process $((\rho_s , m^\intens_s)  : s \geq 0)$, with  $m_t^\intens \in \M_{at}(\R_+) $ the point process given for each $t\geq 0$  by 
\begin{equation}\label{markedintensity}
       m_t^\intens(A) = \mathcal{N}_t(\{  (h,\nu) : h\in A, \nu \le \intens(t,h) \})   \mbox{ for all Borels set } A \subseteq [0,H_t).
\end{equation}
\end{definition}

Last, in order to measure at each height the mass of individuals in the pruned forest  up to a given exploration time, and generalizing the notion in Proposition \ref{markRK}, we introduce:
 
\begin{definition}
  We call   local time process  pruned at rate  $\intens$, or  {\it $m^{\intens}$-pruned local time}, the   process  $(L_t^r(m^{\intens}):t\geq 0, r\geq 0)$ defined by
 \[
 L_t^r(m^{\intens}):=  \int_0^t  \mathbf{1}_{\{m^\intens_s=0\}} \diffd L_s^r= \int_0^t  \mathbf{1}_{\{m^\intens_s([0,H_s))=0\}} \diffd L_s^r \, , \quad r\geq 0, t\geq 0.
 \] 
\end{definition}  
                      
With these notions in hand, we now turn to the questions of what adapted intensity  process  we should look for,  and state our main results. 
           
\subsection{Main results}
 
A closer comparison of \eqref{flow constant theta} and \eqref{flow lb} indicates that, in order to obtain a subforest $\T^*$ encoding the genealogy of the process  \eqref{flow lb},  we should prune $\T$ at  each point  at rate equal to $g$ of the local time left of it  in $\T^*$. In terms of the objects we have just introduced, this  means  that the corresponding adapted intensity $\intens^*$  should solve,  in a certain sense, some fixed point equation. The next result gives a precise meaning to such an object and grants its existence:   

\begin{theorem}\label{main thm 1}
Let $g: [0,\infty) \to [0,\infty)$ be a competition mechanism and let  $F:\intens \mapsto F(\intens)$ denote the operator acting on adapted intensity processes by 
\[
F(\intens) =\intens' \, , \text{ with } \intens'(t,h) = g(L^h_t(m^{\intens})) \quad \text{for all  }(t,h)\in\R_+^2 \,.
\]
Then, $F(\intens)$ is also an adapted intensity process. 
Moreover, assume  the following condition  on $g$ holds: 
\begin{equation*}
 \mbox{{\bf (H)}}\quad \quad    g:\R_+\to \R_+   \mbox{  is non  decreasing  and  locally Lipschitz continuous. }  \hspace{4cm} \end{equation*} 
Then the operator $F$ has a fixed point $\intens^*(t,h)$, a.e unique with respect to  $\mathbb{P}(\diffd \omega)\otimes \diffd t \otimes \I_{[0,H_t(\omega))}(h) \diffd h$.
\end{theorem}

  Notice that under  {\bf (H)} (and  by definition of the competition mechanism $g$), the  function $G=\int_0^{\cdot} g(y)\diffd y$ in   \eqref{flow lb} is  non negative, non  decreasing, convex   and has a locally Lipschitz derivative.  \\
   The following theorem provides the genealogical representation of a flow of branching process with competition  we are looking for and is the main result of the paper (note that $T_x$ is the same as in  \eqref{Tx}).
   
\begin{theorem}\label{main thm 2}
Suppose that   {\bf (H)}   holds and let $\intens^*$ be the adapted intensity process given by  Theorem \ref{main thm 1}. 
Then, the process  $(L_{T_x}^a(m^{\intens^*}) : a\ge 0, x\ge 0)$ and the stochastic flow of branching processes with competition $   (Z_a(x) :  a\ge 0, x\ge 0)$  defined in \eqref{flow lb}  are equal in law.
\end{theorem}      
               
\subsection{Organization of the paper}

In Section {\bf \ref{prop flow}}, we prove Proposition \ref{flowLBP}, using variants of techniques introduced in \cite{DL} and some results  established therein. 
A Lamperti type representation for process \eqref{flowLBP} (for  fixed $v$) is also discussed. 

In Section {\bf \ref{explosnakeprun}} we will first settle  some technical issues required to prove Theorems  \ref{main thm 1} and  \ref{main thm 2}, concerning  in particular filtrations and measurability aspects, as well as  properties of the  operator $F$.  
We will then study the sequence of adapted intensities defined by $\vartheta^{(0)}\equiv 0$ and $\vartheta^{(n+1)}=F(\vartheta^{(n)})$, together with  the  pruned local times they  induce. 
Heuristically, noting that  $\vartheta^{(1)}$ yields a pruned local time process which is smaller than the (non pruned) one associated with $\intens^{(0)}$,  the fact that $g$ is nondecreasing implies  that  $\vartheta^{(2)}$  is smaller than $\vartheta^{(1)}$. Pruning according to $\vartheta^{(2)}$ then implies ``pruning less'' than when using $\vartheta^{(1)}$ and this yields that $\vartheta^{(2)} \le \vartheta^{(3)} \le \vartheta^{(1)}.$ 
By iterating this reasoning one gets  that $\vartheta^{(2)} \le \vartheta^{(4)} \le \ldots \le \vartheta^{(5)} \le \vartheta^{(3)} \le \vartheta^{(1)} $. 
The proof of Theorem  \ref{main thm 1} makes  this remark rigorous  and is achieved showing that the sequences $(\vartheta^{(2n)})$ and $(\vartheta^{(2n+1)})$ converge to the same limit, which is a fixed point of $F.$ 

In Section {\bf    \ref{secRKT}}  we will prove Theorem \ref{main thm 2} by  putting in place a careful approximation argument inspired by Proposition \ref{markRK}. 
More precisely, we  will first construct an approximation of the stochastic flow of  branching processes with competition \eqref{flow lb}  by means of  a flow of    CSBP with ``piecewise frozen'' killing rate,  specified by constant negative drifts  in a rectangular bi-dimensional grid of  constant  time-step  length  and  initial population size. 
We  will then construct  an approximation of the  pruned local time process given by Theorem  \ref{main thm 1}, pruning the original  forest at constant rates inside blocks of some height-local time grid. The blocks of this second  grid are defined in such a way that they are in  correspondence, {\it via}  Proposition \ref{markRK}, with the rectangles of the stochastic flow grid. 
The  proof of Theorem \ref{main thm 2} will then be deduced, noting  first that  these two grid approximations globally have  (as two parameter processes  defined in different spaces) the same law, and then showing that  when the  rectangle sizes go to $0$, each of them does actually converge (in a strong enough way) to the respective processes  they intend to  approximate.  
The technical proofs of these facts are differed to the last two sections.  
    
In Section  {\bf    \ref{consitstochflow}} we prove the convergence  of the  stochastic flow grid-approximation,  using stochastic calculus and some technical ideas  adapted from \cite{DL}. 
In Section   {\bf    \ref{ConsistLawGridTree}}, combining Proposition \ref{markRK} with global properties of stochastic flows and of local times, we  first identify the law of the local time  grid approximation with that of the stochastic flow grid  approximation. 
Then,  we prove the convergence of the former  to the pruned local time process constructed in Theorem \ref{main thm 1}, using mainly  techniques from snake-excursion theory. 
Finally, some technical or auxiliary results will  be proved in the Appendix. 
 
\smallskip 

Before delving in the proofs, we devote the next subsection to a  discussion of  our model and results and of their relations to related works in the literature.

\subsection{Discussion of related results}

In \cite{PWL}, (see also  \cite{PW1,PW}) a Ray-Knight representation theorem for the logistic Feller diffusion was obtained. 
These authors constructed a process $H$ which can roughly be described as a reflected Brownian motion with a negative drift, proportional to the amount of local time  to the left and  at the same height of the current state. More precisely, $H$ is defined as the (unique in law) solution of the SDE 
\begin{equation}\label{RKLPW}
         H_s = \frac{2}{\sigma}B_s + \frac{1}{2}L_s^0(H) - \frac{2\alpha}{\sigma}s - c \int_0^s L_r^{H_r}(H)\diffd r, \quad s \geq 0,
\end{equation}  
where $\alpha<0$, $B$ is a standard Brownian motion and  $L_s^a(H)$  the local time accumulated by $H$ at level $a\geq 0$ up to time $s\geq 0$. 
The main result is that the local time process of $H$, i.e. the process $((\sigma^2/4)L_{T_x}^a(H): a \geq 0)$, is a weak solution of \eqref{EQDL}  (here, $T_x$ is the stopping time   $ T_x = \inf\{s > 0:  L_s^0(H)  > x\}$ which is unchanged if $H$ is replaced by $B$). 
The downward drift of $H$ can thus be understood as a rate of killing which increases the farther to the right one looks, so that excursions  appearing  later with respect to  exploration time  tend to be smaller. 
Individuals are  thus represented as being  arranged ``from left to right'' and as though interacting  through ``pairwise fights'' which  are always won by the individual ``to the left'' (hence  lethal for the individual ``to the right''). 
Under another (more pathwise)  guise, we adopt the same ``trees under attack''  picture here using pruning. There is nevertheless no obvious  extension to the present  setting, of arguments and techniques introduced in  \cite{PWL} which strongly  relied on   the  Brownian motion based description   of the corresponding  height process, and  on the possibility of building  tractable  discrete weak approximations of it. 

 We must  also point out  that  our  arguments do not cover the case of  supercritical branching mechanism  treated in \cite{PWL,PW1,PW}  ($\alpha<0$,  in our notation), nor the general type of interactions considered  in \cite{BaP} and \cite{Px} (conditions $f\in C^1$ and (H1') in the latter would correspond  here  to  assuming $g$  only to be continuous and bounded from below).

\smallskip

In \cite{BDMZ}, Berestycki et al.\ study measure valued branching processes {\it with interactive immigration}, as  introduced and discussed by Li in \cite{BookLi}, in the spirit of the systems of stochastic differential equations (1.5) of  \cite{DL}. 
The main result in  \cite{BDMZ} is a Ray-Knight representation for those branching processes with immigration. More precisely, suppose that we want a representation of the solutions of 
\begin{equation}\label{(2.7)} 
	\begin{cases} 
		X_t(v) = v +   \int_{0}^{t}\int_{0}^{X_{s-}(v)}\int_{0}^{\infty}r \tilde{N}(\diffd s,\diffd \nu,\diffd r) + v \int_0^t  g(X_{s}(1))\, \diffd s, \\	
		v \in [0,1], \ t\geq 0,
	\end{cases}
\end{equation}
where $g$ is now an {\it immigration} mechanism, supposed to be non-negative, monotone non-decreasing, continuous and locally Lipschitz continuous away from zero. 
Observe that this is very similar to equation \eqref{flow lb} with $\sigma=0$ and a positive drift term instead of a negative one, which is however also dependent on the current state of the population.
Let $(s_i, u_i, a_i, \exc_i)$ be a Poisson point process with  intensity measure $\diffd s \otimes \diffd u \otimes \diffd a \otimes \mathbf{N}(\diffd(\exc))$ (with $\mathbf{N}$ denoting here  the excursion measure of the height process $H$) and define $w_t^i$ to be the total local time at level $t$ of the excursion $\exc_i$. 
It is  shown in \cite{BDMZ} that, for all $v\in[0,1]$, there is a unique c\`adl\`ag process $(Z_t(v), t\geq 0)$ satisfying $\P$-a.s.
\begin{equation}\label{Z}
 \begin{cases} 
  Z_t(v) = \sum_{s_i>0} w_{t}^i \,\un{(a_i \le v)} \un{(u_i \le g(Z_{s_i-}(1)))},\qquad t> 0,\\
  Z_0(v)=v
 \end{cases}
\end{equation}
and $Z$ is a weak solution of \eqref{(2.7)}.\footnote{The full result is actually that the Poisson point process can be constructed to obtain a strong solution with respect to a given noise to equation  \eqref{(2.7)}.}
The existence and uniqueness of the process $Z$ is analogue to our Theorem \ref{main thm 1} (and is also proved by a  fixed point argument) while the fact that $Z$ is a solution of \eqref{(2.7)} is analogue to our Theorem \ref{main thm 2}. 
Let us underline, however, that in \cite{BDMZ} the positive drift represents an interactive {\it immigration} while here we have a negative drift corresponding to a {\it competition}. 
This is reflected of course in the fact that instead of {\it adding} excursions as in \cite{BDMZ} we must here {\it prune} the real forest coding the evolution of the population. 
In spite of the similarities,  adapting  techniques in \cite{BDMZ} to the present setting does not seem to be simple.
 


      
\section{Continuous state branching processes with competition  }\label{prop flow} 
   
Recall that  $G(x)=\int_0^x g(y) \diffd y$ denotes the primitive of the competition mechanism $g$. 
In this section, given a fixed function $z\in \mathbb{D}(\R_+,\R_+)$,  we  will moreover consider  a time inhomogeneous competition mechanism $g_{z}:\R_+\times \R_+\to \R_+$ defined as $g_z(t,y)=g(z_t+ y)$. 
We denote  its  primitive in $y$ by $G_z(t,x):=\int_0^x g(z_t+y) \diffd y$, which is locally Lipschitz in $x$, locally uniformly in $t$.  

Also, we will repeatedly use the following analogue  to  L\'evy's characterization of $n$-dimensional Brownian motion,  whose simple proof we give in Appendix \ref{proofLemmaweirdwnandpp} for completeness. 
 \begin{lemma}\label{weirdwnandpp}
Given $({\cal S}_t)_{t\geq 0}$  a filtration  in some probability space and $n,m\in \NN$,  let $(M^i_t: t	\geq 0)_{ i\leq n}$ and $(N^k_t: t	\geq 0)_{ k\leq m}$   respectively be  continuous $({\cal S}_t)$-local martingales and   $({\cal S}_t)$-adapted counting processes. Assume that, for some real numbers  $a_i>0,i\leq n,$ and $b_k>0, k\leq m$,  one has: 
\begin{itemize}
\item[i)]  for every $i,j\leq n$  and all $t\geq 0$,   $[ M^i,M^j]_t=a_i \delta_{ij} t $ (with $\delta_{ij} $ the Kronecker delta); and
\item[ii)]  for every $k,l\leq m$   the processes $(N^k_t)_{t\geq 0}$ and $(N^l_t)_{t\geq 0}$ have  no simultaneous jumps, and $(N^k_t)_{t\geq 0}$  has the predictable compensator $b_k t$. 
\end{itemize}
Then,  $(a^{-1/2}_i M^i_t: t\geq 0)_{ i\leq n}$ are standard $({\cal S}_t)$-Brownian motions,   $(N^k_t: t\geq 0)_{ k\leq m}$ are  $({\cal S}_t)$- Poisson processes with respective  parameters $(b_k)_{ k\leq m}$ and all these processes are independent. 
  \end{lemma}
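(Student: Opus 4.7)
My plan is to prove the lemma via a single joint exponential-martingale argument, in the spirit of L\'evy's characterization of multidimensional Brownian motion combined with Watanabe's characterization of Poisson processes. Given any $\theta=(\theta_1,\ldots,\theta_n)\in\R^n$ and $\phi=(\phi_1,\ldots,\phi_m)\in\R^m$, I would consider the complex semimartingale
$$Z_t := \exp\!\left( i\sum_{j=1}^n \theta_j M^j_t + i\sum_{k=1}^m \phi_k N^k_t + \tfrac12 \sum_{j=1}^n a_j\theta_j^2\, t - \sum_{k=1}^m b_k(e^{i\phi_k}-1)\, t\right),$$
and show that $Z$ is an $(\mathcal{S}_t)$-local martingale. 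Once this is done, taking conditional expectations over $[s,t]$ will produce a product formula for the joint conditional characteristic function of the increments $(M^j_t-M^j_s, N^k_t-N^k_s)$ whose deterministic right-hand side simultaneously yields mutual independence and the correct marginal laws.

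To establish the local-martingale property I would apply It\^o's formula for c\`adl\`ag semimartingales. The continuous part contributes $Z_{t-}\sum_j i\theta_j\, dM^j_t$ together with the quadratic-variation term $-\tfrac12 Z_{t-}\sum_{j,l}\theta_j\theta_l\, d\langle M^j,M^l\rangle_t$; by hypothesis (i) the bracket is diagonal and equals $a_j\delta_{jl}\, dt$, so this precisely cancels the deterministic drift $\tfrac12 \sum_j a_j\theta_j^2\, t$ built into $Z$. For the jump part, the continuity of each $M^j$ combined with hypothesis (ii) (no common jumps among the $N^k$) means that at any jump time exactly one $N^k$ jumps by one, with multiplicative increment $Z_{t}/Z_{t-}=e^{i\phi_k}$; summing these jumps and subtracting the compensator $b_k\, dt$ of $N^k$ cancels the $-b_k(e^{i\phi_k}-1)\, t$ term. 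The net equation reads $dZ_t = Z_{t-}\bigl(\sum_j i\theta_j\, dM^j_t + \sum_k(e^{i\phi_k}-1)(dN^k_t - b_k\, dt)\bigr)$, so $Z$ is a local martingale; since $|Z_t|$ is bounded on each $[0,T]$ by a deterministic constant, a routine localization upgrades it to a true martingale.

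Taking $\E[\,Z_t\mid \mathcal{S}_s\,]=Z_s$ and rearranging then gives
$$\E\!\left[\exp\!\left(i{\textstyle\sum_j}\theta_j(M^j_t-M^j_s)+i{\textstyle\sum_k}\phi_k(N^k_t-N^k_s)\right)\Big|\,\mathcal{S}_s\right] = \prod_{j=1}^n e^{-\tfrac12 a_j\theta_j^2(t-s)} \prod_{k=1}^m e^{b_k(e^{i\phi_k}-1)(t-s)},$$
whose factored and deterministic right-hand side is the product of centered Gaussian characteristic functions of variances $a_j(t-s)$ and of Poisson characteristic functions of parameters $b_k(t-s)$. This immediately yields that the increments are independent of $\mathcal{S}_s$, that the $n+m$ coordinates are mutually independent, and that they have the claimed marginal distributions; together with the continuity of each $M^j$ this identifies $(a_j^{-1/2} M^j)$ as independent standard $(\mathcal{S}_t)$-Brownian motions and $(N^k)$ as independent standard $(\mathcal{S}_t)$-Poisson processes of rates $b_k$, mutually independent. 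The only step requiring care is the jump bookkeeping in the middle paragraph: the no-simultaneous-jumps assumption is exactly what prevents cross-terms in the compensation and allows the jump contribution to be written as a sum over $k$ of single-index integrals. Localization is otherwise standard and presents no obstacle.
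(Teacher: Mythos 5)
Your proof is correct and complete: the joint exponential martingale $Z_t$ is the standard device that simultaneously encodes L\'evy's characterization (for the $M^j$) and Watanabe's characterization (for the $N^k$), and your It\^o bookkeeping — in particular the observation that condition (ii), no simultaneous jumps plus unit jumps, reduces the jump sum to $\sum_k Z_{s-}(e^{i\phi_k}-1)\,dN^k_s$ with no cross terms — is exactly the point that makes the compensation work. Note that the paper explicitly states that the proof of this lemma is omitted, so there is no argument in the paper to compare against; your approach is the one the authors were almost certainly assuming the reader would supply.
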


\subsection{Basic properties of the stochastic flow}

\begin{proof}[Proof of  Proposition \ref{flowLBP} ]  

We first prove parts  i) and iii), leaving  ii) and  iv)  for the end of the proof.

i)   The tuple of functions $(b, \sigma, g_0, g_1 )$ defined by
 \begin{itemize}
      \item  $x \mapsto b(x):= - \alpha x - G(x)$ ;
      \item $(x, u) \mapsto \sigma(x, u):=\sigma\I_{\{u \leq x\}}$;
      \item $(x, \nu, r) \mapsto g_0 (x, \nu,r)=g_1 (x, \nu,r):= r\I_{\{\nu \leq x\}}$,
 \end{itemize}
 are admissible parameters in the sense of conditions $(2.a,b,c,e)$ in \cite[Section 2]{DL}. 
 Then, \cite[Theorem 2.5]{DL} readily provides for each $v \geq 0 $ the existence of a unique strong solution to \eqref{flow lb} and proves i).

iii) Observe first that for each $v \geq 0 $ and each $z\in \mathbb{D}(\R_+,\R_+)$  the same arguments of \cite[Theorem 2.5]{DL} can be used to prove strong existence and  pathwise uniqueness for the  stochastic differential equation
 \begin{equation}\label{flow lb nonhomog}
  \begin{split}
   Z^z _{t}(v)=\ &  v- z_0 - \alpha\int_{0}^{t}Z^z_{s}(v)\diffd s+\sigma\int_{0}^{t}\int_0^{Z^z_{s-}(v)}W(\diffd s,\diffd u)\\
                 & + \int_{0}^{t}\int_{0}^{Z^z_{s-}(v)}\int_{0}^{\infty}r\tilde{N}(\diffd s,\diffd \nu,\diffd r) - \int_{0}^{t} G_z(s,Z^z_s(v))    \diffd s,  \quad   t\ge 0. \\
  \end{split}
 \end{equation}
In particular, the solution $  Z^z _{t}(v)$ is adapted to the filtration generated by $W$ and $N$. 
We claim  that,  for each  fixed   $v \geq u \geq 0$  the process $\Upsilon_t := Z_t(v) - Z_t(u)$,  $t \geq 0$ satisfies a similar equation,  but with a randomized $z$. Indeed,  from \eqref{flow lb}  we have
 \begin{equation}\label{upsilon}
  \begin{split}   
    \Upsilon_t =\ & v-u - \alpha \int_0^t \Upsilon_s \diffd s + \sigma \int_0^t\int_0^{\Upsilon_{s-}} W' (\diffd s,\diffd w)  \\ 
    &     +  \int_0^t\int_0^{\Upsilon_{s-}}\int_0^{\infty} r\tilde{N}' (\diffd s,\diffd \nu, \diffd r) 
                    - \int_0^t G_{Z_{\cdot} (u)}( s,\Upsilon_s) \diffd s, \quad   t\ge 0 \, , \\
  \end{split}
 \end{equation} where now 
 \[
 W' (\diffd s,\diffd w) = W(\diffd s, \diffd w + Z_{s-}(u) )
 \] 
 is an orthogonal martingale measure (in the sense of Walsh \cite{Walsh})  and 
 \[
 N' (\diffd s,\diffd \nu, \diffd r)= N(\diffd s,\diffd \nu + Z_{s-}(u), \diffd r)
 \]
is a random point measure. Using standard properties of integration with respect to $W$ and $N$,  we can  then check that for  arbitrary  Borel sets $A_1,\dots A_n$ in $\R_+$ and $B_1,\dots,B_m$ in $\R_+^2$, disjoint and  of finite Lebesgue measure,   the processes  $M^i_t:=W'([0,t]\times A_i)$  and $N^k_t:=N'([0,t]\times B_k)$ satisfy the  assumptions of Lemma \ref{weirdwnandpp} with the filtration    given by $ {\cal S}_t:=\sigma\left( W([0,s],\diffd w), N([0,s],\diffd r,\diffd \nu),  s\leq t \right)$ and with constants  $a_i:=\int_{A_i}\diffd w  <\infty $ and  $b_k=\int\int_{B_k} \Pi(r)\diffd r \diffd \nu<\infty$. This implies that  $W'$ and $N'$ are respectively  a  Gaussian white noise process with intensity $\diffd s \otimes \diffd w$ and  a Poisson random measure with intensity $\diffd s \otimes \diffd \nu \otimes \Pi(\diffd r)$, both with respect to $ ({\cal S}_t)_{t\geq 0}$, and independent of each other. 

\smallskip

  In order to prove iii),  it is thus enough to show that the pair $(W',N')$ is independent from  ${\cal Z}^u:=\sigma(\left(  Z_t(x) : t \geq 0,0\leq  x\leq u \right))$ (indeed, we can then apply  pathwise uniqueness for  \eqref{flow lb nonhomog} conditionally on ${\cal Z}^u$ to get that $\Upsilon$ is a measurable function of $(W',N',Z(u))$, and immediately  deduce the desired property). 
  To that end, we enlarge the probability space with an independent  pair $(\hat{W},\hat{N})$  equal in law to $(W,N)$ and  define,  given $0\leq u_1\leq \dots\leq u_p\leq u$, the  processes
\[
\overline{W}(\diffd s, \diffd \nu): = \I_{[0,\tilde{Z}_{s-}]}(\nu) W(\diffd s, \diffd \nu)+\I_{(\tilde{Z}_{s-},\infty)}(\nu) \hat{W}(\diffd s, \diffd \nu - \tilde{Z}_{s-})\, \mbox{ and }$$
$$\overline{N}(\diffd s, \diffd \nu, \diffd r): = \I_{[0,\tilde{Z}_{s-}]}(\nu) N(\diffd s, \diffd \nu, \diffd r)+\I_{(\tilde{Z}_{s-},\infty)}(\nu) \hat{N}(\diffd s, \diffd \nu - \tilde{Z}_{s-}, \diffd r)\, 
\]
where $\tilde{Z}_s=\max\{Z_{s}(u_l): l=1,...,p\}$. Call   $({\cal S}^*_t)$ the   filtration generated by $(W,\hat{W},N,\hat{N})$ and, given  families of sets $(\overline{A}_i)_{i\leq n'}$ and  $(\overline{B}_k)_{k\leq m'}$ with similar properties as  $(A_i)_{i\leq n}$ and $(B_k)_{k\leq m}$ considered above, define   $\overline{M}^i_t:=\overline{W}([0,t]\times \overline{A}_i)$  and  $\overline{N}^i_t:=\overline{N}([0,t]\times \overline{B}_i)$. Thanks to Lemma   \ref{weirdwnandpp}  now applied to the families $\left((M^i)_{i\leq n}, (\overline{M}^i)_{i\leq n'}\right)$ and $\left((N^k)_{k\leq m}, (\overline{N}^k)_{k\leq m'}\right)$ and the filtration  $({\cal S}_t^*)$,  we readily get that the process $(\overline{W},\overline{N})$ has the same law as $(W,N)$  and  moreover, that $(W',N')$ and $(\overline{W},\overline{N})$ are independent.
Since the processes $Z(u_l),  l=1,...,p$  are adapted to $({\cal S}_t^*)$ and  they also  solve  a system of  SDEs as in \eqref{flow lb} but driven by  $(\overline{W},\overline{N})$, strong existence and pathwise uniqueness for such  equations imply they are measurable functions of $(\overline{W},\overline{N})$. Hence, their independence from  $(W',N')$ follows. 


\smallskip 

 ii) 
 For each  $t \geq 0$, the fact that  $v \mapsto Z_t(v)$ is increasing can be deduced from the comparison property stated in \cite[Theorem 2.3]{DL}. 
 Moreover, it is easy to show using similar arguments as in the proof of \cite[Proposition 3.4]{DL}, that there is a locally bounded non-negative function $t \mapsto  C(t)$ on $[0, \infty)$ such  that
 \begin{equation}\label{boundcadlagmodif}
  \E \left\{  \sup\limits_{0 \leq s \leq t} |Z_s (v) - Z_s(u) | \right\} 
  \leq C(t)\left\{ (v - u) +  \sqrt{v -u} \right\}
 \end{equation}
 for $v \geq u \geq 0$. 
 Alternatively, one can use a simple extension of  \cite[Theorem 2.3]{DL}  to stochastic flows with  time inhomogeneous drift terms in order to show that, for each $v\geq 0$, the solution $(Z^z_t(v):t\geq 0)$  of \eqref{flow lb nonhomog} when $z_0=u$ is a.s.\ dominated on $[0,\infty)$  by  the solution $(Y_t(v-u):t\geq 0)$ of \eqref{eqflow csbp}, then deduce by conditioning first on  $\left(  Z_t(x) : t \geq 0,0\leq  x\leq u \right)$ (and using part iii)) that 
 \[  
 \E \left\{  \sup\limits_{0 \leq s \leq t} |Z_s (v) - Z_s(u) | \right\} \leq   \E \left\{  \sup\limits_{0 \leq s \leq t} |Y'_s (v-u) | \right\}, 
 \] 
 with $Y'$ defined in terms of $(W',N')$ in the same way as $Y$ was in terms of $(W,N)$, 
 and   conclude  the bound \eqref{boundcadlagmodif} from  \cite[Proposition 3.4]{DL}. 
 Thanks to \eqref{boundcadlagmodif}  and  \cite[Lemma 3.5]{DL}, we  can   follow the arguments  in the proof of  \cite[Theorem 3.6]{DL}  to deduce that the path-valued process $(Z (v) : v \geq 0)$ has a c\`adl\`ag modification, using when necessary  the Markov property stated  in  iii)  instead  of the independent increments property of  the process $(Y (v) : v \geq 0)$.
 
  iv) This part follows by similar arguments as in Theorem 3.7 of \cite{DL}.  
\end{proof}

\medskip

\begin{remark}  Recall that in \cite{L1}, logistic branching processes were constructed by a  Lamperti time-change of certain  L\'evy-driven Ornstein-Ulhenbeck processes. 
A similar approach also works here: for a fixed initial population $v>0$ it is possible to construct  the  process $Z_{\cdot}(v)$    by  a Lamperti transform of the solution $U$ to the  L\'evy-driven SDE
\begin{equation}\label{OHish}
 \diffd U_t= \diffd X_t  - \frac{G(|U_t| )}{U_t} \diffd t , \quad t\geq 0, \quad U_0=v. 
\end{equation}
A precise statement  (not required in the sequel)  and a sketch of  its proof are given in Appendix \ref{Lampertitransform}.
\end{remark}


\section{Competitive pruning}\label{explosnakeprun}

\subsection{A Poisson L\'evy-snake}\label{poissonsnake}


In order to  settle some basic properties  of the exploration process marked at height-time adapted rate $((\rho_s , m^\intens_s)  : s \geq 0)$ (cf. Definition\  \ref{def of prune with adapted process}), we need to first   make precise the way the marked exploration process $((\rho_s , \N_s)  : s \geq 0)$ of Definition  \ref{def of 2d marked explo} enters into the general framework of  L\'evy-snake processes constructed in  \cite[Chapter 4]{DLG}. 

\smallskip 

Recall that, given (the law of) a Markov process $(\xi_r)_{r\geq 0}$ with c\`adl\`ag paths and values in a Polish space $E$, which is furthermore continuous in probability, the snake with path-space component $\xi$ is a Markov process $((\rho_t, \xi^{(t)}):t\geq 0)$ with $\rho$  the exploration process  and such that,  conditionally on $\rho$,  for each $t\geq 0, \, \xi^{(t)}=(\xi^{(t)}_r ,r\le H_t)$ is a path of $\xi$ killed at $H_t$ and  for $t<s$, the  Markovian paths $\xi^{(t)}$ and $\xi^{(s)}$  are related by the snake property.  That is,   $\xi^{(t)}$ and $\xi^{(s)}$ are a.s. equal on $[0,H_{t,s}]$  and,   conditionally on  $\rho$ and one their  value  at time $H_{t,s}$,  their evolutions thereafter are independent
 (see  \cite[Proposition 4.1.1]{DLG}). 

 \smallskip 
 
For $j=1,2$, let us  respectively denote  by  $\M(\R^j_+)$ and  $\M_{at}(\R_+^j)$ the space of Borel measures in $\R_+^j$ and the corresponding subspace of atomic Borel measures with unit mass atoms. 
We endow the space  $\M(\R_+)$  with a complete metric inducing the topology of vague convergence (equivalent to weak convergence of the  restrictions to  every compact subset of $\R_+$). 
Given 
$\pi$ a Poisson point measure on $\R_+^2$ with intensity $\diffd x \otimes \diffd y$, seen as a random  element of $\M_{at}(\R_+^2)$, we  consider the  increasing $\M_{at}(\R_+)$-valued additive Markov process  $(\xi_r)_{r\geq 0}$  (issued from $0$) defined by 
\[
\xi_r(\cdot) = \pi ([0,r]\times \cdot), \, r\geq 0.
\]
Then,  $\xi$ clearly has the above dicussed properties and we can therefore construct the associated L\'evy snake $((\rho_t, \xi^{(t)}):t\geq 0)$.  The  marked exploration process $((\rho_t , \N_t)  : t \geq 0)$ is then defined setting for each  $t\geq 0$: 
\[
\N_t((a,b] \times A) = \xi^{(t)}_b(A) - \xi^{(t)}_a(A), \forall \, 0\leq a< b <H_t \mbox{ and } A\subseteq \R_+  \mbox{ Borel} 
\]
and  $\N_t(\{0\}\times \cdot)=0$. This uniquely determines  a random element or point process $\N_t\in \M_{at}(\R_+^2)$. It is  easy to see that  $((\rho_t,\N_t):t\geq 0)$ thus defined fulfills the distributional conditions   in  Definition  \ref{def of 2d marked explo}. 

\smallskip

The space ${\cal V}$   introduced in Section \ref{Pruning of Levy trees} is given a topology as follows. 
The set  $ \M^0_f(\R_+)$ is seen as a subset of the space $ \M_f(\R_+)$ of finite Borel measures, endowed with the weak topology and  an associated complete distance $D$. 
Consistently with the previous construction,  given a  pair $(\mu,\eta) \in {\cal V}$ we   identify $\eta$ with the  increasing and  killed  c\`adl\`ag  path $[0,H(\mu))\to \M(\R_+)$ given by 
\[
 r \mapsto  \eta([0,r]\times \cdot \, ).
\]
 Finally, as in  \cite[Section  4.1]{DLG},  we endow the space $ {\cal V}$  with the complete distance
\[
    d \left( (\mu, \eta), (\mu' , \eta' )\right) = D(\mu,\mu') + \int_0^{H(\mu) \wedge H(\mu')}\left( d_u (\eta_{(u)} , \eta'_{(u)} )\wedge  1  \right)  \diffd u + |H(\mu) - H(\mu')|\, +  d_0 (\eta_{(0)} , \eta'_{(0)} ), 
\]
for  all  $(\mu,\eta), (\mu' , \eta' ) \in {\cal V}$, where for each $u\in [0,H(\mu)) $, $d_u$ is  the Skorohod metric on  $\mathbb{D}([0, u],\M(\R_+))$ and $\eta_{(u)}$ is the restriction  to $[0,u]$  of the increasing path associated with $\eta$   \footnote{Notice that $d_0$ corresponds to the metric on $\M(\R_+)$.}.  The fact that  $((\rho_s , \N_s)  : s \geq 0)$  is a c\`adl\`ag  strong    Markov process  in  ${\cal V}$  is then granted by the general results in \cite[Chapter 4]{DLG}. 

\subsection{Marking at height-time adapted rates}\label{interprun}

In all the sequel, we will write $(\F_t)_{t \geq 0}$ for  the right continuous complete  filtration  generated by $(\rho_t , \N_t ) : t \geq 0)$,  and $(\F_t^{\rho})_{t \geq 0}$ for  the  one generated by  $(\rho_t   : t \geq 0)$.     
For each $t \geq 0$, we moreover  introduce $(\gG^{(t)}_r)_{r\geq 0}$  the right continuous completion of  the filtration given by 
\begin{equation}\label{Gtr}
        \sigma \left( \F_t^{\rho}, \left\{ \N_s|_{[0,r]\times \R_+}, s\leq t\right\}\right),\quad r\geq 0. 
\end{equation}
In particular, for each $r,t\ge 0$ we have
\[
\F_t^{\rho} \subseteq \gG^{(t)}_r \subseteq \F_t.
\]

\begin{remark}
Heuristically, $\gG^{(t)}_r$ contains all the information to the left of $t$ for $\rho $ and to the  left of $t$ and below $r$ for $\mathcal{N}.$   Notice that  conditionally on $ \gG_0^{(t)}=\F_t^{\rho}$, $\N_t$ is a Poisson process in the filtration $( \sigma \left( \F_t^{\rho}, \left\{ \N_t|_{[0,r]\times \R_+} \right\}\right))_{  r\geq 0}$. Hence,    $(\gG^{(t)}_r)_{r\geq 0}$  is obtained by progressively enlarging that  filtration with   the sigma-fields 
\[
\sigma \left( \F_t^{\rho}, \left\{ \N_s |_{[H_{s,t},r]\times \R_+} \I_{\{ H_{s,t}<r\}}: s\leq t\right\}\right) ,\quad r\geq 0
\]
which are  independent from it conditionally on  $ \gG_0^{(t)}$, by   the snake property of $((\rho_s , \N_s)  : s \geq 0)$ (cf. they contain  information of marks which are not in the lineage of $t$).   
\end{remark}

 The following property is then easily deduced:

\begin{lemma}\label{NPoissG} 
For each $t\geq 0$, conditionally on $ \gG_0^{(t)}$  the point process  $\N_t$  on $\R_+^2$ is  Poisson   of intensity $\I_{ [0,H_t)}(r)\diffd r \otimes \diffd \nu$, with respect to the filtration $(\gG^{(t)}_r)_{r\geq 0}$.
\end{lemma}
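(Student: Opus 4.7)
\textbf{Proof plan for Lemma \ref{NPoissG}.} By point 2 of Definition \ref{def of 2d marked explo}, conditionally on $\F_t^{\rho}=\gG_0^{(t)}$, the random measure $\N_t$ is a Poisson point process on $[0,H_t)\times\R_+$ with intensity $\diffd r\otimes \diffd \nu$. The content of the lemma is therefore to \emph{upgrade} this Poisson property from the static conditioning on $\gG_0^{(t)}$ to the dynamical one provided by the progressively enlarged filtration $(\gG^{(t)}_r)_{r\ge 0}$. The strategy is to check, for each $0\le r<r'$, that conditionally on $\gG_0^{(t)}$ the increment $\N_t|_{(r,r']\times\R_+}$ is independent of $\gG^{(t)}_r$ and has the conditional law of a Poisson point process on $(r\wedge H_t, r'\wedge H_t]\times\R_+$ of intensity $\diffd u\otimes \diffd\nu$; together with the fact that $\N_t$ is a.s.\ supported in $[0,H_t)\times\R_+$, this yields the statement.

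The independence part is the core of the argument, and it is extracted directly from the snake property (Definition \ref{def of 2d marked explo}, point 2). Indeed, the $\sigma$-field $\gG^{(t)}_r$ is, by definition \eqref{Gtr}, generated by $\F_t^{\rho}$ together with the restrictions $\N_s|_{[0,r]\times\R_+}$ for $s\le t$. For such $s$, the snake property allows us to split
\begin{equation*}
\N_s|_{[0,r]\times\R_+}\;=\;\N_s|_{[0,\,H_{s,t}\wedge r]\times\R_+}\;+\;\N_s|_{(H_{s,t}\wedge r,\,r]\times\R_+}.
\end{equation*}
The first piece coincides a.s.\ with $\N_t|_{[0,H_{s,t}\wedge r]\times\R_+}$, hence is $\sigma(\gG_0^{(t)},\N_t|_{[0,r]\times\R_+})$-measurable. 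The second piece is nonzero only when $H_{s,t}<r$; in that case, again by the snake property, it is (conditionally on $\F_t^{\rho}$, which carries $H_s$, $H_t$ and $H_{s,t}$) a measurable functional of $\N_s|_{(H_{s,t},\infty)\times\R_+}$, a point process which is conditionally independent of $\N_t|_{(H_{s,t},\infty)\times\R_+}$, and therefore independent of $\N_t|_{(r,\infty)\times\R_+}$. Passing to the $\sigma$-fields generated by countable families of such restrictions (and taking right-continuous completions, which is harmless because all Poisson laws involved are continuous in their parameters), we conclude that
\begin{equation*}
\gG^{(t)}_r\;\subseteq\;\sigma\bigl(\gG_0^{(t)},\,\N_t|_{[0,r]\times\R_+}\bigr)\vee\mathcal{H}_r,
\end{equation*}
where $\mathcal{H}_r$ is conditionally independent of $\N_t|_{(r,\infty)\times\R_+}$ given $\gG_0^{(t)}$.

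Combining this decomposition with the fact that, conditionally on $\gG_0^{(t)}$, the Poisson point process $\N_t$ has independent increments in the height variable, one gets that for any bounded measurable $f\colon\R_+^2\to\R_+$ supported on $(r,\infty)\times\R_+$,
\begin{equation*}
\E\!\left[\exp\!\Big(-\!\int f\,\diffd\N_t\Big)\,\Big|\,\gG^{(t)}_r\right]\;=\;\exp\!\left(-\int_{r\wedge H_t}^{H_t}\!\!\int_{\R_+}\bigl(1-e^{-f(u,\nu)}\bigr)\,\diffd\nu\,\diffd u\right),
\end{equation*}
which is the desired Laplace-functional characterization of the Poisson property in the filtration $(\gG^{(t)}_r)$. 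The step I expect to be the most delicate is the measurability/conditional-independence argument in the second paragraph: one has to handle the uncountable family $\{\N_s|_{[0,r]\times\R_+}:s\le t\}$ rigorously, using c\`adl\`ag regularity of the snake $(\rho_s,\N_s)$ established in Section \ref{poissonsnake} and the right-continuous completion hidden in the definition \eqref{Gtr}, to reduce it to a countable dense family before invoking the snake property atom-by-atom.
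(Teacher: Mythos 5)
Your argument is correct and is essentially the same one the paper uses: the paper gives no separate proof of Lemma \ref{NPoissG}, deducing it "easily" from the preceding remark, which contains exactly your decomposition — split each $\N_s|_{[0,r]\times\R_+}$, $s\le t$, at the level $H_{s,t}$, identify the lower piece with $\N_t$ via the snake property, and observe that the off-lineage pieces are conditionally independent of the marks on the lineage of $t$, so that enlarging the natural filtration of $\N_t$ by them preserves the Poisson property. Your write-up merely makes the Laplace-functional verification and the countable-family reduction explicit, which is consistent with (indeed slightly more detailed than) the paper's treatment.
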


The use of Poisson calculus for $\N_t$ with respect to the filtration $( \gG_r^{(t)})_{r\geq 0}$  is thus possible but some care is needed regarding measurability issues. 
We  need to introduce

\begin{definition}\label{predictablet}
For each $t\geq 0$, ${\cal P}red^{(t)}$ denotes the sub sigma-field of  ${\cal B}(\R_+)\otimes  \F_{t}$  generated by $ (\gG^{(t)}_{r})_{r\geq 0}$-adapted  processes $(a (r) :  r\geq 0)$ which have continuous trajectories. 
\end{definition}

\begin{definition}\label{snakepredictable}
We denote by ${\cal P}red(\N)$ the sub sigma-field of ${\cal B}(\R_+)\otimes {\cal B}(\R_+)\otimes   \F_{\infty}$  generated by  processes $(b (s,r) : s\geq 0,  r\geq 0)$ such that 
\begin{enumerate}
 \item for each $(s,r)\in \R_+^2$, $b(s,r)$ is $\gG^{(s)}_{r}$  measurable, 
 \item the process $(s,r)\mapsto b(s,r)$  is continuous. 
\end{enumerate}
\end{definition}

\begin{remark}  A monotone class argument shows that if  $(b (s,r) : s\geq 0,  r\geq 0)$   is  ${\cal P}red(\N)$- measurable, then $(b (t,r) :   r\geq 0)$  is ${\cal P}red^{(t)}$-measurable  for each  $t\geq 0$   (the converse is   not true). 
\end{remark}

Recall that, given an adapted intensity process $\intens $,   in  Definition  \ref{def of prune with adapted process}  we introduced  for each $s\geq 0$  the  point process $
m^\intens_s$  in $ \M_{at}(\R_+)$ defined by 
\[
m^\intens_s(A) =  \N_s ( \{(h,u) :  u \le \intens(h,s) \text{ and } h \in A \}  ), \forall \mbox{ real Borel set } A.
\]
Thus,    $((\rho_s , m^\intens_s)  : s \geq 0)$ takes its values in the space of  ``marked finite measures''    
\[
\mathbb{S}:=\left\{  (\mu, w ) \in \M^0_f(\R_+)\times \M_{at}(\R_+) :  \, \mbox{ supp} (w)\subset  [0,H(\mu))\right\}
\] 
considered   in  \cite{ADV}. 
In a similar way as for elements of  ${\cal V}$, for   $(\mu,w)\in \mathbb{S}$  we now  identify  the measure $w$   with the increasing  element of $\D(\R_+,\R)$  given by the  path $ v \mapsto w([0,v])$  killed at  $H(\mu)$ (i.e. the cumulative distribution of $w$). Following again \cite[Chapter 4]{DLG},  $\mathbb{S}$ is endowed with the complete  metric 
\begin{equation}\label{distm}
\begin{split}
 \hat{d}((\mu,w),(\mu',w')):=  D(\mu,&\mu') +  \int_0^{H(\mu) \wedge H(\mu')}\left(\hat{d}_u(w_{(u)},w'_{(u)})\wedge 1 \right) \diffd u  \\
 & + |H(\mu)-H(\mu')|  +\hat{d}_0(w_{(0)},w'_{(0)}),  
 \end{split}
\end{equation}
with  $\hat{d}_u$  the Skorohod metric on $\mathbb{D}([0, u], \R_+)$ and  $w_{(u)}$  the restriction to $[0,u]$   of $w$. 
 

 Next result gathers basic properties of the process $((\rho_t , m^\intens_t)  : t \geq 0)$ needed in the sequel: 
\begin{lemma}\label{lem: prop mark adapt intens}
Let  $((\rho_t , m^\intens_t)  : t \geq 0)$  be a  marked exploration process with adapted intensity $\intens$. 
\begin{itemize} 
 \item[i)]    For each  $t > 0$, the increasing  c\`adl\`ag  process  $h \mapsto m_t^\intens ([0,h)) $ is  $(\gG_h^{(t)})_{h\geq 0}$-adapted  and its   predictable compensator  in that filtration is  $h \mapsto \int_0^{h\wedge H_t} \intens(r,t)\diffd r$. 
 \item[ii)]  For each pair of  time instants $s$ and $s'$ one has, almost surely, 
 \[
 m^\intens_{s'} (\diffd r) \I_{\{ r \leq H_{s,s'} \}} = m^\intens_s (\diffd r) \I_{\{ r \leq H_{s,s'} \}}.
 \]
 \item[iii)]  $((\rho_t , m^\intens_t)  : t \geq 0)$ is an $(\F_t)$-adapted and  c\`adl\`ag  process with values in $(\mathbb{S},\hat{d})$.  
 \item[iv)]  A marked exploration processes with adapted intensity $((\rho_t , m^{\intens'}_t)  : t \geq 0)$    is indistinguishable from  $((\rho_t , m^\intens_t)  : t \geq 0)$  if and only if    a.s.   $\intens (t,h)=\intens' (t,h)$   a.e. with respect to $\diffd t \otimes \I_{[0,H_t)}(h) \diffd h$.
\end{itemize} 
\end{lemma}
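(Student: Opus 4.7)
My plan is to treat the four parts in order, deducing iii) and iv) from the Poisson representation at fixed exploration time established in i).

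For i), fix $t>0$ and note that
\[
   m_t^{\intens}([0,h)) = \int_{[0,h)\times\R_+} \I_{\{\nu \leq \intens(r,t)\}}\,\N_t(\diffd r,\diffd\nu).
\]
By the remark following Definition~\ref{snakepredictable}, $r\mapsto\intens(r,t)$ is ${\cal P}red^{(t)}$-measurable, so the integrand is $(\gG_h^{(t)})$-predictable. Lemma~\ref{NPoissG} states that conditionally on $\gG_0^{(t)}$, $\N_t$ is a $(\gG_h^{(t)})$-Poisson point measure on $[0,H_t)\times\R_+$ with intensity $\diffd r\otimes \diffd\nu$. Standard Poisson compensation then yields $(\gG_h^{(t)})$-adaptedness of $h\mapsto m_t^{\intens}([0,h))$ together with the predictable compensator $\int_0^{h\wedge H_t}\intens(r,t)\,\diffd r$, which is a.s.\ finite by local integrability of $h\mapsto\intens(t,h)$ (Definition~\ref{adapintensity}.2).

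For ii), I would combine the snake property of $\N$ (Definition~\ref{def of 2d marked explo}.2) with the semi-snake property of $\intens$ (Definition~\ref{adapintensity}.3). Fix $s,s'$ and set $D_{s,s'}:=\{r\leq H_{s,s'}:\intens(r,s)\neq\intens(r,s')\}$; by the semi-snake property this random set has Lebesgue measure zero a.s. A Campbell-type argument, in which one conditions on a sub-sigma-field that determines $\rho$ and the two intensity trajectories while leaving $\N_s$ Poisson on the residual randomness, shows that the expected number of atoms of $\N_s$ in $D_{s,s'}\times[0,K]$ vanishes for every $K>0$, hence no such atom exists a.s. Combined with $\N_s|_{[0,H_{s,s'}]\times\R_+}=\N_{s'}|_{[0,H_{s,s'}]\times\R_+}$ from the snake property, this yields the coincidence of $m_s^{\intens}$ and $m_{s'}^{\intens}$ on $[0,H_{s,s'}]$. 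For iii), the càdlàg character of $((\rho_t,\N_t))$ in $({\cal V},d)$ from \cite[Chapter~4]{DLG} handles the first coordinate, and $(\F_t)$-adaptedness of $m^{\intens}$ is immediate from its construction together with ${\cal P}red(\N)$-measurability of $\intens$. For the càdlàg property in $(\mathbb{S},\hat d)$, I would use that $m_t^{\intens}$ carries a.s.\ finitely many atoms in $[0,H_t)$ (local integrability of $\intens$) and argue locally: for $s$ in a small right-neighborhood of $t$, by the semi-snake property any atom of $\N_s$ below $H_{s,t}$ is already an atom of $\N_t$ and is tested against the same intensity value, so the cumulative distribution functions of $m_s^{\intens}$ and $m_t^{\intens}$ can only differ on $[H_{s,t},H_s\vee H_t)$; standard Skorohod reparametrization then controls $\hat d$, and left limits are handled symmetrically.

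For iv), the ``only if'' direction uses i): indistinguishability yields, for each rational $t$, a.s.\ coincidence of $h\mapsto m_t^{\intens}([0,h))$ and $h\mapsto m_t^{\intens'}([0,h))$, hence of their $(\gG_h^{(t)})$-predictable compensators, so that $\intens(r,t)=\intens'(r,t)$ for $\diffd r$-a.e.\ $r\leq H_t$ a.s.; a Fubini argument then produces the announced equality with respect to $\P(\diffd\omega)\otimes\diffd t\otimes \I_{[0,H_t)}(h)\diffd h$. The ``if'' direction runs the Campbell argument from ii) jointly in $(t,h)$ to conclude that $m_t^{\intens}=m_t^{\intens'}$ a.s.\ for $\diffd t$-a.e.\ $t$, and iii) lifts this to indistinguishability. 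The main technical obstacle, common to ii) and the ``if'' half of iv), is the circular coupling between $\N$ and the $\N$-adapted intensity $\intens$; the resolution is to condition on a sufficiently large sub-sigma-field---typically $\F_t^\rho$ enlarged with enough of $\N$ to determine $\intens(\cdot,t)$---so that Campbell's formula applies to the residual Poisson randomness.
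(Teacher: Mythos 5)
Your overall architecture is the same as the paper's: part i) by the compensation formula conditionally on $\gG_0^{(t)}=\F^\rho_t$ (Lemma \ref{NPoissG}), the ``only if'' half of iv) by identifying compensators and Lebesgue differentiation, and iii) by exploiting agreement of marks below $H_{s,t}$ together with the metric $\hat d$; the only cosmetic difference in iii) is that the paper obtains left limits from completeness of the space of killed c\`adl\`ag paths (plus Portmanteau for the support condition) rather than from your finitely-many-atoms/Skorohod sketch.

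The genuine problem lies in the device you propose for ii) and for the ``if'' half of iv). You rightly single out the circular coupling between $\N$ and the $\N$-adapted intensity as the obstacle, but the resolution you offer --- conditioning on a \emph{static} sub-sigma-field, ``$\F^\rho_t$ enlarged with enough of $\N$ to determine $\intens(\cdot,t)$'', so that Campbell's formula applies to the residual Poisson randomness --- cannot work. Indeed $\intens(t,h)$ is $\gG^{(t)}_h$-measurable and $\gG^{(t)}_h$ already contains $\N_t$ restricted to $[0,h]\times\R_+$; a sigma-field rich enough to determine the whole trajectory $h\mapsto\intens(t,h)$ (let alone both trajectories $\intens(s,\cdot)$ and $\intens(s',\cdot)$) essentially contains $\N_t$ itself, so conditionally on it there is no residual Poissonian randomness and Campbell's formula is unavailable. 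What makes the first-moment argument legitimate --- and what the paper does, and what you yourself did in part i) --- is not static conditioning but predictability in the \emph{height} variable: on $\{h\le H_{s,s'}\}$ one bounds $|m^{\intens}_s([0,h))-m^{\intens}_{s'}([0,h))|$ by $\int_0^h\int \I_{\{\intens(s,r)\wedge\intens(s',r)<\nu\le \intens(s,r)\vee\intens(s',r)\}}\,\N_{s\vee s'}(\diffd r,\diffd\nu)$, which is permitted because $\N_s$ and $\N_{s'}$ coincide below $H_{s,s'}$ by the snake property, and then applies the compensation formula conditionally on $\gG_0^{(s\vee s')}$ in the filtration $(\gG^{(s\vee s')}_r)_{r\ge0}$, with respect to which both intensity trajectories are adapted with regular paths (hence the integrand is predictable) and $\N_{s\vee s'}$ is Poisson; the compensator $\int_0^{h\wedge H_{s,s'}}|\intens(s,r)-\intens(s',r)|\,\diffd r$ then vanishes a.s.\ by the semi-snake property of $\intens$, and no atom of $\N$ can sit where the two thresholds differ. (Working at the later time $s\vee s'$ is needed so that both intensities are adapted; the paper itself is terse on this point.) With this replacement your argument for ii), and its joint-in-$(t,h)$ version for the ``if'' half of iv), goes through exactly along the paper's lines.
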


We will refer to property ii) above as {\it the semi-snake property for marked exploration processes}. 

\begin{remark} Notice that  independence of $m^\intens_{s'} (\diffd r) \I_{\{ H_{s,s'}<r \leq H_{s'} \}} $ and $m^\intens_s (\diffd r) \I_{\{H_{s,s'}<r \leq H_{s} \}}$ for $s<s'$ (hence the full ``snake-property'') will in general fail to hold. 
Moreover, $((\rho_t , m^\intens_t)  : t \geq 0)$ is not Markovian in general  (an exception is the case of $\intens$ constant, where the marked exploration process is itself a snake process, with a standard Poisson process as path-space component). 
\end{remark}

\begin{remark}\label{equalintens}
Observe that for each $t\geq 0$ the process $h \mapsto  \intens(h,t)$   is  determined from $ m^{\intens}_t$ only $\diffd h$ a.e. (as the Lebesgue derivative of its compensator). 
In view of  this and of part iv) of Lemma \ref{lem: prop mark adapt intens}, two adapted intensities $\intens$ and $\intens'$ are identified  if any of the   two equivalent properties therein hold.
\end{remark}

\begin{proof}[Proof of Lemma \ref{lem: prop mark adapt intens}]
 Part i) is straightforward using  the compensation formula for ${\cal N}_t$ with respect to the filtration $(\gG_h^{(t)})_{h\geq 0}$,  conditionally on ${\cal F}^{\rho}_t=\gG_0^{(t)}$.  

 For part ii),  taking conditional expectation given $\gG_0^{(s)}$ in the inequality  
 \[
 |m_{s}^\intens ([0,h))- m_{s'}^\intens ([0,h)) | \I_{\{ h \leq H_{s,s'} \}} \leq \int_0^h \int_{\intens(s,r)\wedge \intens(s',r)}^{\intens(s,r)\vee \intens(s',r)} \mathcal{N}_s(\diffd \nu,\diffd  r)   \I_{\{ h \leq H_{s,s'} \}},
 \]  
 we deduce from  $\intens(s,\cdot)$ and  $\intens(s',\cdot)$  being ${\cal P}red^{(s)}$-measurable and from   the compensation formula  with respect to   $( \gG_t^{(t)})_{r\geq 0}$ that,  for each $h\geq 0$,  $m_{s}^\intens ([0,h))= m_{s'}^\intens ([0,h)) $ a.s. on $\{ h \leq H_{s,s'} \}$. 
 This yields $m_{s}^\intens ([0,r))= m_{s'}^\intens ([0,r)) $   for all  rational $r\leq h$  a.s. on $\{ h \leq H_{s,s'} \}$, and  then  $m_{s}^\intens= m_{s'}^\intens $ as measures on that event by  a.s. left continuity of $r\mapsto m_u^\intens ([0,r)) $, for $u=s,s'$. 

 Adaptedness  in  part iii) is clear. As for path regularity of $((\rho_t , m^\intens_t)  : t \geq 0)$,  from the definition  \eqref{distm}  and the facts that  $(\rho_s: s \geq 0)$ is weakly  c\`adl\`ag and $(H_s: s \geq 0)$ is  continuous, we easily get right continuity.  
 Existence of the limit in $(\mathbb{S},\hat{d})$ of $(\rho_{s_n}, m^{\intens}_{s_n})$   when $s_n\nearrow t$ is slightly more subtle. 
 Indeed, by path properties of $\rho$ and $H$,  $\rho_{s_n}$ clearly converges to a limit $\rho_{t-}$ such that  $H(\rho_{t-})=H_t$, while  convergence of  $ m^{\intens}_{s_n}$ to a limit  $ m^{\intens}_{t-}$   follows from completeness of the space of killed  c\`adl\`ag  paths in $\R_+$  under the metric
 \[
  (w,w')\mapsto  \int_0^{\zeta(w) \wedge\zeta(w')}\left(\hat{d}_u(w_{(u)},w'_{(u)})\wedge 1 \right) \diffd u  + \hat{d}_0(w_{(0)},w'_{(0)})+ |\zeta(w)- \zeta(w')|, 
  \]
 where $\zeta(w)$ is the lifetime of $w$  (see \cite[Section 4.1.1]{DLG} for the general property). Last, $\mbox{supp } m^{\intens}_{t-}\subseteq [0,H(\rho_{t-}))$  follows from the Portmanteau theorem and implies that  $(\rho_{t-}, m^{\intens}_{t-})\in {\cal V}$. 
   
 For the direct implication in  part iv),  we  first obtain from  indistinguishability and part i) the fact that, a.s. for every $t\geq 0$, the processes $h \mapsto \int_0^{h	\wedge  H_t} \intens(r,t)\diffd r$ and $h \mapsto \int_0^{h	\wedge  H_t} \intens'(r,t)\diffd r$ are equal. Then, we conclude by Lebesgue derivation.  
 In the converse implication, starting from the inequality
 \[
 |m_{s}^\intens ([0,h))- m_{s}^{\intens'} ([0,h)) |  \leq \int_0^h \int_{\intens(s,r)\wedge \intens'(s,r)}^{\intens(s,r)\vee \intens'(s,r)} \mathcal{N}_s(\diffd \nu,\diffd r) \, ,  
 \] 
 we obtain  (with similar arguments as in proof of point ii) above) the equality  $m_{s}^\intens =m_{s}^{\intens'}$ a.s. for all $s$ in a set of full Lebesgue measure. 
 Indistinguishability follows then from the path regularity stated in  iii). \end{proof}

Finally, we  establish a useful  regularity  property  regarding  the process  of marks in each  lineage: 
 
\begin{lemma}\label{total marks lsc}
Let  $((\rho_t , m^\intens_t)  : t \geq 0)$  be a  marked exploration process with adapted intensity $\intens$.  Then, the function $s\mapsto m^{\intens}_s([0,H_s))$ is a.s.\ lower semi-continuous. 
As a consequence,  a.s.  the function  $s\mapsto \I_{\{ m^{\intens}_s=0\}}= \I_{\{ m^{\intens}_s([0,H_s))=0\}}$ has at most countably many discontinuity points.
\end{lemma}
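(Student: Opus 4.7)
The plan is to establish the lower semi-continuity via the semi-snake property and the continuity of the height process, then deduce the countable-discontinuities claim by combining the closedness of $\{f=0\}$ with the countable Poissonian structure of the marks.

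Fix $s\ge 0$ and a sequence $s_n\to s$. By a.s.\ continuity of $H$ one has $H_{s_n}\to H_s$ and $H_{s,s_n}:=\inf_{r\in[s\wedge s_n,\,s\vee s_n]}H_r\to H_s$. The semi-snake property of Lemma \ref{lem: prop mark adapt intens}(ii) yields the pairwise a.s.\ identity $m^\intens_{s_n}|_{[0,H_{s,s_n}]}=m^\intens_s|_{[0,H_{s,s_n}]}$, which can be upgraded to a joint a.s.\ statement valid for all real pairs by first working on a countable dense subset of $\R_+$ and then extending using the $\hat d$-c\`adl\`ag regularity of $(\rho,m^\intens)$ together with the continuity of $H$. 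Fixing $\eps>0$ and taking $n$ large enough that $H_{s_n},H_{s,s_n}>H_s-\eps$,
\[
f(s_n)\ :=\ m^\intens_{s_n}([0,H_{s_n}))\ \ge\ m^\intens_{s_n}([0,H_s-\eps])\ =\ m^\intens_s([0,H_s-\eps]).
\]
Taking $\liminf_n$ and then $\eps\downarrow 0$, using that $m^\intens_s$ is finite and supported in $[0,H_s)$, gives $\liminf_n f(s_n)\ge f(s)$.

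For the second statement, since $f$ is $\NN$-valued and lower semi-continuous, $F:=\{f=0\}$ is closed; hence $g:=\mathbf 1_F$ is upper semi-continuous and its discontinuity set is contained in $\partial F=\partial F^c$. The open complement decomposes as $F^c=\bigsqcup_n(a_n,b_n)$ into an at most countable disjoint union of open intervals, and it also admits the description $\bigcup_k I_k$, with $I_k=[\sigma_k,\tau_k]$ the exploration-time life interval of the $k$-th effective atom of the Poisson snake $\N$ (that is, an atom $(h_k,\nu_k)$ with $\nu_k\le \intens(h_k,\sigma_k)$). Any point $s\in\partial F$ is approached by points of $F^c$; if these are eventually contained in a single component $(a_n,b_n)$, then $s\in\{a_n,b_n\}\subseteq\bigcup_k\{\sigma_k,\tau_k\}$, a countable set. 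The main obstacle is to exclude a.s.\ the complementary accumulation scenario, in which distinct components of $F^c$ converge to $s$ --- equivalently, distinct life intervals $I_{k_j}$ cluster at $s$ and the associated marks on the skeleton accumulate at the tree vertex corresponding to $(s,H_s)$. This should follow from the local finiteness of the Poisson atoms with respect to the length measure on the L\'evy skeleton together with the snake representation, which imply that a.s.\ the endpoints $\{\sigma_k,\tau_k\}$ have no accumulation points outside themselves.
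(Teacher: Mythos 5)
Your lower semi-continuity argument is correct and follows the same route as the paper: fix $\eps>0$, use the semi-snake property to identify $m^\intens_{s}$ and $m^\intens_{s_n}$ on $[0,H_{s,s_n}]\supseteq[0,H_s-\eps]$ for $n$ large, take $\liminf$ and then let $\eps\downarrow 0$. Your remark that the pairwise a.s.\ identity from Lemma \ref{lem: prop mark adapt intens}(ii) needs to be upgraded to a joint a.s.\ statement (via a countable dense set and c\`adl\`ag regularity) is a detail the paper elides but is handled correctly.

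For the countability claim there is a genuine gap, and you have in fact put your finger on it yourself. After reducing to showing that $\partial F=\partial F^c$ is countable, you note that the obstacle is to exclude the scenario where distinct connected components of the open set $F^c$ accumulate at a point, and you say this ``should follow'' from local finiteness of the Poisson atoms with respect to the length measure on the skeleton. That heuristic is not a proof, and it is not obviously correct: the length measure on a L\'evy skeleton is only $\sigma$-finite, and in a bounded exploration-time window the number of marks can be infinite, so ``local finiteness'' of the atoms does not translate directly into non-accumulation of the associated exploration-time intervals. The boundary of a countable disjoint union of open intervals is in general \emph{not} countable (take the complement of a Cantor set in $[0,1]$), so something specific to the excursion structure must be invoked to rule out accumulation. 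To close the gap you would want to argue concretely: each connected component of $F^c$ is precisely the excursion interval of $H$ above the lowest marked ancestor of any point in it (so its endpoints are determined by a single minimal mark), and then show that these excursion intervals cannot cluster at a point of $F$. It is worth noting that the paper's own proof is equally terse at this step — it asserts ``the latter being a disjoint countable union of open intervals, the conclusion follows'' — so your flagging of this point is a legitimate observation, not merely a failure to reproduce the paper's reasoning.
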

   
\begin{proof}   
 By the semi-snake property of  $((\rho_t , m^\intens_t)  : t \geq 0)$ and the continuity of $s\mapsto H_s$, it holds for each  $\varepsilon >0$ and every $t\geq 0$  that $m^{\intens}_t([0,H_t-\varepsilon))= m^{\intens}_s([0,H_t-\varepsilon)) \leq m^{\intens}_s([0,H_s) )$, for all $s$ close enough to $t$ so that $H_t-\varepsilon \leq H_{s,t}$. 
 Taking $  \liminf_{s\to t}$  and letting $\varepsilon\to 0$ gives us the first statement. Also, we  deduce that
 \begin{equation}\label{usc}
  \limsup_{s\to t} \I_{\{ m^{\intens}_s([0,H_s))=0 \}}\leq  \I_{\{  m^{\intens}_t([0,H_t))=0\}}
 \end{equation}
 for all $t\geq 0$,  implying that $s\mapsto \I_{\{ m^{\intens}_{s}([0,H_{s}))=0 \}} $ is continuous at points $t\geq 0$ such that  $m^{\intens}_t([0,H_t))>0$.  
 Now, if  $t\geq 0$ is such that $m^{\intens}_t([0,H_t))=0$, either we have  for any sequence $s_n\to t$ and  all  large enough  $n$ that  $\I_{\{ m^{\intens}_{s_n}([0,H_{s_n}))=0 \}}=1$, or we can construct  a sequence $\hat{s}_n\to t$ such that  $\I_{\{ m^{\intens}_{\hat{s}_n}([0,H_{\hat{s}_n}))=0 \}}=0$ for all $n\geq 0$. 
 In the first case,   $s\mapsto \I_{\{ m^{\intens}_{s}([0,H_{s}))=0 \}}$ is continuous in $t$. In the second one,  $t$ lies in the boundary of the  open set $\{s\geq 0:  m^{\intens}_s([0,H_s))>0 \}$. 
 The latter being a disjoint countable union of  open intervals, the conclusion  follows. 
\end{proof}

\subsection{An operator on adapted intensities}  
The following well known approximations of local time at a given level $a\geq 0$,  by  the limiting  normalized time  that  the height process spends  in a small neighborhood,   will be very useful in the sequel: 
\begin{equation}\label{aprox1}
 \lim\limits_{\epsilon \rightarrow 0}\sup\limits_{a \geq 0} \E\left[\sup\limits_{s\leq t}\left| \epsilon^{-1}\int_0^s \I_{\{a<H_r\leq a+\epsilon\}}\diffd r - L^a_s\right|\right]=0 \, , \
\end{equation}
\begin{equation}\label{aprox2}
 \lim\limits_{\epsilon \rightarrow 0}\sup\limits_{a \geq \epsilon} \E\left[\sup\limits_{s\leq t}\left| \epsilon^{-1}\int_0^s \I_{\{a-\epsilon<H_r\leq a\}}\diffd r - L^a_s\right|\right]=0 
\end{equation}
(see \cite[Proposition 1.3.3]{DLG}). 
We will also need the  occupation times formula therefrom deduced or, better, its immediate  extension to time-height functions: 
\begin{equation}\label{occupation formula generalized}
           \int_0^t \varphi(r, H_r)\diffd r =  \int_0^{\infty} \int_0^t \varphi(s,a)  \diffd L^a_s da
\end{equation}
a.s. for  every $t\geq 0$ and all   bounded  measurable functions $\varphi:\R_+^2 \to \R$. 
 
\smallskip 
           
Next result will prove  the first statement of Theorem \ref{main thm 1}.  
\begin{proposition}[{\bf Pruned local times yield adapted intensities}]\label{lem: loc time of pruned is adapted}
Let $f:\R_+\mapsto \R_+$ be  a locally bounded measurable function  and $\intens$ an adapted intensity process. Then $(f(L_t^r(m^{\intens})):  t\geq 0, r\geq 0)$ is also an adapted intensity process.
\end{proposition}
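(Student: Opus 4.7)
The plan is to verify the three defining properties (Definition 3.2) of an adapted intensity process for $\intens'(t,r) := f(L_t^r(m^\intens))$. Local integrability in $r$ follows from $f$ being locally bounded together with $r \mapsto L_t^r(m^\intens) \leq L_t^r$ being bounded with compact support for each fixed $t$. The semi-snake property follows from the observation that for $s \leq s'$ and $r < H_{s,s'}$ the height process satisfies $H_u \geq H_{s,s'} > r$ for all $u \in [s, s']$, so $\diffd L_u^r$ puts no mass on $(s, s')$, whence $L_s^r(m^\intens) = L_{s'}^r(m^\intens)$; the single value $r = H_{s,s'}$ is negligible for the $\diffd h$-a.e. statement. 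It thus remains to exhibit a $\mathcal{P}red(\N)$-measurable version of $\intens'$, and since $f$ is Borel it suffices to do so for $L^{\cdot}_{\cdot}(m^\intens)$ itself.

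The key step is a smoothed approximation. Fix a non-negative $\phi \in C_c^\infty(\R)$ supported in $[-1,0]$ with $\int \phi = 1$, set $\phi_\epsilon(x) := \epsilon^{-1}\phi(x/\epsilon)$, and define
\[
b_\epsilon(t,r) := \int_0^t \I_{\{m^\intens_u = 0\}}\, \phi_\epsilon(H_u - r)\, \diffd u.
\]
Since $\phi_\epsilon$ is supported in $[-\epsilon, 0]$, the integrand vanishes unless $H_u \leq r$, and on that event $m^\intens_u([0, H_u)) = m^\intens_u([0, r))$; hence
\[
b_\epsilon(t,r) = \int_0^t \I_{\{m^\intens_u([0,r))=0\}}\, \phi_\epsilon(H_u - r)\, \diffd u.
\]
For fixed $(t,r)$, the integrand for $u \leq t$ is $\gG_r^{(u)}$-measurable, since $m_u^\intens([0,r))$ depends on $\N_u$ only through its marks below level $r$, and on the values of $\intens(u,h)$ for $h<r$, which by Definition 3.2 item 1 are themselves $\gG_h^{(u)} \subseteq \gG_r^{(u)}$-measurable. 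Hence $b_\epsilon(t,r)$ is $\gG_r^{(t)}$-measurable. Joint continuity of $(t,r) \mapsto b_\epsilon(t,r)$ follows from Lipschitzness in $t$ with constant $\|\phi_\epsilon\|_\infty$ and uniform continuity of $\phi_\epsilon$, so each $b_\epsilon$ is $\mathcal{P}red(\N)$-measurable.

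Applying the generalized occupation time formula \eqref{occupation formula generalized} pathwise to $\varphi(u, a) = \I_{\{m^\intens_u=0\}}\, \phi_\epsilon(a-r)$ yields
\[
b_\epsilon(t, r) = \int_{r-\epsilon}^{r} \phi_\epsilon(a - r)\, L_t^a(m^\intens)\, \diffd a,
\]
so by Lebesgue's differentiation theorem applied to the locally bounded function $a \mapsto L_t^a(m^\intens)$, one obtains $b_\epsilon(t, r) \to L_t^r(m^\intens)$ for $\diffd r$-a.e.\ $r$ (for fixed $t$, almost surely). Setting $\tilde L(t,r) := \liminf_{n} b_{\epsilon_n}(t, r)$ along a sequence $\epsilon_n \downarrow 0$ produces a $\mathcal{P}red(\N)$-measurable process equal to $L_t^r(m^\intens)$ in the sense of $\mathbb{P}(\diffd\omega)\otimes \diffd t \otimes \I_{[0,H_t(\omega))}(r)\, \diffd r$-a.e., and then $f \circ \tilde L$ is the desired $\mathcal{P}red(\N)$-measurable version of $\intens'$ (cf.\ Remark 3.2). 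The main conceptual obstacle is that the pruning indicator $\I_{\{m^\intens_u = 0\}}$ a priori involves marks up to the level $H_u$, which can exceed $r$, so it is not visibly $\gG_r^{(t)}$-measurable; the trick that unblocks everything is the choice of a one-sided smoothing kernel, which forces $H_u \leq r$ on the support of the integrand and reduces the indicator to a function of marks strictly below $r$.
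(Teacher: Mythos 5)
Your proposal is correct, but the route to the $ {\cal P}red(\N)$-measurability is genuinely different from the paper's. Both arguments pivot on the same key observation: wherever the level-$r$ local time can increase (for you, on $\{H_u\le r\}$, forced by the one-sided kernel; for the paper, on the support of $\diffd L^r_s$, via $\diffd L^r_s=\I_{\{H_s=r\}}\diffd L^r_s$) the full-lineage indicator $\I_{\{m^\intens_u=0\}}$ coincides with $\I_{\{m^\intens_u([0,r))=0\}}$, which only sees marks below level $r$ and is $\gG^{(u)}_r$-measurable (this adaptedness is in fact already available from Lemma \ref{lem: prop mark adapt intens} i), so you need not re-derive it). From there the paper discretizes the Stieltjes integral in time along dyadic grids and proves the convergence \eqref{aprox2_m} of the Riemann sums using the lower-semicontinuity Lemma \ref{total marks lsc} together with dominated convergence with respect to $\diffd L^r$, which yields a $ {\cal P}red(\N)$-measurable process equal to $L^r_t(m^\intens)$ a.s.\ for each fixed $(t,r)$; you instead mollify in the height variable with a one-sided kernel, convert via the occupation-time formula \eqref{occupation formula generalized} and invoke Lebesgue differentiation. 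Your route avoids Lemma \ref{total marks lsc} for the limit step (though some measurability in $u$ of the indicator, e.g.\ from Lemma \ref{lem: prop mark adapt intens} iii) or the lower semicontinuity, is still needed for $b_\epsilon$ to be well defined), but it delivers only a version agreeing with $f(L^r_t(m^\intens))$ for $\P(\diffd\omega)\otimes\diffd t\otimes\diffd r$-a.e.\ point rather than a modification at every fixed $(t,r)$; this weaker notion suffices here, since adapted intensities are identified exactly up to that a.e.\ equivalence (Remark \ref{equalintens} and Lemma \ref{lem: prop mark adapt intens} iv)), so $m^{\intens'}$ and the induced pruned local times are unaffected. Your semi-snake verification is also more pathwise than the paper's (which goes through \eqref{aprox2} and expectations): for fixed $r<H_{s,s'}$ the measure $\diffd L^r$ charges only $\{H=r\}$ and hence puts no mass on $[s,s']$, and a Fubini step over $r$ (implicit in your ``$\diffd h$-a.e.'' remark) finishes. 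Finally, your boundedness of $r\mapsto L^r_t$ is asserted rather than justified via the bound $L^r_t\le L^r_{T_x}$ on $\{t\le T_x\}$ and the CSBP property of $r\mapsto L^r_{T_x}$ as in the paper, but this is cosmetic.
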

  
\begin{proof}
 Since $L_t^r(m^{\intens})\leq L_t^r\leq L^r_{T_{x}}$ on the event $\{t\leq T_x\}$ and, for each $x\geq 0$ the process $(L^r_{T_{x}}:r\geq 0)$ is a  (sub)critical CSBP issued from $x$, the fact that $T_x\nearrow \infty$ when $x\to \infty$ implies that a.s.\ for each $t\geq 0$ the function  $r\mapsto L_t^r(m^{\intens})$   is  a.s.  locally bounded. 
 This  ensures that  property  2 in Definition \ref{adapintensity} is satisfied. 
 
 In order to check the first point of Definition \ref{adapintensity}, observe first that for any  $\varepsilon ,t, r \geq 0$ 
 \[
 \varepsilon^{-1} \int_0^t \I_{\{ -\varepsilon < H_u- r\leq 0     \}} \diffd u  =   
 \lim_{k\to \infty} \varepsilon^{-1}  \int_0^{\infty}  (1-\phi_{ k}(u-t+k^{-1} ))  \left(\phi_{ k}( H_u-r+\varepsilon)-    \phi_{ k}(H_u-r) \right) \, \diffd u  \, ,
 \]
 almost surely,  for $ (\phi_{ k}:\R\to [0,1])_{k\geq 1}$  continuous functions  vanishing on $(-\infty,0]$ and equal to $1$ on $(1/k,\infty)$. 
 From this and the  fact that, for all $t,r\geq0$ and $k\in \NN\backslash \{0\}$,  $(H_u\I_{u\leq t-k^{-1}} : u\geq 0)$  is  measurable with respect to ${\cal B}(\R_+)\otimes {\cal G}^{(t)}_0\subset {\cal B}(\R_+)\otimes{\cal G}^{(t)}_r$,  we deduce using \eqref{aprox2} that $(L_t^r:  t\geq 0, r\geq 0)$ is  ${\cal P}red(\N)$-measurable. Since  for each $s\geq 0$ the process $r\mapsto m^{\intens}_s([0,r))$ is c\`agl\`ad and $(\gG_r^{(s)})_{r\geq 0}$ adapted,  we next observe that 
 \begin{linenomath}
  \begin{equation*}
  \begin{split}
   (t,r)\longmapsto  & \sum_{k=0}^{\infty} \left( L^r_{  (k+1)2^{-n} \wedge t} -  L^r_{  k 2^{-n}\wedge t}\right) \I_{\{ m^{\intens}_{k 2^{-n}}([0,r))=0\} } \\
   & = \int_0^t \sum_{k=0}^{\infty} \I_{\{ k 2^{-n}< s\leq (k+1)2^{-n},\, m^{\intens}_{k 2^{-n}}([0,r))=0\} }\diffd L_s^r \, 
  \end{split}
 \end{equation*}
 \end{linenomath}
 is   for each $n\in \NN$ a ${\cal P}red(\N)$--measurable  process too. 
 Thanks to the  a.s. equality $\diffd  L_s^r=\I_{\{H_s=r\}} \diffd L_s^r$ for each  $r\geq 0$, in order to conclude that $(L_t^r(m^{\intens}): r\geq 0, t\geq 0)$ is ${\cal P}red(\N)$-measurable  it is enough to check the convergence 
 \begin{equation}\label{aprox2_m}
  \lim\limits_{n\to \infty} \int_0^t \sum_{k=0}^{\infty} \I_{\{ k 2^{-n}< s\leq (k+1)2^{-n}\}  }
  \I_{\{ m^{\intens}_{k 2^{-n}}([0,r))=0\} }\diffd L_s^r =  \int_0^t   \I_{\{m^{\intens}_s([0,r))=0\}} \diffd L_s^r 
 \end{equation}
 for each $t\geq 0$. 
 By Lemma \ref{total marks lsc} and  continuity of  the  measure $\diffd L_s^r$,  the function  $s\mapsto  \I_{\{m^{\intens}_s=0\}}$ is  continuous $\diffd L^r_s$ almost everywhere. 
 Hence, the  functions  $s\mapsto  \sum_{k=0}^{\infty}  \I_{\{ k 2^{-n}< s\leq (k+1)2^{-n}\}} \I_{\{ m^{\intens}_{k 2^{-n}}=0\} }$ converge $\diffd L^r_s$  a.e. as $n\to \infty$  to $s\mapsto  \I_{\{m^{\intens}_s=0\}}$. 
 Thus, dominated convergence yields \eqref{aprox2_m} and the desired measurability property.
 
 Let us finally check that property 3 in Definition \ref{adapintensity} holds for  $(f(L_t^r(m^{\intens})):t\geq 0, r\geq 0)$. 
 Since for all  $s'\geq s \ge 0$ and every  $r,\varepsilon>0$ we trivially have  $\E\left[  \int_s^{s'} \I_{\{r-\varepsilon<H_u\leq r\}}\diffd u \I_{\{ r < H_{s,s'}\}} \right]=0$, the approximation  \eqref{aprox2} implies that $L_s^r=L_{s'}^r$ for each $r\geq 0$, a.s. on the event $\{ r < H_{s,s'}\}$, whence $\int_0^{\infty}\E (|L_s^r-L_{s'}^r|\I_{\{ r < H_{s,s'}\}})\diffd r=0$. 
 Fubini's theorem then ensures that   $(L_t^r:t\geq 0, r\geq 0)$  satisfies the semi-snake property for adapted intensities, and the conclusion follows using the  approximation \eqref{aprox2_m} and the semi-snake property of  $(s,r)\mapsto m^{\intens}_s([0,r))$.  
\end{proof}

\medskip
    
We now establish a continuity-type estimate for the operator $F$ under condition  {\bf (H)}, that will be useful at several points in the sequel. 
Introduce  for each $M>0$ the $({\cal F}^{\rho}_t)_{t\geq 0}$-stopping time
\begin{equation}\label{TM}
 T^M:= \inf \{  t\ge 0: \sup_{h\ge 0} L^h_t \ge M \}
\end{equation}
and let $c(M)\geq 0$  be a finite constant such that
\begin{equation}\label{c(M)}
|g(x)-g(y)|\leq c(M) |x-y| \quad \forall \, 0\leq x,y\leq M \, ,
\end{equation}
 that is,  a Lipschitz constant of $g$ on $[0,M]$.   We have 
\begin{lemma}\label{lemma pre-Gronwall}
Assume condition {\bf (H)}  holds and let $\intens_1$, $\intens_2$  be two adapted intensity processes. Define for each $t\geq 0$, 
\[
\Delta_t := \int_0^{H_t} | \intens_1(t,h)-\intens_2(t,h)| \diffd h
\] 
and 
\[
\Delta'_t :=\int_0^{H_t} | F(\intens_1)(t,h)-F(\intens_2)(t,h)| \diffd h \, ,
\]
Then, for any $M,a\geq 0$ and  every  $(\F_t^{\rho})$-stopping  time $\tau$ such that $\tau\leq T^M$ a.s, we have
\begin{equation}\label{key lemma for convergence of picard scheme}
 \E[\I_{\{t\le \tau,\, H_t\leq a\}} \Delta'_t ] \le c(M) \int_0^t \E[\I_{\{s\le \tau, \, H_s\leq a\}} \Delta_s  ] \diffd s \quad \mbox{ for all }t\geq 0. 
\end{equation}
\end{lemma}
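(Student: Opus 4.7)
The plan is to chain a Lipschitz bound on $g$ with the occupation times formula \eqref{occupation formula generalized} and the Poisson compensation formula for $\mathcal{N}_s$ in the filtration $(\gG^{(s)}_h)_{h\ge 0}$. The first step is almost immediate: on $\{t\le\tau\le T^M\}$ one has $L^h_t(m^{\intens_i})\le L^h_t\le M$ for every $h$, so \eqref{condition on g} together with the definition of $F$ yields
\[
\Delta'_t\le c(M)\int_0^{H_t}|L^h_t(m^{\intens_1})-L^h_t(m^{\intens_2})|\,\diffd h,
\]
and the integral representation of the pruned local times combined with the trivial inequality $|\mathbf{1}_{\{m^{\intens_1}_s=0\}}-\mathbf{1}_{\{m^{\intens_2}_s=0\}}|\le\mathbf{1}_{\{m^{\intens_1}_s\neq m^{\intens_2}_s\}}$ gives
\[
|L^h_t(m^{\intens_1})-L^h_t(m^{\intens_2})|\le\int_0^t\mathbf{1}_{\{m^{\intens_1}_s\neq m^{\intens_2}_s\}}\,\diffd L^h_s.
\]

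Second, I take expectation against $\mathbf{1}_{\{t\le\tau,\,H_t\le a\}}$. Since $\tau$ is an $(\F^{\rho}_t)$-stopping time, $\{t\le\tau\}\subseteq\{s\le\tau\}$ for all $s\le t$; using $\mathbf{1}_{\{H_t\le a\}}\int_0^{H_t}\le\int_0^{\infty}\mathbf{1}_{[0,a]}(h)\,\diffd h$, applying Fubini on the resulting non-negative integral, and then invoking the occupation times formula \eqref{occupation formula generalized} with $\varphi(s,h)=\mathbf{1}_{\{s\le\tau,\,h\le a\}}\mathbf{1}_{\{m^{\intens_1}_s\neq m^{\intens_2}_s\}}$ collapses the $(s,h)$ integral into the single time integral $\int_0^t\mathbf{1}_{\{s\le\tau,\,H_s\le a\}}\mathbf{1}_{\{m^{\intens_1}_s\neq m^{\intens_2}_s\}}\,\diffd s$. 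This step, which exploits that $\diffd L^h_s$ concentrates on $\{H_s=h\}$, is precisely what transports the height constraint ``$H_t\le a$ at time $t$'' into ``$H_s\le a$ at earlier times $s$'', and is the main technical subtlety of the proof.

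Third, the pointwise domination
\[
\mathbf{1}_{\{m^{\intens_1}_s\neq m^{\intens_2}_s\}}\le\mathcal{N}_s\bigl(\{(h,\nu):\,h<H_s,\ \intens_1(s,h)\wedge\intens_2(s,h)<\nu\le\intens_1(s,h)\vee\intens_2(s,h)\}\bigr),
\]
combined with Lemma~\ref{NPoissG} and the ${\cal P}red^{(s)}$-measurability of $\intens_i(s,\cdot)$ (Definition~\ref{adapintensity}), yields, via the compensation formula for $\mathcal{N}_s$ in $(\gG^{(s)}_h)_{h\ge 0}$, the identity $\E[\mathcal{N}_s(\cdot)\mid\gG^{(s)}_0]=\Delta_s$. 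Since $\mathbf{1}_{\{s\le\tau,\,H_s\le a\}}$ is $\F^{\rho}_s=\gG^{(s)}_0$-measurable, multiplying and taking expectation produces the bound $\E[\mathbf{1}_{\{s\le\tau,\,H_s\le a\}}\Delta_s]$ on the integrand, which is exactly what is needed to close the argument.
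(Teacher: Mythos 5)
Your proposal is correct and follows essentially the same route as the paper's proof: the Lipschitz bound $c(M)$ on $\{t\le\tau\le T^M\}$, domination of the difference of pruning indicators by the $\N_s$-mass of the band between $\intens_1$ and $\intens_2$, the inhomogeneous occupation-time formula \eqref{occupation formula generalized} to convert the $\diffd L^h_s\,\diffd h$ integral into a $\diffd s$ integral carrying the constraint $H_s\le a$, and finally the compensation formula for $\N_s$ conditionally on $\F^\rho_s$. The only slight imprecision is that the compensation formula yields $\E[\N_s(\cdot)\mid\gG^{(s)}_0]=\E[\Delta_s\mid\gG^{(s)}_0]$ rather than $\Delta_s$ itself (since $\intens_i(s,h)$ is $\gG^{(s)}_h$- but not $\gG^{(s)}_0$-measurable), which is harmless here because the factor $\I_{\{s\le\tau,\,H_s\le a\}}$ is $\F^\rho_s$-measurable and the tower property gives exactly the bound $\E[\I_{\{s\le\tau,\,H_s\le a\}}\Delta_s]$.
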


\begin{proof}
 Notice  first that  for each $s\geq 0$, $ \big|\I_{\{m_s^{\intens_1}=0\}} -\I_{\{m_s^{\intens_2} =0\}} \big|\leq  \big|m_s^{\intens_1}([0,H_s)) - m_s^{\intens_2} ([0,H_s))  \big|$, hence 
 \[
  \big| L_t^h(m^{\intens_1}) -L_t^h(m^{\intens_2}) \big| \le \int_0^t  \big|m_s^{\intens_1}([0,H_s)) - m_s^{\intens_2} ([0,H_s))  \big|  \diffd L_s^h
 \]
a.s  for all $t\geq 0$.  Since  for each $h\geq 0$  and for $i=1,2$,  a.s. $  L_{t\wedge \tau} ^h(m^{\intens_i}) \leq M$  for all $t\geq 0$, we get
 \begin{linenomath}
 \begin{align*}
 \I_{\{t\le \tau,\, H_t\leq a\}} \Delta'_t &\le c(M) \I_{\{t\le \tau\} }\int_0^{a} |  L_t^h(m^{\intens_1}) -L_t^h(m^{\intens_2}) | \diffd h   \\
 &\le c(M) \I_{\{t\le \tau\} }\int_0^a  \int_0^t 
 \big|m_s^{\intens_1}([0,H_s)) - m_s^{\intens_2} ([0,H_s))  \big|  \diffd L_s^h \diffd h    \\
 &\leq  c(M)  \int_0^a   \int_0^t \I_{\{s\le \tau\}}  
 \big|m_s^{\intens_1}([0,h)) - m_s^{\intens_2} ([0,h))  \big|   \diffd L_s^h \diffd h    \Big] \\
 &\le  c(M) \int_0^t    \I_{\{s\le \tau, \, H_s\leq a\}}  
 \big|m_s^{\intens_1}([0,H_s)) - m_s^{\intens_2} ([0,H_s))  \big|  \diffd s  
 \end{align*}	
 \end{linenomath}
 using that $\diffd  L_s^h=\I_{\{H_s=h\}} \diffd L_s^h$ in the third  line and  the  inhomogeneous occupation-time formula \eqref{occupation formula generalized}  in the fourth. 
 Thus,
 \begin{linenomath}
 \begin{align*}
 \E[\I_{{t\le \tau}}& \Delta'_t ] \leq  c(M) \int_0^t   \E\Big[ \I_{\{s\le \tau, \, H_s\leq a\}}  \big|m_s^{\intens_1}([0,H_s)) - m_s^{\intens_2} ([0,H_s))  \big|  \Big]  \diffd s  \\
 &\leq  c(M)  \int_0^t  \E\left[\I_{\{s\le \tau, \, H_s\leq a\}}   \E\left( 
 \int_0^{H_s}  \int_0^\infty |  \I_{\{\nu < \intens_1(s,h)}\} - \I_{\{\nu< \intens_2(s,h)}\} |\cN_s(\diffd h , \diffd \nu) \bigg| \cF_s^\rho\right)  \right] \diffd s  \\
 &\leq  c(M)  \int_0^t  \E\left[\I_{\{s\le \tau, \, H_s\leq a\}}   \E\left( 
 \int_0^{H_s}  |\intens_1(s,h) - \intens_2(s,h)| \diffd h \bigg| \cF_s^\rho\right)  \right] \diffd s
 \end{align*}	
 \end{linenomath}
 and the statement follows.
\end{proof}

\subsection{Iterative scheme and fixed point}

The remainder of this section is devoted to the proof of the second part of Theorem \ref{main thm 1} so from now on we assume that the condition {\bf (H)}  holds. 
The construction of  the adapted intensity $\intens^*$ will be achieved  by  an  iterative procedure. 
For each $t\geq 0$, we define $\intens^0(t,h)=0$ for all $h$,  so that $m^{\intens^0}$ is the null measure. 
Then we let 
\[
\intens^{n+1} =F(\intens^n), \forall n\ge 0
\]
and write  $L^h_t(n) = L^h_t(m^{\intens^n})$ to simplify notation.

Observe that, by construction  of the sequence $\intens^{n}$, the continuity of $g$ and Lemma \ref{lem: loc time of pruned is adapted},  for all $n\in \NN$ the process $(t,h)\mapsto \intens^n(t,h)$ has a  version which  a.s.\ is  continuous in $t\geq 0$ for each $h\geq 0$ and  is an adapted intensity process. 
Moreover, since $g$ is increasing,   for any pair $\intens,\tilde{\intens}$ satisfying a.s.   $\intens  \leq \tilde{\intens} $ (pointwise), one has $\I_{\{ m_t^{\intens}=0\}}\geq \I_{\{ m_t^{\tilde{\intens}}=0\}}$ for all $t\geq 0$.
We can then check  by induction in $n$ that a.s. for all  $(t,h)\in \R_+^2$, 
\begin{equation}\label{orderL}
 L^h_t(0) \geq L^h_t(2) \geq  \dots  \geq L^h_t(2n) \geq \ldots   \geq L^h_t(2n+1) \geq  \dots  \geq L^h_t(3)   \geq L^h_t(1),
\end{equation} 
and 
\begin{equation}\label{orderm}
 \intens^0(t,h) \leq \intens^2(t,h) \leq \dots \leq \intens^{2n}(t,h) \leq \ldots \leq  \intens^{2n+1}(t,h)  \leq  \dots  \leq \intens^3(t,h) \leq \intens^1(t,h).
\end{equation}
Relevant consequences of these inequalities are next highlighted:
  
\begin{lemma}\label{convml}
There exist   two adapted intensities  $\intens^e $ and  $\intens^o$ and two ${\cal P}red(\N)$-measurable processes  $(L_t^r(e): t\geq 0,r\geq 0)$,  $(L_t^r(o): t\geq 0,r\geq 0)$   such that, almost surely,  
\begin{itemize}
\item[i)] 
\begin{linenomath}
\begin{align*}
\intens^{2n}   & \nearrow  \intens^e   \quad \text{ and } \quad      L(2n) \searrow      L(e),     \\
\intens^{2n+1} & \searrow  \intens^o   \quad \text{ and } \quad      L(2n+1) \nearrow      L(o)     
\end{align*} 	
\end{linenomath}   pointwise and   moreover,
   \[
     \intens^e \le \intens^o   \mbox{  and  }  L(e)\geq L(o) \, ,
   \]
\item[ii)]  for every $(t,h)\in \R_2^+$ one has   $
L^h_t(e) = L^h_t(m^{\intens^e}) $ and $ L_t^h(o)=L^h_t(m^{\intens^o}) $  and

\item[iii)]  for every $(t,h)\in \R_2^+$ it holds that
\[
\begin{cases} \intens^e(t,h) = g(L^h_t(m^{\intens^o})) \\ \intens^o(t,h) = g(L^h_t(m^{\intens^e}))\end{cases}. 
\]
\end{itemize}
\end{lemma}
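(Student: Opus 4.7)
The plan is to extract everything from the monotonicity established in \eqref{orderL}--\eqref{orderm}. I would first observe that for $\P$-a.e.\ $\omega$ and every $(t,h)\in\R_+^2$, the sequences $(\intens^{2n}(t,h))_n$ and $(L^h_t(2n+1))_n$ are nondecreasing and bounded above (by $\intens^1$ and $L^h_t$ respectively), while $(\intens^{2n+1}(t,h))_n$ and $(L^h_t(2n))_n$ are nonincreasing. Hence all four pointwise limits
\[
\intens^e(t,h) := \lim_n \intens^{2n}(t,h), \quad \intens^o(t,h) := \lim_n \intens^{2n+1}(t,h), \quad L^h_t(e) := \lim_n L^h_t(2n), \quad L^h_t(o) := \lim_n L^h_t(2n+1)
\]
exist, and the inequalities $\intens^e \le \intens^o$, $L(e) \ge L(o)$ follow by passing to the limit in \eqref{orderL}--\eqref{orderm}. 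This establishes (i) once I verify that $\intens^e, \intens^o$ are genuinely adapted intensities.

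For that, I would argue that $\mathcal{P}red(\N)$-measurability of each $\intens^n$ (obtained inductively via Proposition \ref{lem: loc time of pruned is adapted} and continuity of $g$) is preserved under pointwise limits, since $\mathcal{P}red(\N)$ is a $\sigma$-field; the semi-snake property likewise survives the limit trivially; and local integrability of $h\mapsto \intens^e(t,h)\le \intens^o(t,h)\le \intens^1(t,h)=g(L^h_t)$ follows from the local boundedness of $h\mapsto L^h_t$ used in Proposition \ref{lem: loc time of pruned is adapted}. Measurability of $L(e), L(o)$ is inherited from the iterates $L(n)$.

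The core step is (ii), identifying $L^h_t(e) = L^h_t(m^{\intens^e})$ and the analogous identity for the odd sequence. I would rely on the intermediate estimate contained in the proof of Lemma \ref{lemma pre-Gronwall}: before invoking the Lipschitz constant of $g$, the same chain of inequalities gives, for any two adapted intensities $\intens_1, \intens_2$, any $(\F^\rho_t)$-stopping time $\tau \leq T^M$, and any $a, t \geq 0$,
\[
\E\Big[\I_{\{t\le \tau, H_t \le a\}}\int_0^a |L^h_t(m^{\intens_1})-L^h_t(m^{\intens_2})|\,\diffd h \Big] \le \int_0^t \E\Big[\I_{\{s\le \tau, H_s \le a\}} \int_0^{H_s} |\intens_1(s,h)-\intens_2(s,h)|\,\diffd h\Big]\diffd s.
\]
Applied with $\intens_1 = \intens^{2n}, \intens_2 = \intens^e$, dominated convergence on the right-hand side (with majorant $\intens^1$) makes it vanish as $n\to \infty$, and combined with the $\P$-a.s.\ monotone pointwise limit $L^h_t(m^{\intens^{2n}}) = L^h_t(2n) \searrow L^h_t(e)$ this identifies $L^h_t(e) = L^h_t(m^{\intens^e})$ for $\diffd h \otimes \P$-a.e.\ $(h,\omega)$; localising by $M\to \infty$ and using a countable dense set of levels together with right-continuity in $h$ of both sides then delivers the statement a.s.\ for all $(t,h)$ simultaneously. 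The argument for $L^h_t(o) = L^h_t(m^{\intens^o})$ is symmetric, starting from $\intens^{2n+1}\searrow \intens^o$.

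Finally (iii) would follow by letting $n \to \infty$ in the recursion $\intens^{2n+2}(t,h) = g(L^h_t(2n+1))$: continuity of $g$ from \eqref{condition on g} combined with (ii) yields $\intens^e(t,h) = g(L^h_t(m^{\intens^o}))$, and symmetrically $\intens^o(t,h) = g(L^h_t(m^{\intens^e}))$. I expect the main obstacle to be (ii): a naive pointwise identification $\bigcap_n \{m^{\intens^{2n}}_s = 0\} = \{m^{\intens^e}_s = 0\}$ fails on realisations where a Poisson atom of $\N_s$ lies exactly on the graph of $\intens^e(s,\cdot)$, and the Lemma \ref{lemma pre-Gronwall} estimate is precisely what one needs to bypass this null-set subtlety via integrated convergence.
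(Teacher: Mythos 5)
Your part (i) and your passage to the limit in the recursion for part (iii) coincide with the paper's argument (monotone pointwise limits from \eqref{orderL}--\eqref{orderm}, domination by $\intens^1(t,h)=g(L^h_t)$, stability of the adapted-intensity properties under pointwise limits, and continuity of $g$). The real divergence is in part (ii). The paper's proof is pathwise and very short: for fixed $(t,a)$ it writes $L^a_t(e)=\inf_n\int_0^t\I_{\{m^{\intens^{2n}}_s=0\}}\,\diffd L^a_s=\int_0^t\inf_n\I_{\{m^{\intens^{2n}}_s=0\}}\,\diffd L^a_s$ and identifies the inner infimum with $\I_{\{m^{\intens^e}_s=0\}}$ because the nondecreasing, integer-valued sequence $m_s^{\intens^{2n}}([0,H_s))$ is bounded by $m_s^{\intens^{1}}([0,H_s))<\infty$ and hence eventually constant; this gives the identity directly for every fixed pair $(t,a)$. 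Your alternative — the intermediate inequality extracted from the proof of Lemma \ref{lemma pre-Gronwall} (stopping before the Lipschitz bound on $g$ is applied, so no $c(M)$ appears), applied with $\intens_1=\intens^{2n}$, $\intens_2=\intens^e$, plus dominated convergence on both sides — is correct as far as it goes, and it has the genuine merit of bypassing the atoms-on-the-graph subtlety you flag at the end (a point the paper's pointwise identification passes over silently). What it buys is an integrated identification; what it costs is pointwise control.

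That cost is where the gap lies. Your argument yields, for each fixed $t$ (after letting $M$ and $a$ tend to infinity), that $L^h_t(e)=L^h_t(m^{\intens^e})$ for Lebesgue-a.e.\ $h$, almost surely — not for every $h$. The upgrade you invoke, ``right-continuity in $h$ of both sides'', is not available: neither the paper nor standard theory provides right-continuity in the level variable of $h\mapsto L^h_t$ for fixed exploration time $t$, let alone of the pruned integral $h\mapsto\int_0^t\I_{\{m_s=0\}}\,\diffd L^h_s$; the only level-regularity used in the paper is the $L^1$-approximation \eqref{aprox1}--\eqref{aprox2}, uniform in the level, which does not give path regularity in $h$ for a fixed version of the field. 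So as a proof of statement (ii) (and hence of the substitution needed in (iii)) as written — ``for every $(t,h)$'' — the last step fails. It is worth noting that the a.e.\ version you do obtain (a.e.\ with respect to $\P(\diffd\omega)\otimes\diffd t\otimes\I_{[0,H_t)}(h)\diffd h$) would suffice for the subsequent fixed-point proposition, since fixed points and adapted intensities are there identified only up to that reference measure (cf.\ Lemma \ref{lem: prop mark adapt intens} iv) and Remark \ref{equalintens}); but to prove Lemma \ref{convml} itself you should either supply the missing level-regularity or argue pointwise as the paper does, via the stabilisation of the integer-valued mark counts.
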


\begin{proof} 
 i) From local boundedness of the process $(t,h)\mapsto L_t^h$ (cf. the proof of Lemma \ref{lem: loc time of pruned is adapted}) and the fact  that $\intens^n(h,t)\leq g(L^h_t(0))$ for all $n\in \NN$ and $(t,h)\in \R_2^+$, the increasing sequence $\intens^{2n}$ has a pointwise  limit  $\intens^e$. 
 The limit clearly inherits the properties of $\intens^{2n}$   making  it an adapted intensity process.  
 Similar  arguments apply to the decreasing sequence $\intens^{2n+1}$. 
 Pointwise convergence of the sequences of pruned  local times and their measurability  properties are  also easily obtained by monotone limit arguments. 
 The two inequalities  at the end of part i) are immediate consequences of the inequalities \eqref{orderL} and  \eqref{orderm}. 
 
 We now move to  point ii).  For fixed $a\geq 0$ and $t\geq 0$, we have
 \begin{equation*}
  L_t^a(e) = \inf\limits_{n \in \NN} \int_0^t \I_{\{m_s^{\intens^{2n}}([0,H_t)) =0\}}\diffd L_s^a
           = \int_0^t \inf\limits_{n \in \NN} \I_{\{m_s^{\intens^{2n}} =0\}}\diffd L_s^a  = \int_0^t \I_{\{m_s^{\intens^{e}}=0\}}\diffd L_s^a \, ,           
 \end{equation*}  
 using in the third equality the fact that for each $s\geq 0$ the increasing and integer-valued sequence $m_s^{\intens^{2n}}([0,H_s))$ is upper bounded by $m_s^{\intens^{1}} ([0,H_s)) $, hence constant for large enough $n$. 
 A similar argument applies for $ L_t^h(o)$. 
     
 Finally, iii) follows from $\intens^e(h,t) =\lim_n \intens^{2n}(h,t) =\lim_n F(\intens^{2n-1})(h,t) =\lim_n g(L^h_t(2n-1)).$   
\end{proof}

\smallskip

Concluding the proof of Theorem \ref{main thm 1} is now easy:    
\begin{proposition}
With $\mathbb{P}(d\omega)\otimes \diffd t \otimes \I_{[0,H_t(\omega))}(h) \diffd h$  as reference measure, we have 
$\intens^e=\intens^o =F(\intens^e)=F(\intens^o)$ a.e. 
Moreover, $\intens^*:=\intens^e$ is the  a.e. unique solution of the equation $\intens^*=F(\intens^*)$ a.e. and the  corresponding marked exploration processes are unique, up to indistinguishability. 
\end{proposition}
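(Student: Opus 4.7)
My plan is to close the gap between $\vartheta^e$ and $\vartheta^o$ by a Grönwall-type argument based on Lemma \ref{lemma pre-Gronwall}. The key observation I would exploit upfront is that Lemma \ref{convml}(iii) makes $F$ \emph{swap} the two limits: $F(\vartheta^e)(t,h)=g(L^h_t(m^{\vartheta^e}))=\vartheta^o(t,h)$ and symmetrically $F(\vartheta^o)=\vartheta^e$. Setting
$$\Delta_t := \int_0^{H_t} |\vartheta^e(t,h)-\vartheta^o(t,h)|\, \diffd h,$$
this swap means $\Delta'_t=\Delta_t$ in the notation of Lemma \ref{lemma pre-Gronwall}, so the estimate there becomes a genuine self-bound on which Grönwall can bite.

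Concretely, I would fix $M,a>0$, take $\tau=T^M$, and define $u(t):=\E[\mathbf{1}_{\{t\le T^M,\, H_t\le a\}}\Delta_t]$. Monotonicity of $g$ together with the bound $L^h_t(m^{\vartheta^e}), L^h_t(m^{\vartheta^o}) \le L^h_t \le M$ on $\{t\le T^M\}$ gives $\Delta_t \le 2g(M)a$ on the relevant event, so $u$ is bounded; Lemma \ref{lemma pre-Gronwall} then yields $u(t)\le c(M)\int_0^t u(s)\diffd s$ and Grönwall forces $u\equiv 0$. Letting $a\to\infty$ (using $H_t<\infty$ a.s.) and then $M\to\infty$ (using $T^M\to\infty$ a.s., which follows from local boundedness in $(t,h)$ of the local time process, exactly as in the proof of Proposition \ref{lem: loc time of pruned is adapted}), the indicator events increase to a full-probability set, so $\E[\Delta_t]=0$ for every $t\ge 0$. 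A final Fubini step gives $\vartheta^e=\vartheta^o$ a.e.\ with respect to $\mathbb{P}(d\omega)\otimes \diffd t \otimes \mathbf{1}_{[0,H_t(\omega))}(h)\, \diffd h$, and, combined with Lemma \ref{convml}(iii), this shows that $\vartheta^*:=\vartheta^e$ is an a.e.\ fixed point of $F$.

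For a.e.\ uniqueness I would simply rerun the very same argument on any pair of fixed points $\vartheta^*_1,\vartheta^*_2$ of $F$: since $F(\vartheta^*_i)=\vartheta^*_i$, once more $\Delta'_t=\Delta_t$ with $\Delta_t:=\int_0^{H_t}|\vartheta^*_1-\vartheta^*_2|\diffd h$, and Lemma \ref{lemma pre-Gronwall} plus Grönwall force $\vartheta^*_1=\vartheta^*_2$ a.e. The corresponding indistinguishability of marked exploration processes is then immediate from part (iv) of Lemma \ref{lem: prop mark adapt intens}, which characterizes indistinguishability precisely through a.e.\ equality of the intensities.

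The main effort is really absorbed into Lemma \ref{lemma pre-Gronwall}; the only delicate point specific to this proposition is the double localization by $T^M$ and $\{H_t\le a\}$, necessary to get a bounded function $u$ feeding into Grönwall, followed by the monotone passage to the limit in $a$ and then in $M$. Once that scaffolding is in place, the alternating structure $F\circ F(\vartheta^e)=\vartheta^e$ built into Lemma \ref{convml} bakes the fixed point directly into the argument, and no further work is needed.
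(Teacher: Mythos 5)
Your proposal is correct and follows essentially the same route as the paper: apply Lemma \ref{lemma pre-Gronwall} with $\intens_1=\intens^e$, $\intens_2=\intens^o$, use the swap identity $F(\intens^e)=\intens^o$, $F(\intens^o)=\intens^e$ from Lemma \ref{convml}(iii) to turn the estimate into a self-bound, invoke Gr\"onwall and let $a,M\to\infty$, then repeat the argument for two fixed points and conclude indistinguishability via Lemma \ref{lem: prop mark adapt intens}(iv). Your explicit handling of the boundedness of $u$ and of the localization removal simply spells out steps the paper leaves implicit.
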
 

\begin{proof} By Lemma \ref{convml} iii) 
 we have $\intens^e=F(\intens^o)$  and $\intens^o =F(\intens^e)$. Applying Lemma \ref{lemma pre-Gronwall}    to   $\intens_1= \intens^e$ and  $\intens_2= \intens^o$, 
 using  Gronwall's lemma  and letting  $M$ and $a$ therein go to $+\infty$ we get,   for each $t\geq 0$, that 
 \[
 \int_0^{H_t} |\intens^e(t,h) -\intens^o(t,h)| \diffd h =0 \quad  \text{ a.s.}
 \]    
 The first statement follows from statement iii) in Lemma \ref{convml} and integration with respect to $\diffd t$. 
  
 Now, let $\intens^*$ and $\intens^{**}$ be two fixed points. 
 By applying Lemma \ref{lemma pre-Gronwall}  to $\intens_1= \intens^*$ and  $\intens_2= \intens^{**}$, we  conclude  uniqueness of fixed points using again Gronwall's lemma. 
 Last, point iv) of Lemma \ref{lem: prop mark adapt intens} provides the indistinguishability of the associated marked exploration processes. 
\end{proof}

To simplify notation,   we will write in the sequel 
\[
((\rho_t , m^*_t)  : t \geq 0):= ((\rho_t , m^{\intens^{*}}_t)  : t \geq 0)
\]
and refer to this process as the {\it competitively marked exploration process}. 
Accordingly, we will call $\intens^*$ and $L(m^*)$ the {\it competitive intensity} and {\it competitively pruned local times}, respectively. 
Notice that 
\begin{equation}\label{eqL*}
  L_t^h(m^*):=  \int_0^t  \mathbf{1}_{\{m^*_s=0\}} \diffd L_s^h 
\end{equation}
for each $h\geq 0$, a.s. for all $t\geq 0$  and 
\begin{equation} \label{eqm*}
 m^* _t([0,h])  =\int_0^{h} \int_0^{\infty} \I_{\{\nu <g(L_t^{r}(m^*))\}}\N_t(\diffd \nu,\diffd r)
\end{equation}
for each $t\geq 0$, a.s. for all $h\in [0,H_t)$. 
The rest of the paper is devoted to the proof of Theorem \ref{main thm 2}.  

\section{A Ray-Knight representation  }\label{secRKT}

\subsection{Strategy  of proof of Theorem \ref{main thm 2}}
      
Our next goal is to prove that the process of   $(L_{T_v}^a(m^*): a\ge 0, v\ge 0)$, with  $(L_t^a(m^*):t\geq 0, a\geq 0) $  constructed in the previous section and $T_x$ as in \eqref{Tx},  has the same law as the stochastic flow  \eqref{flow lb}. \smallskip 

To that end, we first construct  a family $( Z_a^{\varepsilon, \delta}(x) : x\ge 0,  a\ge 0)$  of approximations of the solution of \eqref{flow lb}, with $(\varepsilon,\delta)\in (0,\infty)^2$,  coupled  to each other and to the process \eqref{flow lb}  by using the same Gaussian and Poisson noise, and with negative competition drifts that are suitably discretized versions of the drift in \eqref{flow lb}. More precisely,  every time instant $a>0$ of the form $k\varepsilon$, $k\in \NN$,  we  split the  population into  blocks of  size $\delta$. For the next   $\varepsilon>0$ time units,   the evolution  of the $n-$th block, $n\in \NN$, is given by   a  flow like \eqref{eqflow csbp}, but with a  constant additional  negative drift  $g(n\delta)$ and suitably ``shifted'' noises. This construction must be done by means of a nested bi-recursive  (in $k$ and $n$) procedure. The  process $(Z_a^{\varepsilon, \delta}(x) : x\ge 0,  a\ge 0)$  will  then be defined by composing the dynamics  of consecutive  time strips.

Secondly, paralleling the previous  construction, we prune $(L_t^a: a\geq 0, t\geq 0)$ at a rate  that is constant inside rectangles of some two-dimensional grid, now defined in a bi-recursive way in terms of height and local time units. 
More precisely, at  heights of the form $k\varepsilon$, $k\in \NN$,  we split  the exploration times in blocks of  $\delta$  units of pruned local time at that level. Then, between height $k\varepsilon$ and $(k+1)\varepsilon$ and  when in the $n-$th exploration-time block, we prune the  original local time process   at rate $g(n\delta)$. 
\smallskip

The next step, crucially relying on   Proposition \ref{markRK}, will be to  prove that  the resulting punned local times, denoted  $(L_{T_x}^a(\varepsilon,\delta): a\geq 0, x \geq 0)$,  has the same law   as the  process $(Z_a^{\varepsilon, \delta}(x) : a\ge 0,  x\ge 0)$. 
The final and most technical  steps will be proving that, when $(\varepsilon, \delta)\to (0,0)$, for each $x\ge 0,  a\ge 0$, both the  convergences
 \[
 L_{T_x}^a(\varepsilon,\delta)\to L_{T_x}^a(m^*)\quad \mbox{ and }\quad Z_a^{\varepsilon, \delta}(x)\to Z_a(x)
 \]
hold in probability (in the respective probability spaces). This is enough to grant  finite dimensional convergence in distribution and, by the previous step, that   $(L_{T_x}^a: a\geq 0, x \geq 0)$   and $(Z_a(x) : x\ge 0,  a\ge 0)$ are equal in law as desired.

In the remainder of this section the two approximations are constructed and precise statements on their convergence and on their laws are given (proofs  are deferred to Sections 5 and 6).   We then end the present section deducing  the proof of Theorem \ref{main thm 2} from those results. 
      
\subsection{Grid approximation of  the stochastic flow of branching processes with competition}\label{gridapproxflow}

As in  \eqref{flow lb}, consider $W(\diffd s, \diffd u)$ a white noise process on $(0, \infty)^{2}$ based on the Lebesgue measure $\diffd s\otimes \diffd u$ and $N$ a Poisson random measure on $(0, \infty)^{3}$ with intensity $\diffd s\otimes \diffd \nu \otimes \Pi(\diffd r)$. 
Let $\varepsilon , \delta > 0 $  be fixed  parameters.  The process $(Z^{\varepsilon,\delta}_t(x): x\geq 0, t\geq 0)$ is constructed  in the same probability space  by means of the following bi-recursive procedure:
\begin{itemize}
 \item{\textbf{Time-step $k=0$ :}} For all $x \geq 0$, we set
 \[
 Z_0^{\varepsilon, \delta}(x): = Z_0 (x) = x;
 \] 
 \item{\textbf{Time-step $k+1$ :}} Assuming that $(Z^{\varepsilon,\delta}_t(x) : t \leq k\varepsilon, x\geq 0)$ is already constructed,  we define  
 \[
 (Z^{\varepsilon,\delta}_t(x) : t\in (k\varepsilon, (k+1)\varepsilon], x\geq 0)
 \] 
 as
  \begin{equation}\label{prunedflow}
  Z^{\varepsilon,\delta}_t(x):= Z_{k\varepsilon,t}^{\varepsilon,\delta}(Z^{\varepsilon,\delta}_{k\varepsilon}(x) ).
 \end{equation}         
where $(Z_{k\varepsilon,t}^{\varepsilon, \delta}(z): t\in (k\varepsilon, (k+1)\varepsilon] , z\geq 0)$ is an auxiliary process defined as follows:
 \begin{itemize}
  \item[*] {\textbf{Space-step $n=0$ :}}    For each $ t\in (k\varepsilon, (k+1)\varepsilon]$ we set
  \[
  Z_{k\varepsilon,t}^{\varepsilon, \delta}(0)=0.   
  \]  
  \item[*] {\textbf {Space-step $n+1$ :}}   Assuming that a process $(Z^{\varepsilon,\delta}_{k\varepsilon, t}(z) :  t\in (k\varepsilon, (k+1)\varepsilon], z\leq n\delta)$  has already been constructed, we set
  \[
  W^{(k , n)}(\diffd s,\diffd u) := W(\diffd s + k\varepsilon,\diffd \nu +Z_{k\varepsilon,s-}(n	\delta) ) 
  \]
  and 
  \[
  N^{(k , n)}(\diffd s,\diffd \nu, \diffd r) :=  N(\diffd s + k\varepsilon,\diffd \nu + Z_{k\varepsilon, s-}(n	\delta),\diffd r)
  \]
  which respectively are a Gaussian white noise with intensity $\diffd s\otimes \diffd u$ in $ \R_+ \times \R_+$ and a Poisson random measure with intensity $\diffd s \otimes \diffd \nu \otimes \Pi(\diffd r)$  in $ \R_+\times \R_+^2$, independent from each other (this can be checked using Lemma  \ref{weirdwnandpp} as in the proof of Proposition \ref{flowLBP}, iii)).  
  We then consider the stochastic flow of $\psi_{g( n\delta)}$-CSBP
  \[
  (Z^{(k, n)}_{t}(v): t\geq 0, v\geq 0)
  \]  
  defined by 
  \begin{equation}\label{Zkn}
   \begin{split}
     Z_t^{(k , n)}(v) = & v  - \alpha \int_0^t Z^{(k , n)}_s(v) \diffd s
                          + \sigma\int_0^t\int_0^{Z^{(k , n)}_{s-}(v)} W^{(k , n)}(\diffd s,\diffd u)\\
                        & + \int_0^t\int_0^{Z_{s-}^{(k , n)}(v)} \int_0^{\infty} r\tilde{N}^{(k , n)}(\diffd s , \diffd \nu,\diffd r)- \int_0^t g(n\delta) Z^{(k , n)}_s(v) \diffd s ;
   \end{split} 
  \end{equation}
  Here, $\psi_{g( n\delta)}$ is the  branching mechanism   \eqref{psitheta} with $\theta= g(n\delta)$, that is,
  \begin{equation}
  \label{psin}\psi_{g( n\delta)}(\lambda)=\psi(\lambda) + \lambda  g(n\delta). 
  \end{equation}
 We then define  for all $(t,z) \in (k\varepsilon, (k+1)\varepsilon]\times  (n\delta,(n+1)\delta]$ \, : \begin{equation}\label{Zept}
   Z_{k\varepsilon,t}^{\varepsilon,\delta} (z) := Z_{k\varepsilon,t}^{\varepsilon,\delta} (n\delta ) +   Z_{t-k\varepsilon}^{(k, n)}\left(z - n\delta \right) = \sum\limits_{l =0}^{n-1} Z_{t-k\varepsilon}^{(k, l)}(l \delta)+  Z_{t-k\varepsilon}^{(k, n)}\left(z - n\delta \right) . 
  \end{equation}
 \end{itemize}   
\end{itemize}

\begin{remark}\label{indepnoises}
It is immediate that  $( W^{(k,n)}(\diffd s,\diffd u),  N^{(k,n)}(\diffd s,\diffd \nu,\diffd r)  )_{n\in \NN}$ are independent as $k\in \NN$ varies. 
Moreover, reasoning as  proof of Proposition \ref{flowLBP}, iii)  inductively, one can also check that for fixed 
$k \in \NN$, $( W^{(k,n)}(\diffd s,\diffd u), N^{(k,n)}(\diffd s,\diffd \nu,\diffd r))$ are independent  as $n\in \NN$  varies.  Thus, the processes 
 \[
  \left( Z_h^{(k,n)}(v)=Z^{\varepsilon,\delta}_{k\varepsilon, k\varepsilon+h}(n\delta +v)- Z^{\varepsilon,\delta}_{k\varepsilon, k\varepsilon+h}(n\delta):  h\in [0,\varepsilon], v\in [0,\delta]  \right)_{k,n \in \NN }
 \]
are independent too. 
\end{remark}
  
\begin{definition}\label{gridflowepsdel}  We  refer to process $(Z_t^{\varepsilon,\delta}(x): t \geq 0 , x\geq 0)$   as the $(\varepsilon,\delta)$- grid approximation of the stochastic flow of continuous state branching processes with competition, or simply   {\it grid approximation of the stochastic flow}.  
\end{definition}
  
The parameters $(\varepsilon,\delta)$ being  fixed, for each $s,x\geq 0$ and $k\in \NN$ we introduce the notation 
  \begin{equation}\label{k}
 k_s=k_s(\varepsilon):=  \sup\{j \in \NN:  \, j\varepsilon < s\}
 \end{equation} 
 and 
  \begin{equation}\label{n}
 n_x^{k}=n_x^{k}(\varepsilon,\delta):=\sup\{m \in \NN: m\delta < Z^{\varepsilon,\delta}_{k\varepsilon} (x) \}.
 \end{equation}

By induction in $k\in\NN$, summing at each step over $n\in \NN$, $(Z_t^{\varepsilon,\delta}(x): t \geq 0 , x\geq 0)$ is seen to solve a sort of stochastic-flow equation, but with a  ``locally frozen'' drift term. 
More precisely, it is not hard to check
\begin{lemma} For each $x\geq 0$, 
the process $Z^{\varepsilon,\delta}(x) = (Z^{\varepsilon,\delta}_t(x):t\geq 0)$ is a solution to the  stochastic differential equation:
\begin{equation}\label{prunedSDE}
 \begin{aligned}
  Z_t^{\varepsilon, \delta} (x) = x &- \alpha \int_0^t Z^{\varepsilon,\delta}_s(x) \diffd s
                   + \sigma \int_0^t \int_0^{Z_{s-}^{\varepsilon,\delta}(x)}W(\diffd s,\diffd u)
                   + \int_0^t \int_0^{Z_{s-}^{\varepsilon, \delta}(x)}\int_0^{\infty} r \tilde{N}(\diffd s,\diffd \nu,\diffd r)\\
                                    &-\int_0^t \sum\limits_{n=0}^{n^{k_s}_x} g(n\delta) \left(
                                     Z_{k_s\varepsilon,s}((n+1)\delta \wedge Z^{\varepsilon,\delta}_{k_s\varepsilon}(x)) - Z_{k_s\varepsilon,s}(n\delta \wedge Z^{\varepsilon,\delta}_{k_s\varepsilon}(x))\right) \diffd s
                   , \quad  t\geq 0.\\
 \end{aligned}
\end{equation}
Moreover, by construction and properties of the flows  \eqref{Zkn} of CSBP, $ (Z^{\varepsilon,\delta}_t(x):t\geq 0,x\geq 0)$ has a bi-measurable version such that,   almost surely, for each $x\geq 0$ the process $t \mapsto Z_t^{\varepsilon, \delta} (x)$ is non-negative and c\`adl\`ag and, for every $t \geq 0$, $x \mapsto Z_t^{\varepsilon, \delta} (x)$ is  non-negative,  non-decreasing and c\`adl\`ag. 
\end{lemma}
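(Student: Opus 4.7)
The plan is to prove this lemma by induction on the time index $k$, showing that on each strip $t \in (k\varepsilon, (k+1)\varepsilon]$ the shifted stochastic integrals defining the auxiliary CSBP-flows $Z^{(k,n)}$ can be re-assembled, after summing over the space index $n$, into unshifted integrals with respect to $W$ and $N$ over cumulative regions.

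I would first fix $k \in \NN$ and work inside the strip $(k\varepsilon,(k+1)\varepsilon]$. For $w \geq 0$, set $z := Z^{\varepsilon,\delta}_{k\varepsilon}(w)$ and let $n^k_w$ be as defined before the statement. By construction \eqref{Zept}, for $s \in (k\varepsilon, (k+1)\varepsilon]$ one has the decomposition
\begin{equation*}
Z^{\varepsilon,\delta}_{k\varepsilon,s}(z) = \sum_{l=0}^{n^k_w - 1} Z^{(k,l)}_{s-k\varepsilon}(\delta) + Z^{(k,n^k_w)}_{s-k\varepsilon}(z - n^k_w \delta).
\end{equation*}
The key observation is that the shifts $\nu \mapsto \nu + Z_{k\varepsilon,s-}(n\delta)$ entering $W^{(k,n)}$ and $N^{(k,n)}$ are precisely chosen so that, for each $l$, the integration region $\{(s,u) : u \leq Z^{(k,l)}_{(s-k\varepsilon)-}(\delta)\}$ for $W^{(k,l)}$ corresponds, under the change of variables $u' = u + Z_{k\varepsilon,s-}(l\delta)$, to the region $\{(s,u') : Z_{k\varepsilon,s-}(l\delta) < u' \leq Z_{k\varepsilon,s-}((l+1)\delta)\}$ for $W$ (and similarly for $N$, $\tilde N$). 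Summing the SDEs \eqref{Zkn} for $l = 0, 1, \dots, n^k_w$ (truncating the last one at level $z - n^k_w\delta$) and using this change of variables, the Gaussian and Poissonian integrals telescope into single integrals over $(k\varepsilon, t] \times (0, Z^{\varepsilon,\delta}_{k\varepsilon,s-}(z)]$ with respect to $W$ and $\tilde N$. The linear drift $-\alpha \int Z^{(k,l)} ds$ sums to $-\alpha \int Z^{\varepsilon,\delta}_{k\varepsilon,s}(z) \diffd s$, while the frozen competition drifts $-\int_0^{t-k\varepsilon} g(l\delta) Z^{(k,l)}_s(\delta) \diffd s$ combine precisely into the discretized sum on the last line of \eqref{prunedSDE}, as $Z^{(k,l)}_{s-k\varepsilon}(\delta) = Z_{k\varepsilon,s}((l+1)\delta) - Z_{k\varepsilon,s}(l\delta)$ by the definition \eqref{Zept} and the flow property (with the truncation $(l+1)\delta \wedge z$ for $l = n^k_w$).

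Next, I would patch the strips together. Using \eqref{prunedflow} and the identity $Z^{\varepsilon,\delta}_t(w) = Z^{\varepsilon,\delta}_{k_t\varepsilon, t}(Z^{\varepsilon,\delta}_{k_t\varepsilon}(w))$, a straightforward induction on $k$ extends the within-strip SDE to an SDE on $[0,t]$ of the form claimed in \eqref{prunedSDE}. The only subtle points here are the treatment of the boundary times $\{k\varepsilon : k \geq 1\}$, where one must check that the stochastic integrals do not produce spurious contributions; this follows because $W$ and $N$ almost surely assign no mass to any deterministic time-slice $\{s\}\times \R_+$ (resp. $\{s\}\times \R_+^2$), and the drift is Lebesgue-a.e.\ defined.

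For the final statement on the bi-measurable version, I would invoke the existence of a jointly regular version for each stochastic flow of CSBP $Z^{(k,n)}$ coming from the Dawson--Li construction (this is precisely the content of properties i)--iv) recalled at the beginning of Section \ref{introCSBP} applied with branching mechanism $\psi_{g(n\delta)}$). Since by Remark \ref{indepnoises} the family $(Z^{(k,n)})_{k,n \in \NN}$ lives on a countable product structure of independent pieces, composing and pasting these c\`adl\`ag monotone flows via \eqref{Zept} and \eqref{prunedflow} preserves the non-negativity, monotonicity in $w$, and c\`adl\`ag regularity in both variables. The main technical point I expect to require real care is the identification of the shifted noises $W^{(k,n)}, N^{(k,n)}$ with restrictions of $W, N$ to disjoint adjacent strata --- this is the change-of-variables step underlying the telescoping, and it rests on applying Lemma \ref{weirdwnandpp} together with an induction on $n$ parallel to the proof of part iii) of Proposition \ref{flowLBP}, to justify treating the shifted noises as honest white noise / Poisson measure pieces that fit together into the global $W, N$.
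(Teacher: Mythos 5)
Your proposal is correct and follows essentially the route the paper itself indicates (the paper only sketches this lemma): induction on the time index $k$, summation over the space index $n$ within each strip so that the shifted integrals against $W^{(k,n)}$, $\tilde N^{(k,n)}$ telescope via the definitional change of variables into integrals over $(0, Z^{\varepsilon,\delta}_{s-}(w)]$ against $W$, $\tilde N$, with the frozen drifts assembling into the discretized competition term, and the regularity statement inherited from the Dawson--Li properties of the flows $Z^{(k,n)}$ under composition and pasting. Nothing in your argument deviates from or adds beyond what the paper's construction implicitly relies on.
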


The proof of the following result is deferred to Section  \ref{consitstochflow}.
   
\begin{proposition}[{\bf Convergence of the  stochastic flow grid approximation}]\label{convergenceflow}
For each $x\geq 0$ and $t \geq 0$, the r.v.  $Z_{t}^{\varepsilon,\delta}(x)$ converges  to $Z_{t}(x)$  in probability when $(\varepsilon,\delta)\to (0,0)$.
\end{proposition}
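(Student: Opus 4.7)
\begin{demo}[Plan of proof]
The strategy parallels the approach of \cite[Sect.~3]{DL} for stochastic flows of CSBPs, adapted to the present grid approximation. Since $(Z_t(x))$ and $(Z^{\varepsilon,\delta}_t(x))$ live on the same probability space and are driven by the same $W$ and $N$, I would work with the pathwise difference $D_t^{\varepsilon,\delta} := Z_t(x) - Z^{\varepsilon,\delta}_t(x)$. First I would introduce the localizing stopping time $\tau_M^{\varepsilon,\delta} := \inf\{t \ge 0 : Z_t(x) \vee Z^{\varepsilon,\delta}_t(x) \ge M\}$ and check that $\sup_{\varepsilon,\delta > 0} \P(\tau_M^{\varepsilon,\delta} \le T) \to 0$ as $M \to \infty$ for each fixed $T > 0$, using uniform first moment bounds for $Z^{\varepsilon,\delta}_T(x)$ that follow from \cite[Prop.~3.4]{DL} applied blockwise to the CSBP flows $Z^{(k,n)}$ of \eqref{Zkn}. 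This reduces the task to showing $\E[\sup_{s \le t \wedge \tau_M^{\varepsilon,\delta}} |D_s^{\varepsilon,\delta}|] \to 0$ for arbitrary fixed $M$ and $t$.

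Taking the difference of \eqref{flow lb} and \eqref{prunedSDE}, the Brownian and compensated Poisson contributions to $D^{\varepsilon,\delta}$ are martingale integrals over the symmetric differences of $[0,Z_{s-}(x)]$ and $[0,Z^{\varepsilon,\delta}_{s-}(x)]$. These can be controlled by $c(M)\int_0^t \E[\sup_{u \le s \wedge \tau_M^{\varepsilon,\delta}}|D_u^{\varepsilon,\delta}|]\,\diffd s$ plus vanishing terms, via the Yamada--Watanabe style smoothing used in the proofs of \cite[Thm.~2.2 and 2.3]{DL}. For the finite-variation part, letting $\Delta_n Z^{\varepsilon,\delta}_{k_s\varepsilon,s} := Z^{\varepsilon,\delta}_{k_s\varepsilon,s}((n+1)\delta\wedge Z^{\varepsilon,\delta}_{k_s\varepsilon}(x)) - Z^{\varepsilon,\delta}_{k_s\varepsilon,s}(n\delta\wedge Z^{\varepsilon,\delta}_{k_s\varepsilon}(x))$, I would decompose
\[
 G(Z_s(x)) - \sum_{n=0}^{n_x^{k_s}} g(n\delta)\,\Delta_n Z^{\varepsilon,\delta}_{k_s\varepsilon,s}
 = \big[G(Z_s(x)) - G(Z^{\varepsilon,\delta}_s(x))\big] + R^{\varepsilon,\delta}_s.
\]
The first bracket is bounded by $g(M)\,|D^{\varepsilon,\delta}_s|$ on $\{s \le \tau_M^{\varepsilon,\delta}\}$ by local Lipschitz continuity of $G$. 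Since the blocks tile $[0,Z^{\varepsilon,\delta}_s(x)]$ by construction, the remainder admits
\[
|R^{\varepsilon,\delta}_s| = \Bigg|\sum_n \int_{Z^{\varepsilon,\delta}_{k_s\varepsilon,s}(n\delta)}^{Z^{\varepsilon,\delta}_{k_s\varepsilon,s}((n+1)\delta)} \big(g(u) - g(n\delta)\big)\,\diffd u\Bigg| \le c(M)\,Z^{\varepsilon,\delta}_s(x)\Big(\delta + \sup_{u \le M}\big|Z^{\varepsilon,\delta}_{k_s\varepsilon,s}(u) - u\big|\Big).
\]

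To control the supremum factor I would observe that on each strip $(k\varepsilon,(k+1)\varepsilon]$ the map $v \mapsto Z^{\varepsilon,\delta}_{k\varepsilon,s}(v) - v$ is a telescoping sum over the $O(M/\delta)$ blocks of the increments $Z^{(k,n)}_{s-k\varepsilon}(\delta) - \delta$, which are independent (Remark~\ref{indepnoises}) and, as $\psi_{g(n\delta)}$-CSBP increments from mass $\delta$ over time at most $\varepsilon$, have variances of order $\delta\varepsilon$. A martingale bound of Doob type applied to the resulting telescoping sum, together with the deterministic drift of order $\delta\varepsilon$ per block, should yield $\E[\sup_{u \le M} |Z^{\varepsilon,\delta}_{k\varepsilon,s}(u)-u|] \le C(M)\sqrt{\varepsilon}$, uniformly in $\delta$ and in $k$. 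Plugging everything back, Gronwall's lemma then gives $\E[\sup_{s \le T \wedge \tau_M^{\varepsilon,\delta}}|D^{\varepsilon,\delta}_s|] \to 0$ as $(\varepsilon,\delta) \to (0,0)$, whence convergence in probability.

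The main obstacle is precisely the uniform bound on $\sup_{u\le M}|Z^{\varepsilon,\delta}_{k\varepsilon,s}(u) - u|$: a naive pointwise bound would scale like $M/\delta$ times the per-block fluctuation and blow up as $\delta \to 0$. The cancellation across blocks, provided by their independence and by the fact that each starts with the same mass $\delta$, is what makes the estimate vanish with $\varepsilon$ \emph{uniformly in} $\delta$; this delicate second-moment bookkeeping is the technical heart of the proof.
\end{demo}
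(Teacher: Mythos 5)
Your plan follows essentially the same route as the paper: work with the pathwise difference driven by the common noises, localize, treat the Gaussian and compensated Poisson parts by the Yamada--Watanabe-type smoothing of \cite{DL}, isolate from the drift an $O(\delta)$ discretization error plus a term controlled by $\sup_v|Z^{\varepsilon,\delta}_{k_s\varepsilon,s}(v)-v|$, and close with Gronwall. The paper's version of your key estimate is Lemma \ref{boundz-v}, which gives exactly the bound you aim for ($O(m\varepsilon)+O(\sqrt{m\varepsilon})$, uniform in $\delta$), and its drift decomposition (via a Stieltjes integral in the initial mass and integration by parts) is an equivalent repackaging of your tiling identity for $R^{\varepsilon,\delta}_s$.

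Two points of difference are worth noting. First, the paper localizes with $\tau_m=\inf\{t:Y_t(x)>m\}$, where $Y$ is the dominating $\psi$-CSBP from the comparison Lemma \ref{comparison property}; this stopping time does not depend on $(\varepsilon,\delta)$, so no uniform tightness estimate for $\tau_M^{\varepsilon,\delta}$ is needed at the end — your variant works but adds an extra step. Second, and more importantly, the step you yourself single out as the technical heart is not correct as literally stated: under the standing assumption \eqref{psi2} one only has $\int(x\wedge x^2)\Pi(\diffd x)<\infty$, so a $\psi_{g(n\delta)}$-CSBP increment from mass $\delta$ over time $\varepsilon$ need not have a finite variance, and the claim that the block increments "have variances of order $\delta\varepsilon$" fails whenever $\int_1^\infty r^2\Pi(\diffd r)=\infty$. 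The fix is the one the paper implements in Lemma \ref{boundz-v}: split the jumps at $r=1$, control the uncompensated large-jump and drift contributions in $L^1$ (producing the $m\varepsilon\bigl(|\alpha_0|+\int_1^\infty r\,\Pi(\diffd r)\bigr)+g(m)m\varepsilon$ terms), and apply a BDG/Doob maximal inequality only to the Brownian part and the compensated small-jump part, viewed as martingales in the initial-mass variable $v$ (which is the continuum analogue of your telescoping sum of independent block increments). With that modification your scheme goes through and coincides with the paper's proof.
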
     
            
\subsection{Grid approximation of  competitively marked local times}\label{gridapproxloctime}

 Given fixed parameters $\varepsilon , \delta > 0 $, we now introduce an approximation $\intens^{\varepsilon,\delta}$ of the intensity process $\intens^*$, which will define a pruning at a piecewise constant rate, the regions where it is constant being now  the interior of  rectangles in a  suitably defined height/local-time   grid. 
We will denote by $((\rho_s, m_s^{\varepsilon,\delta}): s \geq 0)$ the corresponding marked exploration process     and by
\[
L(\varepsilon,\delta)=(L_t^a(\varepsilon,\delta) : t \geq 0, a \geq 0)
\] 
the associated pruned local time process, that is, $L_t^a(\varepsilon,\delta) =\int_0^t \mathbf{1}_{\{ m_s^{\varepsilon,\delta} =0\} }\diffd L_s^s. $

\medskip 

\begin{figure}[h]
    \centering
   \includegraphics[width=1\textwidth]{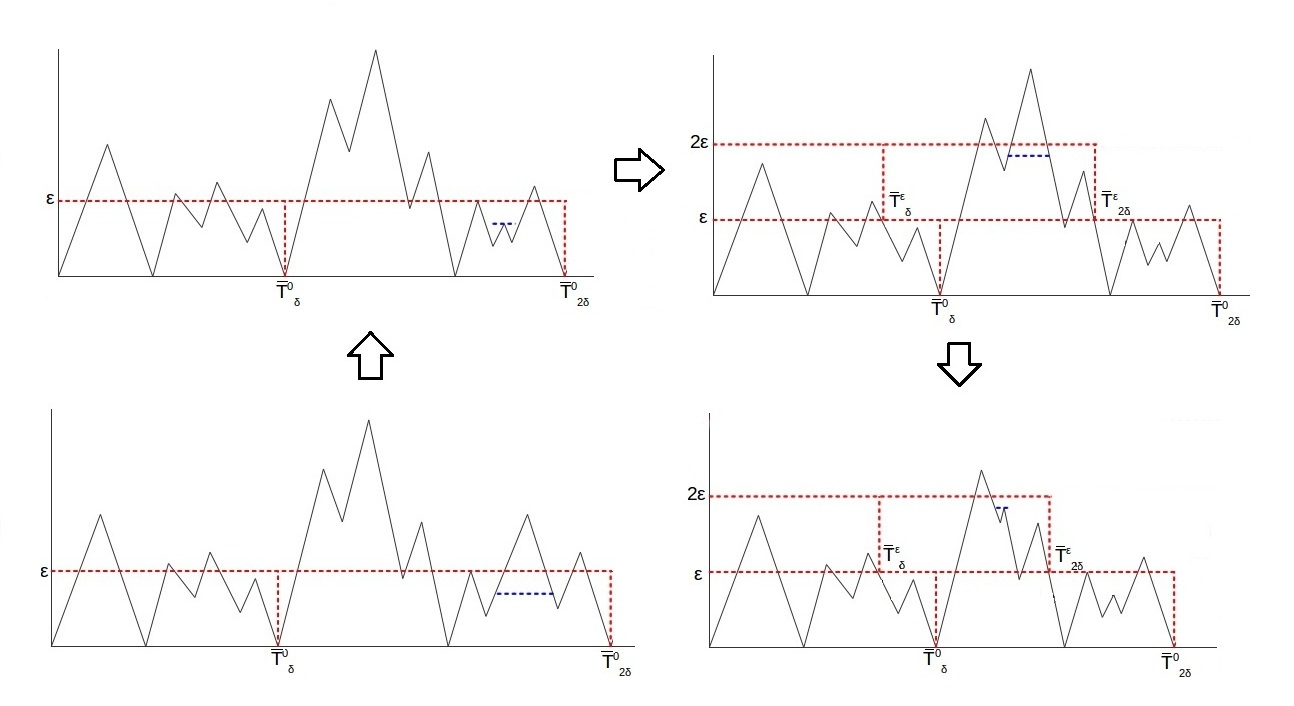}
    \caption{Construction of the local time grid approximation.}
\end{figure} 

Again, the construction consists in  a doubly recursive procedure.  
\begin{itemize}
 \item {\textbf{Height-step $0$ :}} we set
 $\intens^{\varepsilon,\delta}(t,0):=0 $ for all $t\geq 0$,   and $ \overline{T}_{x}^{0}=T_{x} $ for all $x\in \R. $
 \item {\textbf{Height-step $k+1$ :}} Assuming  that   $\left(\left( \intens^{\varepsilon,\delta}(t,h),  L^h_t(\varepsilon,\delta) \right): t \geq 0,  h \in [0, k\varepsilon]\right)$ is  constructed,  we set for all $x\in \R$,   
 \[
 \overline{T}^{k\varepsilon}_{x}:=\inf\{t\geq 0: L^{k\varepsilon}_t(\varepsilon,\delta) = x\}\, , \, 
 \mbox{    with } \inf \emptyset =\infty.
 \]
   The process
 $\left(\left(L^h_t(\varepsilon,\delta),\intens^{\varepsilon,\delta}(t,h)\right): t \geq 0,  h \in (k\varepsilon, (k+1)\varepsilon]\right)$ is then defined
 as follows:
 \begin{itemize}
   \item[*] {\textbf{Time-step $0$:}} For each $h \in (k\varepsilon, (k+1)\varepsilon] $ we set  $\intens^{\varepsilon,\delta}(0,h)=g(0).$
   \item[*] {\textbf{Time-step $n+1$:}} Assume that  $\overline{T}^{k\varepsilon}_{n\delta}<\infty$ and that  
   \[
   \left(\left(L^h_t(\varepsilon,\delta), \intens^{\varepsilon,\delta}(t,h)\right):  t \in (0,\overline{T}^{k\varepsilon}_{n\delta}] , h \in (k\varepsilon, (k+1)\varepsilon]) \right) \, \text{ is already defined. }
   \] 
  If  $ \overline{T}^{k\varepsilon}_{(n+1)\delta}< \infty$,  for all $(t ,h)\in (\overline{T}^{k\varepsilon}_{n\delta}, \overline{T}^{k\varepsilon}_{(n+1)\delta} ] \times (k\varepsilon,(k+1)\varepsilon]$  we  set  
   \begin{equation}\label{defintensepdel}
     \intens^{\varepsilon,\delta}(t,h):= g(n\delta) \,  ,
   \end{equation}
   \begin{equation} \label{defmepdel} 
     m^{\varepsilon,\delta}_t([0,h]):= m^{\varepsilon,\delta}_t([0,k\varepsilon]) + \int_{(k\varepsilon, \, h]} \int_0^{\infty} \I_{\{\nu <  g( n\delta)\}}\N_t(\diffd r, \diffd \nu) \, ,
   \end{equation} 
   and
   \begin{equation} \label{defLepdel0}
    L_t^h(\varepsilon,\delta): =   L_{\overline{T}^{k\varepsilon}_{n\delta}}^h(\varepsilon,\delta) +  \int_{\overline{T}^{k\varepsilon}_{n\delta}}^{t} \I_{\{m_s^{\varepsilon,\delta}=0\}}\diffd L_s^h .  \\
   \end{equation} 
        If  $\overline{T}^{k\varepsilon}_{(n+1)\delta}=\infty$, we apply this definitions for all $(t ,h)\in (\overline{T}^{k\varepsilon}_{n\delta}, \infty) \times (k\varepsilon,(k+1)\varepsilon]$, and the inductive procedure in $n$ stops. Note that in this case, we have $ n\delta \leq L_{\infty}^{k\varepsilon}(\varepsilon,\delta) \leq (n+1)\delta$.  
                 \end{itemize} 
\end{itemize}

 It is easily seen that $\intens^{\varepsilon,\delta} $ is ${\cal P}red(\N)$-measurable and that  $\intens^{\varepsilon,\delta} (s,h)<\intens^{\varepsilon,\delta} (s',h)$ can happen for $s\leq s'$ only if $L_s^{k_h\varepsilon}<L^{k_h\varepsilon}_{s'}$  and hence $h> H_{s,s'}$. 
It therefore is an adapted intensity process too.  
   
   \medskip 
   
 \begin{remark}\label{Tn+1deltaTndelta} Note that, for each  $\varepsilon,\delta >0$ and $(k,n) \in  \NN^2 $,   $L^{k\varepsilon}_{\overline{T}_{(n+1)\delta }^{k\varepsilon}} (\varepsilon,\delta)- L^{k\varepsilon}_{\overline{T}_{n\delta }^{k\varepsilon}} (\varepsilon,\delta)$ takes the value $\delta$ if $\overline{T}_{(n+1)\delta }^{k\varepsilon}<\infty$, the value $0$ if $\overline{T}_{n\delta }^{k\varepsilon}=\infty$ and some value in $[0,\delta]$ if  $\overline{T}_{n\delta }^{k\varepsilon}<\infty$ and   $\overline{T}_{(n+1)\delta }^{k\varepsilon}=\infty$.
\end{remark}
\medskip

For fixed   parameters $(\varepsilon,\delta)$ and for $t,h\geq 0$ and $k\in \NN$ we introduce the notation 
\begin{itemize}
 \item $\overline{n}_t^{k}=\overline{n}_t^{k}(\varepsilon,\delta):=\sup\{n \in \NN: n\delta <L_t^{k\varepsilon}(\varepsilon,\delta)\},$
 \end{itemize}
and   recall that we have set $k_{h}=k_{h} (\varepsilon)  = \sup \{k \in \NN : k\varepsilon <  h \}$ (see Subsection \ref{gridapproxflow}). 
Hence, by construction,    a.s  for each $t\geq 0$ and  all $h\in [0,H_t)$ it holds that  
\[
\intens^{\varepsilon,\delta} (t,h) =g(\overline{n}_t^{k_h}\delta ).
\]
 Moreover, we have
\begin{equation} \label{Lepsdeleq}
 L^h_t(\varepsilon, \delta) = \int_0^t \I_{\{m^{\varepsilon, \delta}_s=0\}}\diffd L_s^h=L^h_t(m^{\varepsilon, \delta}) 
\end{equation} 
 and 
\begin{equation} \label{mepsdeleq}
 m^{\varepsilon, \delta} _t([0,h])  =\int_0^{h} \int_0^{\infty} \I_{\left\{\nu < \intens^{\varepsilon,\delta} (t,r)\right\}}\N_t(\diffd \nu,\diffd r) =  m^{\intens^{\varepsilon, \delta} }_t(h) ,     
\end{equation}
 which should be compared to the system of equations \eqref{eqL*}--\eqref{eqm*}.

\begin{definition}\label{gridfloctime}  We  refer to the  processes  $(L_t^a (\varepsilon,\delta): a \geq 0 , t\geq 0)$ as the $(\varepsilon,\delta)$- grid approximation of pruned local time or simply {\it local time grid approximation}.  
\end{definition}

\medskip

In what follows, for each fixed height $a\geq 0$ we will denote by ${\cal E}_a$ the sigma-field 
\begin{equation}\label{calEa}
 {\cal E}_a:=  \left( (\rho_{\tilde{\tau}_t^a},{\cal N}_{\tilde{\tau}_t^a})\, : t\geq 0 \right)
\end{equation}
where $\tilde{\tau}_t^{a}$ is the right continuous inverse of $t\mapsto \tilde{A}^a_t:=\int_0^t \I_{\{ H_s \leq a\}} \diffd s$   (notice that  ${\cal E}_a$ increases with $a$). 

\medskip

The following results rely on  excursion theory for snake processes and  will be  proved in Section \ref{ConsistLawGridTree}:
 
\begin{proposition}[{\bf Law of the local time grid approximation}] \label{lawgrid}
For each fixed  $\varepsilon,\delta >0$ and $(k,n) \in  \NN^2 $,  the random variable $L^{k\varepsilon}_{\overline{T}_{(n+1)\delta }^{k\varepsilon}} (\varepsilon,\delta)- L^{k\varepsilon}_{\overline{T}_{n\delta }^{k\varepsilon}} (\varepsilon,\delta) $ is $ {\cal E}_{k\varepsilon}$-measurable, and the  process
\begin{equation}\label{L-L}
 \left( L^{k\varepsilon+h}_{\overline{T}_{n\delta +z}^{k\varepsilon}} (\varepsilon,\delta)- L^{k\varepsilon + h}_{\overline{T}_{n\delta}^{k\varepsilon}} (\varepsilon,\delta):  h\in (0,\varepsilon], z\in [0,\delta] \right)
\end{equation}
is ${\cal F}_{\overline{T}_{(n+1)\delta}^{k\varepsilon}}\bigwedge  {\cal E}_{(k+1)\varepsilon}$ measurable. 
Moreover, the conditional law  of the  process \eqref{L-L}  given   ${\cal F}_{\overline{T}_{n\delta}^{k\varepsilon}}\bigvee {\cal E}_{k\varepsilon}$  is equal  to the (unconditional) law of  
\[
 (Z^{(k,n)}_h(z \wedge y):  h\in [0,\varepsilon], z\in [0,  \delta]),
\] 
when
 $y=L^{k\varepsilon}_{\overline{T}_{(n+1)\delta }^{k\varepsilon}} (\varepsilon,\delta)- L^{k\varepsilon}_{\overline{T}_{n\delta }^{k\varepsilon}} (\varepsilon,\delta)$ (here, $Z^{(k,n)}$ is a stochastic flow of continuous state branching processes 
with branching mechanism  $ \psi_{g(n\delta) }$ as in \eqref{psin}).
\end{proposition}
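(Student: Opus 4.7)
The strategy is to exploit the spatial Markov property of the exploration process at level $k\varepsilon$, combined with Proposition~\ref{markRK} applied, after suitable conditioning, to the shifted above-$k\varepsilon$ marked exploration process restricted to the window of exploration time where the pruned local time at level $k\varepsilon$ grows from $n\delta$ to at most $(n+1)\delta$.

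First, I would establish the measurability claims. The key observation is that $\diffd L^{k\varepsilon}_s$ is supported on $\{H_s = k\varepsilon\}$; for such $s$, the indicator $\mathbf{1}_{\{m^{\varepsilon,\delta}_s=0\}}$ depends only on marks on the lineage strictly below height $k\varepsilon$, which are encoded in $\mathcal{E}_{k\varepsilon}$. By induction on $k$ (using \eqref{defintensepdel}--\eqref{defLepdel0} together with the corresponding local-time formula), the process $L^{k\varepsilon}_\cdot(\varepsilon,\delta)$ is $\mathcal{E}_{k\varepsilon}$-measurable, and so are the stopping times $\overline{T}^{k\varepsilon}_{n\delta}$ and the increment $y$. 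For the second measurability assertion, the pruning within the strip $(k\varepsilon,(k+1)\varepsilon]$ is determined by $\mathcal{E}_{(k+1)\varepsilon}$, while the exploration times involved are bounded above by $\overline{T}^{k\varepsilon}_{(n+1)\delta}$, yielding measurability with respect to $\mathcal{F}_{\overline{T}^{k\varepsilon}_{(n+1)\delta}} \bigwedge \mathcal{E}_{(k+1)\varepsilon}$.

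Next I would use the spatial Markov property to decompose the marked exploration process at level $k\varepsilon$. Defining $\rho^{k\varepsilon}_t=\rho_{\tau^{k\varepsilon}_t}$ as in \eqref{tauat} and
$$\mathcal{N}^{k\varepsilon}_t(\diffd r,\diffd \nu):=\mathcal{N}_{\tau^{k\varepsilon}_t}((k\varepsilon,k\varepsilon+\diffd r],\diffd\nu),$$
the snake property of $\mathcal{N}$ (Definition~\ref{def of 2d marked explo}) yields that $((\rho^{k\varepsilon}_t,\mathcal{N}^{k\varepsilon}_t))_{t\ge 0}$ is a marked exploration process, independent of $\mathcal{E}_{k\varepsilon}$ and equal in law to $((\rho_t,\mathcal{N}_t))_{t \ge 0}$. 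By construction of $\intens^{\varepsilon,\delta}$, throughout the shifted strip of heights $(0,\varepsilon]$ and throughout the corresponding exploration-time window (of total level-$0$ local time equal to $y$), the pruning is at the \emph{constant} rate $g(n\delta)$. Conditioning on $\mathcal{F}_{\overline{T}^{k\varepsilon}_{n\delta}} \vee \mathcal{E}_{k\varepsilon}$ and applying the strong Markov property of $(\rho^{k\varepsilon},\mathcal{N}^{k\varepsilon})$ at the (shifted-frame) stopping time corresponding to $\overline{T}^{k\varepsilon}_{n\delta}$ produces a fresh marked exploration process, to be pruned at constant rate $g(n\delta)$ and stopped when its level-$0$ local time reaches $y$. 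Proposition~\ref{markRK} applied with $\theta=g(n\delta)$ then identifies the resulting pruned local times on $[0,\varepsilon]\times[0,y]$ in law with the flow $(Z^{(k,n)}_h(z):h\in[0,\varepsilon],z\in[0,y])$. The minimum $z\wedge y$ in the statement accounts for the fact that when $z>y$ one has $\overline{T}^{k\varepsilon}_{n\delta+z}=\overline{T}^{k\varepsilon}_{(n+1)\delta}$, so the shifted process has already exhausted its $y$ units of level-$0$ local time and the increment is $Z^{(k,n)}_h(y)$.

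The main obstacle is implementing this conditional strong Markov argument rigorously in the time-changed frame: one must justify that, conditionally on $\mathcal{F}_{\overline{T}^{k\varepsilon}_{n\delta}}\vee\mathcal{E}_{k\varepsilon}$, the restriction of $(\rho^{k\varepsilon},\mathcal{N}^{k\varepsilon})$ to the relevant exploration window is a genuine marked exploration process of the same type as in Definition~\ref{def of 2d marked explo}, pruned by a Poisson process of constant intensity $g(n\delta)$. The delicate point is keeping the mark structure of $\mathcal{N}$ coherent across the below/above-$k\varepsilon$ split, in particular verifying via the compensator characterisation of Lemma~\ref{lem: prop mark adapt intens}(i) that, inside the window, the conditional intensity of retained marks along lineages indeed equals the constant $g(n\delta)$, so that the hypotheses of Proposition~\ref{markRK} are met.
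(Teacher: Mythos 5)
Your skeleton matches the paper's strategy at a high level (spatial Markov property at level $k\varepsilon$, reduction inside the strip to constant-rate pruning, Proposition \ref{markRK} with $\theta=g(n\delta)$, the $z\wedge y$ truncation, induction hidden in the measurability claims), but there is a genuine gap precisely at the step you yourself flag as ``the main obstacle'', and it is not a routine verification. Working with the full above-$k\varepsilon$ process $(\rho^{k\varepsilon},\mathcal{N}^{k\varepsilon})$ of Lemma \ref{PPPa} is not sufficient. The window $(\overline{T}^{k\varepsilon}_{n\delta},\overline{T}^{k\varepsilon}_{(n+1)\delta}]$ is cut according to the \emph{pruned} local time $L^{k\varepsilon}(\varepsilon,\delta)$, so in the shifted frame its level-$0$ local time increment is the raw increment $L^{k\varepsilon}_{\overline{T}^{k\varepsilon}_{(n+1)\delta}}-L^{k\varepsilon}_{\overline{T}^{k\varepsilon}_{n\delta}}$, which in general exceeds $y$; your claim that the window carries ``total level-$0$ local time equal to $y$'' is false. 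Moreover, inside the window the indicator $\I_{\{m^{\varepsilon,\delta}_s=0\}}$ appearing in the increments \eqref{L-L} also involves the marks on the part of the lineage below $k\varepsilon$, which were laid at the variable, history-dependent rates of the lower strips; the pruning seen by the shifted process is therefore not at the constant rate $g(n\delta)$, and the stopping time $\overline{T}^{k\varepsilon}_{n\delta}$ is not even a stopping time for the filtration of $(\rho^{k\varepsilon},\mathcal{N}^{k\varepsilon})$ alone. So the intended direct application of the strong Markov property plus Proposition \ref{markRK} does not go through as written.

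The missing idea is the paper's Lemma \ref{prunPEX} (pruning below a fixed level): conditionally on ${\cal E}_{k\varepsilon}$, the point process of excursions above $k\varepsilon$ restricted to those whose lineage below $k\varepsilon$ carries no mark, \emph{re-indexed by the pruned local time} at level $k\varepsilon$, is again a Poisson excursion process of intensity $\diffd\ell\otimes\mathbb{N}$ (stopped at $L^{k\varepsilon}_\infty(\varepsilon,\delta)$). This is proved by a thinning/time-change argument for a Poisson process on a ${\cal B}(\R_+)\otimes{\cal E}_{k\varepsilon}$-measurable random set (Lemma \ref{nvarphi}), and it is exactly what simultaneously discards the marked-below excursions and converts the pruned local time at level $k\varepsilon$ into the level-$0$ local time of a genuine fresh marked exploration process $(\widehat\rho,\widehat{\N})$; one then has $\overline{T}^{k\varepsilon}_{y}$ corresponding to $\widehat{T}_{y}$ and $L^{k\varepsilon+h}_{\overline{T}^{k\varepsilon}_{y}}(\varepsilon,\delta)=\int_0^{\widehat{T}_y}\I_{\{\widehat{m}^{\varepsilon,\delta}_u([0,h))=0\}}\diffd\widehat{L}^h_u$, after which the case $k=0$ argument (strong Markov property at $\widehat{T}_{n\delta}$ together with Proposition \ref{markRK}) applies conditionally on ${\cal E}_{k\varepsilon}$, inductively in $k$. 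A secondary imprecision in your measurability step: it is not the exploration-time-indexed process $t\mapsto L^{k\varepsilon}_t(\varepsilon,\delta)$ that is ${\cal E}_{k\varepsilon}$-measurable, but its local-time-indexed functionals (e.g. $x\mapsto L^{k\varepsilon}_{T_x}(\varepsilon,\delta)$ and the increments between the inverse times), obtained through the time change $\tilde\tau^{k\varepsilon}$ and the approximation \eqref{aprox2}; the paper's proof carries this out explicitly.
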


\begin{lemma}[{\bf Flow property of pruned  local-times}]\label{flowloctime} Let $\intens$ be an adapted intensity and for each $a>0$ denote by $\breve{T}_ x^a:=\inf\{ t>0: \, L_t^a(m^{\intens})>x \}\leq \infty, x\geq 0$,  the right-continuous inverse  of the pruned local   processes $(L_t^a(m^{\intens}): t\geq 0)$. Then, for every $a,b,c,x>0$ we a.s. have  
\[
 L^{a+b+c}_{\breve{T}^a_x}(m^{\intens})=L^{a+b+c}_{\breve{T}^{a+b}_{L^{a+b}_{\breve{T}^a_x}(m^{\intens})}}(m^{\intens})\quad. 
 \]
\end{lemma}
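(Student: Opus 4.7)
Set $\tau_1 := \breve{T}^a_x$, $y := L^{a+b}_{\tau_1}(m^{\intens})$ and $\tau_2 := \breve{T}^{a+b}_y$. Since $\diffd L^{a+b}$ is a continuous measure (inherited from the continuity of $L^{a+b}$), the pruned local time $t \mapsto L^{a+b}_t(m^{\intens}) = \int_0^t \I_{\{m^{\intens}_s = 0\}}\, \diffd L^{a+b}_s$ is continuous; hence $\tau_1 \le \tau_2$ and $L^{a+b}_t(m^{\intens}) = y$ for every $t \in [\tau_1, \tau_2]$. The claim therefore reduces to showing
$$ \int_{\tau_1}^{\tau_2} \I_{\{m^{\intens}_s=0\}}\, \diffd L^{a+b+c}_s \;=\; 0, \qquad \text{given that}\qquad \int_{\tau_1}^{\tau_2} \I_{\{m^{\intens}_s=0\}}\, \diffd L^{a+b}_s \;=\; 0. $$

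\textbf{Snake decomposition at level $a+b$.} The measure $\diffd L^{a+b+c}$ is supported on $\{H = a+b+c\} \subset \{H > a+b\}$, so the integrand above lives on the disjoint union of excursion intervals of $H$ above the level $a+b$. On each such excursion $(s_1,s_2)$, the semi-snake property of the marked exploration process (Lemma 3.2 ii) forces the restriction $s \mapsto m^{\intens}_s|_{[0, a+b)}$ to be a.s.\ constant (since $H_{s,s'} \ge a+b$ whenever $s, s' \in [s_1, s_2]$). Splitting $m^{\intens}_s = m^{\intens}_s|_{[0, a+b)} + m^{\intens}_s|_{[a+b, H_s)}$, ``dirty below'' excursions (where the common value is non-zero) contribute nothing to either integral; only ``clean below'' excursions can contribute.

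\textbf{Reduction to the subforest above $a+b$.} To rule out clean-below contributions, my plan is to invoke the spatial decomposition \eqref{rhoa} at height $a+b$. Let $\bar A_t := \int_0^t \I_{\{H_s > a+b\}}\, \diffd s$ with right-continuous inverse $\bar\tau$, consider the shifted height $\bar H_u := H_{\bar\tau_u} - (a+b)$ of $\rho^{a+b}$, with level-$h$ local times $\bar L^h$, and set $\bar M_u := m^{\intens}_{\bar\tau_u}|_{[0, a+b)}$ (snake-constant on each excursion of $\bar H$ above $0$) and $\bar m_u := m^{\intens}_{\bar\tau_u}|_{[a+b, H_{\bar\tau_u})}$. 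A change of variable, exploiting that $\bar\tau_u$ lies in $\{H = a+b\}$ for $\diffd \bar L^0$-a.e.\ $u$ and in $\{H = a+b+c\}$ for $\diffd \bar L^c$-a.e.\ $u$, yields
$$ L^{a+b}_t(m^{\intens}) \;=\; \int_0^{\bar A_t} \I_{\{\bar M_u = 0\}}\, \diffd \bar L^0_u, \qquad L^{a+b+c}_t(m^{\intens}) \;=\; \int_0^{\bar A_t} \I_{\{\bar M_u = 0\}}\,\I_{\{\bar m_u = 0\}}\, \diffd \bar L^c_u. $$
Reparametrizing $\diffd \bar L^0$ by the local time itself, the vanishing of the first integral over $(\bar A_{\tau_1}, \bar A_{\tau_2})$ translates into the fact that the clean-below set $\{u : \bar M_u = 0\}$ is Lebesgue-null on the corresponding local-time window $(\bar L^0_{\bar A_{\tau_1}}, \bar L^0_{\bar A_{\tau_2}})$. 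Conditional on the below-$(a+b)$ structure (which determines $u \mapsto \bar M_u$), the excursions of $\bar H$ above $0$ form a Poisson point process on the local time axis with intensity $\diffd u \otimes \bar{\mathbf{N}}(\diffd e)$, and thinning by the indicator of cleanness gives intensity $\I_{\{\bar M_u = 0\}}\,\diffd u \otimes \bar{\mathbf{N}}(\diffd e)$; as this is the zero measure on our local-time window, there are almost surely no clean-below excursions reaching level $c$ in that window, and the second integral vanishes.

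\textbf{Main obstacle.} The delicate step is the very last one: converting Lebesgue-nullness of the clean-below parameter set into vanishing of the $\diffd \bar L^c$-integral. Individual clean-below excursion entries are single local-time points with zero $\diffd \bar L^0$-mass yet may produce positive $\diffd \bar L^c$-mass through their shape. The resolution hinges on the jointly measurable Poisson parametrization of above-$0$ excursions of $\bar H$ by local time (with $\bar{\mathbf{N}}(\text{excursion reaches level } c)<\infty$ under the assumptions \eqref{psi2}, \eqref{psi3}), so that the thinned intensity governing $\diffd \bar L^c$-contributions is absolutely continuous in the $\diffd u$-variable on any window where clean-below has full measure, and hence vanishes as soon as that window has Lebesgue measure zero.
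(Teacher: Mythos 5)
Your strategy is sound, but it is a genuinely different and much heavier route than the paper's. The paper argues pathwise, with no excursion-measure computation and no conditioning: assuming $\breve{T}^a_x<\infty$ (the other case being trivial), it supposes some $t_0\in(\breve{T}^a_x,\breve{T}^{a+b}_{y})$, $y=L^{a+b}_{\breve{T}^a_x}(m^{\intens})$, satisfies $H_{t_0}=a+b+c$ and $m^{\intens}_{t_0}([0,a+b))=0$; then $t_0$ lies in a clean excursion $(\alpha^{(i)},\beta^{(i)})$ above level $a+b$ (same use of the semi-snake property as your second step), the pruned local time at $a+b$ at its start is $L^{a+b}_{\alpha^{(i)}}(m^{\intens})=L^{a+b}_{t_0}(m^{\intens})\le y$, which forces $\alpha^{(i)}\le \breve{T}^a_x$ — impossible since $H_{\breve{T}^a_x}=a<a+b$ while $H>a+b$ on $(\alpha^{(i)},\beta^{(i)})$. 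Hence the measure $\I_{\{m^{\intens}=0\}}\diffd L^{a+b+c}$ puts no mass on the interval, and one is done in a few lines. You instead run the conditional Poisson machinery above level $a+b$ (in the spirit of Lemmas \ref{PPPa} and \ref{prunPEX}): translate the hypothesis into Lebesgue-nullness of the clean set on a local-time window and kill the clean excursions reaching relative height $c$ by a thinned-intensity computation. This works, and it is essentially the same thinning argument the paper uses elsewhere (Lemma \ref{prunPEX}, Lemma \ref{nvarphi}), but to be rigorous you must make explicit two points you currently pass over: (i) the conditioning step requires both the thinning indicator \emph{and} the local-time window endpoints $L^{a+b}_{\breve{T}^a_x}$, $L^{a+b}_{\breve{T}^{a+b}_y}$ to be measurable with respect to the below-$(a+b)$ sigma-field ${\cal E}_{a+b}$ — true because $L^a(m^{\intens})$, $L^{a+b}(m^{\intens})$ and $L^{a+b}$ all accrue only at heights $\le a+b$, but without this the phrase "zero intensity on our window" is not legitimate, since the window is a priori defined through exploration times that do involve the above-level excursions; and (ii) the conclusion needs $\mathbf{N}(\sup H\ge c)<\infty$ (guaranteed by \eqref{psi3}) together with the identification, at attachment points, of per-excursion cleanness with the a.e.-defined indicator — exactly what Lemma \ref{prunPEX} provides and what your "main obstacle" paragraph gestures at somewhat vaguely. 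In short: your proof can be completed with tools already in the paper and buys a more systematic, generalizable argument, while the paper's proof buys brevity and avoids all measure-zero and conditioning technicalities.
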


\begin{proposition}[{\bf Consistency of the local-time grid approximation}] \label{convgrid}
For each $x\geq 0$ and $a \geq 0$, the random variable $L_{T_x}^a(\varepsilon,\delta)$ converges in probability to $L_{T_x}^a(m^*)$ as $(\varepsilon,\delta)\to 0$.
\end{proposition}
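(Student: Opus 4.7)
The plan is to first establish that the intensities $\vartheta^{\varepsilon,\delta}$ converge to the fixed-point intensity $\intens^*$ in a suitable integrated sense via a Grönwall-type argument derived from Lemma \ref{lemma pre-Gronwall}, and then to transfer this into convergence of the pruned local times using Poisson compensation of $\N$ and the occupation times formula \eqref{occupation formula generalized}.

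I would fix $x, a > 0$ and, for $M > x$, set $\tau := T^M$, noting that $\P(T_x > \tau)$ can be made arbitrarily small by choosing $M$ large. On the event $\{s \leq \tau\}$ all relevant pruned local times are bounded by $M$ and $g$ is $c(M)$-Lipschitz. Applying Lemma \ref{lemma pre-Gronwall} with $\intens_1 = \vartheta^{\varepsilon,\delta}$, $\intens_2 = \intens^*$ and $\tau$, and using $F(\intens^*) = \intens^*$ together with the triangle inequality $\Delta_s \le \Delta'_s + r_s$ where
\begin{equation*}
r_s := \int_0^{H_s}|\vartheta^{\varepsilon,\delta}(s,h) - F(\vartheta^{\varepsilon,\delta})(s,h)|\diffd h,
\end{equation*}
Grönwall's inequality would yield
\begin{equation*}
\E[\I_{\{s\le \tau,\,H_s\le a\}}\Delta_s]\le \E[\I_{\{s\le\tau\}}r_s] + c(M)\int_0^s e^{c(M)(s-u)}\E[\I_{\{u\le\tau\}}r_u]\diffd u.
\end{equation*}
The next task would be to show $\E[\I_{\{s\le\tau,H_s\le a\}}r_s]\to 0$ uniformly in $s$ on bounded intervals. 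Since $\vartheta^{\varepsilon,\delta}(s,h) = g(\overline{n}_s^{k_h}\delta)$ and $F(\vartheta^{\varepsilon,\delta})(s,h) = g(L_s^h(\varepsilon,\delta))$, Lipschitz continuity of $g$ gives
\begin{equation*}
|\vartheta^{\varepsilon,\delta}(s,h) - F(\vartheta^{\varepsilon,\delta})(s,h)|\le c(M)\bigl(\delta + |L_s^{k_h\varepsilon}(\varepsilon,\delta) - L_s^h(\varepsilon,\delta)|\bigr),
\end{equation*}
so only the height-discretization term is non-trivial. By Proposition \ref{lawgrid}, on each rectangle $(k\varepsilon, (k+1)\varepsilon]\times (\overline T_{n\delta}^{k\varepsilon}, \overline T_{(n+1)\delta}^{k\varepsilon}]$ the increments in $h$ of $L(\varepsilon,\delta)$ are distributed as those of a $\psi_{g(n\delta)}$-CSBP starting from a quantity $\le \delta$; classical $L^2$-modulus-of-continuity estimates for such CSBPs, in the spirit of \cite[Prop. 3.4]{DL}, would provide an $O(\sqrt\varepsilon)$ bound on the oscillation of $h\mapsto L_s^h(\varepsilon,\delta)$ over a strip of width $\varepsilon$. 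Summing the resulting $O(\varepsilon^{3/2})$ contributions over the $O(a/\varepsilon)$ strips below height $a$ yields $\E[\I_{\{s\le\tau,H_s\le a\}}r_s] = O(\delta + \sqrt\varepsilon)$.

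To transfer this intensity bound to the pruned local times, I would start from the pathwise inequality
\begin{equation*}
|L^a_{T_x\wedge\tau}(\varepsilon,\delta) - L^a_{T_x\wedge\tau}(m^*)| \le \int_0^{T_x\wedge \tau}\int_0^{H_s}\int_0^\infty \I_{\vartheta_\wedge(s,h)<\nu\le\vartheta_\vee(s,h)}\N_s(\diffd h,\diffd \nu)\diffd L_s^a,
\end{equation*}
where $\vartheta_\wedge$ and $\vartheta_\vee$ denote the pointwise minimum and maximum of $\vartheta^{\varepsilon,\delta}$ and $\intens^*$. Taking expectations, using that $\diffd L_s^a$ is $\F^\rho$-adapted and concentrated on $\{H_s = a\}$, and applying the Poisson compensation formula for $\N_s$ with respect to $(\gG_h^{(s)})_{h\ge 0}$ conditionally on $\gG_0^{(s)} = \F^\rho_s$ (exactly as in the proof of Lemma \ref{lemma pre-Gronwall}) would yield
\begin{equation*}
\E|L^a_{T_x\wedge\tau}(\varepsilon,\delta) - L^a_{T_x\wedge\tau}(m^*)| \le \E\int_0^\tau \Delta_s\diffd L_s^a.
\end{equation*}
I would then control the right-hand side by combining the Grönwall bound with the occupation formula \eqref{occupation formula generalized}, which identifies $\int_0^a \E\int_0^\tau \Delta_s \diffd L_s^{a'}\diffd a'$ with $\E\int_0^\tau \Delta_s \I_{\{H_s\le a\}}\diffd s$, an expression going to zero. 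The pointwise convergence at the prescribed level $a$ follows by a small-$\eta$ averaging over $[a,a+\eta]$ together with the joint continuity of $(t, a')\mapsto L_t^{a'}$ from \cite{DLG}, and letting $M\to\infty$ gives in-probability convergence as claimed.

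The main obstacle is precisely this last step: the Grönwall estimate naturally integrates against $\diffd s$, whereas the statement concerns integration against the singular random measure $\diffd L_s^a$ at a fixed level $a$. Bridging the two requires a careful interplay between the occupation formula \eqref{occupation formula generalized}, the Duquesne--Le Gall approximation \eqref{aprox2} of local times by occupation times, and the continuity of $a' \mapsto L_t^{a'}$, taking into account that the intensity convergence obtained at the first step need not be uniform in $a'$. A secondary technical issue is the uniform $L^2$-control of CSBP oscillations needed at Step~3, which must be summed over $O(a/\varepsilon)$ rectangles without spoiling the $\sqrt\varepsilon$ rate.
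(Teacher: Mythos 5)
Your overall architecture (localize by $T^M$, run a Gr\"onwall argument based on Lemma \ref{lemma pre-Gronwall} for the intensities, then transfer to local times by the compensation formula) is the same as the paper's, but the two places where you are vague are exactly where the paper's hardest work lies, and as sketched they do not close. First, the height-discretization term $\E\,|L_s^{k_h\varepsilon}(\varepsilon,\delta)-L_s^h(\varepsilon,\delta)|$ cannot be handled by ``Proposition \ref{lawgrid} plus a CSBP modulus of continuity'': the CSBP-flow identification in Proposition \ref{lawgrid} holds only at the inverse local times $\overline{T}^{k\varepsilon}_y$, i.e.\ after summing over \emph{complete} excursions above level $k\varepsilon$. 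At a fixed exploration time $s$ the map $h\mapsto L_s^{k\varepsilon+h}(\varepsilon,\delta)$ cuts through the excursion straddling $s$ and is not (conditionally) a CSBP, so no modulus estimate for $Z^{(k,n)}$ applies to it; moreover what is needed is a control valid simultaneously for all $s\le \tau$ (it enters the Gr\"onwall integrand at every $s$). This is why the paper proves the separate quantitative Lemma \ref{aproxlocprunvar} (a sup-over-time comparison of variably pruned local times at levels $0$ and $\varepsilon$, via $L^2$/BDG calculus under the excursion measure $\mathbf N$) and lifts it above level $k\varepsilon$ through the pruned snake of Lemma \ref{prunPEX} (Lemma \ref{aproxgridlemma} a)). That computation forces the additional localization $\tau^K=\inf\{s:\langle\rho_s,1\rangle\geq K\}$, which you dropped; without it the variance bounds under $\mathbf N$ require second-moment assumptions on $\Pi$ that the paper does not make, and the final limit then needs the careful joint choice $T=M$, $K=K(M)$ so that the error $\Gamma(M,K)$ vanishes as $M\to\infty$.

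Second, the step you yourself flag as the main obstacle — passing from the $\diffd s$-integrated intensity bound to $\E\int_0^{\tau}\Delta_s\,\diffd L_s^a$ at the \emph{fixed} level $a$ — is left unresolved, and the averaging-over-$[a,a+\eta]$/occupation-formula route you outline only yields convergence for a.e.\ level (or requires a uniform-in-level control of the intensity differences that is precisely what is missing). The paper does not need any such argument: in Lemma \ref{estimates} ii) the inner quantity $\int_0^h|\intens_1(t,r)-\intens_2(t,r)|\diffd r$ is bounded \emph{pathwise}, for $t\le\tau\le T^M$, by $c(M)\int_0^t \I_{\{H_s\le h\}}\bigl|m_s^{\tilde\intens_1}([0,H_s))-m_s^{\tilde\intens_2}([0,H_s))\bigr|\diffd s$, which is nondecreasing in $t$; hence the $\diffd L^a$-integral up to $\tau$ is bounded by $M$ times its value at $\tau$, and only then are the compensation formula and the occupation formula \eqref{occupation formula generalized} applied, producing a $\diffd s$-integral to which the Gr\"onwall bound (the paper's Lemma \ref{aproxgridlemma} b), which is your first step plus the grid errors) applies directly at the prescribed $a$. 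Supplying this monotonicity-plus-$L^a_\tau\le M$ bridge, the sup-in-time estimate for the level-discretization error, and the $\tau^K$ localization is what is needed to turn your outline into a proof; as written, these are genuine gaps rather than routine details.
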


\subsection{Proof of the Ray-Knight representation}

\begin{corollary}\label{identlawgrid}
$(Z^{\epsilon,\delta}_a(x): x\geq 0, a\geq 0)$ and $(L^a_{T_{x}}(\epsilon, \delta): x\geq 0, a\geq 0)$ have the same law.
\end{corollary}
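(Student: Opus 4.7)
The plan is to proceed by induction on the height index $K \in \NN$, showing at each step that the restricted processes $(Z^{\varepsilon,\delta}_a(x))_{a \in [0,K\varepsilon], x \geq 0}$ and $(L^a_{T_x}(\varepsilon,\delta))_{a \in [0,K\varepsilon], x \geq 0}$ have the same law; letting $K \to \infty$ then yields the corollary. The base case $K=0$ is immediate: by construction $Z^{\varepsilon,\delta}_0(x) = x$, while $\overline{T}^0_x = T_x$ together with $\intens^{\varepsilon,\delta}(\cdot,0) \equiv 0$ give $L^0_{T_x}(\varepsilon,\delta) = L^0_{T_x} = x$.

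For the inductive step, I compare the two processes on the strip of heights $(K\varepsilon,(K+1)\varepsilon]$ via matching block-by-block decompositions. On the flow side, the recursive definitions \eqref{prunedflow}--\eqref{Zept} directly yield, for $a \in (K\varepsilon,(K+1)\varepsilon]$,
\begin{equation*}
 Z^{\varepsilon,\delta}_a(x) \;=\; \sum_{l=0}^{n^K_x-1} Z^{(K,l)}_{a-K\varepsilon}(\delta) \,+\, Z^{(K,n^K_x)}_{a-K\varepsilon}\!\bigl(Z^{\varepsilon,\delta}_{K\varepsilon}(x) - n^K_x\delta\bigr),
\end{equation*}
and by Remark \ref{indepnoises} the family $\{Z^{(K,n)}\}_{n \in \NN}$ consists of mutually independent $\psi_{g(n\delta)}$-CSBP flows, globally independent of $(Z^{\varepsilon,\delta}_b(\cdot): b \leq K\varepsilon)$. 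On the local-time side, the flow property (Lemma \ref{flowloctime}) gives $L^a_{T_x}(\varepsilon,\delta) = L^a_{\overline{T}^{K\varepsilon}_{y(x)}}(\varepsilon,\delta)$ for $y(x) := L^{K\varepsilon}_{T_x}(\varepsilon,\delta)$, which telescopes over the points $\overline{T}^{K\varepsilon}_{n\delta}$ as
\begin{equation*}
 L^a_{T_x}(\varepsilon,\delta) \;=\; \sum_{n=0}^{\overline{n}^K_{T_x}-1}\!\bigl[L^a_{\overline{T}^{K\varepsilon}_{(n+1)\delta}}(\varepsilon,\delta) - L^a_{\overline{T}^{K\varepsilon}_{n\delta}}(\varepsilon,\delta)\bigr] \,+\, \bigl[L^a_{\overline{T}^{K\varepsilon}_{y(x)}}(\varepsilon,\delta) - L^a_{\overline{T}^{K\varepsilon}_{\overline{n}^K_{T_x}\delta}}(\varepsilon,\delta)\bigr].
\end{equation*}
Iterating Proposition \ref{lawgrid} in $n$ along the nested filtrations $\{{\cal F}_{\overline{T}^{K\varepsilon}_{n\delta}} \vee {\cal E}_{K\varepsilon}\}_{n \in \NN}$ shows that, conditionally on $(L^b_s(\varepsilon,\delta): b \leq K\varepsilon,\, s \geq 0)$, the block increments are independent in $n$ and the $n$-th one is distributed as a $\psi_{g(n\delta)}$-CSBP flow in the variables $(a-K\varepsilon, z)$, capped at the actual block local-time size $y_n$. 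By Remark \ref{Tn+1deltaTndelta} and the definition of $\overline{n}^K_{T_x}$, one has $y_n = \delta$ for every $n < \overline{n}^K_{T_x}$ and $y(x) - \overline{n}^K_{T_x}\delta \leq y_{\overline{n}^K_{T_x}}$ in the last summand, so the capping in Proposition \ref{lawgrid} is inactive exactly in the desired way.

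Applying the inductive hypothesis, $(L^{K\varepsilon}_{T_x}(\varepsilon,\delta))_{x \geq 0}$ and $(Z^{\varepsilon,\delta}_{K\varepsilon}(x))_{x \geq 0}$ share the same law; accordingly the random indices $\overline{n}^K_{T_x}$, $n^K_x$ and the residual offsets $y(x) - \overline{n}^K_{T_x}\delta$, $Z^{\varepsilon,\delta}_{K\varepsilon}(x) - n^K_x\delta$ match jointly in $x$. Since the conditional block-wise contributions of $L(\varepsilon,\delta)$ reproduce, up to the past, the family $\{Z^{(K,n)}\}_{n \in \NN}$ used in the flow construction, the two decompositions agree in law as random processes in $(a,x)$, closing the induction. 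The main technical obstacle will be the joint iteration of Proposition \ref{lawgrid} across all blocks $n \in \NN$ simultaneously: one must verify that the conditional-independence statement extends to the whole countable family and that joint measurability in $x$ is preserved when the random indices $\overline{n}^K_{T_x}$ and residuals depend on $x$. A secondary but genuine subtlety is the boundary case $\overline{T}^{K\varepsilon}_{(\overline{n}+1)\delta} = \infty$, which is harmlessly absorbed by the capping $z \wedge y$ built into Proposition \ref{lawgrid}.
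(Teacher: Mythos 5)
Your proposal is correct and follows essentially the same route as the paper's proof: both telescope the processes along the $\varepsilon$-height grid (the paper via the compositional identities and Lemma \ref{flowloctime}, you via the strip-by-strip decomposition), both split each strip into $\delta$-blocks of local time matched against the $Z^{(k,n)}$ flows, and both close the induction by invoking Proposition \ref{lawgrid} together with Remark \ref{indepnoises} to identify the conditional block laws given the accumulated information below the current level. The technical obstacles you flag (iterating Proposition \ref{lawgrid} jointly over $n$ in the nested filtrations, and absorbing the boundary case $\overline T^{k\varepsilon}_{(n+1)\delta}=\infty$ through the capping $z\wedge y$) are precisely the points the paper's proof addresses in its nested induction on $k$ and $n$, so they are surmountable along the lines you sketch.
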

                
\begin{proof}  
Notice that for every $a,x\geq 0$  
\begin{linenomath}
\begin{equation*}
\begin{split}
Z^{\varepsilon, \delta}_a (x)&=  Z^{\varepsilon, \delta}_{k_a\varepsilon,a} (Z^{\varepsilon,\delta}_{k_a\varepsilon}(x))= Z^{\varepsilon, \delta}_{k_a\varepsilon,a} \circ Z^{\varepsilon,\delta}_{k_a\varepsilon}(x)\\ & = Z_{k_a\varepsilon,a}^{\varepsilon,\delta} \circ Z_{(k_a-1)\varepsilon,k_a\varepsilon}^{\varepsilon,\delta} \circ Z_{(k_a - 1)\varepsilon}^{\varepsilon,\delta}(x) =\cdots\\
& =Z_{k_a\varepsilon,a}^{\varepsilon,\delta} \circ  Z_{(k_a-1)\varepsilon,k_a\varepsilon}^{\varepsilon,\delta} \circ \cdots \circ Z_{0,\varepsilon}^{\varepsilon,\delta}(x)
\end{split}
\end{equation*} 	
\end{linenomath}
  a.s.   On the other hand,
  \begin{linenomath}
  	\begin{equation*}
  	\begin{split}
  	L^a_{T_{x}}(\varepsilon, \delta)& = L^{a}_{\overline{T}^{ k_a\varepsilon}_{L^{k_a\varepsilon}_{T_x} (\varepsilon,\delta)}}( \varepsilon,\delta) =  L^{a}_{\overline{T}^{ k_a\varepsilon}_{\bullet }}( \varepsilon,\delta) \circ  L^{k_a\varepsilon}_{T_x} (\varepsilon,\delta)\\
  	&=L^{a}_{\overline{T}^{ k_a\varepsilon}_{\bullet }}( \varepsilon,\delta) \circ  L^{k_a\varepsilon}_{\overline{T}^{(k_a -1)\varepsilon}_\bullet} (\varepsilon,\delta) \circ L^{(k_a-1)\varepsilon}_{T_x} (\varepsilon,\delta) = \cdots\\
  	&= L^{a}_{\overline{T}^{ k_a\varepsilon}_{\bullet }}( \varepsilon,\delta) \circ  L^{k_a\varepsilon}_{\overline{T}^{(k_a -1)\varepsilon}_\bullet} (\varepsilon,\delta) \circ L^{(k_a-1)\varepsilon}_{\overline{T}^{(k_a -2)\varepsilon}_\bullet} (\varepsilon,\delta) \circ \cdots
  	\circ L^{\varepsilon}_{T_x} (\varepsilon,\delta)
  	\end{split}
  	\end{equation*}
  \end{linenomath}  
for every $a,x\geq 0$ a.s.   by Lemma \ref{flowloctime}. 
  It is thus enough to show that, for each $m\in\NN$, the  equality in law  
  \begin{equation}\label{LZeqlaw}
  \left( L^{k\varepsilon+h}_{\overline{T}_{y}^{k\varepsilon}} (\varepsilon,\delta):  h\in [0,\varepsilon], y\geq 0 \right)_{k\leq m} \overset{d}{=}\left( Z^{\varepsilon,\delta}_{k\varepsilon, k\varepsilon+h}(y \wedge Z^{\varepsilon,\delta}_{k\varepsilon}(\infty)  ):  h\in [0,\varepsilon], y\geq 0 \right)_{k\leq m}
 \end{equation}
holds. To that end we proceed by induction on $k$, with help of  the two families  of processes
  \[
  \left( A^{(k,n)}(h,z):= L^{k\varepsilon+h}_{\overline{T}_{n\delta +z}^{k\varepsilon}} (\varepsilon,\delta)- L^{k\varepsilon +h }_{\overline{T}_{n\delta}^{k\varepsilon}} (\varepsilon,\delta):   h\in [0,\varepsilon], z\in [0,\delta] \right)_{k,n \in \NN }
 \]
 and 
  \[
  \left(  B^{(k,n)}(h,z):= Z_h^{(k,n)}\left(z \wedge\left(   Z^{\epsilon,\delta}_{k\varepsilon }(\infty)-n\delta\right)_+ \right):  h\in [0,\varepsilon], z\in [0,\delta]  \right)_{k,n \in \NN }.
 \]
 Since $L^0_{\overline{T}^0_{(n+1)\delta}}(\varepsilon,\delta)-L_{\overline{T}^0_{n\delta}}^0(\varepsilon,\delta) =L^0_{T^0_{(n+1)\delta}} -L_{T^0_{n\delta}}=\delta$ a.s.\  for all $n\in \NN$,  Proposition \ref{lawgrid}    implies  that  the processes  $\left( A^{(0,n)} \right)_{n \in \NN }$  are independent. Thus, 
   $\left( A^{(0,n)} \right)_{n \in \NN }$  and $\left( B^{(0,n)} \right)_{n \in \NN }$ have the same law by Remark \ref{indepnoises},  and the equality in law  \eqref{LZeqlaw}  follows in the case $m=k=0$ by summing over $n\leq n^0_z$ (note that $Z^{\epsilon,\delta}_{0}(\infty)=\infty$) for each $x\geq 0$. 
   
 We assume next that for $m\geq 1$  the subfamilies  
  $\left( A^{(k,n)} \right)_{k\leq m-1, n \in \NN }$  and $\left( B^{(k,n)} \right)_{k\leq m-1,n \in \NN }$ have the same law. By  summing over $n_x^k$ for each $x\geq 0$  and every $k\leq m-1 $,    this also grants that  the equality in law \eqref{LZeqlaw} for $m-1$ (instead of $m$) holds.
  Let now $(\hat{Z}^{(m,n)})_{n\in \NN}$ be a family of processes equal in law to the sequence  of   processes  $(Z^{(m,n)})_{n\in \NN}$ defined in \eqref{Zkn}, which are independent by Remark  \ref{indepnoises}. Set $\tilde{n}=\inf\{ n\in \NN: \overline{T}^{m\varepsilon}_{(n+1)\delta}=\infty\} \leq \infty $ and, for each $m,n \in \NN$ and $y\geq 0$,  define $\tilde{Z}^{(m,n,y)}=\left(  \hat{Z}_h^{(m,n)}\left(z \wedge y \right):  h\in [0,\varepsilon], z\in [0,\delta]  \right)$.  By Proposition \ref{lawgrid}, the conditional law  of    $\left( A^{(m,n)} \right)_{ n \in \NN }$  given  $\left( A^{(k,l)} \right)_{k\leq m-1, l \in \NN }$ is equal to the (unconditional) law of
  $ \left( \tilde{Z}^{(m,n,y_n)} \right)_{n\in \NN}$
 with $(y_n)_{n\in \NN} \in \R_+^{\NN}$ given by 
 \[
  y_n= \delta \wedge( L^{m\varepsilon}_{\infty}(\varepsilon,\delta)-n\delta)_+ = \begin{cases} \delta & \mbox{if }  n <\tilde{n} \\   L^{m\varepsilon}_{\infty}(\delta,\varepsilon)-\tilde{n}\delta   & \mbox{if }n= \tilde{n}  \\
0 & \mbox{if }  n >\tilde{n}. \\ 
  \end{cases} 
  \]
 In particular, for every $z\in [0,\delta]$ and $n\in \NN$ one has $z\wedge y_n= z\wedge( L^{m\varepsilon}_{\infty}(\varepsilon,\delta)-n\delta)_+ $.  The desired equality in law follows then  for $k\leq m$  from the induction hypothesis and the fact that 
 \[
  L^{m\varepsilon}_{\infty}(\varepsilon,\delta)= \sum\limits_{n\in \NN} A^{(m-1,n)}(\varepsilon, \delta)\quad  \text{ and } \quad Z_{m\varepsilon}^{\varepsilon,\delta}(\infty)=\sum\limits_{n\in \NN} B^{(m-1,n)}(\varepsilon, \delta). 
  \]
\end{proof}
  
\begin{proof}[Proof of Theorem \ref{main thm 2}] 
 By Proposition \ref{convergenceflow} (resp. Proposition \ref{convgrid}), when $(\varepsilon,\delta)\to (0,0)$ the process $(Z^{\varepsilon,\delta}_a(x): x\geq 0, a\geq 0)$ (resp. $(L_{T_x}^a(\varepsilon,\delta): x\geq 0, a\geq 0)$) converges to $(Z_a(x): x\geq 0, a\geq 0)$ (resp. $(L_{T_x}^a(m^*): x\geq 0, a\geq 0)$) in the sense of finite dimensional distributions. 
 From these convergences and Corollary \ref{identlawgrid} we conclude that $(Z_a(x): x\geq 0, a\geq 0)$ and  $(L_{T_x}^a(m^*): x\geq 0, a\geq 0)$ have the same finite dimensional distributions.
\end{proof}

\begin{remark}
Although for all $\varepsilon>0,\delta>0$, the process $(Z^{\varepsilon,\delta}_a(x): x\geq 0, a\geq 0)$ has the same law as $(L_{T_x}^a(\varepsilon,\delta): x\geq 0, a\geq 0)$ and $(Z_a(x): x\geq 0, a\geq 0)$ the same as $(L_{T_x}^a(m^*): x\geq 0, a\geq 0)$, it is not clear whether   the couplings   $\left((Z^{\varepsilon,\delta}_a(x), Z_a(x)  ) : x\geq 0, a\geq 0\right)$ and  
$\left((L_{T_x}^a(\varepsilon,\delta), L_{T_x}^a(m^*)  ) : x\geq 0, a\geq 0 \right)$ are equal in law or not.
\end{remark}

\section{Consistency of the stochastic flow approximation}\label{consitstochflow}

\subsection{Some auxiliary results}

In this section our goal is to prove Proposition \ref{convergenceflow}. 
Recall that the solutions of the various equations \eqref{eqflow csbp}, \eqref{flow lb}  and \eqref{prunedSDE} that will be used in its proof are constructed on the same probability space and with the same driving processes.  The  following comparison property  will be useful. 
   
\begin{lemma}[{\bf Comparison property}]\label{comparison property}
Let $(Y_t (y):t \geq 0,y\geq 0) $  be the solution of equation \eqref{eqflow csbp}. 
For all  $\varepsilon,\delta \geq 0$, the process  $(Z_t^{\varepsilon, \delta}(z): t\geq 0,z\geq 0)$  defined by  \eqref{prunedSDE} and the solution $(Z_t(z): t\geq 0,z\geq 0)$ to equation \eqref{flow lb} satisfy
\begin{equation}\label{eqcomparison property} 
 \p \{Z^{\varepsilon,\delta}_t(v) \leq Y_t (w)\, , \forall t\geq 0\}= 1 \quad \mbox{ and } \quad  \p \{Z_t (v)\leq Y_t (w)\, , \forall t\geq 0\}= 1,
\end{equation}
for all $ 0\leq v \leq w$. 
In both cases we say that the ``comparison property'' holds.
\end{lemma}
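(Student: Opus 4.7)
The plan is to derive both inequalities from the pathwise comparison theorem for stochastic-flow SDEs driven by the same Gaussian white noise and compensated Poisson measure, namely \cite[Theorem~2.3]{DL}, which was already invoked in the proof of Proposition~\ref{flowLBP}. All three processes $Y$, $Z$, and $Z^{\varepsilon,\delta}$ are constructed on the same probability space, driven by the same pair $(W,N)$, and share the identical dispersion coefficients $\sigma(x,u)=\sigma\I_{\{u\leq x\}}$ and $r\I_{\{\nu\leq x\}}$, which were already verified to be admissible in the sense of \cite[Section~2]{DL} at the beginning of the proof of Proposition~\ref{flowLBP}. Therefore, in each case it suffices to check that the drift of $Y$ pointwise dominates that of the other process with $v\leq w$, and then to apply the theorem.

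For the first inequality $Z(v)\leq Y(w)$: the drift of $Y$ at level $y$ is $-\alpha y$, whereas the drift of $Z$ at level $z$ is $-\alpha z-G(z)$. Since $g\geq 0$ by the very definition of a competition mechanism, one has $G\geq 0$, and hence $-\alpha z-G(z)\leq -\alpha z$ for all $z\geq 0$. Combined with $v\leq w$, this is exactly the monotonicity hypothesis of \cite[Theorem~2.3]{DL}, and the conclusion follows.

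For the second inequality $Z^{\varepsilon,\delta}(v)\leq Y(w)$, the method is identical: the extra drift appearing in \eqref{prunedSDE}, namely $-\sum_{n=0}^{n^{k_s\varepsilon}_w}g(n\delta)\bigl[Z_{k_s\varepsilon,s}((n+1)\delta\wedge Z^{\varepsilon,\delta}_{k_s\varepsilon}(w))-Z_{k_s\varepsilon,s}(n\delta\wedge Z^{\varepsilon,\delta}_{k_s\varepsilon}(w))\bigr]$, is non-positive, because $g\geq 0$ and $x\mapsto Z_{k_s\varepsilon,s}(x)$ is non-decreasing by Proposition~\ref{flowLBP}(ii). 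The comparison theorem therefore applies and yields the desired inequality. The main (and only) subtlety I anticipate is that this extra drift is a genuinely path-dependent $(\F_t)$-adapted process rather than a pointwise function of $Z^{\varepsilon,\delta}_s(v)$; this is what makes a direct invocation of \cite[Theorem~2.3]{DL} slightly delicate. I expect it to still go through because the standard proof via It\^o's formula applied to $(Z^{\varepsilon,\delta}(v)-Y(w))_+$ only uses the drift ordering pointwise in time. Should that fail, I would replace the direct argument by a bi-recursion on $(k,n)$ mirroring the construction of $Z^{\varepsilon,\delta}$: on each strip $[k\varepsilon,(k+1)\varepsilon]$ use the flow property (Proposition~\ref{flowLBP}(iv)) to decompose $Y_{k\varepsilon,\cdot}(Y_{k\varepsilon}(w))$ into $\psi$-CSBP blocks driven by the shifted noises $W^{(k,n)}, N^{(k,n)}$, dominate each corresponding $\psi_{g(n\delta)}$-CSBP block $Z^{(k,n)}$ via the standard CSBP comparison (since $\psi_{g(n\delta)}=\psi+g(n\delta)\lambda$ differs from $\psi$ only by an additional non-negative linear drift), and close the induction using the hypothesis $Z^{\varepsilon,\delta}_{k\varepsilon}(v)\leq Y_{k\varepsilon}(w)$.
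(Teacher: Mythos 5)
Your proposal is correct in substance, but for the grid process it takes a different route from the paper, while your ``fallback'' is in fact the paper's actual proof. For $Z(v)\le Y(w)$ you and the paper do the same thing: a direct appeal to the Dawson--Li comparison theorem, since $-\alpha z-G(z)\le-\alpha z$ (the paper cites \cite[Thm.~2.2]{DL} here). For $Z^{\varepsilon,\delta}(v)\le Y(w)$ the paper does \emph{not} compare via equation \eqref{prunedSDE} at all: it exploits the bi-recursive construction, comparing each $\psi_{g(n\delta)}$-CSBP block $Z^{(k,n)}$ with a $\psi$-CSBP $Y^{(k,n)}$ driven by the \emph{same} shifted noises $(W^{(k,n)},N^{(k,n)})$ (again by \cite{DL}, since the drifts differ by $-g(n\delta)z\le 0$), then sums over $n$ inside each time strip and composes over $k$ --- exactly the induction you sketch as your alternative. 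Your primary argument (apply the comparison to \eqref{prunedSDE} directly, using that the frozen competition term is non-positive because $g\ge0$ and the inner flow is monotone in its initial value) is sound, but as you yourself note it cannot be closed by merely citing \cite[Thm.~2.3]{DL}: the extra drift is an adapted, path-dependent functional, not a function of the current state, so you must rerun the Yamada--Watanabe-type argument (the $\phi_j$-approximation of the positive part, as in the paper's Appendix~A.2), using that the lower drift is pointwise dominated by the Lipschitz drift $-\alpha y$ of $Y$ evaluated at the lower solution, together with a localization (e.g.\ by the stopping times $\tau_m$) to guarantee integrability of the frozen drift. What each approach buys: yours is shorter once the extended comparison lemma is written out and makes the mechanism (drift domination) transparent; the paper's block-wise induction avoids proving any new SDE comparison result, reducing everything to finitely many applications of the cited theorem to genuine CSBPs plus monotone summation and composition, at the cost of a somewhat heavier bookkeeping over the grid.
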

                     
\begin{proof}  
 The comparison property for $Z(v)$ with respect to $Y(w)$ follows directly from  \cite[Theorem 2.2]{DL}.   Whenever  $y\geq z$, the same result implies a comparison property for each  of the $\psi_{g(n\delta)}$-CSBP $( Z^{(k,n)}_{t}(z): t \geq 0)$ defined in \eqref{Zkn},  with respect to a $\psi$-CSBP  $(  Y_t^{(k , n)}(y): t \geq 0)$  constructed using  {\it the same  noises}  $( W^{(k,n)}(\diffd s,\diffd u),  N^{(k,n)}(\diffd s,\diffd \nu,\diffd r)  )_{n\in \NN}$. 
 Since $\left( Z^{\varepsilon,\delta}_{k\varepsilon, k\varepsilon+h}(z):  h\in [0,\varepsilon] \right)$ is build for each $k$ by  summing  processes $( Z^{(k,n)}_{t}(x): t \geq 0)$ defined  in \eqref{Zept},  over finitely many $n$  and for  suitable values of $x$,  an inductive argument in $n$ yields the comparison property of that process with respect to  the process $\big( Y_{k\varepsilon, k\varepsilon+h}(y):  h\in [0,\varepsilon]   \big)$  defined in \eqref{flow property CSBP}, which can be obtained  by similarly  summing over $n$ some of the above described  processes $(  Y_t^{(k , n)}(x): t \geq 0)$. 
 Last,   $(Z_t^{\varepsilon,\delta}(v): t\geq 0,v\geq 0)$ and $(Y_t(w): t\geq 0,w\geq 0)$ can   be  respectively obtained  from  the families  $( Z^{\varepsilon,\delta}_{k\varepsilon, k\varepsilon+\cdot})_{k\in \NN}$  and $( Y_{k\varepsilon, k\varepsilon+\cdot})_{k\in \NN}$  by composition, so an induction argument in $k$  completes the proof of the desired property.    
\end{proof}

To unburden the proof of Proposition \ref{convergenceflow}, we next prepare  a technical lemma.
 
\begin{lemma}\label{boundz-v}
Let $x\geq 0$ be fixed and for $m\geq 0$ let $\tau_{m} =\tau_{m}(x): = \inf\{t \geq 0 : Y_{t}(x) > m \}$. 
Then,  for all $s\geq 0$, 
\begin{linenomath}
\begin{equation*}
\begin{split}
\E \left[\I_{\{s \leq \tau_m\}} \sup\limits_{0 \leq v \leq Z^{\varepsilon,\delta}_{k_s\varepsilon}(x)} \left|Z^{\varepsilon,\delta}_{k_s\varepsilon,s}(v) - v\right| \right] \leq m\varepsilon &\left(|\alpha_0| +\int_1^{\infty} r \Pi(\diffd r)+g(m)\right)  \\
& + 2 \sqrt{m\varepsilon} 
\left(\sigma + \sqrt{\int_0^1 r^2\Pi(\diffd r)}\right),
\end{split}
\end{equation*}	
\end{linenomath}
where $\alpha_0 =  - \alpha - \int_1^{\infty} x\Pi(\diffd x) $.
\end{lemma}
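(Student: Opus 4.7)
The plan is to exploit the SDE satisfied by $v\mapsto Z^{\varepsilon,\delta}_{k_s\varepsilon,s}(v)$, decomposed according to the L\'evy--It\^o splitting at jump size $1$, and to estimate each resulting piece on the time-scale $s-k_s\varepsilon<\varepsilon$. The starting point is that by Lemma \ref{comparison property} and the flow property, on the event $\{s\leq \tau_m\}$ one has
\begin{equation*}
Z^{\varepsilon,\delta}_{k_s\varepsilon,u}(v)\ \leq\ Y_{k_s\varepsilon,u}(v)\ \leq\ Y_u(x)\ \leq\ m \qquad \text{for all } u\leq s \text{ and } v\leq Z^{\varepsilon,\delta}_{k_s\varepsilon}(x).
\end{equation*}
Writing equation \eqref{prunedSDE} from time $k_s\varepsilon$ and extracting the big-jump part of the L\'evy integral together with its compensator gives the decomposition
\begin{equation*}
Z^{\varepsilon,\delta}_{k_s\varepsilon,s}(v)-v \ =\ \alpha_0\!\int_{k_s\varepsilon}^{s}\! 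Z^{\varepsilon,\delta}_{k_s\varepsilon,u}(v)\,\diffd u\ +\ M^{(1)}_{v,s}+M^{(2)}_{v,s}+J_{v,s}\ -\ D_{v,s},
\end{equation*}
with $M^{(1)}$ the Brownian martingale, $M^{(2)}$ the compensated small-jump martingale, $J\geq 0$ the uncompensated big-jump contribution, and $D\geq 0$ the competition drift coming from the sum in \eqref{prunedSDE}.

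For the drift, competition, and big-jump terms I would use the monotonicity of $v\mapsto Z^{\varepsilon,\delta}_{k_s\varepsilon,u}(v)$ from property ii) of Proposition \ref{flowLBP} together with the monotonicity of $g$: each of these three functions is nondecreasing in $v$, so its supremum is attained at $v=Z^{\varepsilon,\delta}_{k_s\varepsilon}(x)$. Bounding $Z\leq m$ on the event then yields $|\alpha_0|m\varepsilon$ for the linear drift and $g(m)m\varepsilon$ for the competition term. For $J$, replacing $\I_{\{s\leq \tau_m\}}$ by the larger $\I_{\{u\leq \tau_m\}}$ inside the time integral and applying the compensation formula for $N$ produces
\begin{equation*}
\E\bigl[\I_{\{s\leq \tau_m\}}\sup_{v}J_{v,s}\bigr]\ \leq\ m\varepsilon\int_1^\infty r\,\Pi(\diffd r).
\end{equation*}

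For the martingales $M^{(1)},M^{(2)}$ I would first localize by replacing $Z^{\varepsilon,\delta}_{k_s\varepsilon,u-}(v)$ with $Z^{\varepsilon,\delta}_{k_s\varepsilon,u-}(v)\wedge m$ in the stochastic integrals, producing martingales $\widehat M^{(i)}_{v,s}$ that coincide with $M^{(i)}_{v,s}$ on $\{s\leq\tau_m\}$ for $v\leq Z^{\varepsilon,\delta}_{k_s\varepsilon}(x)$. The crucial point, at fixed $s$, is that $v\mapsto \widehat M^{(i)}_{v,s}$ is itself a martingale in $v$: the orthogonality of $W$ (respectively independence of the atoms of $N$) across disjoint vertical strips makes the increments $\widehat M^{(i)}_{v',s}-\widehat M^{(i)}_{v,s}$ uncorrelated with $\widehat M^{(i)}_{v,s}$. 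After localization, the quadratic variation at $v=+\infty$ is bounded by $\sigma^2 m\varepsilon$ for $i=1$ and by $m\varepsilon\int_0^1 r^2\Pi(\diffd r)$ for $i=2$. Doob's $L^2$ maximal inequality in the $v$ variable and Cauchy--Schwarz then deliver the $\sqrt{m\varepsilon}$ terms of the announced bound, and summing the five contributions gives the inequality with some universal $C_1$.

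The main obstacle is the rigorous justification of the martingale-in-$v$ property of $\widehat M^{(i)}_{v,s}$ for the \emph{composite} flow $Z^{\varepsilon,\delta}_{k_s\varepsilon,s}$, which is built by gluing the independent CSBP blocks $(Z^{(k_s,n)})_{n\geq 0}$ of \eqref{Zept} driven by the shifted noises of Remark \ref{indepnoises}. Concretely one has to check that, when summing across blocks, the blockwise orthogonality-in-$v$ of the Walsh integrals and of the Poisson stochastic integrals is preserved. This is essentially an extension to the piecewise setting of the in-$v$ martingale argument used in the proof of \cite[Proposition 3.4]{DL}; once in place, the remaining estimates are routine.
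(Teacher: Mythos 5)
Your proposal is correct and takes essentially the same approach as the paper's proof: decompose the SDE into drift, competition, big-jump, Brownian and small-jump compensated parts, control the monotone (nonmartingale) pieces via $Z\le m$ on $\{s\le\tau_m\}$ and compensation, and for the two martingale pieces truncate at $m$, establish the martingale-in-$v$ property with respect to the filtration generated by the vertically sliced noises, and apply a maximal inequality to obtain the $\sqrt{m\varepsilon}$ terms. The only cosmetic difference is that the paper invokes the Burkholder--Davis--Gundy $L^1$ inequality followed by Jensen, whereas you use Doob's $L^2$ maximal inequality followed by Cauchy--Schwarz; both give the same bound up to the universal constant $C_1$.
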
  

\begin{proof} 
 It follows  from \eqref{Zkn} and \eqref{Zept} that 
\begin{linenomath}
\begin{align*}
\E & \left[\I_{\{s \leq \tau_m\}}  \sup\limits_{0 \leq v \leq Z^{\varepsilon,\delta}_{k_s\varepsilon}(x)} \left|Z^{\varepsilon,\delta}_{k_s\varepsilon,s}(v) - v\right| \right] \\ 
&\leq \E\left[ \I_{\{s \leq \tau_m\}} \left( |\alpha_0| \int_{k_s\varepsilon}^s Z^{\varepsilon,\delta}_{\theta}(x)\diffd \theta +
\int_{k_s\varepsilon}^s \int_0^{Z^{\varepsilon,\delta}_{\theta}(x)}\int_1^{\infty} rN (\diffd \theta, \diffd \nu,\diffd r)\right)\right] \\
& \quad + \sigma \E\left[\I_{\{s \leq \tau_m\}} \sup\limits_{0 \leq v \leq Z^{\varepsilon,\delta}_{k_s\varepsilon}(x)} \left|\int_{k_s\varepsilon}^s \int_0^{Z^{\varepsilon,\delta}_{k_s\varepsilon,\theta}(v)} W(\diffd \theta, \diffd u)\right| \right] \\
& \quad +  \E\left[\I_{\{s \leq \tau_m\}} \sup\limits_{0 \leq v \leq Z^{\varepsilon,\delta}_{k_{s}\varepsilon}(x)} \left| \int_{k_s\varepsilon}^s \int_0^{Z^{\varepsilon,\delta}_{k_s\varepsilon,\theta}(v)}\int_0^1 r\tilde{N}(\diffd \theta, \diffd \nu,\diffd r)\right| \right] \\
& \quad + \E\left[\I_{\{s \leq \tau_m\}} g(Z^{\varepsilon,\delta}_{k_s\varepsilon} (x)) \int_{k_s\varepsilon}^{s} Z^{\varepsilon,\delta}_{\theta} (x)\diffd \theta \right]. 
\end{align*}	
\end{linenomath}
 Thanks to Lemma \ref{comparison property}, on $\{s \leq \tau_m\}$ it holds a.s. for every $(t,v) \in (k_s\varepsilon, s]\times [0,Z^{\varepsilon,\delta}_{k_s\varepsilon}(x))$ that $$Z_{k_s\varepsilon,t}^{\varepsilon,\delta}(v) = Z_{k_s\varepsilon,t}^{\varepsilon,\delta}(v)\wedge m. $$
 We deduce that
 \begin{equation}\label{bounddif}
  \begin{aligned}
   \E\left[\I_{\{s \leq \tau_m\}}  \sup\limits_{0 \leq v \leq Z^{\varepsilon,\delta}_{k_s\varepsilon}(x)} \left|Z^{\varepsilon,\delta}_{k_s\varepsilon,s}(v) - v\right| \right]\leq & \,  \sigma \E\left[  \sup\limits_{0 \leq v \leq Z^{\varepsilon,\delta}_{k_{s}\varepsilon}(x)\wedge m} |M_v^W(s)| \right]\\
   &  +   \E\left[ \sup\limits_{0 \leq v \leq Z^{\varepsilon,\delta}_{k_{s}\varepsilon}(x) \wedge m} |M_v^{N}(s)| \right]  \\
   & + m\varepsilon \left(|\alpha_0| + \int_1^{\infty} r \Pi(\diffd r)+ g(m)\right)    \, 
  \end{aligned}
 \end{equation}
 where, for each $s\geq 0$, $(M_v^W(s ))_{v \geq 0}$ and $(M_v^{N}(s ))_{v \geq 0}$ denote the processes respectively defined as
 \[
  M_v^W(s) = \int_{k_s\varepsilon}^s \int_0^{Z^{\varepsilon,\delta}_{k_s\varepsilon,\theta}(v) \wedge m} W(\diffd \theta, \diffd u) \quad \mbox { and } \quad  M_v^{N}(s) = \int_{k_s\varepsilon}^s \int_0^{Z^{\varepsilon,\delta}_{k_s\varepsilon,\theta}(v) \wedge m}\int_0^1 r\tilde{N}(\diffd \theta, \diffd \nu,\diffd r). 
 \]
 Standard It\^o  calculus for  stochastic integrals with respect to $W$ and $\tilde{N}$ show for each $s,v\geq 0$ that 
  \begin{equation*}
 \E\left[{M_{v}^{W}(s)}^2\right] =  \E \left[ \int_{k_{s}\varepsilon}^{s}(Z^{\varepsilon,\delta}_{k_{s}\varepsilon,\theta}(v) \wedge m)\diffd \theta\right]\leq m\varepsilon
 \end{equation*}
 and 
  \begin{equation*}
   \E\left[{M_{v}^{\tilde{N}}(s)}^2\right] =  \E\left[\int_{k_{s}\varepsilon}^{s} 
   \int_0^{Z_{k_{s}\varepsilon,\theta}(v) \wedge m}\int_0^{1} r^2\Pi(\diffd r)\diffd \theta \diffd \nu \right] \leq   m\varepsilon \, \int_0^{1} r^2\Pi(\diffd r) .
  \end{equation*}
 Moreover,  $(M_v^W(s ))_{v \geq 0}$ and $(M_v^{N}(s ))_{v \geq 0}$ are martingales in the variable $v$, issued from $0$, with respect to the filtration $(\mathcal{S}^{k_s\varepsilon,s}_{v})_{v \geq 0}$ given by 
\[
 \mathcal{S}^{k_s\varepsilon,s}_{v} := \sigma \left(\I_{[ 0,Z^{\varepsilon,\delta}_{k_s\varepsilon,t}(w)]}(u)  W(\diffd \theta, \diffd u) ,  \I_{[ 0,Z^{\varepsilon,\delta}_{k_s\varepsilon,t}(w)]}(\nu)  N(\diffd \theta, \diffd \nu , \diffd r)    :  \theta \in [k_s\varepsilon,s] , 0\leq  w \leq v \right).
\]
 Indeed, we have
 \begin{linenomath}
 \begin{align*}
 \E \left[\int_{k_{s}\varepsilon}^{s} \right. &\left. \int_0^{Z^{\varepsilon,\delta}_{k_{s}\varepsilon,\theta} (v+h) \wedge m} W(\diffd \theta, \diffd u) \Bigg|{\mathcal{S}^{k_s\varepsilon,s}_v} \right]\\
 & =  \E \left[\int_{k_{s}\varepsilon}^{s} 
 \int_0^{Z^{\varepsilon,\delta}_{k_{s}\varepsilon,\theta}(v) \wedge m} W(\diffd \theta, \diffd u) \bigg|{\mathcal{S}^{k_s\varepsilon,s}_v} \right] + \E \left[\int_{k_{s}\varepsilon}^{s} \int_{Z^{\varepsilon,\delta}_{k_{s}\varepsilon,\theta}(v) \wedge m}^{Z^{\varepsilon,\delta}_{k_{s}\varepsilon,\theta}(v+h) \wedge m} W(\diffd \theta, \diffd u) \bigg|{\mathcal{S}^{k_s\varepsilon,s}_v} \right]\\
 & = \int_{k_{s}\varepsilon}^{s} 
 \int_0^{Z^{\varepsilon,\delta}_{k_{s}\varepsilon,\theta}(v)\wedge m} W(\diffd \theta, \diffd u) + \E \left[\int_{k_{s}\varepsilon}^{s} \int_{Z^{\varepsilon,\delta}_{k_{s}\varepsilon,\theta}(v) \wedge m}^{Z_{k_{s}\varepsilon,\theta}(v+h) \wedge m} W(\diffd \theta, \diffd u)\right] \\
 & = \int_{k_{s}\varepsilon}^{s} 
 \int_0^{Z^{\varepsilon,\delta}_{k_{s}\varepsilon,\theta}(v)\wedge m } W(\diffd \theta, \diffd u) ,
 \end{align*}	
 \end{linenomath}
 where the second equality can be checked using Remark \ref{indepnoises} and  the flow property of the  $\psi_{g( n\delta)}$-CSBPs \eqref{Zkn}. 
 In a similar way, we get that
 \begin{linenomath}
 \begin{equation*}
 \begin{aligned}
 \E\left[\int_{k_{s}\varepsilon}^{s} \int_0^{Z^{\varepsilon,\delta}_{k_{s}\varepsilon,\theta}(v+h)\wedge m}\int_0^{1} r \tilde{N}(\diffd \theta, \diffd \nu,\diffd r) \bigg|\mathcal{S}^{k_s\varepsilon,s}_v \right]= \int_{k_{s}\varepsilon}^{s} \int_0^{Z^{\varepsilon,\delta}_{k_{s}\varepsilon,\theta}(v) \wedge m} \int_0^{1} r \tilde{N}(\diffd \theta, \diffd \nu,\diffd r).
 \end{aligned}
 \end{equation*}	
 \end{linenomath}
 
 Using Jensen's inequality and then Doob's inequality  for the martingale $(M_v^W(s))_{v \geq 0}$, the first term on the right hand side of \eqref{bounddif} can thus be bounded as follows:
 \begin{equation}\label{BDGW}
    \sigma \E\left[ \sup\limits_{0 \leq v \leq Z^{\varepsilon,\delta}_{k_{s}\varepsilon}(x) \wedge m} |M_v^W(s)| \right]  \leq   \sigma\sqrt{ \E\left[ \sup\limits_{0 \leq v \leq  m} M_v^W(s)^2 \right] }  \leq  2 \sigma   \sqrt{m\varepsilon}.
 \end{equation}
 In a similar way, for the second term on the right hand side of \eqref{bounddif} we get 
 \begin{linenomath}
 \begin{equation}\label{BDGN}
 \begin{split}
 \E\left[\sup\limits_{0 \leq v \leq Z^{\varepsilon,\delta}_{k_{s}\varepsilon}(x) \wedge m} |M_v^{\tilde{N}}(s)| \right] & \leq 2 \sqrt{m\varepsilon}\sqrt{\int_0^1 r^2\Pi(\diffd r)}. \\ 
 	\end{split}
 	\end{equation}	
 \end{linenomath}
 Using the bounds \eqref{BDGW} and \eqref{BDGN} in  \eqref{bounddif} the desired result follows.

\end{proof}

\subsection{Proof of Proposition \ref{convergenceflow}} 
             
\begin{proof}[Proof of Proposition \ref{convergenceflow}]
 The difference $\zeta_t^{\varepsilon,\delta}(x):= Z_t (x) - Z^{\varepsilon,\delta}_t(x)$ satisfies 
 \begin{equation}\label{eq:y-z}
  \begin{aligned}
   \zeta_t^{\varepsilon,\delta}(x) = -\alpha & \int_0^t \left(Z_s (x) - Z^{\varepsilon,\delta}_s (x)\right)\diffd s
   + \sigma \int_0^t\int_0^{\infty}\left(\I_{\{u <Z_{s-} (x)\}} - \I_{\{u <Z^{\varepsilon,\delta}_{s-} (x)\}}\right) W(\diffd s,\diffd u) \\
                                             & +  \int_0^t \int_0^{\infty}\int_0^\infty \left(\I_{\{\nu <Z_{s-} 
   (x)\}} - \I_{\{\nu <Z^{\varepsilon,\delta}_{s-} (x)\}}\right)r\tilde{N}(\diffd s,\diffd \nu,\diffd r) \\
                                             & -\int_0^t \left[G(Z_s(x)) - G(Z^{\varepsilon,\delta}_s(x)) 
   \right]\diffd s - \int_0^t \int_0^{Z^{\varepsilon,\delta}_s(x)} g(v)\diffd v \diffd s\\
                                             & +  \int_0^t  \sum\limits_{n =0}^{ 
   n^{k_s}_x} g(n\delta) \left(Z_{k_s\varepsilon,s}((n+1)\delta \wedge Z^{\varepsilon,\delta}_{k_s\varepsilon}(x)) - Z_{k_s\varepsilon,s}(n\delta \wedge Z^{\varepsilon,\delta}_{k_s\varepsilon}(x)) \right)\diffd s. \\  \end{aligned}
 \end{equation}  
  (where $n^{k_s}_x$ corresponds to $n_x^k$ as  defined in \eqref{n}, with $k=k_s$ as in \eqref{k}).  Observe that
 \[
  Z_{k_s\varepsilon,s}((n+1)\delta \wedge Z^{\varepsilon,\delta}_{k_s\varepsilon}(x)) - Z_{k_s\varepsilon,s}(n\delta \wedge Z^{\varepsilon,\delta}_{k_s\varepsilon}(x))
                                              = \int_{n\delta \wedge Z^{\varepsilon,\delta}_{k_s\varepsilon}(x)} 
  ^{(n+1)\delta \wedge Z^{\varepsilon,\delta}_{k_s\varepsilon}(x)} \diffd_uZ_{k_s\varepsilon,s}(u),
  \]
 which we can substitute in the last term in the right-hand side of \eqref{eq:y-z} to get              \begin{equation}\label{eq1:y-z}
  \begin{aligned}
   \zeta_t^{\varepsilon,\delta}(x)= & - \alpha \int_0^t \left(Z_s (x) - Z^{\varepsilon,\delta}_s (x)\right)\diffd s\\
   & + \sigma \int_0^t\int_0^{\infty}\left(\I_{\{u <Z_{s-} (x)\}} - \I_{\{u < Z^{\varepsilon,\delta}_{s-} (x)\}}\right) W(\diffd s,\diffd u) \\
                                    & + \int_0^t \int_0^{\infty}\int_0^\infty \left(\I_{\{\nu <Z_{s-} (x)\}} 
    - \I_{\{\nu <Z^{\varepsilon,\delta}_{s-} (x)\}}\right)r\tilde{N}(\diffd s,\diffd \nu,\diffd r)\\ 
                                    & -\int_0^t \left[G(Z_s(x)) - G(Z^{\varepsilon,\delta}_s(x)) \right]\diffd s\\
                                    & - \int_0^t \sum\limits_{n =0}^{ n^{k_s}_x}
    \int_0^{Z^{\varepsilon,\delta}_{k_s\varepsilon}(x)}  \I_{\{n\delta<u\leq (n+1)\delta\}} \left[ g(u) - g(n\delta) \right]  d_u\, Z^{\varepsilon,\delta}_{k_s\varepsilon,s}(u)\diffd s\\
                                    & - \int_0^t \left[ \int_0^{Z^{\varepsilon,\delta}_s(x)} g(v)\diffd v 
    - \int_0^{Z^{\varepsilon,\delta}_{k_s\varepsilon}(x)}g(u)\diffd_u Z^{\varepsilon,\delta}_{k_s\varepsilon,s}(u)  \right]\diffd s. 
  \end{aligned}
 \end{equation}  
 Applying It\^o's formula to an approximation of the absolute value as in the proof of  \cite[Theorem 2.1]{DL} and dealing with second order terms \eqref{eq1:y-z} in a similar way, we deduce (see Appendix  \ref{proofboundFuLi} for details)
 that for each $m\in \NN$, 
 \begin{equation}\label{boundFuLi}
  \begin{split}
   \E|\zeta^{\varepsilon,\delta}_{t\wedge\tau_{m}}(x)| \leq & \left(|\alpha_0|+ \int_{1}^{\infty} r\Pi(dr) 
   + g(m)\right)\E\left[\int_{0}^{t} |\zeta^{\varepsilon,\delta}_{s\wedge\tau_{m}}(x)|\diffd s\right] \\
                                                            & + \E \left[ \int_0^{t\wedge \tau_m} \sum\limits_{n = 
   0}^{ n^{k_s }_x} \int_0^{Z^{\varepsilon,\delta}_{k_s\varepsilon}(x)}\I_{\{n\delta<u\leq (n+1)\delta\}}
   \left| g(u) - g(n\delta)\right| d_uZ^{\varepsilon,\delta}_{k_s\varepsilon,s}(u)\diffd s \right]\\
                                                            & + \E \int_0^{t\wedge\tau_{m}} 
   \left|\int_0^{Z^{\varepsilon,\delta}_{s}(x)} g(v)\diffd v - \int_0^{Z^{\varepsilon,\delta}_{k_s\varepsilon}(x)} g(v)\diffd v\right| \diffd s  \\
                                                            & + \E\int_0^{t\wedge\tau_{m}} \left| 
   \int_0^{Z^{\varepsilon,\delta}_{k_s\varepsilon}(x)}g(u)\diffd u 
   - \int_0^{Z^{\varepsilon,\delta}_{k_s\varepsilon}(x)}g(u)\diffd_u Z^{\varepsilon,\delta}_{k_s\varepsilon,s}(u)  \right| \diffd s.\\
  \end{split}
 \end{equation}
  Since $ \left| u- n\delta \right|   \I_{\{n\delta <u \leq (n+1)\delta\}}\leq \delta$ and  $ Z^{\varepsilon,\delta}_{k_s\varepsilon,s}( Z^{\varepsilon,\delta}_{k_s\varepsilon}(x) )= Z^{\varepsilon,\delta}_{s}(x)\leq m$ for $s\leq \tau_m$, the second term on the right hand side  of  \eqref{boundFuLi}  is bounded by $c(m)m\delta t$.  
Thanks to the a.e. differentiability of $u\mapsto g(u)$ (following from Hypothesis {\bf (H)} and Rademacher's theorem),  we can use integration by parts inside the last term of \eqref{boundFuLi} in order to rewrite
 \[
 \int_0^{Z^{\varepsilon,\delta}_{k_s\varepsilon}(x)} g(u)\diffd u = g(Z^{\varepsilon,\delta}_{k_s\varepsilon}(x)) Z^{\varepsilon,\delta}_{k_s\varepsilon}(x) - \int_0^{Z^{\varepsilon,\delta}_{k_s\varepsilon}(x)} u \, g'(u)\diffd u,
 \]
and 
 \begin{linenomath}
 \begin{align*}
 - \int_0^{Z^{\varepsilon,\delta}_{k_s\varepsilon}(x)} g(u) \diffd_uZ^{\varepsilon,\delta}_{k_s\varepsilon,s}(u) & =  - g(u)  Z^{\varepsilon,\delta}_{k_s\varepsilon,s}(u) \bigg|_{u=0}^{u=Z^{\varepsilon,\delta}_{k_s\varepsilon}(x)}  +  \int_0^{Z^{\varepsilon,\delta}_{k_s\varepsilon}(x)} Z^{\varepsilon,\delta}_{k_s\varepsilon,s}(u)g'(u)\diffd u\\
 & = - g(Z^{\varepsilon,\delta}_{k_s\varepsilon}(x))Z^{\varepsilon,\delta}_{s}(x) 
 + \int_0^{Z^{\varepsilon,\delta}_{k_s\varepsilon}(x)} Z^{\varepsilon,\delta}_{k_s\varepsilon,s}(u) g'(u)\diffd u.
 \end{align*}	
 \end{linenomath}
 This yields the following upper bound for the sum of the third and fourth lines in  \eqref{boundFuLi}:
 \begin{linenomath}
 \begin{equation*}
 \begin{aligned}
 \E & \int_0^{t\wedge\tau_{m}} \left(g(m) + g(Z^{\varepsilon,\delta}_{k_s\varepsilon}(x))\right) \left|Z^{\varepsilon,\delta}_{k_s\varepsilon}(x) - Z^{\varepsilon,\delta}_{s}(x)\right| \diffd s \\
 & + \E\int_0^{t\wedge\tau_{m}} \int_0^{Z^{\varepsilon,\delta}_{k_s\varepsilon}(x)}\left| u - Z^{\varepsilon,\delta}_{k_s\varepsilon,s}(u)  \right| g'(u)\diffd u  \diffd s .
 \end{aligned}
 \end{equation*}	
 \end{linenomath}
 
 Summarizing, we have 
 \begin{equation}\label{boundzeta}
  \begin{aligned}
   \E|\zeta^{\varepsilon,\delta}_{t\wedge\tau_{m}}(x)| \leq  \bigg(|\alpha_0| + & \int_{1}^{\infty} r\Pi(\diffd r) + g(m)\bigg)\E\left[ \int_{0}^{t} |\zeta^{\varepsilon,\delta}_{s\wedge\tau_{m}}(x)|\diffd s\right]  + c(m)m\delta t \\
                                                                             + \E & \int_0^{t\wedge\tau_{m}} 
   \left(g(m) + g(Z^{\varepsilon,\delta}_{k_s\varepsilon}(x))\right) \left| Z^{\varepsilon,\delta}_{k_s\varepsilon}(x)- Z^{\varepsilon,\delta}_{s}(x)\right| \diffd s \\     
                                                                             + \E & \int_0^{t\wedge\tau_{m}} 
   \int_0^{Z^{\varepsilon,\delta}_{k_s\varepsilon}(x)}\left| u - Z^{\varepsilon,\delta}_{k_s\varepsilon,s}(u)  \right| g'(u)\diffd u  \diffd s \\ 
                                                           \leq \bigg(|\alpha_0|+ & \int_{1}^{\infty} r\Pi(\diffd 
   r)+g(m)\bigg) \E\left[ \int_{0}^{t}|\zeta^{\varepsilon,\delta}_{s\wedge\tau_{m}}(x)|\diffd s\right]  + c(m)m\delta t\\
                                                                                + & 2g(m)\E 
   \int_0^{t\wedge\tau_{m}}\left|Z^{\varepsilon,\delta}_{k_s\varepsilon}(x) -  Z^{\varepsilon,\delta}_{k_s\varepsilon,s} (Z^{\varepsilon,\delta}_{k_s\varepsilon}(x))\right|\diffd s \\     
                                                                                + & c(m)\E\int_0^{t\wedge\tau_{m}} 
   \int_0^{Z^{\varepsilon,\delta}_{k_s\varepsilon}(x)}\left| u - Z^{\varepsilon,\delta}_{k_s\varepsilon,s}(u)  \right|\diffd u\diffd s. \\ 
  \end{aligned}
 \end{equation}
 We then use Lemma \ref{boundz-v} to bound the last two terms  in \eqref{boundzeta} and get that
 \begin{linenomath}
 \begin{equation*}
  \begin{aligned}
   \E|\zeta^{\varepsilon,\delta}_{t\wedge\tau_{m}}(x)| \leq & \left(|\alpha_0|+ \int_{1}^{\infty} r\Pi(\diffd r)+g(m)\right)\E\left[ \int_{0}^{t} |\zeta^{\varepsilon,\delta}_{s\wedge\tau_{m}}(x)|\diffd s\right] + c(m)m\delta t\\
                                                            & + \left(2g(m) + c(m)  \right)tm\varepsilon \left( 
   |\alpha_0| +\int_1^{\infty} r \Pi(\diffd r)  + g(m) \right) \\
                                                            & + \left(2g(m) + c(m) \right) 2 t \sqrt{m\varepsilon} 
    \left(\sigma + \sqrt{\int_0^1 r^2\Pi(\diffd r)}\right).
  \end{aligned}
 \end{equation*}
\end{linenomath}
 Since $\zeta_{s}^{\varepsilon,\delta}(x) \leq 2 m$ when  $0 < s \leq \tau_{m}$ by Lemma \ref{comparison property},  we can use  Gronwall's lemma and get
 \begin{linenomath}
 	\begin{equation*}
 	\begin{split}
 	\E|\zeta^{\epsilon,\delta}_{t\wedge\tau_{m}}(x)| \leq & \left [\delta c(m)m  + \varepsilon \left(2g(m) + c(m) \right)m\left( |\alpha_0| +\int_1^{\infty} r \Pi(\diffd r)  + g(m) \right)\right. \\
 	& + \left. \sqrt{\varepsilon} \left(2g(m) + c(m)\right) 
 	2\sqrt{m}\left(\sigma + \sqrt{\int_0^1 r^2\Pi(\diffd r)}\right)\right] te^{(|\alpha_0|+ \int_{1}^{\infty} r\Pi(\diffd r)+g(m))t}.
 	\end{split}
 	\end{equation*}
 \end{linenomath}
 Hence, $\E|\zeta^{\epsilon,\delta}_{t\wedge\tau_{m}}(x)|$ goes to zero when $(\delta,\varepsilon) \rightarrow ( 0,0)$ for each fixed $m>0$. 
 The desired result follows since $\tau_{m} \rightarrow \infty$ a.s. as  $m \rightarrow \infty$ 
\end{proof}
          
\section{Law and consistency of the local time  approximation} \label{ConsistLawGridTree}

The proofs of Propositions \ref{lawgrid} and \ref{convgrid} are based on several technical results mainly relying on excursion theory and on Proposition \ref{markRK}.
For easier reading, some proofs are differed to the Appendix. 

We will need in what follows the $(\F_t^{\rho})$-stopping times defined by 
\[
T_x^a : = \inf\{t \geq 0: L_t^a \geq x \}
\]
so that  $T_x^0=T_x$ for all $x \geq 0$.
\subsection{Poisson-snake excursions}\label{explosnakeexcurs}

We first  briefly  recall  some  facts of excursion theory in the case of  the snake process  $((\rho_t, {\cal N}_t) : \, t\geq 0) $.  
The reader is referred to \cite{DLG, DLGI} for details and general  background.  

Recall that the processes  $ (X_s - I_s: \, s\geq 0)=(\langle \rho_s , 1\rangle  \, : s\geq 0) $ and  $\rho$ share  the same excursion intervals $(\alpha_j,\beta_j)_{j\in J}$ away from their respective zero elements. 
Let us denote by $(\rho^j, {\cal N}^j)$ the excursion away from $(0,0)$ of $((\rho_t, {\cal N}_t ): \, t\geq 0) $ in  the interval $(\alpha_j,\beta_j)$:
\begin{linenomath}
\begin{equation*}
 \left\{
 \begin{aligned}
  \rho_s^j= & \rho_{\alpha_j + s} \quad & 0<s<  \beta_j- \alpha_j\\
  \rho_s^j= & 0 \quad & s\geq   \beta_j- \alpha_j
 \end{aligned}
 \right., \quad   \quad
 \left\{
 \begin{aligned}
  {\cal N}_s^j = & {\cal N}_{\alpha_j + s} \quad  & 0<s<  \beta_j- \alpha_j\\
  {\cal N}_s^j = & 0  \quad  & s\geq   \beta_j- \alpha_j\\
 \end{aligned}
 \right. \quad .
\end{equation*}
\end{linenomath}
Then, the  Markov process $((\rho_t, {\cal N}_t ): \, t\geq 0) $ can be rewritten as
\begin{equation}\label{rhoexcursion}  
  (\rho_t, {\cal N}_t)=\sum_{j\in J} \I_{\{ \alpha^j< t <\beta^j\}} (\rho^j_{t- \alpha^j} , {\cal N}^j_{t- \alpha^j}) \, , \quad  t\geq 0. 
\end{equation} 
The  point process  in $\R_+ \times \mathbb{D} (  \R_+, \M_f(\R_+)\times \M(\R^2_+)) $ given by  
\begin{equation}\label{PEX}
 \mathbf{ M} (\diffd \ell,\diffd  \rho \, , \diffd {\cal N}) :=  \sum_{j\in J} \delta_{(\ell^j,  \rho^j,\,  {\cal N}^j)} (\diffd \ell,\diffd  \rho \, , \diffd {\cal N}) ,
\end{equation}
where $(\ell^j= L^0_{\alpha_j})_{j\in J}$,  is Poisson of  intensity $\diffd x \otimes \mathbb{N}(\diffd  \rho \, , \diffd {\cal N})$ with $\mathbb{N}(\diffd  \rho \, , \diffd {\cal N}) = \mathbf{N}(\diffd  \rho) Q^{H(\rho)}( \diffd {\cal N} )$, $\mathbf{N}(\diffd  \rho)$  the excursion measure of the exploration process and $Q^{H(\rho)}$  the conditional (probability) law of  $({\cal N}_t:t\geq 0)$  given $\rho$. 
These facts follow from standard excursion theory, or are established in Section 4.1.4 in  \cite{DLG} (in particular in what concerns the description of $\mathbb{N}$). 
\footnote{ Notice that  results in \cite{DLG} are stated in terms of the process $((\rho_t, W_t): t\geq 0)$  with   $W_t$ corresponding here to the increasing ${\cal M}_{at}(\R_+)$-valued path $W_t =(h\mapsto {\cal N}_t([0,h], d\nu):  h\in [0,H_t) \, )$, which was denoted $\xi^{(t)}$ in Section 3.} 

\medskip
 
Reciprocally, given a Poisson point process $\mathbf{M}$ of intensity $\diffd x \otimes \mathbb{N}(\diffd \rho \, , \diffd {\cal N})$ and atoms $(\ell^j, \rho^j,\, {\cal N}^j)_{j\in J}$, a snake process $((\rho_t, {\cal N}_t ): \, t\geq 0)$ is uniquely determined by the relation \eqref{rhoexcursion}, with  $(\alpha_j,\beta_j)$ defined in terms of $\mathbf{M}$ by     
\[
 \alpha_j:= \sum_{k\in J:\ell^k < \ell^j } \zeta_k  \mbox{ and }\beta_j:= \sum_{k\in J:\ell^k \leq \ell^j } \zeta_k  \quad  \,, 
\]
with $\zeta_j:= \inf \{s\geq 0 :\, \rho^j_s=0\}$ the length of excursion $j$ for each $j\in J$ 
(the fact that the measure $\diffd L_t^0=\mathbf{1}_{\{ \rho_t=0 \}}\diffd L_t^0$ is singular with respect to $\diffd t$ is used to check this).  

\medskip
 
For a fixed height $a\geq 0$ we will also consider the process $(\rho^a_t,{\cal N}_t^a)_{t\geq 0}$ describing  ``what happens above  $a$'',  that is,  $\rho_t^a $ is defined as  in \eqref{rhoa}  and  
\[
{\cal N}_t^a (\diffd r, \diffd \nu)  := {\cal N}_{\tau_t^a}  (a+\diffd r, \diffd \nu),
\]
with  $\tau_t^a$ the stopping time   defined in \eqref{tauat}.  
We moreover denote by $(\rho^{(i)})_{i\in I}$ the excursions of  the process $\rho$ above height $a$ and by  $(\alpha^{(i)},\beta^{(i)})_{i\in I}$ the corresponding excursion intervals. 
More precisely, for each $i\in I$, we set
\begin{linenomath}
\begin{equation*}
\left\{
\begin{aligned}
\langle \rho_s^{(i)}, f\rangle = &  \int_{(a, \infty)} \rho_{ \alpha^{(i)}+s} (\diffd r) f (r - a)  \quad   & 0<s< \beta^{(i)}-  \alpha^{(i)} \\
\rho_s^{(i)}= & 0 \quad    & s\geq   \beta^{(i)}-  \alpha^{(i)}.
\end{aligned}
\right.
\end{equation*}	
\end{linenomath} 
These excursions are in one-to-one correspondence to the excursions away from $0$ of the process $\rho^a$ occurring at cumulated local times $L_{\alpha^{(i)}}^a=L_{\beta^{(i)}}^a$  at level $a$. 
We also introduce the excursions of ${\cal N}$ above level $a$, ``relative to  $a$'', given by  \footnote{ In the notation of \cite{DLG}, ${\cal N}^{(i)} $ would correspond to the ``increment''  of an excursion of the path-valued process $W_t$.}
\begin{linenomath}
\begin{equation*}
\left\{
\begin{aligned}
{\cal N}_s^{(i)} (\diffd r, \diffd \nu)= & {\cal N}_{  \alpha^{(i)}+s}  (a+\diffd r, \diffd \nu)  \quad  & 0 < s < \beta^{(i)}-   \alpha^{(i)}\\
{\cal N}_s^{(i)}= & 0  \quad  & s\geq   \beta^{(i)}-  \alpha^{(i)} \\
\end{aligned}
\right. .  
\end{equation*}	
\end{linenomath}         
\begin{remark}\label{relativexcursion}
Each excursion ${\cal N}^{(i)}$ away from $0$ corresponds to a segment of a unique ``parent excursion'' ${\cal N}^{j} $ such that $(\alpha^{(i)},  \beta^{(i)})\subset (\alpha^{j},  \beta^{j})$.
\end{remark}
   
Thus, $(\rho^{(i)},{\cal N}^{(i)})_{i\in I}$ are exactly the excursions of the process $\left( (\rho^a_t,{\cal N}_t^a)\, : t\geq 0 \right)$ away from $(0,0)$.  
The next result follows from \cite[Proposition 3.1]{DLGI} and the snake property (but see Appendix \ref{proofPPPa} for the sketch of a direct proof,  adapted from the one  of Proposition 4.2.3 in \cite{DLG}) :
   
\begin{lemma}[{\bf Snake-excursion process above a given level}]\label{PPPa}
For each $a\geq 0$, conditionally on  the sigma field ${\cal E}_a$ defined in \eqref{calEa}, the  point process  in $\R_+ \times \mathbb{D} (  \R_+, {\cal V}) $ given by  
\begin{equation}\label{PEXa}
 \mathbf{M}^a  (\diffd \ell,\diffd  \rho \, , \diffd {\cal N})  :=  \sum_{i\in I} \delta_{(\ell^{(i)},  \rho^{(i)},\,  {\cal N}^{(i)})}  (\diffd \ell,\diffd  \rho \, , \diffd {\cal N})  ,
\end{equation}
where $\ell^{(i)}=L^a_{\alpha^{(i)}}$ for all $i\in I$,  has the same law as the process \eqref{PEX} and is independent of ${\cal E}_a$. 
Consequently, $\left( (\rho^a_t,{\cal N}_t^a)\, : t\geq 0 \right)$ has the same law as $\left( (\rho_t,{\cal N}_t)\, : t\geq 0 \right)$ and  is independent of  ${\cal E}_a$.   
\end{lemma}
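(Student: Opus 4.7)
The strategy is to combine the classical Poisson excursion structure of the exploration process~$\rho$ above a fixed level with the snake property of the marks~${\cal N}$. The first ingredient is Proposition~4.2.3 of \cite{DLG}, applied to $\rho$ alone: it states that, conditionally on the sigma-field $\sigma(\rho_{\tilde\tau^a_t}: t\ge 0)\subset {\cal E}_a$, the point measure $\sum_{i\in I}\delta_{(\ell^{(i)},\rho^{(i)})}$ is a Poisson point process on $\R_+\times \mathbb{D}(\R_+,\M_f(\R_+))$ with intensity $\diffd \ell\otimes\mathbf{N}(\diffd\rho)$. Thus the first two coordinates of $\mathbf{M}^a$ already have the desired conditional law. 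It remains to add the marks.

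Next I invoke the snake property (point 2.\ of Definition~\ref{def of 2d marked explo}) to decouple the marks above level~$a$ from the rest. The key observation is that, for any $s\in(\alpha^{(i)},\beta^{(i)})$ and any $u$ either with $H_u\le a$ or belonging to a different excursion interval $(\alpha^{(j)},\beta^{(j)})$ with $j\neq i$, one has $H_{s,u}\le a<H_s$. The snake property then gives, conditionally on $\rho$, that ${\cal N}_s|_{(a,H_s]\times\R_+}$ and ${\cal N}_u$ are independent. Since ${\cal N}^{(i)}_{s-\alpha^{(i)}}(\diffd r,\cdot)={\cal N}_s(a+\diffd r,\cdot)$, this yields simultaneously (a) the mutual independence, conditionally on $(\rho^{(i)})_{i\in I}$, of the family $({\cal N}^{(i)})_{i\in I}$, and (b) the conditional independence of that family from all of ${\cal E}_a$. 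It remains to identify the conditional law of ${\cal N}^{(i)}$ given $\rho^{(i)}$ as $Q^{H(\rho^{(i)})}$, so that the resulting intensity on the full triple $(\ell,\rho,{\cal N})$ is
\begin{equation*}
\diffd \ell\otimes \mathbf{N}(\diffd\rho)\,Q^{H(\rho)}(\diffd {\cal N})=\diffd \ell\otimes\mathbb{N}(\diffd\rho,\diffd{\cal N}),
\end{equation*}
matching the law of~$\mathbf{M}$. This follows because the shifted sub-process $(\rho^{(i)}_s,{\cal N}^{(i)}_s)_{s\ge 0}$ inherits from $((\rho_t,{\cal N}_t))_{t\ge 0}$ both the snake property and the conditional Poisson-intensity property of Definition~\ref{def of 2d marked explo}; those properties characterize the snake kernel $Q^{H(\cdot)}$.

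Putting the three steps together gives that $\mathbf{M}^a$ is, conditionally on ${\cal E}_a$, a Poisson point measure of intensity $\diffd \ell\otimes\mathbb{N}(\diffd\rho,\diffd{\cal N})$; in particular it is independent of ${\cal E}_a$ and has the same unconditional law as~$\mathbf{M}$. The final assertion on $((\rho^a_t,{\cal N}^a_t):t\ge 0)$ is then immediate from the reconstruction formula \eqref{rhoexcursion} applied to $\mathbf{M}^a$ in place of $\mathbf{M}$. The main technical obstacle is the third step above: verifying that the restriction of the marked exploration process to a single excursion interval above level~$a$ is again an instance of the same kind of object, so that its marks are distributed according to $Q^{H(\rho^{(i)})}$. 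This amounts to a ``strong Markov at a Poissonian level'' argument for the coupled process $(\rho,{\cal N})$, rather than for $\rho$ alone, and is handled by carefully checking that the two conditions of Definition~\ref{def of 2d marked explo} pass from the whole trajectory to each excursion.
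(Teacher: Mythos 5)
Your overall route --- first the Poissonian excursion structure of $\rho$ above level $a$ given the sub-level-$a$ information, then the marks conditionally on $\rho$ --- is genuinely different from the proof in Appendix \ref{proofPPPa}, which instead adapts the Laplace-functional computation in the proof of Proposition 4.2.3 of \cite{DLG} directly to the \emph{marked} snake under the excursion measure $\mathbb{N}$ (with test functions depending also on the components $\ell^{(i)}$ and on the increments of the spatial component above level $a$), works within each excursion of $\rho$ away from $0$, and then pastes over $j\in J$ by Poisson additivity. Your factorized route is essentially the alternative alluded to just before the lemma (``follows from \cite[Prop 3.1]{DLGI} and the snake property''), so it is legitimate in principle; note however that Proposition 4.2.3 of \cite{DLG} is a statement about the L\'evy snake, not about $\rho$ alone, so the $\rho$-only excursion decomposition you invoke in your first step should be credited to \cite[Prop 3.1]{DLGI} (or its analogue in Chapter 1 of \cite{DLG}) rather than to that proposition.

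The genuine gap lies in your claims (a), (b) and in the identification of $Q^{H(\rho^{(i)})}$. The snake property in Definition \ref{def of 2d marked explo} is a family of statements about \emph{pairs} of fixed times: for each $s\le s'$, conditionally on $\rho$, the restrictions of ${\cal N}_s$ and ${\cal N}_{s'}$ above $H_{s,s'}$ are independent. Such pairwise, two-time statements do not ``yield simultaneously'' the mutual conditional independence of the whole excursion-indexed family $({\cal N}^{(i)})_{i\in I}$, nor its joint independence from ${\cal E}_a$, which involves the marks at uncountably many random times $\tilde\tau^a_t$; pairwise independence does not imply joint independence, and the two conditions of Definition \ref{def of 2d marked explo} only constrain two-time marginals, so they do not by themselves characterize the conditional law of $({\cal N}_s)_{s\ge 0}$ given $\rho$ --- hence the assertion ``those properties characterize the snake kernel $Q^{H(\cdot)}$'' is unjustified as stated, and it is precisely what is needed to identify the law of each ${\cal N}^{(i)}$. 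To close this, you must use the actual construction of $((\rho_t,{\cal N}_t):t\ge 0)$ as a L\'evy snake in Section \ref{poissonsnake} --- equivalently, the description of the conditional law of the marks given $\rho$ as a single Poisson point measure on the tree times $\R_+$, whose restrictions to the pairwise disjoint subtrees above level $a$ and to the portion of the tree at heights $\le a$ are \emph{jointly} independent --- or else run a Markov-property/Laplace-functional argument as in \cite[Prop. 4.2.3]{DLG}, which is what the paper does. A secondary point of the same nature: the excursion endpoints $\alpha^{(i)}$ are random (functions of $\rho$), so the fixed-time snake statements must be applied conditionally on $\rho$ and upgraded to path segments by a monotone-class/regularity argument; routine, but it should be said rather than absorbed into ``this yields''.
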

  
\subsection{Proofs of Proposition  \ref{lawgrid} and Lemma \ref{flowloctime}}\label{proofproplawgrid}
     
 The following result extending Lemma \ref{PPPa} is a cornerstone of this paper. Consider  an intensity  process $\intens$  and for a given  height $a>0$ define:
\[
\intens^a(t,h):= \I_{[0,a]}(h)\intens(t,h)  \, \mbox{ for all } (t,h)\in \R_+^2.
\] 
Denote  also 
\[ I^{ \intens^a } := \{ i\in I \, :  \,  m^{\intens^a}_{\alpha^{(i)}}= 0\}= \{ i\in I \, :  \,  m^{\intens^a}_{\alpha^{(i)}} ([0,a))= 0\},
\] 
where $(\alpha^{(i)}, \beta^{(i)})_{i \in I}$ are the excursion intervals above height $a$ defined in Section 6.1.
By the semi-snake property of $ m^{\intens^a}$, for every $i\in I$ one has $ m^{\intens^a}_s=0 \, \forall s\in [\alpha^{(i)},\beta^{(i)}]= 0$, thus $I^{\intens^a}$ is  the set of excursions above   $a>0$ whose parent excursion  ${\cal N}^{j} $  has no marks below that level  in the time interval $(\alpha^{(i)}-\alpha^j,\beta^{(i)}-\alpha^j)$. We have 
\begin{lemma}[\bf Pruning below a fixed level]\label{prunPEX}  The process  $(L^a_{T_x^a }(m^{\intens^a}): x\geq  0)$ is measurable with respect to the sigma-field ${\cal B}(\R_+)\otimes {\cal E}_a $, where ${\cal E}_a$ was defined in \eqref{calEa}.   Moreover,  conditionally on  ${\cal E}_a$, the point process in $\R_+ \times \mathbb{D}(\R_+, {\cal V}) $ given by 
\begin{equation}\label{PEXtheta}
 \sum_{i\in I^{\intens^a}} \delta_{(L_{\alpha^{(i)}}^a(m^{\intens^a}) ,  \rho^{(i)},\,  {\cal N}^{(i)})}  (\diffd \ell,\diffd  \rho \, , \diffd {\cal N}), 
\end{equation}
with $L_t^a(m^{\intens^a})$  the $ m^{\intens^a}$-pruned local time at height  $a>0$ and time $t\geq 0$,   is equal in law to the  process \eqref{PEX} stopped at $\ell=L^a_{\infty}(m^{\intens^a})$. 
\end{lemma}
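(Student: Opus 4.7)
The plan is to derive the result from Lemma \ref{PPPa} by a thinning-and-relabelling argument for Poisson point processes.

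First I would establish the measurability claim. Since $\intens^a$ vanishes above height $a$, the mark measure $m^{\intens^a}_s$ is supported in $[0, a)$ and, by the semi-snake property of marked exploration processes (Lemma \ref{lem: prop mark adapt intens}(ii)) together with the semi-snake property of adapted intensities, for any $s$ with $H_s \leq a$ the pair $(\rho_s, m^{\intens^a}_s)$ is determined by the trajectory $(\rho_{\tilde{\tau}^a_u}, {\cal N}_{\tilde{\tau}^a_u})_{u \geq 0}$, that is, by ${\cal E}_a$. Since $dL^a_s$ is supported on $\{s: H_s = a\}$ and in this restriction both the integrator $dL^a_s$ and the integrand $\I_{\{m^{\intens^a}_s = 0\}}$ are ${\cal E}_a$-measurable, the time change $s = \tilde{\tau}^a_u$ yields
$$ L^a_{T^a_x}(m^{\intens^a}) = \int_0^{\tilde{A}^a_{T^a_x}} \I_{\{m^{\intens^a}_{\tilde{\tau}^a_u} = 0\}}\, d_u L^a_{\tilde{\tau}^a_u}, $$
with $\tilde{A}^a_{T^a_x} = \inf\{u \geq 0: L^a_{\tilde{\tau}^a_u} \geq x\}$. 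Hence $(L^a_{T^a_x}(m^{\intens^a}): x \geq 0)$ is ${\cal E}_a$-measurable in $x$, and joint measurability in $(x,\omega)$ follows from its monotonicity.

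For the distributional statement, I would condition on ${\cal E}_a$. By Lemma \ref{PPPa}, the excursion process $\mathbf{M}^a = \sum_{i \in I} \delta_{(\ell^{(i)}, \rho^{(i)}, {\cal N}^{(i)})}$ is, conditionally on ${\cal E}_a$, Poisson with intensity $d\ell \otimes \mathbb{N}$ and independent of ${\cal E}_a$. The condition $i \in I^{\intens^a}$, namely $m^{\intens^a}_{\alpha^{(i)}} = 0$, is, by the snake property, determined by the ``parent excursion'' of $(\rho,{\cal N})$ below $a$, hence by ${\cal E}_a$. More precisely, there exists a ${\cal E}_a$-measurable $\chi: [0, L^a_\infty] \to \{0,1\}$ such that $i \in I^{\intens^a}$ iff $\chi(\ell^{(i)}) = 1$. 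Likewise, the relabelling $\ell^{(i)} \mapsto L^a_{\alpha^{(i)}}(m^{\intens^a})$ is realized by the ${\cal E}_a$-measurable function $\phi(\ell) := L^a_{T^a_\ell}(m^{\intens^a})$; from the identity $dL^a_s(m^{\intens^a}) = \I_{\{m^{\intens^a}_s = 0\}}\, dL^a_s$ one checks that $\phi$ has unit slope on $\{\chi = 1\}$ and is flat on $\{\chi = 0\}$, so that $\phi$ pushes forward the measure $\chi(\ell)\, d\ell$ onto the Lebesgue measure on $[0, L^a_\infty(m^{\intens^a})]$.

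Applying the Poisson mapping theorem conditionally on ${\cal E}_a$, the relabelled-and-thinned process
$$ \sum_{i \in I^{\intens^a}} \delta_{(\phi(\ell^{(i)}), \rho^{(i)}, {\cal N}^{(i)})} = \sum_{i \in I^{\intens^a}} \delta_{(L^a_{\alpha^{(i)}}(m^{\intens^a}), \rho^{(i)}, {\cal N}^{(i)})} $$
is then Poisson with intensity $\I_{[0, L^a_\infty(m^{\intens^a})]}(\ell)\, d\ell \otimes \mathbb{N}(d\rho, d{\cal N})$, which is exactly the law of the process \eqref{PEX} stopped at $\ell = L^a_\infty(m^{\intens^a})$. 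The main obstacle I anticipate is a clean rigorous description of the ${\cal E}_a$-measurable functions $\chi$ and $\phi$ and of the correspondence between points $\ell^{(i)}$ in the original Poisson process and local-time instants in the ``below-$a$'' clock $\tilde{\tau}^a_u$; the key ingredients to make this rigorous are the snake invariance of marks below level $a$ within each excursion above $a$ and the identity $dL^a_s(m^{\intens^a}) = \I_{\{m^{\intens^a}_s = 0\}}\, dL^a_s$.
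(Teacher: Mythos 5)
Your proposal is correct and follows essentially the same route as the paper: you rewrite the pruned local time at level $a$ in local-time coordinates via the below-$a$ clock and the semi-snake property to obtain the ${\cal B}(\R_+)\otimes{\cal E}_a$-measurability, and then, conditionally on ${\cal E}_a$, thin and relabel the conditionally Poisson excursion process supplied by Lemma \ref{PPPa}, the relabelling map being exactly $\ell\mapsto L^a_{T^a_\ell}(m^{\intens^a})=\int_0^\ell \I_{\{\chi=1\}}(u)\,\diffd u$. The only difference is one of packaging: where you invoke the standard Poisson restriction--mapping theorem conditionally on ${\cal E}_a$, the paper proves and applies its own thinning/time-change statement (Lemma \ref{nvarphi}, via exponential martingales and absence of simultaneous jumps across disjoint test sets), which amounts to the same computation.
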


\begin{remark}\label{timechangeAintens}
It is not hard to see that   the marked exploration process  associated with the excursion process \eqref{PEXtheta} is given by
\[
\left(\widehat{\rho}_t,\widehat{\N}_t  \ : t\in [0,A_{\infty}^{\intens^a})\right)= \left(\rho_{\widehat{C}_t },{\cal N}_{\widehat{C}_t }  \ : t\in [0,A_{\infty}^{\intens^a})\right) \, ,
\]
where
$\widehat{C}_t:=\inf \left\{s>0 \, : \,  A_{s}^{\intens^a}  >t \right\}$ and $ A_{s}^{\intens^a}: = \int_0^{s} \I_{\{m^{\intens^a}_{r}([0,a)) =0 \}} \diffd A_r^a $. 
Indeed, by the semi-snake property, $ \I_{\{ m^{\intens}_{u}([0,a)) =0\}}$ is constant on the increase  intervals of  $A_u^a= \int_0^u \I_{\{H_r>a\}} \diffd r $,  hence the  only excursions above $a$ contributing to the integral  in the definition of $\widehat{C}_t$  are those in the set $I^{\intens^a}$.\end{remark}

Lemma \ref{prunPEX} will follow from Lemma \ref{PPPa} and  an elementary fact about Poisson processes in $\R_+$, proved in  Appendix  \ref{proofnvarphi}  for completeness: 
\begin{lemma}\label{nvarphi}
Let $(N_{\ell}:\ell \geq 0)$ be a  standard Poisson process of parameter $\lambda>0$ with respect to a given filtration $({\cal K}_{\ell})_{\ell \geq 0}$  and  $E \subset \R_+$ be a random set such that  $ \I_{E}  $  is ${\cal B}(\R_+) \otimes  {\cal K}_0 $-measurable. 
For each  $x,y\geq 0$ set $\varphi_y := \int_{0}^{y} \I_{E}(\ell)\diffd \ell $   and define the stopping time $\phi_x:= \inf\{ y\geq 0:  \varphi_y>x\}\leq \infty$.
Then, conditionally on $ {\cal K}_0 $\, ,  $N^{E}:=  \left(\int_0^{\phi_x}\I_{E}(\ell)N(\diffd \ell): \, x \geq 0\right)$ is a  standard $({\cal K}_{\phi_{x}})_{x \geq 0}$-Poisson process of  parameter $\lambda$ stopped at  $ \int_{0}^{\infty} \I_{E}(\ell)\diffd \ell $. 
\end{lemma}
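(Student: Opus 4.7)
The plan is to realise $N^E$ as a time-change, by the right-continuous inverse $\phi$ of $\varphi$, of the stochastic integral $\int_0^\cdot \I_E(\ell)\,\diffd N_\ell$, and then to identify its compensator and apply Watanabe's characterisation of Poisson processes. The starting observation is that since $\I_E$ is $\B(\R_+)\otimes {\cal K}_0$-measurable, the random set $E$ is deterministic conditionally on ${\cal K}_0$. Consequently $x\mapsto \varphi_x$ is continuous, nondecreasing, and, for each $x$, ${\cal K}_0$-measurable; its generalised inverse $\phi_y$ is then ${\cal K}_0$-measurable and hence a $({\cal K}_\ell)$-stopping time for every $y\ge 0$. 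By continuity of $\varphi$ and right-continuity of $\phi$ one also has the basic identity $\varphi_{\phi_y}=y\wedge \varphi_\infty$.

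First I would introduce the $({\cal K}_\ell)$-local martingale
\[
M_x:=\int_0^x \I_E(\ell)\,\diffd N_\ell - \lambda\,\varphi_x,
\]
which is well defined because $\I_E$ is $({\cal K}_\ell)$-predictable (being ${\cal K}_0$-measurable) and $N-\lambda\,\mathrm{id}$ is a local martingale. By optional sampling applied to the family of stopping times $(\phi_y\wedge n)_{y\ge 0}$ and letting $n\to\infty$ — equivalently by the classical time-change theorem for local martingales along a right-continuous family of stopping times — the process $y\mapsto M_{\phi_y}$ is a local martingale in the time-changed filtration $(\widetilde{\cal K}_y)_{y\ge 0}:=({\cal K}_{\phi_y})_{y\ge 0}$. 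Using $\varphi_{\phi_y}=y\wedge \varphi_\infty$, this says that $N^E_y-\lambda(y\wedge \varphi_\infty)$ is a $(\widetilde{\cal K}_y)$-local martingale.

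Next I would note that $N^E$ is a counting process with unit jumps only, since its jumps are precisely the jumps of $N$ that fall in $E$ and occur before the stopping horizon. Working conditionally on ${\cal K}_0$ (under which $\varphi_\infty$ is a deterministic constant), Watanabe's characterisation applied to the counting process $N^E$ with deterministic absolutely continuous compensator $\lambda(y\wedge \varphi_\infty)$ in its own filtration yields that, ${\cal K}_0$-conditionally, $N^E$ is a Poisson process of parameter $\lambda$ stopped at $\varphi_\infty=\int_0^\infty \I_E(\ell)\,\diffd \ell$, which is the claim.

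The one delicate point will be a clean handling of the conditioning on ${\cal K}_0$: strictly speaking, the compensator and stopping horizon are only deterministic under the regular conditional law $\p(\,\cdot\mid {\cal K}_0)$, so Watanabe's theorem must be invoked in that conditional framework. This is however routine — e.g.\ by checking that, for every $u>0$, the exponential processes $\exp\bigl(-u\,N^E_y+\lambda(1-e^{-u})(y\wedge\varphi_\infty)\bigr)$ are genuine $(\widetilde{\cal K}_y)$-martingales ${\cal K}_0$-conditionally — and causes no real difficulty. I expect this conditional bookkeeping to be the only step requiring any care; everything else is a direct application of standard stochastic-calculus tools.
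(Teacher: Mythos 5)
Your proposal is correct and in substance follows the paper's own route: both arguments rest on the predictability of $\I_{E}$, the ${\cal K}_0$-measurability of the stopping times $\phi_x$, and an exponential-martingale/compensator identity taken conditionally on ${\cal K}_0$ (and then on ${\cal K}_{\phi_y}$ for the increments). The only difference is presentational: the paper computes the conditional Laplace transforms of the increments $\int_{\phi_y}^{\phi_x}\I_{E}(\ell)N(\diffd \ell)$ directly from the exponential formula for Poisson stochastic integrals (with dominated convergence to handle $\phi_x=\infty$), which is precisely the ``fallback'' exponential-martingale check you describe, rather than passing through a time change and Watanabe's characterization.
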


\begin{proof}[Proof of Lemma \ref{prunPEX}] 
In the proof we write $\intens^a=\intens$ for simplicity.  As in the proof of Proposition 4.2.3 in \cite{DLG} we introduce $\tilde{L}_t^a:= L_{\tilde{\tau}^a_t }^a$ with $\tilde{\tau}^a_t$ defined  above  after \eqref{calEa},    and  its right-continuous inverse $\gamma^{a}(r):= \inf\{s\geq 0 : \, \tilde{L}_s^a  >  r\}.$ 
 Let us rewrite the $m^{\intens}$-pruned local time at level $a$ in terms of the corresponding local time units. 
 Using the semi-snake property of $m^{\intens}$ in the second equality, we have
 \begin{linenomath}
 \begin{equation*}
  \begin{split}
   L_{T^a_x} ^a(m^{\intens} )=  &  \int_0^{T^a_x}  \I_{\{m^{\intens}_{v}=0\}}\diffd L^a_v
   =   \int_0^{\tilde{A}_{T^a_x}^a} \I_{\{m^{\intens}_{\tilde{\tau}_u^a}=0\}}\diffd \tilde{L}^a_u \\
   = & \int_0^{\tilde{L}^a_{\tilde{A}_{T^a_x}^a}} \I_{\{m^{\intens}_{\tilde{\tau}_{\gamma^a(\ell)}^a}([0,a)) =0\}}\diffd \ell
   = \int_0^x \I_{\{\mathbf{m}^{\intens}_{\ell} =0 \}}\diffd \ell,\\
  \end{split}
 \end{equation*}
\end{linenomath}
 where for all $\ell\geq 0$ we have set $\mathbf{m}^{\intens}_{\ell}:= m^{\intens}_{\tilde{\tau}^a_{\gamma^{a}(\ell)}}([0,a))$.  
  We have also used  the changes of variables $v= \tilde \tau_u^a$ and  $\ell = \tilde L^a_u$.
 The last equality above stems from the fact that, by definition of  $\tilde{\tau}^a_s$ (as the  right inverse   of $\tilde{A}^a_t=\int_0^t\I_{\{H_s\le a\}} \diffd s$) and since $L^a_s$ does not vary on intervals where $H_s>a$, one has $\tilde{L}^a_{\tilde{A}_{T^a_x}^a}= L^a_{\tilde{\tau}^a_{\tilde{A}_{T^a_x}^a}}= L_{T^a_x}^a.$ Given that $\tilde{\tau}^a_t<\infty$ a.s. for each $t\geq 0$, \eqref{aprox2} implies that 
 \[
  L_{\tilde{\tau}^a_s }^a=\lim_{\epsilon\to 0} \epsilon^{-1}\int_0^s \I_{\{a-\epsilon<H_{\tilde{\tau}^a_r }\}}\diffd r \quad  \text{ in probability uniformly on $s$ in compact sets.}
  \]   
  It follows  that the process $( \I_{\{\mathbf{m}^{\intens}_{\ell} =0 \}} :\ell \geq 0) $ is ${\cal B}(\R_+) \otimes  {\cal E}_a$-measurable. Thus, $(L_{T^a_x} ^a(m^{\intens} ):x\geq 0)$ is  ${\cal B}(\R_+) \otimes  {\cal E}_a$-measurable  as claimed.
 
 We now introduce the filtration  $\sigma \left(\{ \mathbf{M}^a  \left( [0,x ],  \diffd \rho, \diffd \N \right) : 0\leq x \leq   \ell \}\vee {\cal E}_a\right)_{\ell \geq 0}$  (with $\mathbf{M}^a$ given in   \eqref{PEXa}) and   its  right-continuous completion denoted $({\cal K}^a_{\ell})_{\ell \geq 0}$. 
 For each  Borel set  $S \subset  \mathbb{D} (  \R_+, {\cal V}) $ with $\mathbb{N}(S)<\infty$,   define a    $({\cal K}^a_{\ell})_{\ell \geq 0}$-  Poisson process in $\R_+$ by 
 \[
  N^{a,S} \left([0,\ell]\right) := \mathbf{ M}^a \left([0,\ell] \times S\right)\,, \quad \ell \geq 0 \, .
 \] 
 Setting $\phi_y =\inf \{x>0: \int_0^x \I_{E^{\intens}(\ell) }d\ell >y \}$ with $E^{\intens}:=\{\ell \in \R_+ :   \mathbf{m}^{\intens}_{\ell}=0\}$, Lemma \ref{nvarphi} yields that  conditionally on ${\cal K}^a_0$,
 \[
 N^{E^{\intens}}(S):= \left(\int_0^{\phi_x} \I_{E^{\intens}}(\ell)N^{a,S}(\diffd \ell): x \geq 0\right)
 \] 
 is a Poisson process in $\R_+$  of parameter $ \mathbb{N}(S)$, with respect to the time changed filtration  $({\cal K}^a_{\phi_l \geq 0})_{\ell \geq 0}$. 
 Moreover, the processes $N^{E^{\intens}}(S_1),..,N^{E^{\intens}}(S_n)$ do not have  simultaneous jumps if  the sets $S_1 ,..., S_n$ are  disjoint, hence they are independent from each other conditionally on ${\cal K}^a_0$. We conclude that, conditionally on ${\cal K}^a_0$, the point process  $\mathbf{M}^{a,\intens}$  on  $\R_{+} \times \mathbb{D}(\R_+,\cal{V})$ defined  by 
 \[
 \mathbf{M}^{a,\intens}  \left( [0,x ] \times S  \right) =  \int_{[0,\phi_x]\times S }  \I_{E^{\intens}}(\ell)  \mathbf{M}^a   (\diffd \ell, \diffd \rho, \diffd \N ) 
 \]
 is Poisson with intensity $\diffd x \otimes \mathbb{N}(\diffd \rho,\diffd \N)$. 
 It is not hard to see that this is exactly the  process \eqref{PEXtheta}.
\end{proof}

 \medskip

We are ready to give  the 
  
\begin{proof}[Proof of Proposition \ref{lawgrid}]

The fact that  for each $(k,n) \in  \NN^2 $  the process \eqref{L-L} is  ${\cal F}_{\overline{T}_{(n+1)\delta}}$-measurable  is immediate.
For all the remaining properties, we proceed by induction on $k\in \NN$. 

 In the case $k=0$, the first assertion is obvious since for every $n\in \NN$ one has  $
  L^{0}_{\overline{T}_{(n+1)\delta}^{0}}(\varepsilon,\delta)- L^{0}_{\overline{T}_{n\delta}^{0}}(\varepsilon,\delta) = L^{0}_{T_{(n+1)\delta}^{0}}(\varepsilon,\delta)- L^{0}_{T_{n\delta}^{0}}(\varepsilon,\delta)  =\delta
 $ a.s. and ${\cal E}_0$ is trivial.  For the second assertion, we observe first that in a similar way as  in the proof of Lemma \ref{prunPEX}, using \eqref{aprox2} one shows that 
$ L_{\tilde{\tau}^\varepsilon_s }^h=\lim_{\eta\to 0} \eta^{-1}\int_0^s \I_{\{\varepsilon-\eta<H_{\tilde{\tau}^\varepsilon_r }\leq h\}}\diffd r$ in probability uniformly on $s$ in compact sets, for each $h\in [0, \varepsilon].   $
Furthermore, in a similar way  as in that proof we also get that
\[  L_{T_x}^h(\varepsilon,\delta )=   \int_0^{\check{L}^h_{\tilde{A}_{T_x}^{\varepsilon}}}\I_{\{m^{\varepsilon,\delta}_{\tilde{\tau}_{\check{\gamma}(\ell)}^{\varepsilon}}([0,h)) =0\}}\diffd \ell \, ,
\]  
where $s\mapsto \check{L}_s^h:= L_{\tilde{\tau}^{\varepsilon}_s }^h$ and  its right-continuous inverse denoted $\check{\gamma}(\ell)$ are both ${\cal B}(\R_+) \otimes {\cal E}_{\varepsilon}$-measurable. Since  $\tilde{\tau}^{\varepsilon}_{\tilde{A}_{u}^{\varepsilon}}\geq  u$  with equality  if $u\geq  0$ is a  strict right-increase point of  $\tilde{A}_{u}^{\varepsilon}$,  we deduce that  
\[
 L^0_{\tilde{\tau}^{\varepsilon}_{\tilde{A}_{T_x}^{\varepsilon}}}=L^0_{T_x}=x \quad \text{ and that } \quad \tilde{A}_{T_x}^{\varepsilon}=\inf\{s\geq 0:  L_{\tilde{\tau}^{\varepsilon}_s }^0 >x\}. 
 \] 
 The latter random variable is thus ${\cal E}_{\varepsilon}$-measurable. It follows that  $L_{T^0_x}^h(\varepsilon,\delta)$ is $ {\cal E}_{\varepsilon}$-measurable  for every $x\geq 0$  and $h\in [0, \varepsilon]$ as required. 
The fact that  $
   \left( L^{h}_{T_{n\delta+z}^{0}} (\varepsilon,\delta)- L^{h}_{T_{n\delta}^{0}} (\varepsilon,\delta) :  h\in [0,\varepsilon], z\in [0,\delta]\right)
 $ is  ${\cal F}_{T_{(n+1)\delta}^{0}}$-measurable is obvious, and it has the asserted  conditional law thanks Proposition \ref{markRK} and the strong Markov property of  $(\rho,\N)$  applied at  ${\cal F}_{T_{n\delta}^{0}}$. This completes the  case $k=0$.  

 For the inductive step, assume the statements are valid for all integer smaller than or equal to $(k-1)$. Since for each $n\in \NN$, 
 \[
 L^{k\varepsilon}_{\overline{T}_{(n+1)\delta }^{k\varepsilon}} (\varepsilon,\delta)- L^{k\varepsilon}_{\overline{T}_{n\delta }^{k\varepsilon}} (\varepsilon,\delta)  = \delta\wedge( L^{k\varepsilon}_{\infty}(\varepsilon,\delta)-n\delta)_+   
 \]
and $ L^{k\varepsilon}_{\infty}=\lim\limits_{l\to \infty} L^{k\varepsilon}_ {\overline{T}_{l\delta }^{(k-1)\varepsilon}} $ is ${\cal E}_{k\varepsilon}$-measurable by the induction hypothesis, the first property  for the integer $k$  is immediate. We consider next the   marked exploration process   $((\rho_t , m^{\intens^{ k\varepsilon}}_t)  : t \geq 0)$  with adapted intensity $\intens^{ k\varepsilon}(t,h):=\intens^{\varepsilon,\delta}(t,h)\I_{\{h\leq k\varepsilon\}}$, that is,  $\intens^{ k\varepsilon}= (\intens^{\varepsilon,\delta})^{k\varepsilon}$   in the notation of the beginning of Section \ref{proofproplawgrid}  with $a=k\varepsilon$. Also, denote respectively  by $\widehat{L}^h_t $, $\widehat{T}_x$ and $\widehat{H}_t$ the local time processes  at level $h\geq 0$,  the inverse local time  process at level $0$ and the height process associated with the (possibly stopped)  process  $(\widehat{\rho}, \widehat{\N})= \left(\rho_{\widehat{C}_{\cdot} },{\cal N}_{\widehat{C}_{\cdot} } \right)$ defined in Remark \ref{timechangeAintens} (i.e. the marked exploration process pruned below level $a=k\varepsilon$). 
By already developed arguments using the approximation  \eqref{aprox2},  we can check that for every $t,h\geq 0$, on the event that $\widehat{C}_t<\infty$ we have
  \begin{equation}\label{hatLhatC}
  \widehat{L}_t^h=\int_0^{\widehat{C}_t}  \I_{\{m_s^{\varepsilon,\delta}([0,k\varepsilon)) =0 \}}  \diffd L^{k\varepsilon+h}_s
  \end{equation}
a.s. 
 (see  the proof of Proposition \ref{markRK} in \ref{proofmarkRK} for the details of a similar computation based on  \eqref{aprox1}).  Thus,  for each $y\in [0,L^{k\varepsilon}_{\infty}(\varepsilon,\delta))$ we have
  \[
  L^{k\varepsilon+h}_{\overline{T}^{k\varepsilon}_y}(\varepsilon,\delta) = \int_0^{\overline{T}^{k\varepsilon}_y} \I_{\{m_s^{\varepsilon,\delta}([k \varepsilon,k \varepsilon+h)) =0 \}}   \I_{\{m_s^{\varepsilon,\delta}([0,k\varepsilon)) =0 \}}  \diffd L^{k\varepsilon+h}_s =  \int_0^{A^{\intens^{k\varepsilon}}_{\overline{T}^{k\varepsilon}_y}} \I_{\{\widehat{m}^{\varepsilon,\delta}_u([0,h)) =0 \}}  \diffd \widehat{L}^{h}_u  
 \]
using the change of variable $s=\widehat{C}_u$, \eqref{hatLhatC}
  and  the notation $A^{\intens^{k\varepsilon}}_s=   \int_0^{s} \I_{\{m^{\intens^{k\varepsilon}}_{r}([0,k\varepsilon)) =0, \, H_r>k\varepsilon \}} \diffd r $ and 
\begin{equation} \label{widehatm}
\widehat{m}^{\varepsilon,\delta}_u([0,h)): = m^{\varepsilon,\delta}_{\widehat{C}_u}([ k\varepsilon, k\varepsilon+ h )) \, .
\end{equation} 
  Notice now that since $H_{\overline{T}^{k\varepsilon}_y}=k\varepsilon$, with the notation used in Lemma \ref{prunPEX} one has
  \begin{equation}\label{AintensbarThatT}
  A^{\intens^{k\varepsilon}}_{\overline{T}^{k\varepsilon}_y}=    \sum_{i\in I^{\intens^{k\varepsilon}}: \, L_{\alpha^{(i)}}^{k\varepsilon}(m^{\intens^{k\varepsilon}})\, \leq y}(\beta^{(i)}-\alpha^{(i)}) =\inf\{u\geq 0 : \widehat{L}^0_u>y\}= \widehat{T}_y. 
  \end{equation}
Thus, we have 
  \begin{equation}\label{LhatL}  
  L^{k\varepsilon+h}_{\overline{T}^{k\varepsilon}_y}(\varepsilon,\delta) =  \int_0^{ \widehat{T}_y} \I_{\{\widehat{m}^{\varepsilon,\delta}_u([0,h)) =0 \}}  \diffd \widehat{L}^{h}_u  . 
 \end{equation} 

 Therefore,  thanks to  Lemma \ref{PPPa} (with $a=k\varepsilon$),  the construction of the processes \eqref{L-L}  for the integer $k$   can be done conditionally on ${\cal E}_{k\varepsilon}$  using the process  $(\widehat{\rho}, \widehat{\N})$  up to the time  $\widehat{T}_y$ for every  $  y\leq  L^{k\varepsilon}_{\infty}(\varepsilon,\delta)$,  in the same way as in the case $k=0$  the processes  \eqref{L-L}   were constructed using the process $(\rho,{\cal N})$ up  to each time $T_y$. In particular,    the process \eqref{L-L}  is  measurable with respect to the  sigma-field 
   $ \widehat{{\cal E}}_{\varepsilon}$,  defined as  in  \eqref{calEa}  in terms of the right-continuous inverse $\tilde{\widehat{\tau}}^\varepsilon_t$ of the process $\tilde{\widehat{A}}^\varepsilon_t:=\int_0^t \I_{\{\widehat{ H}_s \leq \varepsilon\}} \diffd s$. We can then check that $ \widehat{{\cal E}}_{\varepsilon}\subseteq   {\cal E}_{(k+1)\varepsilon}$, noting that $\widehat{C}_{\tilde{\widehat{\tau}}^\varepsilon_t}$ is the right-continuous inverse of  the process 
   \[\tilde{\widehat{A}}^\varepsilon_{A^{\intens^{k\varepsilon}}_t} =\int_0^t
\I_{\{H_{\widehat{C}_{A^{\intens^{k\varepsilon}}_u}} < (k+1)\varepsilon, \, m^{\intens}_u([0,k\varepsilon))=0,\,  H_u >k\varepsilon\}}\diffd u
=\int_0^t  \I_{\{H_u < (k+1)\varepsilon, \,  m^\intens_u([0,k\varepsilon))=0, \,  H_u >k\varepsilon\}}\diffd u,\] which yields $
\widehat{C}_{\widehat{\tilde{\tau}}_t^\varepsilon}=  \tilde{\tau}^{(k+1)\varepsilon}_{\theta_t},
$ with $\theta_t: = \inf \left\{r> 0 :\int_0^r \I_{\{m^\intens_{\tilde{\tau}^{(k+1)\varepsilon}_v}([0,k\varepsilon))=0,\,  H_{\tilde{\tau}^{(k+1)\varepsilon}_v} >k\varepsilon\}}\diffd v > t \right\}.$ Moreover,   thanks to  Lemma \ref{PPPa}, the conditional law  of the  process \eqref{L-L}  given   ${\cal F}_{\overline{T}_{n\delta}^{k\varepsilon}}\bigvee {\cal E}_{k\varepsilon}$  is measurable with respect to $\widehat{{\cal F}}_{\widehat{T}_{(n+1)\delta}}\bigvee {\cal E}_{k\varepsilon}$, where $(\widehat{{\cal F}}_t)_{t\geq 0}$ is the filtration $({\cal F}_{\widehat{C}_t})_{t\geq 0}$. Thus,  the identification of that conditional law is done in a similar way as in the case $k=0$, reasoning in terms of $(\widehat{\rho}, \widehat{\N})$  conditionally on $ {\cal E}_{k\varepsilon}$. This achieves the  inductive step and concludes the proof. \end{proof}

   We end this subsection with the
   
   \begin{proof}[Proof of Lemma \ref{flowloctime}]   
  To ease notation we write $m=m^{\intens}$.  We  furthermore define  $\intens^{a+b}$ and $I^{\intens^{a+b}}$ as in the beginning of the present Section  \ref{proofproplawgrid}, but with $a+b$ instead of $a$. 
  
 Since $\breve{T}^{a+b}_{L^{a+b}_s(m)}\geq s$ for all $s\geq 0$, the result is obvious if  $\breve{T}^a_x= \infty$. We thus assume that $\breve{T}^a_x< \infty$. 
 Suppose moreover that $H_{t_0}=  a+b+c$ for some $t_0 \in (\breve{T}^a_x, \breve{T}^{a+b}_{L^{a+b}_{\breve{T}^a_x}(m)})$ such that $m_{t_0}([0,a+b))=0$. 
 Then, for some excursion  $i\in I^{\intens^{a+b}}$  above level $a+b$ whose parent excursion above $0$ has no marks below $a+b$, we must have $t_0\in  (\alpha^{(i)},\beta^{(i)})$ and  then  $L^{a+b}_{\alpha^{(i)}}(m)=  L^{a+b}_{t_0}(m). $ Since  $L^{a+b}_{t_0}(m)\leq L^{a+b}_{\breve{T}^a_x}(m)$ by definition of  $\breve{T}^{a+b}_{L^{a+b}_{\breve{T}^a_x}(m)}$,  we get $\alpha^{(i)}\leq \breve{T}^a_x$,  a contradiction since  $\breve{T}^a_x<t_0$ and  $H_{\breve{T}^a_x}=a<a+b$. 
Therefore,  we get 
\[
 \int_{\breve{T}^a_x}^{ \breve{T}^{a+b}_{L^{a+b}_{\breve{T}^a_x}(m)}}  \diffd L^{a+b+c}_t(m)= \int_{\breve{T}^a_x}^{ \breve{T}^{a+b}_{L^{a+b}_{\breve{T}^a_x}(m)}}  \I_{\{ H_t=a+b+c\}} \diffd L^{a+b+c}_t(m)= 0 
 \]
and we conclude the desired identity.
    \end{proof}
            
\subsection{Proof of Proposition  \ref{convgrid}}
   
The remainder of this section is devoted to the proof of Proposition \ref{convgrid}. 
The proof is quite technical and will need different types of estimates and localization  with respect to different variables (which is  one of the reasons why we  only get convergence in probability). 
Next result provides a ``continuity-type'' estimate for pruned local times. 
Recall that the $(\F_t^{\rho})$-stopping time $T^{M}$ was defined in \eqref{TM}.

\begin{lemma}\label{estimates}
Let $((\rho_t,m^{\intens_1}_t):t\geq 0)$ and  $((\rho_t,m^{\intens_2}_t):t\geq 0)$) be  marked exploration process  and $\tau$ be an arbitrary $(\F_t^{\rho})$-stopping time. 
\begin{itemize} 
 \item[i)] For  $h\geq 0$ we have that
 \[
  \E \left(  \left|  L_{\tau }^h(m^{\intens_1})-  L_{\tau }^h(m^{\intens_2})\right|\right)
     \leq \, \E\left( \int_0^{ \tau } \diffd L_t^h \, \int_0^{h} \left| \intens_1(t,r) -  \intens_2(t,r) \right|  \, \diffd r \right) .
\]  
 \item[ii)] If moreover  $\tau\leq T^M$ a.s. for  some $M\geq 0$ and for $i=1,2$  there is $\tilde{\intens}_i$  such that $ \intens_i=g(L(m^{ \tilde{\intens}_i}))$, we have
 \[
  \E\left( \int_0^{ \tau }\diffd L_t^h \, \int_0^{h} \left| \intens_1(t,r) -  \intens_2(t,r) \right|\, \diffd r \right) \leq   c(M) M  \,  \E \left(  \int_0^{\tau}   \I_{\{H_s\leq h \}}  \int_0^{H_s}  |\tilde{\intens}_1(s,r) - \tilde{\intens}_2(s,r)| \diffd r \diffd t \right)
 \]
 where $c(M)>0$ is a Lipschitz constant of $g$ in $[0,M]$. 
\end{itemize}
\end{lemma}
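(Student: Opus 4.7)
My plan is a double application of Poisson compensation along a lineage, interleaved with the elementary indicator bound $|\I_{\{a=0\}} - \I_{\{b=0\}}| \le |a-b|$ for $a,b \in \NN$. For part i), starting from
\[
L_\tau^h(m^{\intens_i}) = \int_0^\tau \I_{\{m_s^{\intens_i}=0\}} \diffd L_s^h
\]
and applying this indicator inequality to $a = m_s^{\intens_1}([0,H_s))$, $b = m_s^{\intens_2}([0,H_s))$, together with the fact that $\diffd L_s^h$ is supported on $\{H_s=h\}$ (so that $m_s^{\intens_i}([0,H_s)) = m_s^{\intens_i}([0,h))$ on the support), gives
\[
|L_\tau^h(m^{\intens_1}) - L_\tau^h(m^{\intens_2})| \le \int_0^\tau |m_s^{\intens_1}([0,h)) - m_s^{\intens_2}([0,h))| \diffd L_s^h.
\]
Because $L^h$ and $\tau$ are $(\F_t^\rho)$-adapted, I would then condition inside the $\diffd L_s^h$-integral on $\F_s^\rho = \gG_0^{(s)}$. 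Writing $m_s^{\intens_i}([0,h))$ as the Poisson integral of $\I_{\{\nu<\intens_i(s,r)\}}$ against $\N_s$ on $[0,h)\times \R_+$, invoking Lemma \ref{NPoissG} together with the ${\cal P}red^{(s)}$-measurability of $r\mapsto \intens_i(s,r)$, the compensation formula produces
\[
\E\bigl(|m_s^{\intens_1}([0,h)) - m_s^{\intens_2}([0,h))| \bigm| \F_s^\rho\bigr) \le \int_0^{h\wedge H_s} |\intens_1(s,r) - \intens_2(s,r)| \diffd r.
\]
Since $H_s=h$ on the support of $\diffd L_s^h$, part i) is established.

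For part ii), the Lipschitz hypothesis combined with $L_t^r(m^{\tilde{\intens}_i}) \le L_t^r \le M$ valid for $t \le \tau \le T^M$ yields $|\intens_1(t,r) - \intens_2(t,r)| \le c(M)|L_t^r(m^{\tilde{\intens}_1}) - L_t^r(m^{\tilde{\intens}_2})|$, and a second use of the indicator trick of part i) gives
\[
|L_t^r(m^{\tilde{\intens}_1}) - L_t^r(m^{\tilde{\intens}_2})| \le \int_0^t |m_s^{\tilde{\intens}_1}([0,r)) - m_s^{\tilde{\intens}_2}([0,r))| \diffd L_s^r.
\]
Inserting this bound and applying the inhomogeneous occupation-times formula \eqref{occupation formula generalized} to the inner double integral over $(r,s)$,
\[
\int_0^h \int_0^t |m_s^{\tilde{\intens}_1}([0,r)) - m_s^{\tilde{\intens}_2}([0,r))| \diffd L_s^r \diffd r = \int_0^t \I_{\{H_s \le h\}} |m_s^{\tilde{\intens}_1}([0,H_s)) - m_s^{\tilde{\intens}_2}([0,H_s))| \diffd s,
\]
I would then swap the $\diffd L_t^h$ and $\diffd s$ integrals by Fubini to extract $L_\tau^h - L_s^h \le M$ (thanks to $\tau \le T^M$), thus reducing the triple integral to $M$ times a single $\diffd s$-integral. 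Taking expectation and applying exactly the same Poisson compensation step as in part i), now for $(\tilde{\intens}_1, \tilde{\intens}_2)$, yields the conclusion.

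No serious obstacle is anticipated: the argument is a careful chaining of three ingredients already established in the paper, namely the indicator–mass inequality, Poisson compensation for $\N_s$ along a lineage, and the occupation-times formula. The only delicate points are bookkeeping ones: choosing the right moment to condition on $\F_s^\rho$ (justified by the $(\F_t^\rho)$-adaptedness of $L^h$ and $\tau$), using the support of $\diffd L_s^h$ to replace $H_s$ by $h$, and invoking $\tau \le T^M$ precisely once in part ii) to extract the factor $M$.
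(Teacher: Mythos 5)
Your argument reproduces the paper's proof step for step: the indicator bound $|\I_{\{a=0\}}-\I_{\{b=0\}}|\le|a-b|$ followed by Poisson compensation conditionally on $\F_s^\rho=\gG_0^{(s)}$ in part i), and in part ii) the Lipschitz bound, a second indicator estimate, the occupation-times identity \eqref{occupation formula generalized}, Fubini to extract the factor $M$ from $L_\tau^h-L_s^h\le M$, and a final compensation step — this is exactly the chain the paper runs, with part ii) implicitly re-using the computation from Lemma \ref{lemma pre-Gronwall} just as you do explicitly. Correct, and essentially identical in method.
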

    
\begin{proof}
 {\em i)} We have  a.s.  for all $t\geq 0$,
 \[
  \left|  L_{t}^h(m^{\intens_1})-  L_{t}^h(m^{\intens_2})\right|  \leq \int_0^{t}  \left| m^{\intens_1}_s([0,h)) - m^{\intens_2}_s([0,h))\right| \diffd L_s^h  \leq  \int_0^{t}f_h(s) \diffd L_s^h  ,        
 \]
 with
 \[
 f_h(s)= \int_0^h \int_0^{\infty} \I_{\{ \intens_1 (s,r) \wedge  \intens_2 (s,r)) <  \nu \leq   \intens_1 (s,r) \vee  \intens_2 (s,r)  \} } \N_s(\diffd r, \diffd \nu).
 \]
 Applying conditionally on ${\cal F}^{\rho}_s={\cal G}_0^{(s)}$ the compensation formula we get for any $({\cal F}^{\rho}_t)_{t\geq 0}$-stopping time $\tau$ that
 \[
  \E\left(  f_h(s)  \I_{\{s\leq \tau \} } \bigg|   {\cal F}_s ^{\rho}  \right) \leq \ \int_0^h \E\left( | \intens_1 (s,r) - \intens_2 (s,r) | \bigg|  {\cal F}_s^{\rho}\right) \I_{\{s\leq \tau \} }\diffd r ,
 \]
 and the statement follows. 
 For point ii), proceeding as in the first part of the proof of Lemma \ref{lemma pre-Gronwall} we get
 \[
   \I_{{t\le \tau} } \int_0^{h}\left| \intens_1(t,r) -  \intens_2(t,r) \right|\, \diffd r  \, \leq  c(M)    \I_{{t\le \tau} } \int_0^t  \I_{\{ H_s\leq h \}}\big|m_s^{\tilde{\intens}_1}([0,H_s)) - m_s^{\tilde{\intens}_2 }([0,H_s))  \big|  \diffd s  \, , 
  \]
 hence
 \begin{linenomath}
  \begin{align*}
   \E\left( \int_0^{ \tau }\right.&\left. \diffd L_t^h \, \int_0^{h} \left| \intens_1(t,r) -  \intens_2(t,r) \right|\, \diffd r \right)\\
    \leq  & \, c(M)   \E \left( \int_0^{ \tau } \diffd L_t^h   \int_0^t    \I_{\{ H_s\leq h \}}  \big|m_s^{\tilde{\intens}_1}([0,H_s)) - m_s^{\tilde{\intens}_2 }([0,H_s))  \big|   \diffd s  \right) \\
           \leq  &\,   c(M) M  \,  \int_0^{\infty}  \E \left(      \I_{\{ s\le \tau, \, H_s\leq h \}}  
   \big|m_s^{\tilde{\intens}_1}([0,H_s)) - m_s^{\tilde{\intens}_2 }([0,H_s))  \big|    \diffd s  \right).
  \end{align*}
 \end{linenomath}
 Conditioning on ${\cal F}_s^{\rho}$ inside the last expectation, and following the last lines in the proof of Lemma \ref{lemma pre-Gronwall} yields the upper bound
 \[
   c(M) M \, \int_0^{\infty}\E \left(\I_{\{ s\le \tau, \, H_s\leq h \}}    \E\left( \int_0^{H_s}  |\tilde{\intens}_1(s,r) - \tilde{\intens}_2(s,r)|\diffd r \bigg|\cF_s^\rho\right)\right) \diffd s                
 \]
 as well as  the desired estimate.
\end{proof}

In the proof of Proposition \ref{convgrid} we will also need to compare accumulated pruned local times at heights that are not in the grid, with local times at heights in the grid right below. 
For local times pruned at fixed rate, this type of comparison can be deduced from a variant of the approximation \eqref{aprox1}. 
An extension to local-time dependent rates of the present framework is however not immediate, in part because the pruning rate is globally unbounded.  We will thus need to localize such approximation argument with respect to accumulated local times, in order to deal with pruning rates taking values  in compact intervals.

In order to ensure that the localization parameter can be removed, while at the same time, making the grid parameters go to $0$, we  will moreover need quantitative information about the speed at which approximations of local times such as \eqref{aprox1} converge. 
Such a result is the content of next lemma. 
Its technical proof, given in Appendix \ref{proofaproxlocprunvar}, relies on a snake variant of $L^2$-Poisson calculus developed in \cite{DLG} for the excursion of the exploration process. 
In a similar way as in that work, to avoid making additional integrability assumptions on the underlying  L\'evy process $X_t$, we need to also localize the exploration process with respect to its mass $ \langle \rho_t, 1\rangle$. We thus introduce an additional parameter $K>0$ and the stopping time 
\[
\tau^K:=\inf\{s>0 \, : \langle \rho_s, 1\rangle \geq K\}.
\] 
       
\begin{lemma}[{\bf Quantitative approximation  of variably pruned local  times at level $0$}]\label{aproxlocprunvar}
Consider a c\`agl\`ad   function   $\Theta:\R_+\to [0,\bar{\theta}]$  with $\bar\theta \geq  0$ and the marked exploration process $((\rho_t, m_t):t\geq 0)$ with $m_t= m^{\intens}_t$ associated with 
\[
\intens(t,h):= \Theta(L_t^0) \quad  \forall  \, (t,h)\in \R_2^+ \,.
\]
\begin{itemize}
 \item[a)] There exists an explicit nonnegative function $(\varepsilon,\bar\theta)\mapsto\hat{\cal C}(\bar\theta , K,\varepsilon)$ going to $0$ when $\varepsilon\to 0$ and increasing both in $\varepsilon$ and $\bar\theta$, such that for all $x\geq 0$: 
 \[ 
 \E \left[  \sup_{y\in [0,x]}\left|y -  \frac{1}{\varepsilon}  \int_0^{T_y} \I_{\{ 0<H_s \leq \varepsilon, \, m_s =0 \}} \diffd s \right| \I_{\{\tau^K> T_x\}}  \right] \leq \hat{\cal C}(\bar\theta ,K,\varepsilon) ( x+ \sqrt{x}).  
 \]
 \item[b)] We deduce that for   all $x\geq 0$,
 \[ 
 \E \left[  \sup_{t\in [0,T_x]}\left| L^{\varepsilon}_t(m)  - L_t^0 \right| \I_{\{\tau^K> T_x\}}  \right] \leq {\cal C}(\bar\theta ,K,\varepsilon) ( x+ \sqrt{x}) 
 \]
 for some explicit nonnegative  function $(\varepsilon,\bar\theta)\mapsto{\cal C}(\bar\theta , K,\varepsilon)$ with similar properties as $\hat{\cal C}(\bar\theta , K,\varepsilon)$. 
\end{itemize}
\end{lemma}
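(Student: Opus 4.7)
My plan is to decompose $\xi_s := \I_{\{0 < H_s \leq \varepsilon,\, m_s = 0\}}$ into its conditional expectation given the exploration process $\rho$ plus a Poisson-snake fluctuation. Since, conditionally on $\rho$, the measure $\N_s$ is Poisson on $[0,H_s)\times\R_+$ with intensity $dr\otimes d\nu$ and $\intens(s,h) = \Theta(L_s^0)$ is $h$-independent,
\begin{equation*}
 \E[\I_{\{m_s=0\}} \mid \F_\infty^\rho] = \exp\bigl(-H_s\, \Theta(L_s^0)\bigr).
\end{equation*}
Writing $\varepsilon^{-1}\int_0^{T_y} \xi_s\, ds = A_y + M_y$ with $A_y := \varepsilon^{-1}\int_0^{T_y} \I_{\{0<H_s\leq\varepsilon\}} e^{-H_s \Theta(L_s^0)}\, ds$, the elementary bound $|1-e^{-H_s\Theta(L_s^0)}|\leq \bar\theta H_s\leq \bar\theta\varepsilon$ on $\{H_s\leq\varepsilon\}$ combined with the approximation \eqref{aprox1} yields
\begin{equation*}
 \E\!\left[\sup_{y\leq x}\bigl|A_y - y\bigr|\, \I_{\{\tau^K > T_x\}}\right] \leq \bar\theta\varepsilon x + \eta(\varepsilon,K)\cdot(x+\sqrt{x}),
\end{equation*}
for some $\eta(\varepsilon,K)\to 0$ as $\varepsilon\to 0$, by the quantitative version of \eqref{aprox1} (essentially classical under the truncation $\tau^K$, which keeps $\langle\rho_s,1\rangle\leq K$ and hence the relevant moments of $\rho$ finite).

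For the fluctuation $M_y$ I would view $y\mapsto M_y$ as a process indexed by cumulative local time and use the $L^2$ Poisson-snake calculus of \cite[Chapter 4]{DLG}: the compensation formula for $\N$ along the exploration axis, combined with the snake property identifying the shared portion of $\N_s$ and $\N_{s'}$ below $H_{s,s'}$, allows a second-moment estimate of the form
\begin{equation*}
 \E\bigl[M_x^2\, \I_{\{\tau^K > T_x\}}\bigr] \leq C(\bar\theta, K)\, \varepsilon\, x,
\end{equation*}
the factor $\varepsilon$ arising from the restriction of the relevant marks to the slab $[0,\varepsilon]$ and the factor $x$ from the accumulated local time at $0$. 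A Doob-type maximal inequality applied to the martingale structure of $M$ in its natural filtration then gives $\E[\sup_{y\leq x}|M_y|\I_{\{\tau^K>T_x\}}]\leq C'\sqrt{\varepsilon\, x}$. Combining the two estimates produces the bound (a) with $\hat{\cal C}(\bar\theta,K,\varepsilon)$ the maximum of $\bar\theta\varepsilon$, $\eta(\varepsilon,K)$ and $C'\sqrt{\varepsilon}$.

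\textbf{Part (b).} I would prove this via the same conditional-expectation trick applied to $L^\varepsilon_t(m) = \int_0^t \I_{\{m_s=0\}}\, dL^\varepsilon_s$. Since $dL^\varepsilon_s$ is carried by $\{H_s=\varepsilon\}$, one has
\begin{equation*}
 \E[L^\varepsilon_t(m)\mid \F_\infty^\rho] = \int_0^t e^{-\varepsilon \Theta(L_s^0)}\, dL^\varepsilon_s,
\end{equation*}
so $|L^\varepsilon_t - L^\varepsilon_t(m)|$ is bounded in mean by $\bar\theta\varepsilon L^\varepsilon_t$ plus a martingale fluctuation handled exactly as $M_y$. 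A quantitative version of $L^\varepsilon_t \to L^0_t$ uniformly on $[0,T_x]$ (which follows either from \eqref{aprox2} or from Theorem \ref{RKDL}, since $(L^\varepsilon_{T_x}:x\geq 0)$ is a $\psi$-CSBP evaluated at time $\varepsilon$ issued from $x$, converging to $x$ with controlled rate) then delivers the bound.

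\textbf{Main obstacle.} The hardest step is establishing the $L^2$ estimate on $M_y$ with the correct $\varepsilon$-scaling. The snake property implies that $\N_s$ and $\N_{s'}$ share their marks below level $H_{s,s'}$, so a naive $L^2$ computation yields unwanted cross-terms. One must decompose the marks into those lying in the common lineage of $s$ and $s'$ (which cancel in the centered quantity $\xi_s - \E[\xi_s\mid\F_\infty^\rho]$) and those strictly above $H_{s,s'}$ (which are independent), and then verify that the dominant contribution truly scales like $\varepsilon\cdot x$ rather than the naive $x$. The mass-truncation by $\tau^K$ is required precisely at this step to keep the relevant quadratic-variation integrable.
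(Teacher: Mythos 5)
Your overall architecture (split into a bias term plus a marks-fluctuation martingale, use Poisson-snake calculus, truncate by $\tau^K$) is in the right spirit, but the two steps you treat as available off the shelf are exactly where the work lies, so as written the argument has genuine gaps. In part (a), your bias estimate for $A_y$ reduces to a quantitative, uniform-in-$y$ version of \eqref{aprox1} along $T_y$ under the truncation $\{\tau^K>T_x\}$; this is not classical --- it is precisely the $\bar\theta=0$ case of the statement you are proving, and the paper obtains it (simultaneously with the marked case) by writing $\varepsilon^{-1}\int_0^{T_y}\I_{\{0<H_s\le\varepsilon,\,m_s=0,\,\langle\rho_s,1\rangle\le K\}}\diffd s$ as a functional of the excursion point process \eqref{PEX}, computing the mean explicitly under $\mathbf{N}$ via Proposition 1.2.5 of \cite{DLG} (the key structural fact, which you do not exploit, is that $\Theta(L^0_s)$ is constant, equal to $\Theta(\ell^j)$, along each excursion, so the ``no marks'' probability $e^{-\Theta(\ell)H_s}$ is computable excursion by excursion), and controlling the centered part by BDG in the local-time filtration $({\cal Q}_\ell)$ of the point process. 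This also dissolves what you flag as the main obstacle: no decomposition of marks into common-lineage versus independent ones is needed, because distinct excursions away from $0$ are independent atoms of a Poisson point process, and within a single excursion one simply bounds the (conditional) variance of the per-excursion functional by its second moment, landing on $\mathbf{N}\bigl(\bigl(\int_0^{\zeta}\I_{\{0<H_s\le\varepsilon,\,\langle\rho_s,1\rangle\le K\}}\diffd s\bigr)^2\bigr)\le 2\varepsilon^2\,\E\bigl(X_{L^{-1}(\varepsilon)}\wedge K\bigr)$ as in Lemma 1.3.2 of \cite{DLG}. Note also that your claimed rate $\E[M_x^2]\le C(\bar\theta,K)\,\varepsilon x$ is stronger than what this computation gives and than what you can expect in general: since $\int_0^\infty r\,\Pi(\diffd r)$ may be infinite, the correct bound is of order $x\,\E(S_\varepsilon\wedge K)$, which tends to $0$ but in general not linearly in $\varepsilon$; this is precisely why the $K$-truncation must sit inside the occupation integral.

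In part (b) the gap is the second step: a quantitative bound on $\E\bigl[\sup_{t\le T_x}|L^{\varepsilon}_t-L^0_t|\,\I_{\{\tau^K>T_x\}}\bigr]$ does not follow from \eqref{aprox2} (which is qualitative) nor from Theorem \ref{RKDL} as you claim. The Ray--Knight identification makes $y\mapsto L^{\varepsilon}_{T_y}$ a subordinator whose jump measure need not have a second moment, so a maximal inequality with an explicit rate requires a jump truncation, and nothing in your sketch connects such a truncation to the event $\{\tau^K>T_x\}$, which bounds $\langle\rho_s,1\rangle$ and not the level-$\varepsilon$ local time accumulated in one excursion. The paper proves this comparison by first reducing $\sup_{t\le T_x}$ to $\sup_{y\le x}|L^{\varepsilon}_{T_y}(m)-y|$, then sandwiching with the occupation of the slabs $(0,\varepsilon]$, $(0,2\varepsilon]$ and $(\varepsilon,2\varepsilon]$, and finally applying part (a) with $\bar\theta=0$, conditionally on ${\cal E}_{\varepsilon}$, to the snake pruned below level $\varepsilon$ via Lemmas \ref{PPPa} and \ref{prunPEX}; this transfer of the level-$0$ estimate to level $\varepsilon$ is the essential ingredient your plan omits. (By contrast, your first step, comparing $L^{\varepsilon}_t(m)$ with $L^{\varepsilon}_t$, is correct and even simpler than you suggest: the difference is nonnegative and nondecreasing in $t$, so no fluctuation analysis is needed and $\E\bigl[L^{\varepsilon}_{T_x}-L^{\varepsilon}_{T_x}(m)\bigr]\le(1-e^{-\bar\theta\varepsilon})\,\E L^{\varepsilon}_{T_x}=(1-e^{-\bar\theta\varepsilon})\,x e^{-\alpha\varepsilon}$.)
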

  
Recall the notation $k_{h}= \sup \{k \in \NN : k\varepsilon <  h \}$ and $\overline{n}_t^{k}=\sup\{n \in \NN: n\delta <L_t^{k\varepsilon}(\varepsilon,\delta)\}$ introduced for fixed $\varepsilon,\delta \geq 0$  in Sections \ref{gridapproxflow} and  \ref{gridapproxloctime}. 
From the previous result we deduce 

\begin{lemma} \label{aproxgridlemma}
Let us fix real numbers $ \varepsilon, \delta , K , M > 0$. 
\begin{itemize}
 \item[a)]  For all  $a>0$, 
  \[
  \E \left[\sup_{t \leq T^{M} \wedge \tau^K}  | L_t^{k_a\varepsilon}(\varepsilon, \delta) - L_t^a(\varepsilon, \delta)|  \right]	\leq {\cal C}(g(M ),K,\varepsilon) ( M+ \sqrt{M}) +\Gamma( M,K), 
  \]
  where $\Gamma( M,K)=  2M \p(\tau^K\leq  T_M ).$
 \item[b)] For each $t\geq 0$, 
  \begin{linenomath}
  \begin{equation*}
  \begin{split}
  \E\left[ \I_{\{t <T^{M}\wedge \tau^K \}}\I_{\{H_t \leq a\}} \right.& \left.  \int_0^{H_t} | \intens^{\varepsilon,\delta}(t,h)-\intens^*(t,h)| \diffd h \right]\\ 
  & \leq c (M) a \left(\delta + \Gamma( M,K) +  
  {\cal C}(g(M ),K,\varepsilon) ( M+ \sqrt{M})\right)e^{c(M)t}.
  \end{split}
  \end{equation*}	
  \end{linenomath}
 \end{itemize}
\end{lemma}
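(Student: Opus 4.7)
\textbf{Plan of proof for Lemma \ref{aproxgridlemma}.}

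\emph{Part (a).} The strategy is to reduce the statement to Lemma \ref{aproxlocprunvar} applied to the \emph{subtree above level $k_a\varepsilon$}. By Lemma \ref{PPPa}, conditionally on $\mathcal{E}_{k_a\varepsilon}$, the excursion snake above $k_a\varepsilon$ has the same law as the original marked exploration process, and by the construction of $\vartheta^{\varepsilon,\delta}$ in the strip $(k_a\varepsilon,(k_a+1)\varepsilon]$, the pruning rate is $g(\overline{n}_t^{k_a}\delta)$, which is a piecewise constant function of $L^{k_a\varepsilon}_t(\varepsilon,\delta)$. By a calculation analogous to Remark \ref{timechangeAintens} and \eqref{AintensbarThatT}, this coincides with the local time at level $0$ of the pruned subtree, so the pruning in this strip fits the hypothesis of Lemma \ref{aproxlocprunvar} with the c\`agl\`ad function $\Theta(\ell)=g(\lfloor \ell/\delta\rfloor \delta)$, which on $\{t\le T^M\}$ is bounded by $g(M)$. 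Applying part (b) of Lemma \ref{aproxlocprunvar} at height $a-k_a\varepsilon\le\varepsilon$ yields the term $\mathcal C(g(M),K,\varepsilon)(M+\sqrt M)$. The correction $\Gamma(M,K)=2M\,\P(\tau^K\le T_M)$ is obtained by splitting the event $\{t\le T^M\wedge\tau^K\}$ according to whether $\tau^K>T_M$ (where Lemma \ref{aproxlocprunvar} applies) or $\tau^K\le T_M$ (where we use the trivial bound $|L^{k_a\varepsilon}_t(\varepsilon,\delta)-L^a_t(\varepsilon,\delta)|\le 2M$).

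\emph{Part (b).} Define
\[
\phi(t):=\E\bigl[\mathbf{1}_{\{t<T^M\wedge\tau^K\}}\mathbf{1}_{\{H_t\le a\}}\,\textstyle\int_0^{H_t} |\vartheta^{\varepsilon,\delta}(t,h)-\vartheta^*(t,h)|\,\diffd h\bigr].
\]
By the triangle inequality and the local Lipschitz property \eqref{condition on g} of $g$ on $[0,M]$, on the event $\{t\le T^M\}$,
\[
|\vartheta^{\varepsilon,\delta}(t,h)-\vartheta^*(t,h)|\le c(M)\bigl(\delta+|L^{k_h\varepsilon}_t(\varepsilon,\delta)-L^h_t(\varepsilon,\delta)|+|L^h_t(\varepsilon,\delta)-L^h_t(m^*)|\bigr),
\]
using that $|\overline n_t^{k_h}\delta-L^{k_h\varepsilon}_t(\varepsilon,\delta)|\le\delta$ and $\vartheta^*(t,h)=g(L^h_t(m^*))$. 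Integrating in $h\in[0,a]$ and taking expectation, the first two contributions are bounded by $c(M)a\delta$ and, via part (a), by $c(M)a\bigl(\mathcal C(g(M),K,\varepsilon)(M+\sqrt M)+\Gamma(M,K)\bigr)$. For the third contribution, Lemma \ref{estimates}.i applied to $m^{\varepsilon,\delta}$ and $m^*$ gives
\[
\E\bigl[|L^h_t(\varepsilon,\delta)-L^h_t(m^*)|\bigr]\le \E\Bigl[\textstyle\int_0^t \diffd L^h_s \int_0^h |\vartheta^{\varepsilon,\delta}(s,r)-\vartheta^*(s,r)|\,\diffd r\Bigr].
\]
Multiplying by $c(M)$ and integrating in $h$ over $[0,a]$, the occupation time formula \eqref{occupation formula generalized} (swapping $h$ with $H_s$) transforms this into $c(M)\int_0^t \phi(s)\,\diffd s$. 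Putting all three contributions together yields
\[
\phi(t)\le c(M)a\bigl(\delta+\mathcal C(g(M),K,\varepsilon)(M+\sqrt M)+\Gamma(M,K)\bigr)+c(M)\int_0^t\phi(s)\,\diffd s,
\]
and Gronwall's lemma yields the stated bound.

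\emph{Main obstacle.} The delicate step is part (a): the reduction to Lemma \ref{aproxlocprunvar} in the subtree requires verifying that the pruning rate there is exactly of the form $\Theta(\widehat L^0_\cdot)$ with $\Theta$ c\`agl\`ad and bounded, and that the mass-localization $\tau^K$ of the original tree provides an adequate localization for the subtree as well, so that the probabilistic error $\P(\tau^K\le T_M)$ absorbs the failure of these controls. Once (a) is in place, the Gronwall step (b) is routine, the only mildly subtle point being the swap of the $\diffd h$ and $\diffd L^h_s$ integrations, handled by \eqref{occupation formula generalized}.
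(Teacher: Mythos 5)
Your proposal is correct and follows essentially the same route as the paper: part (a) is proved by passing to the snake pruned below $k_a\varepsilon$ (via Lemma \ref{prunPEX}/Lemma \ref{PPPa}, conditioning on ${\cal E}_{k_a\varepsilon}$), applying Lemma \ref{aproxlocprunvar}~b) at height $a-k_a\varepsilon\le\varepsilon$ with the piecewise-constant rate, and absorbing the event $\{\tau^K\le T_M\}$ into $\Gamma(M,K)$; part (b) uses the same three-term decomposition followed by Gronwall. Your handling of the third term via Lemma \ref{estimates}~i) plus the occupation-time formula \eqref{occupation formula generalized} is just an unpacking of the paper's direct appeal to Lemma \ref{lemma pre-Gronwall}, so it is the same argument in substance.
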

  
\begin{proof}
 a) We start noting that if $t \leq T^{M} $, we have $ L_t^{h}(\varepsilon, \delta)\leq L_t^{h}\leq M $ for every $h \geq 0$. 
 Thus, if $k_a=0$, since $T^M\leq T_M$, we have  
 \[
 \sup_{t \leq T^{M} \wedge \tau^K}  | L_t^{k_a\varepsilon}(\varepsilon, \delta) - L_t^a(\varepsilon, \delta)| \leq \sup_{t \leq T_M }  | L_t^0 - L_t^a(\varepsilon, \delta)| \mathbf{1}_{\{\tau^K> T_M\}} +2M \I_{\{\tau^K\leq T_M\}} 
 \]
 and the desired  inequality follows from part b) of Lemma \ref{aproxlocprunvar} with $\bar{\theta}=g(M)$. 
 To prove the property for arbitrary  $k_a= k$, we first observe  that, since $T^M\leq T^{k\varepsilon} _M$, 
 \begin{equation}\label{sLLsLL}
  \sup_{t \leq T^{M} \wedge \tau^K}  | L_t^{k\varepsilon}(\varepsilon, \delta) - L_t^a(\varepsilon, \delta)|  \leq \sup_{t \leq T^{k\varepsilon}_M }  | L_t^{k\varepsilon}(\varepsilon,\delta) - L_t^a(\varepsilon, \delta)| \I_{\{\tau^K> T_M\}} +2M   \I_{\{\tau^K\leq T_M\}},
 \end{equation}
 so it enough to bound the first term on the right hand side of \eqref{sLLsLL} by ${\cal C}(g(M) ,K,\varepsilon) (M+ \sqrt{M})$ in order  to obtain the desired inequality. 
 To that end, consider again the processes $\widehat{L}^r_t $, $\widehat{T}_x$ and $\widehat{H}_t$ associated with $(\widehat{\rho},\widehat\N)$, the snake process pruned below  level $k\varepsilon$  already used in the proof of Proposition \ref{lawgrid}. 
Observe that $\I_{\{\tau^K> T^{k\varepsilon}_M\}}\leq  \I_{\{\widehat \tau^K> \widehat{T}_{L^{k\varepsilon}_{T^{k\varepsilon}_M} (\varepsilon,\delta)}\}}$, where $\widehat{\tau}^K:=\inf\{s>0 \, : \langle \widehat{\rho}_s, 1\rangle \geq K\}$,   and that  (by similar arguments as in the proof of  Proposition \ref{lawgrid}) the supremum on the right-hand side of \eqref{sLLsLL}  satisfies 
 \[
 \sup_{t \leq T^{k\varepsilon}_M }  | L_t^{k\varepsilon}(\varepsilon,\delta) - L_t^a(\varepsilon, \delta)|\leq 
 \sup_{s \leq \widehat{T}_{L^{k\varepsilon}_{T^{k\varepsilon}_M}(\varepsilon,\delta)}}
  |\widehat{ L}_s^0- \int_0^s\I_{\{\widehat{m}_u^{\varepsilon,\delta}([0,\epsilon'))=0\}}\diffd \widehat{L}_u^{\epsilon'}|= \sup_{s \leq \widehat{T}_{L^{k\varepsilon}_{T^{k\varepsilon}_M}(\varepsilon,\delta)}}
 |\widehat{ L}_s^0- \widehat{L}_s^{\epsilon'} (\widehat m^{\varepsilon,\delta})|, 
 \] 
 where $\epsilon' = a - k\varepsilon\in [0,\varepsilon]$, and $\widehat m^{\varepsilon,\delta}$ is  given in \eqref{widehatm}. 
Thanks to Lemma \ref{prunPEX},  when taking expectation to the first summand  on the r.h.s.\ of \eqref{sLLsLL}, we can first condition on ${\cal E}_{k\varepsilon}$   and (conditionally) apply  part b) of Lemma \ref{aproxlocprunvar}. The result then follows since  $L^{k\varepsilon}_{T^{k\varepsilon}_M}(\varepsilon,\delta)\leq M$ and $y\mapsto {\cal C}(g(y) ,K,\varepsilon) ( y+ \sqrt{y})$ is increasing.  
  
 
 \medskip
  
 b) Writing $\Delta_t^*:=  \int_0^{H_t} | \intens^{\varepsilon,\delta}(t,h)-\intens^*(t,h)| \diffd h $,  we have
 \begin{linenomath}
 	\begin{align}
 	\E \left[ \I_{\{t <T^{M}\wedge \tau^K  \}}\I_{\{H_t \leq a\}} \Delta^*_t \right]  \leq  &  \E \left[ \I_{\{t <T^{M}\wedge \tau^K  \}}\I_{\{H_t \leq a\}}\int_0^{H_t} \left|g( L_t^r(\varepsilon, \delta)) - g(L_t^r(m^*)) \right| \diffd r \right] \label{I} \\
 	& +  \E \left[ \I_{\{t 
 		<T^{M}\wedge \tau^K  \}}\I_{\{H_t \leq a\}} \int_0^{H_t} |g\left( \overline{n}_t^{k_r} \delta \right) - g\left(L_t^{k_r\varepsilon}(\varepsilon,\delta)\right)| \diffd r \right] \label{II}\\
 	& +  \E \left[\I_{\{t 
 		<T^{M}\wedge \tau^K  \}} \I_{\{H_t \leq a\}}  \int_0^{H_t} | g\left( L_t^{k_r\varepsilon}(\varepsilon, \delta)\right) - g\left(L_t^r(\varepsilon, \delta)\right)| \diffd r \right] \label{III}.
 	\end{align}
 \end{linenomath}
 Taking into account the relations   \eqref{eqL*} and \eqref{Lepsdeleq},  the term on the r.h.s.\  of  \eqref{I} is  bounded by 
 \[ 
  c(M)  \int_0^t  \E \left[ \I_{\{s <T^{M}\wedge \tau^K  \}}\I_{\{H_s \leq a\}}\Delta^*_s  \right] \diffd s \, ,
 \]
 thanks to Lemma \ref{lemma pre-Gronwall}. 
 From the definition of the integers $k_{h}=k_{h} (\varepsilon)$ and $\overline{n}_t^{k} = \overline{n}_t^{k}(\varepsilon,\delta)$, term \eqref{II} is bounded by $c(M)  a\delta$.
 Finally, by part a) term \eqref{III} is bounded by
 \[
  c(M)  \E  \left[  \int_0^a   \sup_{s \leq T^{M} \wedge \tau^K}|L_s^{k_r\varepsilon} (\varepsilon, \delta) - L_s^r(\varepsilon, \delta)| \diffd r \right] \leq  c(M) \, a \left( {\cal C}(g(M ),K,\varepsilon) ( M+ \sqrt{M}) + 2M \Gamma( M,K) \right).
 \]
 The statement follows by Gronwall's lemma. 
\end{proof}

\medskip
 
We can finally give the
\begin{proof}[Proof of Proposition \ref{convgrid}] 
 We fix $M>0$ and  $x\geq 0$ and consider the  $({\cal F}_t^{\rho})-$stopping time 
 \[ 
 \tau = T \wedge \tau^K \wedge T_x
 \] 
 with $T, K\geq 0$  constants to be fixed later in terms of $M$. 
 Thanks to the relations \eqref{eqL*} and \eqref{Lepsdeleq} we can apply inequality i) of Lemma \ref{estimates} to get for all $a\geq 0$ that
 \[
  \E \left[ |L^a_{\tau \wedge T^{M}}(\varepsilon,\delta) - L^a_{\tau\wedge T^{M}}(m^*)| \right]
  \leq  \E \left[\int_0^{\tau \wedge T^{M}} \diffd L_s^a  \int_0^{a} |g\left( \overline{n}_s^{k_r} \delta\right)- g\left(L_s^r(m^*)\right)| \diffd r  \right],
 \]
 from where
 \begin{linenomath} 
 \begin{align}
  \E \left[|L^a_{\tau \wedge T^{M}}(\varepsilon,\delta) - L^a_{\tau\wedge T^{M}}(m^*)| \right]
  \leq &  \, \E \left[\int_0^{\tau\wedge T^{M}}\diffd L_s^a \int_0^{a} |g\left( \overline{n}_s^{k_r} \delta\right)
   - g\left( L_s^{k_r\varepsilon}(\varepsilon,\delta)\right)| \diffd r \right]  \label{1}\\
       & +  \E \left[\int_0^{\tau \wedge T^{M}}\diffd L_s^a \int_0^{a} |g\left( L_s^{k_r\varepsilon} 
  (\varepsilon,\delta)\right)  -  g\left( L^{r}_{s} (\varepsilon,\delta)\right)| \diffd r \right] \label{2}\\
       & +  \E \left[\int_0^{\tau \wedge T^{M}}\diffd L_s^a \int_0^{a} |g(L_s^{r}(\varepsilon,\delta)) - 
  g(L^{r}_{s} (m^*))| \diffd r \right]  \label{3}.
 \end{align}
\end{linenomath}
 The right hand side of  \eqref{1} is bounded by $c(M) M\, a\delta$, by construction of the process $L(\varepsilon,\delta)$ and definition of $T^M$. 
 Term \eqref{2}  is bounded by
 \begin{linenomath}
  \begin{align*}
   c(M)\,\E & \left[\int_0^{\tau \wedge T^{M}}\diffd L_s^a   \int_0^{a} \sup_{u \leq T^{M} \wedge \tau^K}  | L_u^{k_r\varepsilon}(\varepsilon, \delta) - L_u^r(\varepsilon, \delta)|   \diffd r \right]\\
       \leq & \,  c(M) \, M  \, \E \left[ \int_0^a \sup_{u \leq T^{M} \wedge \tau^K}  | 
   L_u^{k_r\varepsilon}(\varepsilon, \delta) - L_u^r(\varepsilon, \delta)|   \diffd r \right]\\
       \leq & \,  a c(M) M \, ( {\cal C}(g(M ),K,\varepsilon) ( M+ \sqrt{M}) +\Gamma( M,K)),
  \end{align*}
 \end{linenomath}
 where in the last inequality we used part  a) of Lemma \ref{aproxgridlemma}.

 To bound \eqref{3}, we use inequality ii) of Lemma \ref{estimates} with  the stopping time $\tau$  in that statement replaced by, in  current proof's notation, $\tau\wedge T^M=  T \wedge \tau^K \wedge T_x\wedge T^M\leq T^M $, and  with  $\intens_1 =g(L(\varepsilon,\delta))$,  $\intens_2= g(L(m^*))$,  $\tilde{\intens}_1 = \intens^{\varepsilon,\delta}$  and $ \tilde{\intens}_2=\intens^*$. This gives us  the upper bound  
 \[
 c(M) M  \,  \E \left(  \int_0^{T\wedge T_x}   \I_{\{H_s\leq a \}} \I_{\{s <T^{M}\wedge \tau^K \}} \int_0^{H_s}  | \intens^{\varepsilon,\delta}(s,r) - \intens^*(s,r)| \diffd r \, \diffd t \right)
 \]
 for expression \eqref{3}. 
 By Lemma \ref{aproxgridlemma} b), the latter  is bounded by 
 \[
 a   \, T  c(M) ^2 \, M  \left(\delta + \Gamma( M,K)+  {\cal C}(g(M) ,K,\varepsilon) ( M+ \sqrt{M})\right) e^{c(M) T}. 
 \]
 Bringing all together,  we  have shown that
 \begin{linenomath}
 \begin{equation*}
 \begin{split}
 \E & \left[|L^a_{  T_x \wedge  (\tau^K  \wedge T \wedge T^{M})}(\varepsilon,\delta) - L^a_{T_x \wedge ( \tau^K  \wedge T \wedge T^{M})}(m^*)| \right]\\
 & \leq  a c(M) M (1+Tc(M)) \left(\delta + \Gamma( M,K)+  {\cal C}(g(M) ,K,\varepsilon) ( M+ \sqrt{M})\right) 
 e^{c(M)T} .
 \end{split}
 \end{equation*}	
 \end{linenomath}
 We now choose  for each $M>0$, $T:=M$. 
 Since $\tau^K\to \infty $ when $K\to \infty$, for each $M>0$ we can moreover find some  $K=K(M)$  going to $\infty$ with $M$ and such that $\p(\tau^{K(M) }\leq  T_M )\leq (c(M) M^3 (1+M\, c(M)) ) e^{c(M) M})^{-1}$. 
 With these choices, we have 
 \[
 c(M) M (1+M\, c(M))\Gamma( M,K(M))e^{c(M)  M}\leq M^{-1} \to 0
 \] 
 when $M\to \infty$, whereas  the sequence of stopping  times ${\cal T}_M:= M\wedge \tau^{K(M)} \wedge T^{M}$ goes a.s. to $\infty$. 
 Thus, for each $\eta>0$ and $M\geq 0$, 
 \begin{linenomath}
  \begin{align*}
   \p \left[|L^a_{T_x }(\varepsilon,\delta)\right.& \left. - L^a_{T_x  }(m^*)|>\eta  \right]\\
                                                  & \leq \p \left[|L^a_{T_x  \wedge {\cal T}_M} 
   (\varepsilon,\delta) - L^a_{T_x   \wedge{\cal T}_M}(m^*)|>\eta  \right] + \p(T_x >{\cal T}_M ) \\
                                                  & \leq \frac{  a c(M) M (1+M\, c(M))}{\eta}  \left(\delta + 
   \Gamma( M,K(M))+  {\cal C}(g(M ),K(M),\varepsilon) ( M+ \sqrt{M})\right) e^{c(M)M} \\
                                                  & \quad + \p(T_x >{\cal T}_M ). 
  \end{align*}
 \end{linenomath}
 Hence, 
 \begin{linenomath}
 \begin{equation*}
 \begin{split}
 \limsup\limits_{(\varepsilon,\delta)\to (0,0)} \p \left[|L^a_{T_x }(\varepsilon,\delta) - L^a_{T_x  }(m^*)|>\eta  \right] \leq & \frac{  a c(M) M (1+M\, c(M))}{\eta}  \Gamma( M,K(M))e^{c(M)M} \\
 & + \p(T_x >{\cal T}_M ).
 \end{split}
 \end{equation*}	
 \end{linenomath}
 
 Letting $M \to \infty$, we have shown that $\lim\limits_{(\varepsilon,\delta)\to( 0,0)} \p \left[|L^a_{T_x } (\varepsilon,\delta) - L^a_{T_x  }(m^*)|>\eta  \right]=0.$
\end{proof}

\bigskip     

{\bf Acknowledgments} J.B. acknowledges  support of grants ANR-14-CE25-0014 (ANR GRAAL)
and ANR-14-CE25-0013 (ANR NONLOCAL). M.C.F.'s research  was supported by a doctoral grant  of   BASAL-Conicyt Center for Mathematical Modeling (CMM) at University of Chile and a postdoctoral grant from the National Agency for Science and Technology Promotion, Argentina. 
She thanks also  ICM Millennium Nucleus NC120062 (Chile) for travel support, a doctoral mobility grant of the  French Embassy in Chile for a three months visit to  Laboratoire de Probabilit\'es et Mod\`eles Al\'eatoires of University Paris 6 and  hospitality of the latter  during her stay.  
J.F. acknowledges partial support of ICM Millennium Nucleus NC120062 and BASAL-Conicyt CMM. The authors thank Jean-Fran\c cois Delmas for  enlightening  discussions at the beginning of this work and  Victor Rivero and Juan Carlos Pardo for kindly providing some useful mathematical insights. They also thank two anonymous referees for their careful reading of the manuscript,   for valuable remarks  which allowed us to  clarify the presentation of our results, and for pointing out to us references [7] and [23]. 

\appendix
       
\section{Appendix} 
 
\subsection{Proof of Proposition \ref{markRK}}\label{proofmarkRK}

Recall from \cite[Theorem 0.3]{ADV} that $(\rho^\theta_t: t\geq 0) : = (\rho_{C^\theta_t}: t\geq 0)$, with  $C^\theta_t $ the right-continuous inverse of $A^\theta_t : =   \int_0^t \I_{\{m^{\theta}_s  =0\}} \diffd s$, 
has the same law as the exploration process associated with a L\'evy process of Laplace exponent \eqref{psitheta}. Denote by $\left(\bar{L}_s^a :t\geq 0 \right) $ its  local time at level $a$. 
Applying \eqref{aprox1} to  $\rho^{\theta}$ and performing the change of variable $C^{\theta}_r\mapsto u$, we deduce that  
\[
\bar{L}_t^a=  \lim\limits_{\epsilon\to 0}  \epsilon^{-1}\int_0^{C^{\theta}_t} \I_{\{a<H_u\leq a+\epsilon, \, m^{\theta}_u=0\}}\diffd u \quad \mbox{for all  }  t\geq 0, 
\]
in the  $L^1(\p)$ sense. Let us check that this limit is equal to $L_{C^{\theta}_t}^a(m^\theta)$.  Since
$C^{\theta}_t<\infty$ a.s. (see \cite{ADV}), we deduce from  \eqref{aprox1} applied to the original exploration process $(\rho_t: t\geq  0)$  that, on the interval $[0,C^{\theta}_t ]$,  the finite measures $\epsilon^{-1}  \I_{\{a-\epsilon<H_s\leq a\}}\diffd s$ converge weakly in probability  towards   $\diffd L_s^a$ as $\epsilon \to 0$. 
By Lemma \ref{total marks lsc}  the function $s\mapsto \I_{\{ m_s^{\theta} ([0,H_s))=0\}}$ has a.s.\ at most countably many discontinuities, therefore a.s.\ it is  continuous  a.e.\   with respect to the measure  $\I_{[0,C^{\theta}_t]}(s)\diffd L_s^a$.  We  thus deduce that: 
\begin{equation}\label{LbarL}
L_{C^{\theta}_t}^a(m^{\theta}) =  \P-\lim\limits_{\epsilon\to 0}  \epsilon^{-1}\int_0^{C^{\theta}_t} \I_{\{a<H_u\leq a +\epsilon, \, m^{\theta}_u=0\}}\diffd u =  \bar{L}_t^a  .
\end{equation}
 By right-continuity in $t\geq 0$ of the first and third expressions, $\left( \bar{L}_t^a :t\geq 0 \right)$ and $\left(L_{C^{\theta}_t}^a(m^{\theta})  :t\geq 0 \right)$ are indistinguishable for each $a\geq 0$. 
In particular, if we set $\bar{T}_x=\inf \{ s>0 : \,  \bar{L}_t^0 >x\}$, for each $a\geq 0$ we a.s. have
\[
\bar{L}_{\bar{T}_x}^a = L_{C^{\theta}_{\bar{T}_x}}^a(m^\theta)= L_{T_x}^a(m^\theta) 
\]
since $\bar{L}^0_{\cdot}=L_{C^{\theta}_{\cdot}}^0(m^\theta)$ (by \eqref{LbarL} with $a=0$) and $L^0_{\cdot}(m^{\theta})=L^0_{\cdot}$. 
The result follows from  the above identities and Theorem \ref{RKDL} applied to the local times of $\rho^{\theta}$.

\subsection{Proof of  Lemma \ref{weirdwnandpp} }\label{proofLemmaweirdwnandpp}

Write $M_t= (M^i_t)_{ i\leq n}$, $N_t= (N^k_t)_{ k\leq m} $, let  $\lambda
=(\lambda_1,\cdots,\lambda_n)\in \R^n$,  $\mu=(\mu_1,\cdots,\mu_m)\in \R^m$ and  set $\hat{\lambda}
=(\lambda_1 a_1^{-1},\cdots,\lambda_n a_n^{-1})$.  It is enough to show for all $t,s\geq 0$ that 
$$\E(\exp\{i \hat{\lambda} \cdot (M_{t+s}-M_t)+ i\mu \cdot(N_{t+s}-N_t)\}\vert {\cal S}_t)=
\exp\left\{-s \frac{|\lambda|^2}{2}+ s\sum_{k=1}^m b_k(e^{i\mu_k}-1)\right\}$$
(where ``$\cdot$'' is the Euclidean inner product and $i=\sqrt{-1}$). Equivalently,  we need to show that 
$$ \exp\left\{i \hat{\lambda} \cdot M_{t} +t\frac{|\lambda|^2}{2} + i\mu \cdot N_t -t\sum_{k=1}^m b_k(e^{i\mu_k}-1) \right\}$$
is a $({\cal S}_t)$- martingale. By  i),  
$ {\cal E}_t^{(M)}:=\exp\left\{i \hat{\lambda} \cdot M_{t} +t\frac{|\lambda|^2}{2} \right\}= \exp\left\{i \hat{\lambda} \cdot M_{t} -  \frac{i^2}{2} [ \hat{\lambda} \cdot M]_t \right\} $ clearly is a   $({\cal S}_t)$-martingale. On the other hand, writing ${\cal E}_t^{(N)}=   \exp\left\{ i\mu \cdot N_t -t\sum_{k=1}^m b_k(e^{i\mu_k}-1) \right\}$ we have
\begin{equation*}
\begin{split}
 {\cal E}_t^{(N)}- 1=\, &  \sum_{0<s\leq t}   {\cal E}_{s-}^{(N)}  \left( e^{ i\mu \cdot  \Delta N_s }-1\right) - \int_0^t \sum_{k=1}^m b_k(e^{i\mu_k}-1)  {\cal E}_{s}^{(N)}\diffd s   \\
 =\, &  \sum_{0<s\leq t}  \sum_{k=1}^m {\cal E}_{s-}^{(N)}  \left( e^{ i\mu_k }-1\right)  \Delta N^k_s  - \int_0^t \sum_{k=1}^m b_k(e^{i\mu_k}-1)  {\cal E}_{s-}^{(N)}\diffd s   \\
 \end{split}
\end{equation*}
 by the first property in ii) and since $\Delta N^k_s=0$ or $1$ for  each $k=1,\dots,m$.  The second property in ii) then grants  that
 $ {\cal E}_t^{(N)}- 1=\int_0^t  \sum_{k=1}^m b_k(e^{i\mu_k}-1)  {\cal E}_{s-}^{(N)} \diffd \tilde{N}^k_s$,  with $\tilde{N}^k_t=N^k_t-b_k t$,  is a  $({\cal S}_t)$- martingale too.  Since ${\cal E}^{(M)}$ and  ${\cal E}^{(N)}$ respectively are continuous and  pure jump  martingales, we have  $[{\cal E}^{(M)},{\cal E}^{(N)}]=0$. Thus,  ${\cal E}^{(M)}{\cal E}^{(N)}$  is  a  $({\cal S}_t)$- martingale as required.

\subsection{Proof of  estimate \eqref{boundFuLi}}\label{proofboundFuLi}
We follow ideas in  the proofs of \cite[Theorem 2.1]{DL}  and  \cite[Proposition 3.1]{FL}. Define a  function $\varrho$  on $\R_+$   by $\varrho(x):= [\sigma + \int_{0}^{1} r^{2}\Pi(\diffd r)]\sqrt{x}$. Notice  that for all  $y\leq x$ one has   
 \begin{equation}\label{boundc}
  \int_0^{\infty} \diffd \nu \int_0^1 \Pi(\diffd r) \int_0^1 \dfrac{r^2\I_{\{y<\nu<x\}} (1-t)}{\varrho(|x-y +tr\I_{\{y<\nu<x\}}|)^2}\diffd t \leq c := \dfrac{\int_{0}^{1}r^{2}\Pi(\diffd r)}{\left(\sigma + \int_{0}^{1} r^{2}\Pi(\diffd r)\right)^2}.
 \end{equation}
Consider a  real sequence $\{a_{j}\}_{j\geq 1}$ defined by   $a_{0} = 1$ and $a_{j}= a_{j-1} \,e^{-j[\sigma+ \int_{0}^{1} r^2\Pi(\diffd r)]^{2}}$, so that  $a_{j}\searrow 0$  and  $\int_{a_{j}}^{a_{j-1}} \varrho(z)^{-2 }\diffd z = j$.  
For each $j\geq 1$,  let $\psi_{j}$  be  a non-negative continuous function on $\R$ supported in $(a_{j} , a_{j-1})$ such that $0 \leq \psi_{j}(z)\leq 2j^{-1}\varrho(z)^{-2}$ and  $\int_{a_{j}}^{a_{j-1}} \psi_{j}(z)\diffd z = 1$. 
Define also non-negative twice continuously differentiable functions $\phi_{j}$ on $\R$ by 
\[
\phi_{j}(x) =\int_{0}^{|x|}\diffd y\int_{0}^{y}\psi_{j}(z)\diffd z, \quad x \in \R. 
\]
Notice that for each  $x\in\R$ it holds    for all $j\geq 1$ that  $ 0 \leq \phi'_{j}(x)\mbox{sign}(x) \leq 1 $,   $\phi_{j}''(x)\geq 0$ and $ \phi''_{j}(x)\sigma^2|x| \leq 2j^{-1}$. Moreover, we have $\phi_{j}(x) \nearrow |x|$  as $j \rightarrow \infty$.  

For any  $z,h, r, \nu ,x,y\geq 0$ and a differentiable function $f$, set now $l(r,\nu; ,x, y):  = r\left[\I_{\{\nu<x\}} - \I_{\{\nu<y\}}\right]$,  $\Delta_{h}f(z):= f(z+h)-f(z)$ and  $D_h f (\zeta) :=\Delta_h f(\zeta)  - f' (\zeta)h$. Using  \eqref{eq1:y-z}  and It\^{o}'s formula, we get 

\begin{equation}\label{eq:itophij}
 \begin{split}
  \phi_{j}(\zeta^{\varepsilon,\delta}_{t\wedge\tau_{m}}(x)) 
   =   & M_{t\wedge \tau_{m}}+  \alpha_0 \int_{0}^{t\wedge \tau_{m}} \phi'_{j}(\zeta^{\varepsilon,\delta}_{s}(x)) \zeta^{\varepsilon,\delta}_{s}(x) \diffd s \\
       & + \int_{0}^{t\wedge\tau_{m}}\int_{0}^{\infty}\int_1^{\infty} 
   \Delta_{l(r,\nu;Z_{s}(x),Z^{\varepsilon,\delta}_{s}(x))}\phi_{k}(\zeta^{\varepsilon,\delta}_{s}(x))\diffd \nu\Pi(\diffd r) \diffd s\\
       & + \frac{  \sigma^2}{2}\int_{0}^{t\wedge\tau_{m}}\phi''_{j}(\zeta^{\varepsilon,\delta}_{s}(x))
   \zeta^{\varepsilon,\delta}_{s}(x)  \diffd s\\
       & + \int_{0}^{t\wedge\tau_{m}} \int_0^{\infty} \int_0^1 D_{l(r,\nu; 
   Z_{s}(x),Z^{\varepsilon,\delta}_{s}(x))}\phi_{k}(\zeta^{\varepsilon,\delta}_{s})\diffd \nu\Pi(\diffd r) \diffd s \\
       & - \int_0^{t\wedge\tau_{m}} \phi'_{j}(\zeta^{\varepsilon,\delta}_{s}(x)) \left[G(Z_s(x)) - 
   G(Z^{\varepsilon,\delta}_s(x)) \right] \diffd s\\
       & - \int_0^{t\wedge \tau_m} \sum\limits_{n =0}^{ n^{k_s\varepsilon}_x} 
   \int_0^{Z^{\varepsilon,\delta}_{k_s\varepsilon}(x)}\phi'_{j}(\zeta^{\varepsilon,\delta}_{s}(x)) \I_{\{n\delta<u\leq (n+1)\delta\}}\left[ g(u) - g(n\delta) \right] \diffd _uZ_{k_s\varepsilon,s}(u)\diffd s,     
  \end{split}
\end{equation}
where $(M_{t \wedge \tau_{m}})_{t\geq 0}$ is a martingale. By properties of  $\phi'_j$,  the integrands   in the first and second lines are respectively bounded by $|\alpha_0||Z_{s}(x) - Z^{\varepsilon,\delta}_{s}(x)|$ and $   \int_1^{\infty} r \Pi(\diffd r)  |Z_{s}(x) - Z^{\varepsilon,\delta}_{s}(x)|$. The fact that 
\[
  0 \leq \int_{0}^{\infty}\int_0^{1} D_{l(r,\nu; x,y)}\phi_{j}(x-y)\diffd \nu\Pi(\diffd r)\leq \dfrac{2c}{j} 
\]
 following from estimate \eqref{boundc},   together with the  mentioned properties of $\phi''_{j}$  ensure that the terms on the third and fourth lines  vanish when $j\to \infty$. 
Taking expectation in \eqref{eq:itophij} and letting $j\rightarrow \infty$, we get the desired bound, noting that $g(m)$ is a Lipschitz constant for the function $ G$ in $[0,m]$. 

\subsection{Proof of Lemma \ref{PPPa}}\label{proofPPPa}
In a similar way as for the process $\left( (\rho_t,{\cal N}_t)\, : t\geq 0 \right)$, the trajectories of the process $\left( (\rho^a_t,{\cal N}_t^a)\, : t\geq 0 \right)$ are determined in a unique (measurable) way from the atoms of the point process \eqref{PEXa}. 
It is therefore enough to establish the first claim. 
To do so, one easily adapts first  the arguments of the proof of Proposition 4.2.3 in \cite{DLG} in order to prove  that, under the excursion measure $\mathbb{N}$, the process 
\begin{equation}\label{PEXaj}
 \sum_{i\in I^j} \delta_{(\ell^{(i)},  \rho^{(i)},\,  {\cal N}^{(i)})} ,
\end{equation} 
with $I_j:=\{ i \in I:  (\alpha^{(i)},\beta^{(i)}) \subset (\alpha^{j},\beta^{j})\}$  denoting the sub-excursions above level $a$ of the excursion away from $0$ labeled $j$, is  conditionally on ${\cal E}_a$ a Poisson point process of intensity  $\diffd x \I_{[L^a_{\alpha_j}, L^a_{\beta_j}  ]} \otimes \mathbb{N}(\diffd \rho \, , \diffd {\cal N})$ (our superscripts ``$(i)$'' correspond to  superscripts ``$i$'' in the statement of \cite{DLG}). 
The only difference is that, in the computation analogous to the one at end of that proof,  we must consider here test functions depending also on the  components $\ell^{(i)}$ of the atoms, and depending on the excursions of the spatial  component above level $a$ only through their increments respect to their values at  that $a$.  
Since $I$ is equal to the disjoint union $\bigcup_{j\in J} I_j$, one then concludes applying  conditionally on ${\cal E}_a$  the additivity of Poisson point measures.

\subsection{Proof of Lemma \ref{nvarphi}}\label{proofnvarphi} 
By standard properties of Poisson processes, for any nonnegative  predictable process $f$ and  stopping time $\tau$ in the given filtration, it holds that $\E \left[e^{-u \int_0^{\tau\wedge t} f(\ell)N(\diffd \ell) + \lambda \int_0^{\tau\wedge t } (1- e^{-u f (\ell)})\diffd \ell}\vert {\cal K}_0 \right] =1$  a.s. for all $u\geq 0$ and $t\geq 0$. 
If moreover $\tau$ is such that  $ \E \left[e^{ \lambda \int_0^{\tau} (1- e^{-u f (\ell)})\diffd \ell}\right]<\infty$, by dominated convergence one gets after letting $t\to \infty$ that $\E \left[e^{-u \int_0^{\tau} f(\ell)N(\diffd \ell) + \lambda \int_0^{\tau } (1- e^{-u f (\ell)})\diffd \ell}\vert {\cal K}_0 \right] =1$ a.s. 
Now, from our assumptions and  by a monotone class argument, we can check that $\I_{E}$ is a predictable process. Since  $e^{\lambda \int_0^{\phi_x} (1- e^{-u \I_{E}(\ell)})\diffd \ell}=e^{\lambda \int_0^{\phi_x} (1- e^{-u}) \I_{E}(\ell)\diffd \ell}=  e^{\lambda   (1- e^{-u}) (x  \wedge \int\I_{E}(\ell)\diffd \ell  )}  \leq   e^{\lambda   (1- e^{-u}) x } $,   from the previous  we conclude that for all $x\geq 0$, 
\[
\E \left[e^{-u \int_0^{\phi_x} \I_{E}(\ell)N(\diffd \ell) } \vert {\cal K}_0 \right]=  e^{-\lambda x  (1- e^{-u}) }
\mbox{ a.s.}
\] 
 Applying  conditionally on $ {\cal K}_{\phi_{y}}$ this argument   to increments  $\int_{\phi_y} ^{\phi_x} \I_{E}(\ell)N(\diffd \ell) $, the proof is complete.
 
\subsection{Proof of Lemma \ref{aproxlocprunvar}}\label{proofaproxlocprunvar}
   
a) The proof is inspired by that of   Lemma 1.3.2  in \cite{DLG}. 
We have
\begin{linenomath}
\begin{multline}\label{ESF} 
\E\left(\sup_{y\in [0,x]} \left|y -  \frac{1}{\varepsilon}  \int_0^{T_y}  \I_{\{ 0<H_s \leq \varepsilon, \, m_s =0 \}} \diffd s   \right|  \mathbf{1}_{\{\tau^K> T_x\}} \right)   \\
\leq \E\left(\sup_{y\in [0,x]}\left| \frac{1}{\varepsilon}  \int_0^{T_y}  \I_{\{ 0<H_s \leq \varepsilon, m_s =0,  \langle \rho_s, 1\rangle  \leq K\}}  \diffd s -  \frac{1}{\varepsilon} \E\left( \int_0^{T_y} \I_{\{ 0<H_s \leq \varepsilon, m_s =0,  \langle \rho_s, 1\rangle  \leq K\}}  \diffd s \right) \right|  \right) \\
+  \E\left(\sup_{y\in [0,x]}  \bigg| \frac{1}{\varepsilon} \E\left( \int_0^{T_y} \I_{\{ 0<H_s \leq \varepsilon, m_s =0,  \langle \rho_s, 1\rangle  \leq K\}}  \diffd s \right)  - y\bigg| \right)  
\end{multline}	
\end{linenomath}
The time integral in the above expressions can be written in terms of the excursion point process \eqref{PEX} as follows: 
\begin{equation}\label{int=sum}
 \int_0^{T_y} \I_{\{ 0<H_s \leq \varepsilon, m_s =0,\, \langle \rho_s, 1\rangle  \leq K\}}\diffd s = \sum_{j\in J_y} \int_0^{\zeta^j} \I_{\{ 0<H^j_s \leq \varepsilon, m^j_s =0,\, \langle \rho_s^j, 1\rangle \leq K\}}\diffd s\, ,
\end{equation}
where $J_y:=\{ j\in J: \, \ell^j \leq y\}$,   $H_s^j=H_{\alpha^j+s}$, $m_s^j =\N^j_s (\, \cdot \times [0, \Theta(\ell^j) )) $ and $\zeta^j$  is the length of the excursion  labelled $j$. 
By compensation, the desintegration $\mathbb{N}(\diffd \rho \, , \diffd {\cal N}) = \mathbf{N}(\diffd \rho) Q^{H(\rho)}( \diffd {\cal N} )$ and the very definition of the snake $(\rho,\N)$, we get
\begin{linenomath}
\begin{equation*}
 \begin{split}
  \E & \left( \int_0^{T_y} \I_{\{ 0<H_s \leq \varepsilon, m_s =0,\, \langle \rho_s, 1\rangle  \leq K\}}  \diffd s   \right)\\
   = & \int_0^y \diffd \ell  \, \mathbb{N}  \left( \int_0^{\zeta}    \mathbf{1}_{\{ 0<H_s(\rho) \leq \varepsilon, \,  \N_s (\, \cdot \times [0,\Theta(\ell )))=0\, ,   \langle \rho_s, 1\rangle  \leq K\}}  \diffd s   \right)\\
   = & \int_0^y \diffd \ell  \, \mathbf{N}  \left( \int_0^{\zeta}  e^{ -  \Theta(\ell ) H_s(\rho)} \I_{\{ 0<H_s(\rho) \leq \varepsilon\, ,  \langle \rho_s, 1\rangle  \leq K\}}  \diffd s   \right),
 \end{split}
\end{equation*}
\end{linenomath}
with $\zeta$ the length of the canonical excursion and $(H_s(\rho): 0\leq s\leq \zeta)$ its height process. 
Thus, the second term in  the r.h.s.  of \eqref{ESF} is bounded  by  
\[
\int_0^x \diffd \ell  \, \E  \left[ \bigg| \varepsilon^{-1} \mathbf{N}  \left( \int_0^{\zeta}  e^{ -  \Theta(\ell ) H_s(\rho)  }  \mathbf{1}_{\{ 0<H_s(\rho) \leq \varepsilon \,  ,  \langle \rho_s, 1\rangle  \leq K\}}  \diffd s   \right)- 1\bigg|\right] . 
\]
Using Proposition 1.2.5 in \cite{DLG} to compute the integral with respect to $\mathbf{N}$ for each $\ell\in [0,x]$, the latter expression  is seen to be equal to
\begin{equation}\label{upbound1}
 \int_0^x \diffd \ell  \,   \left[ 1 -\frac{1}{\varepsilon}  \int_0^{\varepsilon} e^{-\alpha b} e^{ -  \Theta(\ell) b  }  \p(  S_b \leq K  ) \diffd b \right]  \leq    x \left[ 1- \frac{ 1- e^{-(\alpha+\bar{\theta} ) \varepsilon}}{(\alpha+\bar{\theta} ) \varepsilon}  \p(  S_\varepsilon \leq K  ) \right], 
\end{equation}
where $(S_b)_{b\geq 0}$ is a subordinator of Laplace exponent $\widehat{\psi}(\lambda):= \frac{\sigma^2}{2}\lambda + \int_0^\infty(1 - e^{-\lambda} )\Pi([r, \infty))\diffd r$ which does not depend on the drift coefficient $\alpha$ of the underlying L\'evy process $X$.
In particular, the expression on the r.h.s.\  of \eqref{upbound1} goes to $0$ with $\varepsilon$. 
    
Let now $({\cal Q}_\ell)_{\ell \geq 0}$ denote the right continuous completion of the filtration $(\sigma(\mathbf{ M}([0,x],d\rho,d\mathcal{N}): 0 \leq x \leq \ell))_{\ell \geq 0}$, with $\mathbf{ M}$ the point process defined in \eqref{PEX}. 
Since the first term on the  r.h.s.\  of \eqref{ESF} is the expected supremum of the absolute value of  a $({\cal Q}_{\ell})_{\ell \geq 0}$-martingale, we can  bound it using   BDG inequality  by some universal  constant $C_1$ times
\[
\sqrt{ Var \left[ \frac{1}{\varepsilon}  \int_0^{T_x}  \mathbf{1}_{\{ 0<H_s \leq \varepsilon, m_s =0,  \langle \rho_s, 1\rangle  \leq K\}}   \diffd s   \right] }. 
\]
Written in terms of the excursion Poisson point  process \eqref{PEX},   the previous quantity reads
\[ 
\sqrt{ Var \left[  \frac{1}{\varepsilon}  \sum_{j\in J_x} \int_0^{\zeta_j}    \mathbf{1}_{\{ 0<H^j_s \leq \varepsilon, m^j_s =0,\,    \langle \rho_s^j, 1\rangle  \leq K\}}  \diffd s  \right] }
\] 
and can be estimated by the same arguments  as  in  the proof of  Lemma 1.3.2  of \cite{DLG}, as follows (see also the proof of Lemma 1.1.3  therein for  details on related arguments):
\begin{equation}\label{upbound2}
 \begin{split}
  Var \left[ \frac{1}{\varepsilon}\int_0^{T_x} \I_{\{ 0<H_s \leq \varepsilon, m_s =0,  \langle \rho_s, 1\rangle  \leq K\}}\diffd s\right] =\, & \frac{x}{\varepsilon^2} \mathbb{N}\left(\left(  \int_0^{\zeta}  \I_{\{ 0<H_s \leq \varepsilon, m_s =0,  \langle \rho_s, 1\rangle  \leq K\}}\diffd s  \right)^2  \right) \\
                          \leq &  \, \frac{x}{\varepsilon^2} \mathbf{N}\left(\left(  \int_0^{\zeta} \I_{\{ 0<H_s 
  \leq \varepsilon,  \langle \rho_s, 1\rangle  \leq K\}}   \diffd s  \right)^2  \right)  \\
                          \leq & \,  2 x\E( X_{L^{-1}(\varepsilon)}\wedge K),\\ 
 \end{split}
\end{equation}
where $ \varepsilon\mapsto X_{L^{-1}(\varepsilon)}$ is a subordinator of Laplace exponent  $\exp\left(-  t\left( \widehat{\psi}(\lambda)+ \alpha  \right)\right)$. 
That is, the same subordinator $S$ as above, but killed  at an independent exponential time of parameter $\alpha$. 
Thus, we have $\E( X_{L^{-1}( \varepsilon)}\wedge K)\leq \E(S_ \varepsilon\wedge K) + K(1-e^{-\alpha  \varepsilon}) \to 0$ as $ \varepsilon\to 0$.  
The statement now follows by bringing together \eqref{ESF},  \eqref{upbound2} and \eqref{upbound1}  with the r.h.s. of the latter replaced by its supremum over $\varepsilon'\in [0,\varepsilon]$, which is an increasing function of $\varepsilon$ going  to $0$ as $\varepsilon\to 0$, as required.

\medskip

b)  Observe first that by continuity of $s\mapsto L_s^0$
\[
\forall t < T_x, \forall n\geq \dfrac{1}{x}, \exists y_n,z_n\in [\dfrac{1}{n},x] \text{ such that } T_{y_n}\leq  t\leq T_{z_n} \text{ and }  z_n- \dfrac{1}{n} \leq  L_t^0 \leq   y_n+ \dfrac{1}{n}.
\] 
We deduce that  
\[
 |L_t^0- L_t^{\varepsilon}(m) |	\leq 
 \begin{cases}
   y_n-  L^{\varepsilon}_{T_{y_n}}(m) + \dfrac{1}{n} & \text{ if } \, L_t^{\varepsilon}(m)< L_t^0\\
                                                                                                 \\
   L^{\varepsilon}_{T_{z_n}}(m) -z_n  + \dfrac{1}{n} & \text{ if } \, L_t^{\varepsilon}(m)> L_t^0
 \end{cases} .
\] 
Therefore, we have $\sup\limits_{t\in [0,T_x]}| L_t^0- L_t^{\varepsilon}(m) |\leq  \sup\limits_{y\in [0,x]} \left| L^{\varepsilon}_{T_y}(m)- y \right|$ and it is enough to establish the required upper bound  for the quantity
\begin{equation}\label{LTy-y}
 \E \left[  \sup_{y\in [0,x]}
    \left| L^{\varepsilon}_{T_y}(m)  - y \right|  \mathbf{1}_{\{\tau^K> T_x\}}  \right].
\end{equation}
We have
\begin{linenomath}
	\begin{equation*}
	\begin{split}
	L^\varepsilon_{T_y}(m)-y =  &  \left[   L^\varepsilon_{T_y}(m) -\frac{1}{\varepsilon} \int_0^{T_y}  
	\I_{\{ \varepsilon<H_s \leq 2\varepsilon, m_s((0,\varepsilon))=0\}} \diffd s \right] + 2\left[ \frac{1}{2\varepsilon}  \int_0^{T_y} \I_{\{ 0<H_s \leq 2\varepsilon,m_s((0,\varepsilon))=0\}} \diffd s - y  \right]  \\
	& + \left[y-  \frac{1}{\varepsilon}  \int_0^{T_y}  \mathbf{1}_{\{ 0<H_s \leq 
		\varepsilon,m_s((0,\varepsilon))=0\}} \diffd s   \right],
	\end{split}
	\end{equation*}
\end{linenomath}
and the absolute value of the second term on the right hand side is bounded by 
\[
2 \left| \frac{1}{2\varepsilon}  \int_0^{T_y} \I_{\{ 0<H_s \leq 2\varepsilon,m_s((0,2\varepsilon))=0\}} \diffd s - y  \right| + 2 \left| \frac{1}{2\varepsilon}  \int_0^{T_y} \I_{\{ 0<H_s \leq 2\varepsilon\}} \diffd s - y  \right|, 
\]
thanks to the inequalities $\I_{\{ 0<H_s \leq 2\varepsilon\}}\geq \I_{\{ 0<H_s \leq 2 \varepsilon,m_s ((0,\varepsilon))=0\}} \geq \I_{ \{0<H_s \leq 2\varepsilon,m_s((0,2\varepsilon))=0\}}$. 
It follows from part a) that  expression  \eqref{LTy-y} is upper bounded by
\begin{linenomath}
\begin{multline}\label{CCCTy} 
 \left( 2 \hat{\cal C}(\bar\theta ,K,2\varepsilon) + 2\hat{\cal C}(0  ,K,\varepsilon) + \hat{\cal C}(\bar\theta ,K,\varepsilon) \right)( x+ \sqrt{x})  \\
 + \E \left[  \sup_{y\in [0,x]} \left|L^\varepsilon_{T_y}(m) -\frac{1}{\varepsilon} \int_0^{T_y} \I_{\{ \varepsilon < H_s \leq 2\varepsilon, m_s((0,\varepsilon))=0\}} \diffd s \right| \I_{\{\tau^K> T_x\}}  \right]   
\end{multline}
\end{linenomath}
and it only remains us  to bound this last expectation. 
Notice to that end that the inner supremum can be written in terms of  the Poisson excursions point process pruned below the level $a=\varepsilon$ as considered  in  \eqref{PEXtheta}, namely
\begin{equation}\label{PEXmvariabl}
 \sum_{i\in I^{\intens^\varepsilon}} \delta_{(L_{\alpha^{(i)}}^\varepsilon(m) ,  \rho^{(i)},\,  {\cal N}^{(i)})}  \, ,
\end{equation}
where $I^{\intens^{\varepsilon}}=  \{ i\in I \, :  \, m^{\intens^{\varepsilon}}_{\alpha^{(i)}}([0,\varepsilon))= 0\}$.   
As before, let  $\widehat{L}^0_t$, $\widehat{T}_x$, $\widehat{H}_t$ and $\widehat{\tau}^K$ denote the corresponding local time at $0$, inverse local time at $0$, height process, and the stopping time $\widehat{\tau}^K:=\inf\{s>0 \, : \langle \widehat{\rho}_s, 1\rangle \geq K\}$, all of them now  associated with $(\widehat{\rho}, \widehat\N)$, the snake process associated with \eqref{PEXmvariabl}.

Then, writing in a similar way as in \eqref{int=sum} the time integral in \eqref{CCCTy}, that is, as  a sum of integrals over (now) non marked excursion intervals above level $\varepsilon$,  we get
\[ 
L^\varepsilon_{T_y}(m) -\frac{1}{\varepsilon} \int_0^{T_y}  \I_{\{ \varepsilon<H_s \leq 2\varepsilon, m_s((0,\varepsilon))=0\}} \diffd s= \widehat{L}^0_{\widehat{T}_{L^\varepsilon_{T_y}(m)}} -\frac{1}{\varepsilon} \int_0^{\widehat{T}_{L^\varepsilon_{T_y}(m)}}  \I_{\{ 0<\widehat{H}_s \leq \varepsilon\}} \diffd s. 
\] 
Since $T_x\geq  \widehat{T}_{L^\varepsilon_{T_x}(m)}$ and $\sup\limits_{t\leq T_x} \langle \rho_t, 1\rangle \geq  \sup\limits_{t\leq \widehat{T}_{L^a_{T_x}} (m)} \langle \widehat{\rho}_t, 1\rangle $, the expectation in \eqref{CCCTy} is seen to be less than
\[
 \E \left[  \sup_{z\in [0, L^\varepsilon_{T_x}(m) ]} \left|  z  -\frac{1}{\varepsilon} \int_0^{\widehat{T}_z}  \I_{\{ 0<\widehat{H}_s \leq \varepsilon\}} \diffd s \right| \I_{\{\widehat\tau^K>   \widehat{T}_{L^\varepsilon_{T_x}(m)} \}}  \right]\\\leq \hat{\cal C}(0  ,K,\varepsilon) \E \left[  L^\varepsilon_{T_x}(m)+\sqrt{L^\varepsilon_{T_x}(m)}  \right], 
\] 
 by applying,   thanks to Lemma \ref{prunPEX},  the previous part a) (with $m=0$ or equivalently $\bar\theta =0$) conditionally on ${\cal E}_{\varepsilon}$. 
With the obvious bounds $L^\varepsilon_{T_x}(m)\leq L^\varepsilon_{T_x}$ a.s., $\E\left[ \sqrt{L^\varepsilon_{T_x}}\right] \leq \sqrt{\E\left[ L^\varepsilon_{T_x}\right]}$ and the equalities $\E(   L^{\varepsilon}_{T_x} )= x \mathbf{N}\left( L^{\varepsilon}_{\zeta} \right)= x e^{- \alpha {\varepsilon}} $ following from Corollary 1.3.4 in \cite{DLG},  the desired  result is seen to hold with ${\cal C}(\bar\theta ,K,\varepsilon)=  \left( 2\hat{\cal C}(\bar\theta ,K,2\varepsilon) + 3\hat{\cal C}(0  ,K,\varepsilon) + \hat{\cal C}(\bar\theta ,K,\varepsilon) \right)$.

\subsection{Lamperti-type representation}\label{Lampertitransform}

\begin{proposition}\label{LampOU} 
Let $X_t$ be a L\'evy process with Laplace exponent satisfying  \eqref{psi2} and \eqref{psi3}, and assume that the locally bounded competition mechanisms $g$ is such that $\lim\limits_{z\to 0} \frac{g(z)}{z}$ exists. 
For each $x>0$ there is a unique strong solution $U_t$  to the  SDE \eqref{OHish}. 
Moreover, if we set  
\begin{equation}\label{lampertish}
  V_{t}:= \left\{  
  \begin{aligned}    
   & U_{C_{t}} &\quad & \text{ if } 0\leq t<\eta_{\infty}, \\
   & 0 &\quad & \text { if } \eta_{\infty}<\infty \wedge t\geq \eta_{\infty},
  \end{aligned}   
  \right.
\end{equation}
with $C_t$ the right inverse of  $\eta_{t}:=\inf \left\{s>0:\int_{0}^{s}\frac{\diffd r}{U_r }>t \right\}$, there exists in some enlarged probability space a Brownian motion $B^V$ and an independent Poisson point process $N^V$ in $[0,\infty)^3$ with intensity measure $\diffd t\otimes \diffd \nu \otimes \Pi(\diffd r)$, such that    
\begin{equation}\label{SDECSBPC}
 V_{t}= x - \alpha \int_{0}^{t}V_{s}\diffd s  + \sigma\int_{0}^{t}\sqrt{V_{s}}\diffd B_{s}^{V} 
          + \int_{0}^{t}\int_{0}^{V_{s-}}\int_{0}^{\infty}r\tilde{N}^{V}(\diffd s, \diffd \nu, \diffd r)
          - \int_{0}^{t}G(V_{s})\diffd s , 
\end{equation}
for all $t\geq 0$. 
Last, pathwise uniqueness (and then also  in law) holds for \eqref{SDECSBPC}. 
\end{proposition}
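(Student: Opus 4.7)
My plan proceeds in three steps, mirroring the three assertions of the proposition.

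\textbf{Step 1 (strong existence and uniqueness of $U$).} I would first observe that the hypothesis $\lim_{z\to 0^+}g(z)/z\in \R$ implies, via L'H\^opital, that $G(z)/z \to 0$ as $z\to 0^+$, so the drift $u\mapsto G(|u|)/u$ extends continuously to $u=0$, and, combined with local boundedness of $g$, is locally Lipschitz on every compact subset of $(0,\infty)$. Strong existence and pathwise uniqueness then follow by localizing at exit levels $T_\epsilon^M := \inf\{t : U_t \notin [\epsilon,M]\}$, applying \cite[Thm.~2.5]{DL} to the SDE with the truncated drift, and passing to the limits $\epsilon \to 0$ and $M\to \infty$. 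This gives a unique strong solution up to $T_0 := \inf\{t : U_t = 0\}$, which I would extend by setting $U_t \equiv 0$ on $[T_0,\infty)$.

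\textbf{Step 2 (identification of $V$'s SDE).} Next I would note that $C$ is strictly increasing on $[0,\eta_\infty)$ with $dC_t = V_t\,dt$. Substituting $s = C_r$ in the drift integral of \eqref{OHish} gives
\begin{equation*}
  \int_0^{C_t}\frac{G(U_s)}{U_s}\,ds = \int_0^t G(V_r)\,dr,
\end{equation*}
which produces the competition term of \eqref{SDECSBPC}. For $X_{C_t}$, I would decompose $X_t = -\alpha t + \sigma B_t + \int_0^t\!\int_0^\infty r\,\tilde N(ds,dr)$ and transform each piece: the drift becomes $-\alpha\int_0^t V_s\,ds$; the continuous martingale $t\mapsto \sigma B_{C_t}$ has quadratic variation $\sigma^2\int_0^t V_s\,ds$, so L\'evy's characterization, on a probability space enlarged with an auxiliary independent Brownian motion, produces a Brownian motion $B^V$ with $\sigma B_{C_t} = \sigma \int_0^t\sqrt{V_s}\,dB^V_s$ (extension on $\{V=0\}$ is harmless, the integrand vanishing there); and the atoms of the purely-discontinuous part under the time change form a point process whose predictable compensator is $V_{s^-}\,ds\otimes \Pi(dr)$, so enlarging further by i.i.d.\ uniform marks in the $\nu$-coordinate will provide a Poisson random measure $N^V$ of intensity $ds\otimes d\nu \otimes \Pi(dr)$ with $\int_0^{C_t}\!\int r\,\tilde N(ds,dr) = \int_0^t\!\int_0^{V_{s^-}}\!\int r\,\tilde N^V(ds,d\nu,dr)$. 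Joint independence of $B^V$ and $N^V$ would then be verified via a variant of Lemma~\ref{weirdwnandpp}, exploiting the independence of the continuous and purely-discontinuous parts of $X$ and of the auxiliary randomizations; consistency of convention \eqref{lampertish} after $\eta_\infty$ is automatic since \eqref{SDECSBPC} has absorbing boundary at $0$.

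\textbf{Step 3 (pathwise uniqueness) and the main obstacle.} Finally, pathwise uniqueness for \eqref{SDECSBPC} will follow from \cite[Thm.~2.2]{DL}: the coefficients $v\mapsto\sigma\sqrt{v}$, $(v,\nu,r)\mapsto r\I_{\{\nu\le v\}}$, and $v\mapsto -\alpha v - G(v)$ are admissible (the square-root diffusion coefficient is of Yamada--Watanabe type and the drift is locally Lipschitz under \eqref{condition on g}), and uniqueness in law then follows by Yamada--Watanabe. The hard part of the argument will be Step 2, and specifically the simultaneous construction of $B^V$ and $N^V$ on the enlarged probability space with the correct product intensity and joint independence: since the time change is only strictly increasing on $\{U>0\}$, both noises must be filled in by independent auxiliaries on $\{V=0\}$, and verifying the joint Brownian/Poisson structure of the time-changed atoms demands careful filtration bookkeeping after the random time change, in the spirit of the argument in the proof of Proposition~\ref{flowLBP}(iii).
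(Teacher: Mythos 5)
Your Steps 2 and 3 are essentially the paper's route: the paper identifies the time-changed Gaussian and jump parts by invoking the arguments of Caballero et al.\ \cite[Prop.\ 4]{CLU} (your hands-on construction via L\'evy's characterization, the compensator computation and auxiliary uniform marks is precisely the content of that argument), and it obtains pathwise uniqueness for \eqref{SDECSBPC} from the general results of \cite{FL} rather than \cite[Thm.\ 2.2]{DL}, which in this paper plays the role of a comparison theorem; note also that condition \eqref{condition on g} is not assumed in this proposition, but local boundedness of $g$ already makes $G$ locally Lipschitz, so nothing is lost there. Likewise, \cite[Thm.\ 2.5]{DL} concerns the white-noise/Poisson flow equations, not the additive L\'evy-driven equation \eqref{OHish}; what is needed there is a classical locally-Lipschitz localization argument, as in the paper.

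The genuine gap is in your Step 1. The proposition asserts global strong existence and uniqueness for \eqref{OHish}, and the hypothesis that $\lim_{z\to 0}g(z)/z$ exists is imposed precisely so that the drift $u\mapsto G(|u|)/u$ extends continuously (by $0$) through $u=0$ and the equation makes sense for a real-valued solution which may hit and cross zero. The additive noise $\diffd X_t$ does not switch off at $T_0=\inf\{t:U_t=0\}$, so the process you obtain by freezing $U$ at $0$ on $[T_0,\infty)$ is \emph{not} a solution of \eqref{OHish}, and existence/uniqueness of the true solution beyond $T_0$ is not addressed at all; this does not affect the representation of $V$, which only sees $U$ up to $T_0$ (in the paper $C_T=T_0$ and $V$ is absorbed at $0$), but it does not prove the first claim of the statement. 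In addition, your passage to the limit $M\to\infty$ silently assumes non-explosion: since $g$ is only locally bounded, $G(u)/u$ may grow superlinearly, and the paper devotes the first half of its proof to exactly this point, establishing via It\^o's formula, the restoring (toward the origin) sign of the drift and Gronwall's lemma a second-moment bound for the solution of the equation with truncated $G$, with constants independent of the truncation, and then concluding by Fatou that the exit times from large balls tend to infinity. Some argument of this kind (a uniform moment bound, or a pathwise domination exploiting that the drift always points toward $0$) must be supplied before the limits in your localization can be taken.
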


Since $(Z_t(x): t\geq 0)$ in \eqref{flow lb} satisfies \eqref{SDECSBPC} with the Brownian motion
\[
B_t:=\int_0^t  \int_{0}^{Z_{s-}(x)}  \left(Z_{s-}(x)\right)^{-{\frac{1}{2}}}  W(\diffd s,\diffd u)\, , 
\]  
we conclude that  $(Z_t(x): t\geq 0)$ and  $(V_t :t\geq 0)$ are equal in law.  
  
\begin{proof}
 Let $B^X$ and $N^X$ respectively denote a standard Brownian motion and a Poisson random measure on $[0,\infty)^2$ with intensity $\diffd s\otimes \Pi(\diffd r)$ such that $\diffd X_t= -\alpha \diffd t + \sigma \diffd B^X_t  + \int_0^{\infty} r\widetilde{N}^X (\diffd t,\diffd r)$. 
 Standard localization arguments using a sequence $G^R$ of globally Lipschitz functions equal to $G$ on $[0,R]$ (the local Lipschitz character of $G$ following from the assumptions) show the existence of a unique strong solution $U$ until some random explosion time. 
 For each $R,K\geq 0$, set $\tau^R=\inf\{s\geq 0 \, : U^2_s\geq R^2\}$ and  $\theta_K=\inf\{s\geq 0 \, : [U,U]_s  \geq K\}$. 
 Applying It\^o' s formula to the solution of equation \eqref{OHish} with $G_R$ instead of $G$ while keeping in mind the sign of $G$, we get using  Gronwall's lemma that $\E (U_{t\wedge \theta_K\wedge \tau^{R}}^2)  \leq c+ c'K$ for some finite constants $c,c'>0$ depending on $t,x$ and the characteristics of $X$ but not on $G$. 
 Fatou's Lemma then yields $t\wedge \theta_K\leq  \tau^{\infty} =\sup_{R\geq 0}\tau^{R}$   a.s., from where $\tau^{\infty}=\infty$ a.s. and 
 \begin{equation}\label{SDELevy} 
  \diffd U_t =- \alpha \diffd t + \sigma \diffd B^X_t + \int_0^{\infty} r \widetilde{N}^X (\diffd t,\diffd r) -  \frac{G(|U_t| )}{U_t} \diffd t .
 \end{equation} 
 Let us now set  $T_0:= \inf\{t>0:U_t=0\}$ and $T:= \inf\{t>0:V_t=0\}= \inf\{t>0: U_{C_t}=0\}\wedge \eta_{\infty}.$ 
 As $C_t$ is right-continuous, we have that $U_{C_T}=0,$ so that  $C_T = T_0$ since $\eta_{r}= \int_0^{r \wedge T_0} \dfrac{\diffd s}{U_s}= \eta_{\infty}$ for all $r \geq T_0$. 
 In order to show that the  time-changed process $V = (U_{C_{t}}: t \geq 0)$ is solution of \eqref{SDECSBPC}, we follow the arguments of Caballero \textit{et al.} \cite[Proposition 4]{CLU} providing the existence of a standard Brownian motion $B$ such that
 \begin{equation}\label{B}     
  \int_0^t \sqrt{V_s} \diffd B_s = B^{X}(C_t\wedge T_0)
 \end{equation}
 and of a Poisson random measure $N$ with intensity $\diffd s \otimes \diffd \nu \otimes \Pi(\diffd r)$ such that
 \[
 \sum\limits_{\{n:t^X_n<C_t\}} r^X_n = \sum\limits_{\{n:t_n<t\}}\Delta_n = \int_0^t \int_0^{V_{s-}}\int_0^{\infty} r\I_{\{r \geq 1\}}N(\diffd s,\diffd \nu,\diffd r),
 \]
 where $((\Delta_n, t_n) : n \in \NN)$ is a fixed but arbitrary labeling of the jump times and sizes of $V$ and  $((r^X_n, t^X_n) : n \in \NN)$  are the atoms of $N^X$. 
 We have in an $L^2$ sense that
 \begin{equation} \label{N} 
  \lim\limits_{\varepsilon\searrow 0}\left[\sum\limits_{\{n:t_n<t\}} \Delta_n \I_{\{\Delta_n > \varepsilon\}} - \int_0^t V_s \diffd s \int_{\varepsilon}^{\infty} r\Pi(\diffd r)\right] = \lim\limits_{\varepsilon\searrow 0}\left[\sum\limits_{\{n:t^X_n<C_t\}}r^X_n \I_{\{r^X_n> \varepsilon\}} - \int_0^{C_t} \diffd s \int_{\varepsilon}^{\infty} r\Pi(\diffd r)\right],
 \end{equation}
 so that the compensated measures satisfy 
 \begin{equation}\label{tildeN} 
  \int_0^t \int_0^{V_{s^{-}}}\int_0^\infty r \widetilde{N}(\diffd s,\diffd \nu,\diffd r)  = \int_0^{C_t}\int_0^{\infty} r\widetilde{N}^X(\diffd s,\diffd r).
 \end{equation}          
 Inserting the identities \eqref{B}, \eqref{N} and \eqref{tildeN} into equation \eqref{SDELevy}, we deduce  that
 \[
  \diffd U_{C_t}  = -\alpha \diffd C_t  + \sigma \sqrt{V_t} \diffd B_t + \int_0^{V_{t-}}\int_0^{\infty} r \widetilde{N} (\diffd t,\diffd \nu,\diffd r) - cU_{C_t} \diffd C_t.
 \]
 By \eqref{lampertish},  $\diffd C_t =  V_t \diffd t $ and $ \dfrac{G(|U_{C_t}| )}{U_{C_t}} \diffd C_t = G(|V_t|)\diffd t$, thus  $V=(V_t: t\geq 0)$ is a solution of \eqref{SDECSBPC}. 
 Uniqueness for \eqref{SDECSBPC} follows from general results in \cite{FL}. 
\end{proof}

\bibliographystyle{plain}
\addcontentsline{toc}{section}{\refname}
  \bibliography{references}

\end{document}